\newcommand{\A}{\mathbb{A}}
\newcommand{\Af}{\mathbb{A}_f}
\newcommand{\Afp}{\mathbb{A}_f^{(\p)}}
\newcommand{\C}{\mathbb{C}}
\newcommand{\Q}{\mathbb{Q}}
\newcommand{\R}{\mathbb{R}}
\newcommand{\Z}{\mathbb{Z}}
\newcommand{\HH}{\mathcal{H}}
\newcommand{\OO}{\mathcal{O}}
\newcommand{\bbone}{\mathbf{1}} 
\newcommand{\g}{\mathfrak{g}}
\newcommand{\m}{\mathfrak{m}}
\newcommand{\n}{\mathfrak{n}}
\newcommand{\p}{\mathfrak{p}}
\newcommand{\pb}{\overline{\mathfrak{p}}}
\newcommand{\q}{\mathfrak{q}}
\newcommand{\ft}{\mathfrak{t}}
\newcommand{\into}{\hookrightarrow}
\DeclareMathOperator{\ord}{ord}
\DeclareMathOperator{\Res}{Res}
\DeclareMathOperator{\Max}{Max}
\DeclareMathOperator{\End}{End}
\DeclareMathOperator{\Ind}{Ind}
\DeclareMathOperator{\Hom}{Hom}
\DeclareMathOperator{\Ker}{Ker}
\DeclareMathOperator{\GL}{GL}
\DeclareMathOperator{\Lie}{Lie}
\DeclareMathOperator{\Ad}{Ad}
\DeclareMathOperator{\Rep}{Rep} 
\DeclareMathOperator{\Ver}{Ver} 
\DeclareMathOperator{\rank}{rank}
\newcommand{\an}{\mathrm{an}}
\newcommand{\la}{\mathrm{la}}
\newtheorem{theorem}    {Theorem}[subsection]
\newtheorem{corollary}  [theorem]{Corollary}
\newtheorem{lemma}      [theorem]{Lemma}
\newtheorem{proposition}[theorem]{Proposition}
\theoremstyle{definition}
\newtheorem{definition}[theorem]{Definition}
\newtheorem{assumption}[theorem]{Assumption}
\newtheorem{remark}    [theorem]{Remark}
\title{Overconvergent algebraic automorphic forms}
\author{David Loeffler}
\subjclass[2000]{11F55 (primary), 11F33 (secondary)}
\thanks{The author is grateful for the support of EPSRC postdoctoral fellowship EP/F04304X/1.}
\address{
DPMMS\\
Centre for Mathematical Sciences\\
University of Cambridge\\
Cambridge CB3 0WB, UK}
\email{d.loeffler.01@cantab.net}
\curraddr{
   Warwick Mathematics Institute\\
   Zeeman Building\\
   University of Warwick\\
   Coventry CV4 7AL, UK   }
\begin{document}

\begin{abstract}
I present a general theory of overconvergent $p$-adic automorphic forms and eigenvarieties for connected reductive algebraic groups $G$ whose real points are compact modulo centre, extending earlier constructions due to Buzzard, Chenevier and Yamagami for forms of $\GL_n$. This leads to some new phenomena, including the appearance of intermediate spaces of ``semi-classical'' automorphic forms; this gives a hierarchy of interpolation spaces (eigenvarieties) interpolating classical automorphic forms satisfying different finite slope conditions (corresponding to a choice of parabolic subgroup of $G$ at $p$). The construction of these spaces relies on methods of locally analytic representation theory, combined with the theory of compact operators on Banach modules.
\end{abstract}

\maketitle

 \begin{center}
 \line(1,0){250}
 \end{center}
 \emph{Note added December 2016}: The published version of this paper (Proc.\ London Math.\ Soc.\ \textbf{102} (2011), no. 2, p. 193--228) contained an error in section 2.6, which was subsequently spotted by Olivier Ta\"ibi. I have added a brief account of the error, and how to correct it, has been added to the end of this document. This will shortly be appearing as a corrigendum article in Proc.\ London Math.\ Soc.
  \begin{center}
  \line(1,0){250}
  \end{center}
  
\section{Introduction}

\subsection{Main results}

The aim of this paper is to obtain analogues of the results of \cite{Cpadic} and \cite{CMeigen} concerning the existence of $p$-adic analytic families of modular forms, where modular forms are replaced by automorphic forms on certain higher rank reductive groups. Given a reductive group $G$ defined over a number field $F$, with $G(F_v)$ compact modulo centre for all infinite places $v$; a finite prime $\p$ of $F$; and a parabolic subgroup $P \subseteq G(F_\p)$, we construct a family of $p$-adic Banach spaces of overconvergent automorphic forms, varying analytically over a certain weight space (parametrising characters of the Levi factor of $P$). This space has a Hecke action, and certain Hecke operators at $\p$ give compact operators. Applying the ``eigenvariety machine'' of \cite{buzzard-eigen}, one obtains a $p$-adic rigid space (an eigenvariety), which interpolates the Hecke eigenvalues of classical eigenforms which satisfy a finite slope condition. This finite slope condition depends on $P$, and is equivalent to the local factor at $\p$ of the associated automorphic representation being a subquotient of a representation parabolically induced from $P$; so taking $P$ to be larger gives an eigenvariety which may have smaller dimension, but sees more classical forms.

 We do not need any local assumptions on $G(F_\p)$; in particular, it need not be split or unramified. The special case $P = G$ recovers the space of classical forms, up to twisting by $p$-adic characters. On the other hand, when $G$ is a definite unitary group split at $p$, the space of overconvergent forms defined in \cite{chenevier-families} corresponds to the other extreme case where $P$ is a Borel subgroup. Thus the spaces corresponding to intermediate parabolic subgroups could perhaps be described as ``semi-classical'' overconvergent forms.

If $G$ satisfies the conditions of \cite{gross-algebraic}, so the arithmetic subgroups of $G$ are finite, then the eigenvariety we construct has dimension equal to that of the natural weight space, which is the space of characters of a compact open subgroup of the Levi factor of $P$; in particular, its dimension is the rank of the centre of the Levi factor. If $G$ possesses infinite arithmetic subgroups, then the eigenvariety has smaller dimension, and its image in weight space is contained in a closed subvariety of codimension $\ge 1$ whose geometry is closely related to Leopoldt's conjecture. 

The methods we shall use are an extension of ideas due to Buzzard \cite{buzzard-families} and Chenevier \cite{chenevier-families} for forms of $\GL_n$. The first key observation is that automorphic forms for the groups we consider can be defined as the space of functions
\[ f: G(F) \backslash G(\A) \to V,\]
where $V$ is an algebraic representation of $G$ over some coefficient field $E$ containing $F_\p$, which are locally constant on $G(F \otimes \R)$ and satisfy $f(g k) = k_\p^{-1} \circ f(g)$ for all $k$ in some compact open subgroup of $G(\Af)$. This space still makes sense if $V$ is an arbitrary representation of a compact open subgroup of $G(F_\p)$, rather than the whole group. Secondly, if we restrict to representations $V$ which are ``arithmetical'' (that is, their kernel contains an arithmetic subgroup), then the functor mapping $V$ to the space of automorphic forms with values in $V$ is exact and commutes with base change. So it is sufficient to construct a family of representations over an appropriate weight space, which is carried out in \S 2 (via a form of locally analytic parabolic induction). In \S 3 we recall the necessary global theory to define spaces of automorphic forms with values in these modules, and apply the eigenvariety machine to these spaces.

We also prove a control theorem, analogous to the main result of \cite{C-CO}, showing that overconvergent automorphic eigenforms whose weight is classical and whose Hecke eigenvalues at $\p$ have small valuation are in fact classical forms. This implies that points arising from classical automorphic forms are dense in the eigenvariety whenever locally algebraic weights have a certain accumulation property in weight space. We give an example to show that this is not always the case.

\subsection{Relation to other constructions}

As mentioned above, the methods of this paper are a generalisation of those used in several earlier works dealing with specific examples of compact-modulo-centre reductive groups; the main examples are \cite{buzzard-families}, \cite{buzzard-eigen}, \cite{chenevier-families} and \cite{yamagami}. This is considered in detail in section \ref{chap:examples}, where we show how to recover some of the results of these papers from our approach.

A very different approach to the problem of $p$-adic interpolation can be found in the works of Emerton (\cite{emerton-interpolation} and the papers referenced therein). This theory, which does not require $G(F \otimes \R)$ to be compact modulo centre but at present requires $G(F_\p)$ to be quasi-split, is in a sense opposite to the approach followed here: for each $i \ge 0$, Emerton builds a locally analytic representation $\widetilde H^i_{F_\p-\la}$ of the whole of $G(F_\p)$ which ``sees'' all automorphic representations of $G$ which are cohomological in degree $i$. The eigenvariety is then constructed by applying the Jacquet module functor of \cite{emerton-jacquet} to this representation. We study the relation between the two constructions in \S \ref{ssect:emerton}. For compact-modulo-centre groups the interesting cohomology is all in degree 0, and we show that there is a bijection between the finite slope eigenspaces of the space of overconvergent automorphic forms constructed in this paper and the fibres of Emerton's Jacquet module $J_{\overline{P}}(\widetilde H^0_{F_\p-{\la}})$. In future work I hope to show how the methods of \cite{emerton-interpolation} can be extended to the case of a general parabolic subgroup, which would allow the results of this paper to be extended to the case where $G(F \otimes \R)$ is not compact modulo centre.

A third approach can be found in recent preprints of Ash and Stevens \cite{ash-stevens07}. Again this does not impose any restrictions on $G$ at infinity, but assumes $F = \Q$ and $G(F_\p)$ split. The relation between their approach and the methods of the present paper is not entirely clear at present, but the ``universal highest weight modules'' that they define (using spaces of locally analytic distributions on Iwahori subgroups) are formally similar to the duals of the locally analytic induced representations we consider.

\section{Families of induced representations}
\label{sect:local-families}

Let $L$ be a finite extension of $\Q_p$, with ring of integers $\OO_L$ and uniformiser $\pi_L$.

We recall some basic concepts from $p$-adic function theory, following \cite[\S 2.1]{emerton-memoir}. We fix a field $E \supseteq L$, complete with respect to a discrete valuation extending that of $L$; this will be the coefficient field for all the representations we shall consider. Let $\mathcal{X}$ be an affinoid rigid space over $L$; then we define $C^\an(\mathcal{X}, E) = \OO(\mathcal{X})\ \widehat\otimes_L\ E$, the space of $E$-valued analytic functions on $\mathcal{X}$. For any $E$-Banach space $V$, we define $C^\an(\mathcal{X}, V) = C^\an(\mathcal{X}, E)\ \widehat\otimes_E\ V$, the space of $V$-valued analytic functions on $\mathcal{X}$ \cite[def.~2.1.9]{emerton-memoir}. 

We now let $X$ be a topological space, which we suppose to be strictly paracompact and Hausdorff. Following \cite[\S 2.1]{emerton-memoir}, we define an $L$-analytic partition of $X$ (of dimension $d$) to be the data of an indexing set $I$ and family of triples $(U_i, \mathcal{U}_i, \phi_i)_{i \in I}$, where: $\{U_i\}_{i \in I}$ is a cover of $X$ by disjoint open sets; for each $i$, $\mathcal{U}_i$ is a rigid-analytic affinoid ball of dimension $d$ over $L$; and $\phi_i$ is a homeomorphism $\mathcal{U}_i(L) \stackrel{\sim}{\to} U_i$. If $\underline{U} = (U_i, \mathcal{U}_i, \phi_i)_{i \in I}$ and $\underline{V} = (V_j, \mathcal{V}_j, \phi_j)_{j \in J}$ are both $L$-analytic partitions of $X$, we say $\underline{U}$ is a refinement of $\underline{V}$ if there is a function $\sigma: I \to J$ such that $U_i \subseteq V_{\sigma(i)}$ for all $i \in I$, and the inclusion $U_i \into V_{\sigma(i)}$ extends to an open embedding of rigid spaces $\mathcal{U}_i \into \mathcal{V}_{\sigma(i)}$ (which is necessarily unique if it exists, since $U_i$ is Zariski-dense in $\mathcal{U}_i$). We define two $L$-analytic partitions to be equivalent if they posess a common refinement. Then we can define a locally $L$-analytic manifold as a topological space $X$ endowed with an equivalence class of $L$-analytic partitions; we refer to the given equivalence class as the atlas of $X$, and its elements as charts. The $L$-points of any smooth algebraic variety or rigid space over $L$ are naturally a locally $L$-analytic manifold \cite[5.8.10]{bourbaki-VAR}.

If $X$ is a locally $L$-analytic manifold and $V$ is a finite-dimensional $E$-vector space, then for any chart $(U_i, \mathcal{U}_i, \phi_i)_{i \in I}$ of $X$, we can define the space $\prod_{i \in I} C^\an(\mathcal{U}_i, V)$, whose elements are naturally continuous functions on $X$. Refinements of charts naturally give rise to morphisms between these spaces; and since the atlas of $X$ is by definition a directed set under refinement, we can define the space $C^\la(X, V)$ to be the direct limit. This is the space of {\it locally $L$-analytic $V$-valued functions on $X$.}

Let $G$ be a connected reductive algebraic group over $L$. We refer to \cite[\S 5]{borel-tits} for the structure theory of reductive algebraic groups over a non-algebraically-closed field. Let $S$ be a maximal split torus in $G$; the adjoint action of $S$ on $\g = \Lie G$ gives a relative root system whose Weyl group can be identified with the quotient $N_G(S)/Z_G(S)$. Let us choose a system of simple roots $\Delta$; this defines a minimal parabolic subgroup $P_{\rm min}$ of $G$, containing $S$. 

Fix a parabolic subgroup $P \supseteq P_{\rm min}$, and let $N$ be its unipotent radical, $M = P/N$ the Levi quotient, $M^{ss}$ the semisimple part of $M$, $H = M / M^{ss}$ the maximal torus quotient of $M$. We shall call a positive (relative) root {\bf free} if $-\alpha$ is not a root of $P$; write  $\Delta_f$ for the set of free simple positive roots.\footnote{Note that our normalisations give a correspondence $P \leftrightarrow \Delta_f$ between parabolics and subsets of $\Delta$ which is bijective and inclusion-reversing; in particular every root is free for $P = P_{\rm min}$, and no root is free for $P=G$.} 

The choice of $P$ and $S$ determines an opposite parabolic $\overline{P}$, the conjugate of $P$ by the longest element in the Weyl group of $S$, such that $\overline{P} \cap  P$ is a lifting of $M$ to a subgroup of $G$ (containing the centraliser of $S$). We write $\overline{N}$ for the unipotent radical of $\overline{P}$. We set $T = Z(M) \cap S$, a maximal split torus in the centre of $M$.

For $\Gamma$ a locally $L$-analytic group, let $\Rep_{\rm la,c}(\Gamma)$ denote the category of locally $L$-analytic representations of $\Gamma$ on $E$-vector spaces of compact type, as defined in \cite{ST-distributions}. We let $\Rep_{\rm la,fd}(\Gamma)$ denote the full subcategory of such representations which are finite-dimensional over $E$.

\subsection{Parahoric induction}

We shall consider representations of compact open subgroups of $G(L)$ satisfying the following condition:

\begin{definition}[{\cite[\S 1.4]{casselman-book}}]\label{def:iwahori-decomp}
We say that a compact open subgroup $U \subseteq G(L)$ has an {\it Iwahori factorisation} (with respect to the parabolics $\overline{P}$ and $P$) if:
\begin{enumerate}
 \item The natural multiplication map $\overline{N}_U \times M_U \times N_U \to U$ is an isomorphism of locally $L$-analytic manifolds, where $\overline{N}_U = \overline{N} \cap U$, $M_U = M \cap U$, $N_U = N \cap U$;
 \item If $z \in T(L)$ is such that $|\alpha(z)| \le 1$ for all $\alpha \in \Delta_f$, then $z N_U z^{-1} \subseteq N_U$ and $z^{-1} \overline{N}_U z \subseteq \overline{N}_U$.
\end{enumerate}
\end{definition}

We shall see in the next subsection that such subgroups $U$ do indeed exist, for any connected reductive group $G$ over $L$.

\begin{definition}
Let $U$ be a subgroup of $G$ with an Iwahori factorisation $U = \overline{N}_U \times M_U \times N_U$, and let $V \in \Rep_{\rm la, fd}(M_U)$. We define $\Ind_{\overline{P}}^{U} V$ to be the space
\[ \left\{ f \in C^{\la}(U, V)\ \middle|\ f(\overline{n} m g) = m \circ f(g)\right\}\]
where the functional equation is to be satisfied for all $g \in U$, $m \in M_U$, and $\overline{n} \in \overline{N}_U$.
\end{definition}

By results of \cite[\S 4]{feaux}, this is in $\Rep_{\rm la,c}(U)$, and its underlying vector space can be identified with $C^{\la}(N_U, V)$. This identification can be viewed as arising from a right action of $U$ on $N_U$, obtained by composing right multiplication with the projection onto the $N_U$ factor in the Iwahori factorisation. 

Concretely, if $\Psi_M$ and $\Psi_N$ denote the projection maps from $U$ onto $M_U$ and $N_U$ respectively, the action of $\gamma \in U$ on $f \in C^{\la}(N_U, V)$ is given by 
\[ (\gamma \circ f)(n) = \Psi_M(n\gamma) \circ f(\Psi_N(n\gamma)).\]

Let $H$ be the torus $M / M^{ss}$ over $L$. The image of $M_U$ in $H(L)$ is a compact open subgroup $H_U$, and any locally $L$-analytic character $H_U \to E^\times$ defines a locally $L$-analytic character of $M_U$ by inflation. From the above formula for the $U$-action the following is immediate:

\begin{proposition} \label{prop:twists}
 If $\tau$ is a locally $L$-analytic character of $H_U$, then $\Ind_{\overline{P}}^{U}(V \otimes \tau)$ is isomorphic to $C^{\la}(N_U, V)$ with the twisted $U$-action $\circ_{\tau}$ given by
\[ (\gamma \circ_\tau f)(n) = \tau( \Psi_M(n\gamma)) \cdot (\gamma \circ f)(n).\]
\end{proposition}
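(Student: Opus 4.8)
The plan is to deduce the statement directly from the explicit description of $\Ind_{\overline{P}}^U W$, and of its $U$-action, recalled immediately before the proposition, specialised to the coefficient module $W = V \otimes \tau$. First I would make this identification fully explicit: for $W \in \Rep_{\la,\mathrm{fd}}(M_U)$ the isomorphism $\Ind_{\overline{P}}^U W \cong C^{\la}(N_U, W)$ is restriction of functions from $U$ to the closed submanifold $N_U \subseteq U$, with inverse sending $h \in C^{\la}(N_U, W)$ to the function $\tilde h \in C^{\la}(U,W)$ defined, via the Iwahori factorisation $U = \overline{N}_U \times M_U \times N_U$, by $\tilde h(\overline{n}\, m\, n) = m \circ h(n)$; this $\tilde h$ is the unique element of $C^{\la}(U,W)$ restricting to $h$ and satisfying the defining functional equation, and it is locally analytic because the factorisation is an isomorphism of locally $L$-analytic manifolds. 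Transporting the right-translation action $(\gamma \cdot f)(g) = f(g\gamma)$ through this identification gives $(\gamma \circ h)(n) = \tilde h(n\gamma) = \Psi_M(n\gamma) \circ h(\Psi_N(n\gamma))$ — the $\overline{N}_U$-component of $n\gamma$ dropping out by the functional equation — which is exactly the formula displayed before the proposition.

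Next I would apply all of this to $W = V \otimes \tau$. Since $\tau$ is a character, the underlying $E$-vector space of $V \otimes \tau$ is canonically $V$, so $C^{\la}(N_U, V\otimes\tau) = C^{\la}(N_U,V)$; only the $M_U$-action changes, with $m \in M_U$ acting on $V$ by $v \mapsto \tau(m)\,(m\circ v)$, where $\tau$ denotes the inflation along $M_U \onto H_U$ of the given character. Substituting the twisted action of the element $\Psi_M(n\gamma)$ into the transported $U$-action formula yields
\[ (\gamma \circ_\tau h)(n) = \tau(\Psi_M(n\gamma)) \cdot \bigl(\Psi_M(n\gamma) \circ h(\Psi_N(n\gamma))\bigr) = \tau(\Psi_M(n\gamma)) \cdot (\gamma \circ h)(n), \]
which is precisely the claimed formula; and by construction $\Ind_{\overline{P}}^U(V\otimes\tau)$ is $C^{\la}(N_U,V)$ equipped with this action.

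The hard part is essentially nonexistent: the entire content lies in the construction imported from \cite[\S 4]{feaux}, and the proposition merely records that twisting the coefficient module by a character $\tau$ of $M_U$ multiplies the cocycle of the transported action by $\tau \circ \Psi_M$. The only points deserving a line of verification are that the twisted extension map still lands in $C^{\la}(U,-)$ — automatic, since multiplication by the locally analytic function $\overline{n}\, m\, n \mapsto \tau(m)$ preserves local analyticity — and that $\tau$ is inflated consistently from $H_U$ throughout; neither is a genuine obstacle.
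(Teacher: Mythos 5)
Your argument is correct and is essentially the paper's own: the paper declares the proposition immediate from the displayed formula $(\gamma \circ f)(n) = \Psi_M(n\gamma) \circ f(\Psi_N(n\gamma))$, and your proof merely spells out that replacing $V$ by $V \otimes \tau$ (same underlying space, $M_U$-action twisted by the inflation of $\tau$) multiplies this cocycle by $\tau(\Psi_M(n\gamma))$. The extra verifications you record (local analyticity of the twisted extension, consistency of the inflation from $H_U$) are harmless elaborations of what the paper leaves implicit.
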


\subsection{Analytic vectors}

We now choose a sequence of compact open subgroups of $G(L)$, following \cite[\S 4.1]{emerton-jacquet}.

\begin{definition}\label{def:decomposable}
Let $U$ be a compact open subgroup of $G(L)$. We will say $U$ is {\it decomposable} if:
\begin{enumerate}
 \item $U$ is a good $L$-analytic open subgroup of $G$, in the sense of \cite[\S 5.2]{emerton-memoir}, so there exists a Lie $\OO_L$-lattice $\g_U$ in $\g = \Lie G$ such that the exponential map converges on the affinoid ball associated to $\mathfrak{g}_U$ and identifies $\mathfrak{g}_U$ with $U$;
 \item The lattice $\g_U$ decomposes as a direct sum 
\[ \g_U = \m_U \oplus \bigoplus_{\alpha \in \pm \Phi_f} \mathfrak{n}_{U,\alpha}\]
where $\Phi_f$ is the set of positive free restricted roots, and $\mathfrak{m}_U$ and $\mathfrak{n}_{U, \alpha}$ denote Lie $\OO_L$-lattices in the Lie algebras of $M$ and the root subgroups $N_\alpha$.
\end{enumerate}
\end{definition}

\begin{proposition}
There exists a decomposable subgroup of $G(L)$.
\end{proposition}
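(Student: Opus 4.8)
The plan is to construct a decomposable subgroup by starting from the structure theory of $G$ over $L$ and producing a single Lie $\OO_L$-lattice $\g_U$ in $\g$ with the required properties, then appealing to the good-analytic-subgroup machinery of \cite[\S 5.2]{emerton-memoir} to convert it into a compact open subgroup $U$. First I would fix a Chevalley-type $\OO_L$-lattice $\g_0$ in $\g$, that is, an $\OO_L$-Lie subalgebra which is free of maximal rank; such a lattice exists for any reductive Lie algebra over $L$ by clearing denominators in a basis and taking the $\OO_L$-span of a suitable set of brackets. The key point is that $\g_0$ need not respect the root-space decomposition relative to $S$, but the decomposition $\g = \m \oplus \bigoplus_{\alpha \in \pm\Phi_f} \n_\alpha$ from the choice of $P$ is a direct sum of $S$-stable (indeed $T$-stable) subspaces, so I would replace $\g_0$ by the lattice $\g_1 := \bigl(\g_0 \cap \m\bigr) \oplus \bigoplus_{\alpha \in \pm\Phi_f}\bigl(\g_0 \cap \n_\alpha\bigr)$; since each summand is the intersection of $\g_0$ with a subspace it is an $\OO_L$-lattice in that subspace, and the direct sum over the decomposition of $\g$ is again a full lattice. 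One checks $\g_1 \subseteq \g_0$, so $\g_1$ is bounded, and $[\g_1,\g_1] \subseteq [\g_0,\g_0]\subseteq \g_0$ — but I need the bracket to land back in $\g_1$, which is where the first real work lies.

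Next I would arrange the Lie-algebra closure and the convergence of the exponential simultaneously by rescaling: replace $\g_1$ by $\pi_L^N \g_1$ for $N$ large. Scaling by $\pi_L^N$ multiplies the image of the bracket by $\pi_L^{-N}$ relative to the lattice, so for $N$ sufficiently large $\pi_L^N\g_1$ is closed under the Lie bracket (here one uses that $[\g_1,\g_1]$ is a bounded submodule of $\g$, hence contained in $\pi_L^{-C}\g_1$ for some fixed $C$, by comparing with $\g_0$); and for $N$ large enough the exponential converges on the affinoid ball attached to $\pi_L^N\g_1$ and is an isomorphism onto a compact open subgroup, by the criterion of \cite[\S 5.2]{emerton-memoir}. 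Crucially, rescaling by a central scalar preserves the direct-sum decomposition: $\pi_L^N\g_1 = \pi_L^N(\g_0\cap\m) \oplus \bigoplus_\alpha \pi_L^N(\g_0\cap\n_\alpha)$, and $\pi_L^N(\g_0\cap\m)$ is a Lie $\OO_L$-lattice in $\Lie M$ while each $\pi_L^N(\g_0\cap\n_\alpha)$ is a Lie $\OO_L$-lattice in $\Lie N_\alpha$ (the bracket conditions for these sub-lattices follow from that of $\pi_L^N\g_1$). Setting $\g_U := \pi_L^N\g_1$, $\m_U := \pi_L^N(\g_0\cap\m)$, $\n_{U,\alpha} := \pi_L^N(\g_0\cap\n_\alpha)$ and letting $U$ be the associated good analytic subgroup gives both conditions of Definition~\ref{def:decomposable}.

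The main obstacle I anticipate is verifying that a single scaling parameter $N$ can be chosen to handle \emph{all} the requirements at once — Lie-bracket closure of $\g_U$, Lie-bracket closure of each individual summand $\m_U$ and $\n_{U,\alpha}$ (so that these genuinely are Lie lattices in $\Lie M$, $\Lie N_\alpha$), convergence of $\exp$, and injectivity of $\exp$ on the corresponding ball. Each of these is a lower bound on $N$, so the point is simply that there are finitely many conditions, each satisfied for all sufficiently large $N$; the "good analytic subgroup" formalism of Emerton is designed precisely to make the last two automatic once $\g_U$ is a Lie $\OO_L$-lattice that is $p$-adically small enough. A secondary technical point worth spelling out is that one should check $\n_{U,\alpha}$ is stable under $[\m_U,-]$ and that $[\n_{U,\alpha},\n_{U,\beta}] \subseteq$ the lattice in the appropriate root space (or in $\m_U$ when $\beta=-\alpha$); these all follow from the single inclusion $[\g_U,\g_U]\subseteq\g_U$ together with the fact that the ambient bracket respects the $S$-weight decomposition, so no extra rescaling is needed beyond what closes $\g_U$. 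I would remark that this construction visibly produces subgroups with the direct-product decomposition $\overline{N}_U\times M_U\times N_U$ on the group level via $\exp$, foreshadowing the Iwahori factorisation needed in the next subsection, though that compatibility is not required for the present statement.
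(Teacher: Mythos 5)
Your argument is correct and is essentially the paper's own proof (which follows Emerton's construction in \cite[4.1.6]{emerton-jacquet}): one intersects a lattice in $\g$ with $\m$ and the root spaces $\n_\alpha$, takes the direct sum, and rescales by a scalar of large valuation so that the result is a Lie $\OO_L$-lattice exponentiating to a good $L$-analytic open subgroup, which is decomposable by construction. The only cosmetic differences are that you scale after intersecting rather than before (equivalent, since scaling by $\pi_L^N$ commutes with intersecting a subspace) and that you insist the starting lattice be a Lie subalgebra, which your own rescaling step renders unnecessary.
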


\begin{proof} We follow the construction given in \cite[4.1.6]{emerton-jacquet}. Let $\mathfrak{u}$ be an arbitrary $\OO_L$-lattice in $\g$, and let $a \in L^\times$. Then we can define $\mathfrak{u}' = (a\mathfrak{u} \cap \mathfrak{m}) \oplus \bigoplus_{\alpha \in \pm \Phi_f} (a\mathfrak{u} \cap \mathfrak{n}_\alpha)$. If the valuation of $a$ is large enough, then this is a Lie sublattice and corresponds to a good $L$-analytic compact open subgroup of $G(L)$. By construction, this is decomposable.
\end{proof}

If $U$ is decomposable, then it certainly has an Iwahori factorisation: part (i) of definition \ref{def:iwahori-decomp} follows as the sublattices $\m_U$, $\mathfrak{n}_U = \bigoplus_{\alpha \in \Phi_f} \mathfrak{n}_{U,\alpha}$, and $\overline{\mathfrak{n}}_U = \bigoplus_{\alpha \in \Phi_f} \mathfrak{n}_{U,-\alpha}$ exponentiate to good $L$-analytic open subgroups $M_U$, $N_U$, $\overline{N}_U$ of $M$, $N$, $\overline{N}$ respectively, which clearly satisfy $\overline{N}_U \times M_U \times N_U = U$; and part (ii) follows since for $z \in S(L)$, $\Ad z$ acts on the root space $n_\alpha$ as multiplication by $\alpha(z)$, so if $|\alpha(z)| \le 1$ for all free $\alpha$ then $\Ad z$ preserves the lattices $\mathfrak{n}_{U, \alpha}$ for each positive root and hence preserves $N_U$ (and similarly for $\Ad z^{-1}$ on $\overline{N}_U$). 

\begin{definition}\label{def:admissible}
An {\it admissible subgroup} of $G$ is a compact open subgroup $G_0 \subseteq G(L)$ which admits an Iwahori factorisation, and such that there exists a normal subgroup $G_1 \trianglelefteq G_0$ which is decomposable.
\end{definition}

Any decomposable subgroup is certainly itself admissible, so admissible subgroups exist. We now fix a choice of an admissible subgroup $G_0$, and a decomposable normal subgroup $G_1$. If $\mathfrak{g}_1$ is the Lie sublattice corresponding to $G_1$, we let $G_i$ for $i \in \Z_{> 1}$ denote the good $L$-analytic open subgroup corresponding to the sublattice $\pi_L^{i-1} \mathfrak{g}_i$; this is clearly also a decomposable subgroup of $G(L)$ normal in $G_0$. Let $\overline{N}_i, M_i, N_i$ denote the factors in the Iwahori factorisation of $G_i$, and for $i \ge 1$ let $\mathcal{N}_i$ denote the rigid analytic subgroup underlying $N_i$.

Recall the definition of {\it analytic vectors} for a good $L$-analytic subgroup $U$ in a locally $L$-analytic representation $V$ of a locally $L$-analytic group $G$ \cite[\S 3.4]{emerton-memoir}; these are the vectors $v \in V$ for which the orbit map $\rho_v: G \to V$ restricts to a function in $C^\an(\mathcal{U}, V')$, where $\mathcal{U}$ is the rigid analytic group underlying $U$ and $V'$ is a Banach space with a continuous embedding $V' \into V$.

For all $k \ge 1$, let
\[\mathcal{D}_k = \bigcup_{[x] \in N_0 / N_k} x \mathcal{N}_k.\]
This is an open affinoid subspace of $N$, with $\mathcal{D}_k(L) = N_0$, and $G_0$ acts on $\mathcal{D}_k$ on the right via rigid-analytic automorphisms.

\begin{proposition} \label{prop:analytic-vectors}
If $V \in \Rep_{\rm la,fd}(M_0)$, then for all $k \ge 1$, the space of $G_k$-analytic vectors 
\[\left(\Ind_{\overline{P}}^{G_0} V\right)_{G_k-\an}\]
is exactly the image of $C^\an(\mathcal{D}_k, V_{M_k-\an})$ in $C^{\la}(N_0, V) \cong \Ind_{\overline{P}}^{G_0} V$.
\end{proposition}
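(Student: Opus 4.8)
The plan is to reduce the statement to the known description of analytic vectors in locally analytic induced representations of $p$-adic groups, exploiting the identification of the underlying space $\Ind_{\overline{P}}^{G_0} V$ with $C^{\la}(N_0, V)$ and the explicit formula for the $G_0$-action recalled just before Proposition~\ref{prop:twists}. The key point is that the right action of $G_0$ on $N_0$ extends to the rigid-analytic level: by part (ii) of the Iwahori factorisation, $\overline{N}_k \times M_k \times N_k = G_k$ with $G_k$ normal in $G_0$, so conjugation preserves the $N_k$-factor appropriately, and the projection maps $\Psi_M, \Psi_N$ from $G_0$ onto $M_0, N_0$ are given by rigid-analytic formulas on each translate $x\mathcal{N}_k$. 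Concretely, for $x \in N_0$ representing a coset in $N_0/N_k$ and $\gamma \in G_k$, the element $x\gamma$ has Iwahori components depending rigid-analytically on $x$ (ranging over $\mathcal{N}_k$-translates) because multiplication, inversion and the projections are all rigid-analytic morphisms on the good analytic subgroups involved. This is what makes $\mathcal{D}_k = \bigcup_{[x]} x\mathcal{N}_k$ the correct domain: it is precisely the rigid-analytic "thickening" of $N_0$ on which the $G_k$-orbit maps become analytic.

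First I would set up the comparison of orbit maps. Given $f \in C^{\la}(N_0, V)$, its orbit map $\rho_f \colon G_0 \to \Ind_{\overline{P}}^{G_0} V$ sends $\gamma \mapsto \gamma \circ f$, and by definition $f$ is a $G_k$-analytic vector iff $\rho_f$ restricted to $G_k$ lies in $C^\an(\mathcal{G}_k, (\Ind_{\overline{P}}^{G_0} V)')$ for a suitable Banach subspace. Using the explicit action formula $(\gamma \circ f)(n) = \Psi_M(n\gamma) \circ f(\Psi_N(n\gamma))$, one sees that $\gamma \circ f$, as $n$ ranges over $N_0$ and $\gamma$ over $G_k$, is controlled by the values of $f$ on the single affinoid $\mathcal{D}_k$ and by the $M_k$-analyticity of those values (the $\Psi_M(n\gamma)$ term lands in $M_k$ up to the fixed $M_0/M_k$-translation structure). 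So I would show: (a) if $f$ extends to an element of $C^\an(\mathcal{D}_k, V_{M_k-\an})$, then $\rho_f|_{G_k}$ visibly factors through the analytic functions on $\mathcal{G}_k$ with values in the Banach space $C^\an(\mathcal{D}_k, V_{M_k-\an})$, hence $f$ is $G_k$-analytic; and conversely (b) if $f$ is $G_k$-analytic, then evaluating $\rho_f(\gamma)$ at suitable points $n$ and letting $\gamma$ vary rigid-analytically forces $f$ itself to be analytic on each $x\mathcal{N}_k$ with the required $M_k$-analytic values — essentially because the orbit of the basepoint function already recovers $f|_{x\mathcal{N}_k}$ after composing with the (invertible) rigid-analytic reparametrisation $n \mapsto \Psi_N(n\gamma)$.

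For the bookkeeping I would invoke the formalism of \cite[\S 3.4]{emerton-memoir} (or \cite[\S 4.1]{emerton-jacquet}, whose subgroup chain $G_i$ I have adopted) for the behaviour of analytic vectors under the identification of an induced representation with functions on the big cell, together with the fact that $V_{M_k-\an}$ is the correct Banach space of $M_k$-analytic vectors in $V$ — here, since $V \in \Rep_{\rm la,fd}(M_0)$ is finite-dimensional, $V_{M_k-\an} = V$ as a vector space and the content is purely about which Banach norm one equips it with, which simplifies matters considerably. The main obstacle, and the step deserving the most care, is the converse direction (b): one must check that $G_k$-analyticity of the orbit map genuinely propagates to rigid-analyticity of $f$ on all of $\mathcal{D}_k$ (not just near the identity coset) and with values in $V_{M_k-\an}$ rather than merely $V$; this requires keeping track of how the translates $x\mathcal{N}_k$ for $x$ running over coset representatives of $N_0/N_k$ are permuted (or rather fixed, up to the $\mathcal{N}_k$-action) under right translation by $G_k$, and using that $G_k \trianglelefteq G_0$ so that the decomposition $\mathcal{D}_k = \bigsqcup x\mathcal{N}_k$ is genuinely $G_k$-stable. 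Once that compatibility is pinned down, both inclusions follow from unwinding the action formula.
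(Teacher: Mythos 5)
Your overall route is essentially the paper's: use normality of $G_k$ in $G_0$ to break $\Ind_{\overline{P}}^{G_0}V$ up as a $G_k$-representation along the cosets $N_0/N_k$ (equivalently, the pieces $x\mathcal{N}_k$ of $\mathcal{D}_k$); identify the $G_k$-analytic vectors in each summand with the elements whose defining function is globally analytic on $\mathcal{G}_k$ (the paper simply cites prop.~2.1.2 of \cite{emerton-jacquet2} for this, where you unwind it via orbit maps); and then use the Iwahori factorisation, i.e.\ the explicit formula $(\gamma\circ f)(n)=\Psi_M(n\gamma)\circ f(\Psi_N(n\gamma))$ and the analyticity of $\Psi_M,\Psi_N$, to pass between analyticity on $\mathcal{G}_k$ and analyticity on $\mathcal{D}_k$ with values in $V_{M_k-\an}$. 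Your directions (a) and (b) are the explicit form of that citation, so the approach is the same, only spelled out in more detail.

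There is, however, one genuine error: the claim that, since $V$ is finite-dimensional, $V_{M_k-\an}=V$ as a vector space for every $k\ge 1$, so that ``the content is purely about which Banach norm one equips it with.'' Finite-dimensionality only gives $V_{M_k-\an}=V$ for $k$ sufficiently large (this is exactly how the paper later defines $k(V)$); for small $k$ the subspace $V_{M_k-\an}$ can be proper, even zero --- for instance $V$ one-dimensional, given by a locally analytic character of $M_0$ that is analytic on $M_j$ only for some $j>k$. Since the proposition is asserted for all $k\ge 1$, this remark, if actually used, would collapse precisely the delicate point you yourself single out in direction (b): that the values of a $G_k$-analytic vector must lie in $V_{M_k-\an}$ and not merely in $V$ (and conversely, without $M_k$-analytic values an element of $C^\an(\mathcal{D}_k,V)$ need not be $G_k$-analytic, because $\Psi_M(n\gamma)$ acts on the values as $\gamma$ varies over $\mathcal{M}_k$). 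The fix is simply to drop the remark and carry out the check you already gesture at: restricting the orbit map (or the analytic function on $\mathcal{G}_k$) along $\mathcal{M}_k$ and using the functional equation $F(\overline{n} m g)=m\circ F(g)$ shows the values are $M_k$-analytic, and the Iwahori factorisation gives the converse. With that reinstated, your argument goes through and agrees with the paper's proof.
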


\begin{proof}
As $G_k$ is normal in $G_0$ for each $k$, $\Ind_{\overline{P}}^{G_0} V$ decomposes as a $G_k$-representation into a direct sum of representations of the form $\Ind_{\overline{P}}^{G_k} V$, indexed by the cosets $N_0 / N_k$.

As in the proof of prop.~2.1.2 of \cite{emerton-jacquet2}, $(\Ind_{\overline{P}}^{G_k} V)_{G_k-\an}$ is the preimage of $\Ind_{\overline{P}}^{G_k} V$ under the natural inclusion $C^\an(\mathcal{G}_k, V) \into C^{\la}(G_k, V)$. It is clear that any element of $C^\an(\mathcal{G}_k, V)$ which maps into $\Ind_{\overline{P}}^{G_k} V$ must have image contained in $V_{M_k-\an}$. The converse follows from the Iwahori factorisation of $G_k$. 
\end{proof}

Let use the abbreviation $\Ind(V)_k$ for $\left(\Ind_{\overline{P}}^{G_0} V\right)_{G_k-\an}$. Since $\mathcal{D}_k$ is evidently reduced, the preceding proposition allows us to regard this as a Banach space in the supremum norm, once we fix a choice of norm on $V$. Let us fix a norm that is $M_0$-equivariant, and such that $V$ is orthonormalisable as an $E$-Banach space; such norms always exist, since $M_0$ is compact. The resulting norm on $\Ind(V)_k$ is then clearly $G_0$-invariant.

We conclude this subsection by writing down a specific basis for $\Ind(V)_k$. Suppose that $k$ is sufficiently large that $V_{M_k-\an} = V$; such a $k$ certainly exists, since $V$ is finite-dimensional. Let $x \in N_0 / N_k$. Then $U_x = C^\an(x \mathcal{N}_k, V)$ is a closed $G_k$-stable subspace of $\Ind(V)_k$, and $\Ind(V)_k$ is isometrically isomorphic to the orthogonal direct sum $\bigoplus_{x \in N_0 / N_k} U_x$. The spaces $U_x$ are clearly all isomorphic as Banach spaces, so we concentrate on the identity coset.

Let $\Phi_f$ denote the set of free positive roots. For each $\alpha \in \Phi_f$, let $d(\alpha)$ be the multiplicity of $\alpha$ (the dimension of the $\alpha$-root space $\mathfrak{n}_\alpha$ of $\g$) and let us choose a basis $x_{\alpha, 1}, \dots, x_{\alpha, d(\alpha)}$ for $\n_{k, \alpha} = \mathfrak{n}_k \cap \mathfrak{n}_\alpha$. Then $C^\an(\mathcal{N}_k, E)$ is equal to the Tate algebra $E \left\langle \{x_{\alpha, i}\}_{\alpha \in \Phi_f, 1 \le i \le d(\alpha)}\right\rangle$.

Let $J$ be the set of functions from the set of pairs $ \{(\alpha, i)\}_{\alpha \in \Phi_f, 1 \le i \le d(\alpha)}$ to $\Z_{\ge 0}$, and for $\underline{j} \in J$, write $\underline{X}^{\underline{j}}$ for the product $\prod_{\alpha, i} (x_{\alpha, i})^{\underline{j}(\alpha, i)}$. Let $v_1, \dots, v_r \in V$ be vectors which form an orthonormal basis for $V$ as an $E$-Banach space (with the $M_0$-invariant norm fixed above). 

\begin{proposition}\label{prop:basis}
The vectors
\[ \left\{ \underline{X}^{\underline{j}} \otimes v_{j'}\ \middle|\ \underline{j} \in J, 1 \le j' \le r\right\}\]
are an orthonormal basis for $C^\an(\mathcal{N}_k, V)$ as an $E$-Banach space.
\end{proposition}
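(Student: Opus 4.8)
The plan is to reduce the assertion to two standard facts about orthonormalisable $E$-Banach spaces: that the monomials form an orthonormal basis of a Tate algebra for its Gauss norm, and that an orthonormal basis of a completed tensor product is obtained by multiplying orthonormal bases of the two factors.

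First I would use the identification, already recorded immediately before the statement, of $C^\an(\mathcal{N}_k, E)$ with the Tate algebra $E\langle \{x_{\alpha,i}\}_{\alpha \in \Phi_f,\, 1 \le i \le d(\alpha)}\rangle$ in the coordinate functions $x_{\alpha,i}$ on the affinoid ball $\mathcal{N}_k$. By the elementary theory of Tate algebras, the monomials $\underline{X}^{\underline{j}}$ for $\underline{j} \in J$ form an orthonormal basis of this algebra for the Gauss norm; moreover, since $\mathcal{N}_k$ is reduced, the Gauss norm coincides with the supremum norm that topologises $C^\an(\mathcal{N}_k, E)$ (this is the same observation used earlier to put a supremum norm on $\Ind(V)_k$). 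Thus $\{\underline{X}^{\underline{j}}\}_{\underline{j} \in J}$ is an orthonormal basis of $C^\an(\mathcal{N}_k, E)$.

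Next, since $V$ is finite-dimensional and we have fixed a norm on $V$ for which $v_1,\dots,v_r$ is an orthonormal basis, the space $C^\an(\mathcal{N}_k, V) = C^\an(\mathcal{N}_k, E)\ \widehat\otimes_E\ V$ requires no actual completion: it is simply the finite direct sum $\bigoplus_{j'=1}^{r} C^\an(\mathcal{N}_k, E)\otimes v_{j'}$, normed by the maximum over the $r$ components. Hence any $f \in C^\an(\mathcal{N}_k, V)$ may be written uniquely as $f = \sum_{j'=1}^{r} f_{j'}\otimes v_{j'}$ with $f_{j'} \in C^\an(\mathcal{N}_k, E)$ and $\|f\| = \max_{j'}\|f_{j'}\|$; expanding each $f_{j'} = \sum_{\underline{j} \in J} c_{\underline{j},j'}\,\underline{X}^{\underline{j}}$ with $c_{\underline{j},j'} \in E$, $|c_{\underline{j},j'}| \to 0$ and $\|f_{j'}\| = \max_{\underline{j}}|c_{\underline{j},j'}|$ gives
\[ f = \sum_{\underline{j}\in J,\ 1\le j'\le r} c_{\underline{j},j'}\left(\underline{X}^{\underline{j}}\otimes v_{j'}\right), \qquad \|f\| = \max_{\underline{j},j'}|c_{\underline{j},j'}|, \]
which is exactly the statement that $\{\underline{X}^{\underline{j}}\otimes v_{j'}\}$ is an orthonormal basis. (Alternatively one may simply invoke the general fact that the completed tensor product over $E$ of two orthonormalisable Banach spaces is orthonormalisable, with basis the set of products of the two given bases; see \cite{buzzard-eigen}.)

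I do not expect any genuine obstacle in this argument. The only point that deserves a line of justification is the coincidence of the Gauss norm on the Tate algebra with the supremum norm that defines the topology on $C^\an(\mathcal{N}_k, E)$, which holds precisely because the affinoid ball $\mathcal{N}_k$ is reduced.
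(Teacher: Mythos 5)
Your proof is correct, and the paper itself gives no argument for this proposition---it is stated as an immediate consequence of exactly the two standard facts you use (the monomials form an orthonormal basis of the Tate algebra for the supremum norm, and an orthonormal basis of a completed tensor product is obtained by multiplying orthonormal bases of the factors, the completion being unnecessary here since $V$ is finite-dimensional). One cosmetic remark: the equality of the Gauss norm with the supremum norm on $E\langle \{x_{\alpha,i}\}\rangle$ is the maximum-modulus principle for the Tate algebra, while reducedness of $\mathcal{N}_k$ is only what guarantees that the supremum seminorm is a norm, so your parenthetical attribution is slightly misplaced but harmless.
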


\subsection{Families of twists}

We now come to the main construction of this section. Let $H_0$ be the image of $M_0$ in the torus $H$, and let $\widehat H_0$ be the rigid space parametrising locally $L$-analytic characters of $H_0$, as constructed in \cite[\S 6.4]{emerton-memoir}. By construction, the space $\widehat H_0$ is a rigid space over $L$, isomorphic over $\C_p$ to a finite \'etale cover of the open unit disc $\mathbf{B}^\circ(1)$. Its moduli space interpretation furnishes it with a ``universal character'' $\Delta: H_0 \to C^\an(\widehat H_0, E)^\times$, whose composition with the evaluation map at any point $z \in \widehat H_0(\C_p)$ is the character $H_0 \to \C_p^\times$ corresponding to $z$.

The images of the subgroups $M_k$ in $H$ give a basis $\{H_k\}_{k \ge 0}$ of neighbourhoods of the identity in $H$; and for $k \ge 1$, $H_k$ is a good $L$-analytic open subgroup of $H$. Since affinoids are quasi-compact, it is clear that for any affinoid $X \subseteq \widehat H_0$, there is some $k \ge 1$ such that the composite $\Delta_X: H_0 \to C^\an(X, E)^\times$ is analytic on cosets of $H_k$; let $k(X)$ be the least such integer.

\begin{definition}\label{def:main-construction}
 If $X \subseteq \widehat H_0$ is an affinoid defined over $E$, and $k \ge k(X)$, let 
\[ \mathcal{C}(X, V, k) = C^\an(\mathcal{D}_k \times X, V_{M_k-\an}).\]
We endow this with a $G_0$-action by defining, for $\gamma \in G_0$, 
\[ (\gamma \circ f)(n, x) = \Delta_X(\Psi_M(n\gamma))(x) \cdot \left[ \Psi_M(n\gamma) \circ f( \Psi_N(n\gamma), x)\right].\]
\end{definition}

\begin{proposition}
For every $\tau \in X(E)$ (giving a map $C^\an(X, E) \to E$) we have 
\[ \mathcal{C}(X, V, k)\ \widehat\otimes_{C^\an(X, E), \tau}\ E = \Ind(V \otimes \tau)_k.\]
as $E$-Banach spaces and as representations of $G_0$.
\end{proposition}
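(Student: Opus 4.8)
The plan is to unwind the definition of both sides and check that the specialization map is an isomorphism compatible with the $G_0$-actions. First I would recall that by definition $\mathcal{C}(X,V,k) = C^\an(\mathcal{D}_k \times X, V_{M_k-\an})$, and that for an affinoid $\mathcal{D}_k \times X$ we have a natural identification $C^\an(\mathcal{D}_k \times X, V_{M_k-\an}) \cong C^\an(\mathcal{D}_k, V_{M_k-\an})\ \widehat\otimes_E\ C^\an(X, E)$ coming from $\OO(\mathcal{D}_k \times X) = \OO(\mathcal{D}_k)\ \widehat\otimes_L\ \OO(X)$ and the definition of $V$-valued analytic functions in \S 2. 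Similarly $C^\an(\mathcal{D}_k, V_{M_k-\an})$ is a $C^\an(X,E)$-Banach module via the second tensor factor. The point $\tau \in X(E)$ gives an algebra map $\operatorname{ev}_\tau: C^\an(X, E) \to E$, and completing the tensor product along $\operatorname{ev}_\tau$ kills the $\OO(X)$-factor, leaving $C^\an(\mathcal{D}_k, V_{M_k-\an})\ \widehat\otimes_E\ E = C^\an(\mathcal{D}_k, V_{M_k-\an})$. By Proposition~\ref{prop:analytic-vectors} (and the discussion of $\Ind(V)_k$ following it), this last space is exactly $\Ind(V)_k$ as a Banach space; since $\tau$ is a character of $H_0$, the same space underlies $\Ind(V \otimes \tau)_k$ by Proposition~\ref{prop:twists}. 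Concretely, one can also read this off from the explicit orthonormal basis of Proposition~\ref{prop:basis}: the basis $\{\underline{X}^{\underline{j}} \otimes v_{j'}\}$ of $C^\an(\mathcal{N}_k, V)$ (and its translates over $N_0/N_k$) tensored with an orthonormal basis of $C^\an(X,E)$ gives an orthonormal basis of $\mathcal{C}(X,V,k)$, and specialization at $\tau$ sends it to the corresponding basis of $\Ind(V\otimes\tau)_k$, so the map is an isometric isomorphism of $E$-Banach spaces.

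It remains to check compatibility of the $G_0$-actions. On $\mathcal{C}(X,V,k)$ the action is $(\gamma \circ f)(n,x) = \Delta_X(\Psi_M(n\gamma))(x) \cdot [\Psi_M(n\gamma) \circ f(\Psi_N(n\gamma), x)]$, by Definition~\ref{def:main-construction}. Applying $\operatorname{ev}_\tau$ and using that $\operatorname{ev}_\tau \circ \Delta_X = \tau$ as characters of $H_0$ (this is the defining property of the universal character $\Delta$, restricted to $X$), the specialized action becomes $(\gamma \circ f)(n) = \tau(\Psi_M(n\gamma)) \cdot [\Psi_M(n\gamma) \circ f(\Psi_N(n\gamma))]$, which is precisely the twisted action $\circ_\tau$ of Proposition~\ref{prop:twists} describing $\Ind(V\otimes\tau)_k$. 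Since the completed tensor product along a continuous algebra map is functorial and the $G_0$-action on $\mathcal{C}(X,V,k)$ is $C^\an(X,E)$-linear, the specialization map is automatically $G_0$-equivariant, so this is just a matter of matching up the two formulas.

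I expect the only real technical point to be the compatibility of completed tensor products with base change along $\operatorname{ev}_\tau$ — that is, that $\big(A\ \widehat\otimes_E\ B\big)\ \widehat\otimes_{B,\tau}\ E \cong A$ for the relevant Banach (or compact-type) modules $A$, $B$ — together with the statement that $V_{M_k-\an}$ is unchanged, which holds because $k \ge k(X) \ge 1$ and one may further assume (as in the basis discussion) $k$ large enough that $V_{M_k-\an} = V$; this reduces everything to the finite-dimensional, orthonormalisable case where the tensor-product manipulations are standard (cf.\ the function-theory conventions of \cite[\S 2.1]{emerton-memoir}). Everything else is bookkeeping with the explicit formulas for the actions.
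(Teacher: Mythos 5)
Your proof is correct and follows essentially the same route as the paper: the identification $C^\an(\mathcal{D}_k \times X, V_{M_k-\an}) \cong C^\an(\mathcal{D}_k, V_{M_k-\an})\ \widehat\otimes_E\ C^\an(X,E)$, specialization along $\operatorname{ev}_\tau$, Proposition \ref{prop:analytic-vectors} to recognise the fibre, and Proposition \ref{prop:twists} to match the twisted $G_0$-action; you merely spell out the equivariance and base-change bookkeeping that the paper leaves implicit. (The aside about further enlarging $k$ so that $V_{M_k-\an}=V$ is unnecessary and not available, since $k$ is fixed in the statement, but it is also harmless: $V_{M_k-\an}$ is finite-dimensional and the argument goes through with it in place of $V$.)
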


\begin{proof} We note that 
\[ C^\an(\mathcal{D}_k \times X, V_{M_k-\an}) = C^\an(\mathcal{D}_k, V_{M_k-\an})\ \widehat\otimes_E\ C^\an(X, E).\]
By proposition \ref{prop:analytic-vectors}, we therefore have 
\[ \mathcal{C}(X, V, k) = \Ind(V \otimes \tau)_k\ \widehat\otimes_E\ E\]
with the $G_k$-action on the second factor given by $\tau \circ \Psi_M$. The result now follows from proposition \ref{prop:twists}.
\end{proof}

We now establish some simple properties of the spaces $\mathcal{C}(X, V, k)$. 

\begin{proposition}\label{prop:weight-fiddle}
With the notation above, suppose $\tau$ is an $E$-point of $\widehat H_0$ and $k \ge k(\tau)$. Then the spaces $\mathcal{C}(X, V \otimes \tau, k)$ and $\mathcal{C}({X + \tau}, V, k)$ are isomorphic as Banach $C^\an(X, E)$-modules and as representations of $G_0$.
\end{proposition}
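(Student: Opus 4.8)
The plan is to realise the isomorphism as pull-back of functions along the translation-by-$\tau$ automorphism of the character space, the only non-formal ingredient being the behaviour of the universal character under that translation. Recall that $\widehat H_0$ parametrises locally $L$-analytic characters of the compact abelian group $H_0$, so it carries a natural rigid-analytic group structure (pointwise multiplication of characters), and the fixed $E$-point $\tau$ determines a translation automorphism $t_\tau \colon \widehat H_0 \xrightarrow{\sim} \widehat H_0$, defined over $E$, which restricts to an isomorphism of $E$-affinoids $t_\tau \colon X \xrightarrow{\sim} X + \tau$. The one property of the universal character $\Delta$ that I need is its compatibility with $t_\tau$: translating a point by $\tau$ multiplies the associated character by $\tau$, so $t_\tau^*\bigl(\Delta(h)\bigr) = \tau(h)\cdot\Delta(h)$ for all $h \in H_0$, which after restriction to $X$ and evaluation reads
\[ \Delta_{X+\tau}(h)(x+\tau) \;=\; \tau(h)\cdot\Delta_X(h)(x), \qquad x \in X,\ h \in H_0. \]
This follows from the moduli description of $\widehat H_0$ in \cite[\S 6.4]{emerton-memoir}: it suffices to pull the universal character back along $m\circ(\tau\times\mathrm{id})$, where $m$ is the group law.

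Next I would construct the map. Since $k \ge k(\tau)$, the character $\tau$ is $M_k$-analytic, so tensoring by $\tau$ leaves the space of $M_k$-analytic vectors unchanged: $(V\otimes\tau)_{M_k-\an} = V_{M_k-\an}$ as $E$-Banach spaces. Hence pull-back along $\mathrm{id}_{\mathcal{D}_k}\times t_\tau$ gives an isometric isomorphism of $E$-Banach spaces
\[ \Theta \colon \mathcal{C}(X+\tau, V, k) \;\xrightarrow{\sim}\; \mathcal{C}(X, V\otimes\tau, k), \qquad (\Theta f)(n,x) = f(n, x+\tau). \]
By construction $\Theta(a f) = (t_\tau^* a)\, \Theta(f)$ for $a \in C^\an(X+\tau, E)$, so when $\mathcal{C}(X+\tau,V,k)$ is given its $C^\an(X,E)$-module structure through the ring isomorphism $t_\tau^*\colon C^\an(X+\tau,E)\xrightarrow{\sim}C^\an(X,E)$ — the identification implicit in the statement — the map $\Theta$ is $C^\an(X,E)$-linear.

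It remains to check $G_0$-equivariance. Writing out the action of Definition \ref{def:main-construction} on the source, $(\gamma\circ f)(n,y) = \Delta_{X+\tau}(\Psi_M(n\gamma))(y)\cdot\bigl[\Psi_M(n\gamma)\circ f(\Psi_N(n\gamma),y)\bigr]$; on the target, the $M_0$-action on $V\otimes\tau$ differs from that on $V$ by the scalar $\tau$, so (equivalently, by the mechanism of Proposition \ref{prop:twists}) the action reads $(\gamma\circ g)(n,x) = \tau(\Psi_M(n\gamma))\cdot\Delta_X(\Psi_M(n\gamma))(x)\cdot\bigl[\Psi_M(n\gamma)\circ g(\Psi_N(n\gamma),x)\bigr]$. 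Substituting $y = x+\tau$ and applying the displayed identity with $h = \Psi_M(n\gamma)$ gives $\Theta(\gamma\circ f) = \gamma\circ\Theta(f)$, which finishes the argument.

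The genuinely routine points are the identification $(V\otimes\tau)_{M_k-\an} = V_{M_k-\an}$ and the fact that pull-back along an isomorphism of reduced affinoids is an isometry for the supremum norm; the step carrying the actual content — and the one I would want to pin down carefully against \cite[\S 6.4]{emerton-memoir} — is the translation-equivariance $t_\tau^*\Delta = \tau\cdot\Delta$ of the universal character, together with the elementary verification that translation by an $E$-point is a morphism of rigid spaces over $E$ carrying $X$ isomorphically onto $X+\tau$.
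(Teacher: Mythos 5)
Your argument is correct and is exactly the verification that the paper leaves implicit: its entire proof of Proposition \ref{prop:weight-fiddle} is ``This is immediate from the construction,'' and what makes it immediate is precisely your two observations — that the universal character satisfies $\Delta_{X+\tau}(h)(x+\tau)=\tau(h)\,\Delta_X(h)(x)$ by the moduli description of $\widehat H_0$, and that pulling back along $\mathrm{id}_{\mathcal{D}_k}\times t_\tau$ then intertwines the $\Delta_{X+\tau}$-twisted action with the action on $\mathcal{C}(X, V\otimes\tau, k)$ via Proposition \ref{prop:twists}. So you have simply spelled out the paper's (omitted) argument, correctly.
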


\begin{proof} This is immediate from the construction. \end{proof}

\begin{proposition} \label{prop:projective}
If $X$ is a reduced affinoid over $E$, the space $\mathcal{C}(X, V, k)$ admits a $G_0$-invariant norm in which it is orthonormalisable as a $C^\an(X, E)$-Banach module.
\end{proposition}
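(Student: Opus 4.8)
The plan is to exhibit the required norm explicitly as the supremum norm attached to a distinguished orthonormal basis, and then to check $G_0$-invariance directly from the action formula in Definition \ref{def:main-construction}. Write $A = C^\an(X, E)$ and $W = V_{M_k-\an}$. Since $M_0$ normalises $M_k$, the finite-dimensional space $W$ is stable under $M_0$, so --- by the argument recalled just before Proposition \ref{prop:basis}, using that $M_0$ is compact --- we may fix an $M_0$-invariant norm on $W$ for which it is orthonormalisable over $E$, with orthonormal basis $w_1, \dots, w_r$. Because $X$ is reduced, the supremum norm on $A$ is a Banach $E$-algebra norm with $|a|_A = \sup |a(x)|$, the supremum being over the points of $X$; and since $H_0$ is compact, every continuous character of $H_0$ takes values in the units, so the maximum modulus principle gives $|\Delta_X(h)|_A = 1$ for all $h \in H_0$.

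First I would decompose the underlying module. Recall that $\mathcal{D}_k = \bigsqcup_{x \in N_0/N_k} x\mathcal{N}_k$ is a finite disjoint union of affinoid balls, so
\[ \mathcal{C}(X, V, k) = C^\an(\mathcal{D}_k \times X, W) = \bigoplus_{x \in N_0/N_k} \big( C^\an(x\mathcal{N}_k, E) \,\widehat\otimes_E\, W \big) \,\widehat\otimes_E\, A ,\]
an orthogonal direct sum over a finite set. Arguing exactly as in Proposition \ref{prop:basis} (with $V$ replaced by $W$), each factor $C^\an(x\mathcal{N}_k, E) \widehat\otimes_E W$ is orthonormalisable over $E$, with orthonormal basis the translated monomials $x \cdot \underline{X}^{\underline{j}} \otimes w_l$. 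Tensoring an orthonormalisable $E$-Banach space with the Banach $E$-algebra $A$ produces a module of the shape $c_0(I, A)$, which is by definition orthonormalisable over $A$ with the same basis, and a finite orthogonal direct sum of orthonormalisable $A$-modules is again orthonormalisable. This yields the $A$-module structure together with a distinguished orthonormal basis $\{ x \cdot \underline{X}^{\underline{j}} \otimes w_l \}$; I take $\|\cdot\|$ to be the associated supremum norm, which concretely is $\|f\| = \sup \|f(n, x)\|_W$ over all points $(n,x)$ of $\mathcal{D}_k \times X$ --- a genuine norm since that affinoid is reduced.

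It remains to check that $\|\cdot\|$ is $G_0$-invariant. For $\gamma \in G_0$ the formula of Definition \ref{def:main-construction} reads
\[ (\gamma \circ f)(n, x) = \Delta_X\big(\Psi_M(n\gamma)\big)(x) \cdot \big[ \Psi_M(n\gamma) \circ f\big(\Psi_N(n\gamma), x\big) \big].\]
Here $\Psi_M(n\gamma) \in M_0$ acts isometrically on $W$ by the choice of norm, and $|\Delta_X(\Psi_M(n\gamma))(x)| = 1$ at every point, so $\|(\gamma \circ f)(n, x)\|_W = \|f(\Psi_N(n\gamma), x)\|_W$. Finally $n \mapsto \Psi_N(n\gamma)$ is the right action of $\gamma$ on $\mathcal{D}_k$, which by construction is a rigid-analytic automorphism, hence $(n, x) \mapsto (\Psi_N(n\gamma), x)$ permutes the points of $\mathcal{D}_k \times X$; taking the supremum over those points gives $\|\gamma \circ f\| = \|f\|$.

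The only step needing real care is this last invariance computation: one must verify that each of the three ingredients of the $G_0$-action is norm-preserving --- the twist by $\Delta_X$ (via the maximum modulus principle on the reduced affinoid $X$ together with compactness of $H_0$), the $M_0$-action on $W$ (via the choice of norm), and the right action on $\mathcal{D}_k$ (a rigid automorphism, by the remark following Proposition \ref{prop:analytic-vectors}). By contrast the orthonormalisability over $A$ is essentially formal, amounting to the identity $c_0(I, E) \widehat\otimes_E A = c_0(I, A)$ applied to the basis of Proposition \ref{prop:basis} and a finite direct sum.
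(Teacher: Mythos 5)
Your proof is correct and takes essentially the same route as the paper's: the supremum norm coming from the reduced affinoid $\mathcal{D}_k \times X$, orthonormalisability over $C^\an(X,E)$ from the completed tensor product structure (which you make explicit via the basis of Proposition \ref{prop:basis}), and $G_0$-invariance from the norm-one image of $\Delta_X$ (compactness of $H_0$), the isometric $M_0$-action on $V$, and the rigid-analytic right action of $G_0$ on $\mathcal{D}_k$. The detail you add in the invariance check is precisely what the paper's terser proof leaves implicit.
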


\begin{proof}
If $X$ is reduced, then so is $X \times \mathcal{D}_k$, and hence $C^\an(\mathcal{D}_k \times X, E)$ has a canonical supremum norm, in which it is orthonormalisable; and since $V$ is orthonormalisable, so is their completed tensor product $C^\an(\mathcal{D}_k \times X, V)$. By compactness of $H_0$, the character $\Delta_X: H_0 \to C^\an(X, E)^\times$ must have image contained in the norm 1 elements of $C^\an(X, E)$; so the norm is $G_0$-invariant.
\end{proof}

\subsection{Action of the torus}
\label{ssect:torus-action}

We shall now show that the Banach modules $\mathcal{C}(X, V, k)$ constructed above admit an action of a certain semigroup of non-integral elements of the split torus $T = Z(M) \cap S$, and that these act via compact operators. (This will allow us to give our overconvergent $\mathfrak{p}$-adic automorphic forms a Hecke action at $\mathfrak{p}$.)

\begin{proposition}\label{prop:centre-action}
Let $k \ge 1$ and $z \in T(L)$.
\begin{enumerate}
 \item If $|\alpha(z)| \le 1$ for all $\alpha \in \Delta_f$, then the embedding $z N_k z^{-1} \subseteq N_k$ extends to a map of rigid spaces $\mathcal{N}_k \to \mathcal{N}_k$.
 \item If $|\alpha(z)| < 1$ for all $\alpha \in \Delta_f$, then the map of (ii) factors through the embedding $\mathcal{N}_{k+1} \into \mathcal{N}_k$.
\end{enumerate}
\end{proposition}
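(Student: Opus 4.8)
The plan is to work entirely on the level of the Lie $\OO_L$-lattices, since each $N_k$ is by construction the good $L$-analytic open subgroup attached to the lattice $\mathfrak{n}_k = \bigoplus_{\alpha \in \Phi_f} \mathfrak{n}_{k,\alpha}$, where $\mathfrak{n}_{k,\alpha} = \pi_L^{k-1}\mathfrak{n}_{1,\alpha}$ (up to the initial normalisations fixed above). The key point is Definition \ref{def:decomposable}(2) together with the observation already recorded in the text: for $z \in S(L)$, the operator $\Ad z$ acts on the root space $\mathfrak{n}_\alpha$ as multiplication by the scalar $\alpha(z) \in L^\times$. Since $T = Z(M) \cap S$, every $z \in T(L)$ lies in $S(L)$, so $\Ad z$ preserves the direct sum decomposition and scales the $\alpha$-component by $\alpha(z)$.

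For part (i), I would argue as follows. The inclusion $z N_k z^{-1} \subseteq N_k$ of locally $L$-analytic groups is already given (it is exactly part (ii) of the Iwahori factorisation, Definition \ref{def:iwahori-decomp}, applied to $z$ with $|\alpha(z)|\le 1$). To see that it extends to a morphism of the underlying rigid spaces $\mathcal{N}_k \to \mathcal{N}_k$, I would check it on the level of lattices: $\Ad z$ maps $\mathfrak{n}_{k,\alpha} = \pi_L^{k-1}\mathfrak{n}_{1,\alpha}$ into itself because $\alpha(z) \in \OO_L$ when $|\alpha(z)| \le 1$, and hence $\Ad z (\mathfrak{n}_k) \subseteq \mathfrak{n}_k$. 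A containment of Lie $\OO_L$-lattices induces a morphism of the associated affinoid balls (the rigid groups underlying the good analytic subgroups), compatible with the given map on $L$-points; since $N_k$ is Zariski-dense in $\mathcal{N}_k$, this rigid morphism is the unique extension of $z(-)z^{-1}$. Concretely, in the coordinates $x_{\alpha,i}$ of Proposition \ref{prop:basis}, conjugation by $z$ is the substitution $x_{\alpha,i} \mapsto \alpha(z)\, x_{\alpha,i}$ (at least after the standard identification via exponential coordinates), and since each $\alpha(z)$ has absolute value $\le 1$ this is a well-defined endomorphism of the Tate algebra $E\langle \{x_{\alpha,i}\}\rangle$, hence of $\mathcal{N}_k$.

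For part (ii), the hypothesis strengthens to $|\alpha(z)| < 1$ for all $\alpha \in \Delta_f$. A general free positive root $\alpha \in \Phi_f$ is a non-negative integer combination of the simple free roots $\Delta_f$ (and of simple roots in the Levi part, on which $\alpha(z)=1$ since $z$ is central in $M$), with at least one $\Delta_f$-coefficient strictly positive because $\alpha$ is free; hence $|\alpha(z)| < 1$, so $\alpha(z) \in \pi_L \OO_L$. Therefore $\Ad z$ carries $\mathfrak{n}_{k,\alpha} = \pi_L^{k-1}\mathfrak{n}_{1,\alpha}$ into $\pi_L^k \mathfrak{n}_{1,\alpha} = \mathfrak{n}_{k+1,\alpha}$, so $\Ad z(\mathfrak{n}_k) \subseteq \mathfrak{n}_{k+1}$. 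Passing back to rigid spaces, the morphism of part (i) factors through the inclusion $\mathcal{N}_{k+1} \hookrightarrow \mathcal{N}_k$ (in coordinates: $x_{\alpha,i} \mapsto \alpha(z) x_{\alpha,i}$ has all coefficients divisible by $\pi_L$, so the image lands in the smaller ball).

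The only genuine subtlety — the step I would be most careful about — is making precise the passage from ``$\Ad z$ preserves (resp.\ contracts) the lattice'' to ``the conjugation map extends to the rigid analytic group underlying $N_k$,'' i.e.\ checking that exponential coordinates behave well under the non-automorphism $\Ad z$. This is not quite formal because $z(-)z^{-1}$ is a group homomorphism $N_k \to N_k$ but one is extending it across the exponential; however, since $N_\alpha$ is a vector group (abelian unipotent) in each root direction and $\Ad z$ acts linearly, the exponential intertwines $\Ad z$ on $\mathfrak{n}_k$ with conjugation by $z$ on $N_k$ on the nose, and a $\pi_L$-adic contraction of the coordinate lattice is visibly a morphism of affinoid balls. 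One should also note that the root subgroup filtration / commutator relations do not cause trouble here: even when $N$ is non-abelian, $\Ad z$ respects the lower central series and scales the $\alpha$-graded pieces, so the same estimate propagates. I would also remark that the statement as numbered refers to ``the map of (ii)'' in part (ii), which should read ``the map of (i)'' — a harmless misnumbering to flag.
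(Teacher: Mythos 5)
Your proposal is correct and follows essentially the same route as the paper's proof: identify $\mathcal{N}_k$ with the affinoid ball of the lattice $\mathfrak{n}_k = \bigoplus_{\alpha \in \Phi_f} \mathfrak{n}_{k,\alpha}$, note that $\Ad z$ scales each root component by $\alpha(z)$, and conclude preservation (resp.\ contraction into $\mathfrak{n}_{k+1}$) of the lattice, hence of the affinoid ball. You in fact supply two details the paper leaves implicit — deducing $|\alpha(z)|\le 1$ (resp.\ $<1$) for all $\alpha\in\Phi_f$ from the hypothesis on $\Delta_f$ alone, and the compatibility of the exponential coordinates with $\Ad z$ — and your flag of the ``(ii)''/``(i)'' misnumbering is also correct.
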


\begin{proof} Recall that $\mathcal{N}_k$ is the affinoid ball associated to an $\OO_L$-lattice $\mathfrak{n}_k \subseteq \Lie N$, and this lattice is equal to the direct sum of the lattices $\mathfrak{n}_{k, \alpha}$ which are its intersections with the positive root spaces $N_\alpha$. On each such root space $\Ad z$ acts as multiplication by $\alpha(z) \in \OO_L$, so it certainly preserves the affinoid ball attached to any $\OO_L$-lattice. Moreover if $|\alpha(z)| < 1$, we must have $\alpha(z) \in \pi \OO_L$, so the map has image contained in the affinoid ball corresponding to $\pi \mathfrak{n}_{k, \alpha}= \mathfrak{n}_{k+1, \alpha}$.
\end{proof}

Let $T_0 = T(L) \cap M_0$. Since $T$ is a torus, $T(L)$ is topologically finitely generated and hence $T(L) / T_0$ is a finitely generated abelian group; thus we can choose a finite set of elements of $T(L)$ whose images in $T(L) /T_0$ are a basis for $(T(L) / T_0) \otimes \Q$. Let $\Sigma$ be the subgroup of $T(L)$ generated by these elements, which is a free abelian group of rank equal to the split rank of $Z(M)$. We can regard any $M_0$-representation as a representation of $\Sigma M_0$, on which $\Sigma$ acts trivially.

\begin{remark} If (as will often be the case) $V$ is the restriction to $M_0$ of a representation of $M(L)$, then this is clearly not the ``natural'' action of $\Sigma$; but removing this requirement would make the arguments of the next chapter much messier.
\end{remark}

\begin{definition}
We define
\[\begin{aligned}
 \Sigma^{+} &= \{z \in \Sigma : |\alpha(z)| \ge 1\ \forall \alpha \in \Delta_f\}, & \Sigma^{++} &= \{z \in \Sigma : |\alpha(z)| > 1\ \forall \alpha \in \Delta_f\}.
\end{aligned}\]
\end{definition}

\begin{proposition}
The semigroup $\Sigma^{++}$ is not empty, and contains a basis for $\Sigma \otimes \Q$.
\end{proposition}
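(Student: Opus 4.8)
The statement asserts two things about $\Sigma^{++} = \{z \in \Sigma : |\alpha(z)| > 1\ \forall \alpha \in \Delta_f\}$: that it is nonempty, and that it spans $\Sigma \otimes \Q$. The plan is to reduce both to a single statement about the image of the logarithmic valuation map. Concretely, I would consider the homomorphism $v \colon \Sigma \to \R^{\Delta_f}$ sending $z \mapsto (-\log|\alpha(z)|)_{\alpha \in \Delta_f}$, so that $\Sigma^{++}$ is the preimage of the open ``negative orthant'' $(-\infty, 0)^{\Delta_f}$, while $\Sigma^+$ is the preimage of $(-\infty, 0]^{\Delta_f}$. I claim the image $v(\Sigma)$ is a full-rank lattice inside a subspace of $\R^{\Delta_f}$ that meets the open negative orthant; granting this, one picks a point of $v(\Sigma)$ in that orthant, lifts it to some $z_0 \in \Sigma^{++}$ (this already gives nonemptiness), and then perturbs: for any $z \in \Sigma$ we have $z_0^n z \in \Sigma^{++}$ for $n \gg 0$, since $v(z_0^n z) = n\, v(z_0) + v(z)$ eventually lands in the (open, and closed under adding the fixed negative vector $v(z_0)$ enough times) negative orthant. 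Writing $z = (z_0^n z)(z_0^n)^{-1}$ and noting both factors lie in $\Sigma^{++}$ exhibits $\Sigma$ — hence $\Sigma \otimes \Q$ — as spanned by $\Sigma^{++}$.

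So the work is entirely in establishing that $v$ restricted to $\Sigma^{++}$ makes sense of the above, i.e.\ that $v(\Sigma)$ actually hits the open negative orthant. The key input is the structure theory of $\Sigma$: by construction $\Sigma$ is a free abelian group whose image in $T(L)/T_0$ spans $(T(L)/T_0)\otimes\Q$, and $T(L)/T_0$ has finite index image in $X_*(T)$ via the valuation pairing — equivalently, the characters of $T$ evaluated on $T(L)$ and then $p$-adically valued realise, rationally, the full cocharacter lattice $X_*(T)$ paired against $X^*(T)$. Now $\Delta_f$, the free simple roots, restrict to characters of $T = Z(M)\cap S$; the relevant geometric fact is that these restrictions are \emph{linearly independent} on $X_*(T)\otimes\Q$ and in fact can be taken arbitrarily negative simultaneously. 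This is standard: the free simple roots $\Delta_f$ are precisely the simple roots of $G$ not in the Levi $M$, their restrictions to the central torus $T$ of $M$ form (part of) a basis for $X^*(T)\otimes\Q$ dual in the appropriate sense to the fundamental coweights, and one can find a cocharacter $\lambda \in X_*(T)$ with $\langle \alpha, \lambda\rangle < 0$ for every $\alpha \in \Delta_f$ — for instance a suitable negative multiple of the sum of the relevant fundamental coweights, or the cocharacter through which $z$ acts to contract $N$. Evaluating such a $\lambda$ (after scaling to land in the image of $T(L)$, which is possible up to finite index, and then taking a power to land in $\Sigma$) gives an element of $\Sigma^{++}$.

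The main obstacle — really the only subtle point — is bookkeeping the three successive finite-index passages: from $X_*(T)$ to the image of $T(L)/T_0$, from there to $\Sigma$, and the choice of a power making a given cocharacter land in $\Sigma$ while \emph{preserving} the strict inequalities $|\alpha(z)| > 1$. Strictness is preserved under positive powers (it only scales $v(z)$ by a positive integer), and finite-index inclusions only cost a bounded power, so none of this is genuinely an obstruction — but it must be stated carefully, since the proposition's second clause (spanning $\Sigma\otimes\Q$, not merely $X_*(Z(M))\otimes\Q$) depends on the perturbation argument of the first paragraph working \emph{inside} $\Sigma$ rather than in some overgroup. I would therefore organise the write-up as: (1) the map $v$ and the reduction; (2) existence of a cocharacter strictly contracting $N$, citing Borel–Tits; (3) the finite-index descent into $\Sigma$; (4) the perturbation lemma $z = (z_0^n z)(z_0^n)^{-1}$.
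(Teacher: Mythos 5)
Your proof is correct and takes essentially the same route as the paper's: both rest on the linear independence of the functionals $\ord\circ\alpha$, $\alpha\in\Delta_f$, on the rational (co)character lattice of $T$, which makes the cone cut out by the strict inequalities nonempty and open, followed by a finite-index/positive-power argument to land in $\Sigma$ and the observation that the resulting elements span $\Sigma\otimes\Q$. The only differences are cosmetic: the paper cites Emerton for the linear-independence input and is terse about extracting a basis, whereas you justify the independence directly via a cocharacter of $T$ pairing strictly negatively with every free simple root and make the spanning step explicit through $z=(z_0^n z)(z_0^n)^{-1}$.
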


\begin{proof}
It follows from the discussion of \cite[\S 1.4]{emerton-memoir} that the maps $\ord \circ \alpha$ for $\alpha \in \Delta_f$ are linearly independent functionals on $X \otimes \Q$, where $X = T(L) / T_0$. Hence the set of points satisfying the inequalities $\ord_p \alpha < 0$ for all $\alpha \in \Delta_f$ is a nonempty open cone in $X \otimes \R$. Since $\Sigma$ has finite index in $X$, its image in $X \otimes \R$ certainly has nonempty intersection with this cone, and the intersection contains a basis for $\Sigma \otimes \Q$.
\end{proof}

If $z \in \Sigma^+$, then by part (ii) of definition \ref{def:iwahori-decomp} applied to $z^{-1}$, we have $z^{-1} N_k z \subseteq N_{k}$ and $z \overline{N}_k z^{-1} \subseteq \overline{N}_k$ for all $k \ge 0$. By proposition \ref{prop:centre-action}, for $k \ge 1$ we also have $z^{-1} \mathcal{N}_k z \subseteq \mathcal{N}_k$, and hence $z^{-1} \mathcal{D}_k z \subseteq \mathcal{D}_k$; and if $z \in \Sigma^{++}$ then the last two inclusions factor through $\mathcal{N}_{k+1}$ and $\mathcal{D}_{k+1}$ respectively.

\begin{remark}
If $G = \GL_n(\Q_p)$, $S$ is the diagonal torus and $P_{\rm min}$ the usual Borel subgroup of upper triangular matrices, then $P$ will be a group of matrices that are block upper triangular (with blocks corresponding to the free roots). Then $T = Z(M)$ is the group of diagonal matrices constant on the blocks of $P$. We may take $\Sigma$ to be the matrices in $T$ whose diagonal entries are powers of $p$, in which case $\Sigma^{+}$ is the monoid where the powers of $p$ increase at each break, and $\Sigma^{++}$ the semigroup where they increase strictly. 
\end{remark}

\begin{definition}\label{def:atkin-lehner-monoid}
Let $\mathbb{I} \subset G(L)$ be the monoid generated by $G_0$ and $\Sigma^+$.
\end{definition}

\begin{theorem}
\begin{enumerate}
 \item The action of $G_0$ on $\mathcal{C}(X, V, k)$ can be extended to an action of $\mathbb{I}$ by continuous $C^\an(X, E)$-linear operators.
 \item If $X$ is reduced, then this action satisfies $\|\gamma \circ f\| \le \|f\|$ for all $\gamma \in \mathbb{I}$ and $f \in \mathcal{C}(X, V, k)$, where $\|\bullet\|$ is the $G_0$-invariant norm defined in proposition \ref{prop:projective}.
\end{enumerate}
\end{theorem}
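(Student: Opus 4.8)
The plan is to write down an explicit formula for the action of $\Sigma^+$ extending the $G_0$-action, verify it lands in $\mathcal{C}(X,V,k)$ using the inclusions established just before the statement, check it is a monoid action compatible with $G_0$, and finally estimate norms. For part (i), note that any $\gamma \in \mathbb{I}$ can be written (non-uniquely) as a word in $G_0$ and $\Sigma^+$; since $\Sigma$ acts on any $M_0$-representation trivially and commutes with nothing in general, the cleanest route is to give a direct formula for $z \circ f$ with $z \in \Sigma^+$ and then show it, together with the given $G_0$-action, satisfies the braid-type relations forcing a well-defined monoid action. Concretely, for $z \in \Sigma^+$ I would set
\[ (z \circ f)(n, x) = \Delta_X\big(\Psi_M(nz)\big)(x) \cdot \left[ \Psi_M(nz) \circ f\big(\Psi_N(nz), x\big)\right],\]
exactly mimicking Definition \ref{def:main-construction}, where now $nz$ is computed in $G(L)$ and $\Psi_M, \Psi_N$ refer to the Iwahori factorisation of $G_0$ extended appropriately. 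The key point making this well-defined is that for $z \in \Sigma^+$ we have $z^{-1} N_k z \subseteq N_k$, hence $z^{-1}\mathcal{D}_k z \subseteq \mathcal{D}_k$ (this is recorded in the paragraph preceding the theorem), so $n \mapsto \Psi_N(nz)$ is a rigid-analytic map $\mathcal{D}_k \to \mathcal{D}_k$; and since $z$ centralises $M$ (as $z \in T = Z(M) \cap S$), the factor $\Psi_M(nz)$ depends analytically on $n$ and lies in $M_0$ after absorbing the $z$, so $\Psi_M(nz) \circ f(\cdots, x)$ still takes values in $V_{M_k-\an}$. Thus $z \circ f \in C^\an(\mathcal{D}_k \times X, V_{M_k-\an})$.

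Next I would check the two compatibilities: first that for $z \in \Sigma^+ \cap G_0$ (if any) or more precisely for $\gamma \in G_0$ the formula agrees with the given $G_0$-action — this is immediate since the formula is literally the same; and second the associativity $(\gamma_1 \gamma_2) \circ f = \gamma_1 \circ (\gamma_2 \circ f)$ for $\gamma_1, \gamma_2 \in \mathbb{I}$. The latter follows from the cocycle property of $(n,\gamma) \mapsto \big(\Psi_M(n\gamma), \Psi_N(n\gamma)\big)$: in $G_0$ this is the statement that composing right-multiplications-then-project is an action (already used implicitly to define $\Ind$), and for elements of $\Sigma^+$ one uses that $n z_1 z_2 = (n z_1) z_2$ in $G(L)$ together with the fact that $\Psi_N(n z) \in N_0$ and $\Psi_M(nz)$ lies in $M_0 \Sigma$, so the recursion closes up. The multiplicativity of $\Delta_X$ handles the character factor. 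One subtlety: since $\Sigma$ acts trivially on $V$ by fiat, one must be slightly careful that the "$M$-part'' $\Psi_M(nz)$, which a priori lies in $M(L)$ not $M_0$, is interpreted with its $\Sigma$-component acting trivially; this is exactly the normalisation flagged in the Remark after the definition of $\Sigma$, and it is what makes the formula self-consistent.

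For part (ii), with $X$ reduced the norm on $\mathcal{C}(X,V,k)$ is the supremum norm on $C^\an(\mathcal{D}_k \times X, E)$ tensored with the orthonormalisable norm on $V$, twisted by $\Delta_X$. Given $\gamma \in \mathbb{I}$, write $(\gamma \circ f)(n,x)$ via the formula; the character factor $\Delta_X(\Psi_M(n\gamma))(x)$ has absolute value $\le 1$ by compactness of $H_0$ (as in the proof of Proposition \ref{prop:projective}), the operator $\Psi_M(n\gamma) \circ (-)$ on $V$ is an isometry because the norm on $V$ was chosen $M_0$-equivariant and $\Sigma$ acts trivially hence isometrically, and $n \mapsto \Psi_N(n\gamma)$ maps $\mathcal{D}_k$ into $\mathcal{D}_k$, so precomposition does not increase the sup norm. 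Combining these three bounds gives $\|\gamma \circ f\| \le \|f\|$. The main obstacle I anticipate is purely bookkeeping: verifying the monoid relation in the presence of the two different conventions (the genuine action on $N$-coordinates versus the artificially trivial action of $\Sigma$ on $V$), i.e. making sure that when one expands $n z_1 z_2$ the $M$-parts multiply correctly in $M_0$ after discarding the central $\Sigma$-contribution. Once that is pinned down, everything else is a routine application of the inclusions $z^{-1}\mathcal{D}_k z \subseteq \mathcal{D}_k$ and the $G_0$-invariance of the norm already in hand.
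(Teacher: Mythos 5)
Your proposal is correct and follows essentially the same route as the paper: you use the same formula, justified by the same facts that $\Psi_M(n\gamma) \in M_0\Sigma$ (with $\Delta_X$ and the action on $V$ extended trivially on $\Sigma$), $\Psi_N(n\gamma) \in N_0$, and $z^{-1}\mathcal{D}_k z \subseteq \mathcal{D}_k$, and the norm bound comes from the same three ingredients (power-boundedness of $\Delta_X$ by compactness of $H_0$, the $M_0$-invariant norm on $V$, and the norm-decreasing pullback on $C^\an(\mathcal{D}_k, E)$). The only cosmetic difference is that the paper applies the formula directly to every $\gamma \in \mathbb{I}$ after observing that the Iwahori projection maps are rational and regular on $\Sigma^+$, so every element of $\mathbb{I}$ has an Iwahori factorisation, whereas you define the action on generators and check the cocycle relation; both come to the same thing.
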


\begin{proof}
The projection maps $\Psi_{\overline{N}}$, $\Psi_M$ and $\Psi_N$ onto the factors in the Iwahori factorisation are well-defined as elements of the rational function field $L(G)$, and the relation $x = \Psi_{\overline{N}}(x) \Psi_M(x) \Psi_N(x)$ holds in $L(G)$. Evidently the three maps are all regular at any $z \in \Sigma^{+}$, so any element of $\mathbb{I}$ has a (necessarily unique) Iwahori factorisation. Furthermore, if $x \in \mathbb{I}$, then $\Psi_M(x) \in M_0 \Sigma$, since $\Sigma \subset Z(M)$ and $Z(M)$ normalises $N$ and $\overline{N}$; and since conjugation by $\Sigma^+$ preserves $N_0$, we have $\Psi_{N}(x) \in N_0$.

Thus we may define the action of $\mathbb{I}$ on $\mathcal{C}(X, V, k)$ by the same formula as in definition \ref{def:main-construction}, 
\[ (\gamma \circ f)(n, x) = \Delta_X(\Psi_M(n\gamma))(x) \cdot \left[ \Psi_M(n\gamma) \circ f( \Psi_N(n\gamma), x)\right],\]
where $\Delta_X$ is extended to a character of $M_0 \Sigma$ trivial on $\Sigma$.

For $z \in \Sigma^+$, the action of $z$ on $\mathcal{C}(X, V, k)$ is obtained by base extension from an action on $C^\an(\mathcal{D}_k, E)$. This latter space has a canonical supremum norm; and the action of $z$ is the pullback map arising from the inclusion $z^{-1} \mathcal{D}_k z \subseteq \mathcal{D}_k$, and hence is norm-decreasing. Thus the resulting operator on $C^\an(\mathcal{D}_k, E)\ \widehat\otimes_E\ C^\an(X, E)\ \widehat\otimes_E\ V_{M_k-\an}$ is clearly continuous and norm-decreasing for any choice of norm on the latter two factors (since $z$ acts trivially on them), and (ii) now follows from proposition \ref{prop:projective}.
\end{proof}

Note that since $V$ is finite-dimensional, there is some $k$ such that $V_{M_k-\an} = V$. Let $k(V)$ be the least such $k$.

\begin{theorem}\label{thm:linkmaps1} Suppose $z \in \Sigma^{++}$ and $k \ge 1 + k(V)$. Then the action of $z$ defines a continuous and norm-decreasing map $z_k: \mathcal{C}(X, V, k+1) \to \mathcal{C}(X, V, k)$; and if $i_k$ denotes the the obvious inclusion $\mathcal{C}(X, V, k) \into \mathcal{C}(X, V, k+1)$, the diagram 
\[
\xymatrix{
\mathcal{C}(X, V, k) \ar@{^{(}->}[d]^{i_k} \ar[r]^{z_{k-1}} \ar@{.>}[rd]^z  & \mathcal{C}(X, V, k-1) \ar@{^{(}->}[d]^{i_{k-1}} \\
\mathcal{C}(X, V, k+1) \ar[r]^{z_k} & \mathcal{C}(X, V, k)
}
\]
commutes.
\end{theorem}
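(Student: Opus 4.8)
The plan is to verify each of the three assertions — well-definedness of $z_k$, the norm estimate, and commutativity of the square — by unwinding the explicit formula for the $\mathbb{I}$-action given in the previous theorem, and tracking carefully how $z \in \Sigma^{++}$ moves the affinoid $\mathcal{D}_k$ and the analyticity radius of $V$. First I would recall from the discussion immediately preceding Definition \ref{def:atkin-lehner-monoid} that for $z \in \Sigma^{++}$ one has $z^{-1}\mathcal{D}_{k} z \subseteq \mathcal{D}_{k+1}$ (one power deeper than the $\Sigma^+$ case), together with $\Psi_M(n z) \in M_0\Sigma$ and $\Psi_N(nz) \in N_0$ for $n \in N_0$. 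So in the defining formula $(z\circ f)(n,x) = \Delta_X(\Psi_M(nz))(x)\cdot[\Psi_M(nz)\circ f(\Psi_N(nz),x)]$, the point $\Psi_N(nz)$ ranges over (the $L$-points of) $z^{-1}\mathcal{N}_k z$, which by Proposition \ref{prop:centre-action}(ii) lands in $\mathcal{N}_{k+1}$ — hence a function analytic on the coarser space $\mathcal{D}_{k+1}$ suffices to evaluate it, and the output is analytic on $\mathcal{D}_{k+1}$, i.e.\ lies in $\mathcal{C}(X,V,k+1)$ even though the input lies only in $\mathcal{C}(X,V,k+1)$... wait — I must be careful about the direction: the input $f$ lies in $\mathcal{C}(X,V,k+1)$ (analytic on $\mathcal{D}_{k+1}$), we precompose with the map $\mathcal{D}_k \to \mathcal{D}_{k+1}$, $n \mapsto \Psi_N(nz)$ induced by $z$, and obtain something analytic on $\mathcal{D}_k$, i.e.\ in $\mathcal{C}(X,V,k)$. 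This is the map $z_k$. The condition $k \ge 1 + k(V)$ guarantees $V_{M_{k+1}-\an} = V_{M_k-\an} = V$, so the Levi-analyticity of the coefficient module is not an obstruction and the twisting factor $\Psi_M(nz) \circ (-)$ makes sense on all of $V$.

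Next, for the norm estimate, I would argue exactly as in part (ii) of the preceding theorem: the action of $z$ is obtained by base extension from a map on $C^\an(\mathcal{D}_{k+1}, E) \to C^\an(\mathcal{D}_k, E)$, which is the pullback along the rigid morphism $\mathcal{D}_k \to \mathcal{D}_{k+1}$ induced by $z$; pullback along a morphism of reduced affinoids is norm-decreasing for the supremum norms, and the factor $\Delta_X(\Psi_M(nz))$ has sup-norm $1$ by compactness of $H_0$ (so its image lies in the norm-one elements of $C^\an(X,E)$), while $\Psi_M(nz)\circ(-)$ acts isometrically on $V$ by our choice of $M_0$-invariant norm. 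Tensoring up over $E$ with $C^\an(X,E)$ and $V_{M_k-\an} = V$ preserves the norm-decreasing property, giving the claim when $X$ is reduced; and the continuity statement holds regardless of reducedness since these are all bounded $C^\an(X,E)$-linear maps between Banach modules.

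Finally, the commutativity of the square $i_{k-1} \circ z_{k-1} = z_k \circ i_k$ is a formal consequence of the fact that all four maps — the two inclusions $i_{k-1}, i_k$ and the two $z$-actions $z_{k-1}, z_k$ — are \emph{restrictions of one and the same operator}, namely the action of $z$ on the ambient space $\Ind_{\overline P}^{G_0} V \cong C^\la(N_0, V)$ (or, more precisely, its locally analytic version): the inclusions are the inclusions of nested spaces of analytic vectors $\mathcal{C}(X,V,k) \hookrightarrow \mathcal{C}(X,V,k+1)$ inside $C^\la$, and each $z_j$ is the corestriction of the single $z$-action to the appropriate pair of subspaces. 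Concretely: both composites send $f \in \mathcal{C}(X,V,k)$ to the function $(n,x) \mapsto \Delta_X(\Psi_M(nz))(x)\cdot[\Psi_M(nz)\circ f(\Psi_N(nz),x)]$ regarded in $\mathcal{C}(X,V,k)$; along the top-then-right path one first computes $z\circ f \in \mathcal{C}(X,V,k-1)$ and then includes it into $\mathcal{C}(X,V,k)$, along the left-then-bottom path one first includes $f$ into $\mathcal{C}(X,V,k+1)$ and then applies $z_k$ — but since the formula for $z$ does not depend on which ambient space we use, and inclusions are just identities on functions, the two agree. (The dotted arrow $z$ in the diagram is precisely this common restriction, giving the factorisation $z_{k-1} = z \circ (\text{nothing})$ on the left triangle and $z_k \circ i_k = i_{k-1}\circ z$ on the right.)

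I do not anticipate a genuinely hard step here: the content is entirely in the bookkeeping of \emph{which} affinoid $\mathcal{D}_j$ and \emph{which} analyticity radius on $V$ are needed, and the numerology $k \ge 1 + k(V)$ is exactly calibrated so that the $z$-contraction $\mathcal{D}_k \to \mathcal{D}_{k+1}$ of Proposition \ref{prop:centre-action}(ii) lands one level up while leaving $V_{M_k-\an}$ stable. The one place warranting a little care is confirming that the pullback map on $C^\an(\mathcal{D}_{k+1},E) \to C^\an(\mathcal{D}_k,E)$ really is well-defined — i.e.\ that the self-map of $N$ induced by $z$ (right multiplication by $z$ followed by projection to the $N$-factor of the Iwahori factorisation) genuinely extends to a rigid morphism $\mathcal{D}_k \to \mathcal{D}_{k+1}$ and not merely a map on $L$-points — but this is precisely the combination of Proposition \ref{prop:centre-action} with the remark that $\mathcal{D}_k = \bigcup_{[x]\in N_0/N_k} x\mathcal{N}_k$ and $z$ permutes the cosets while contracting each ball $\mathcal{N}_k \to \mathcal{N}_{k+1}$.
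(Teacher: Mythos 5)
Your proposal is correct and follows essentially the same route as the paper: the paper likewise uses the inclusion $z^{-1}\mathcal{D}_k z \hookrightarrow \mathcal{D}_{k+1}$ (from Proposition \ref{prop:centre-action} and the surrounding discussion) to define $z_k$ as a pullback, and checks commutativity by observing that both composites are the single map $f(n)\mapsto f(z^{-1}nz)$. The only cosmetic difference is that the paper first reduces to $V=E$ (using that $\Sigma$ acts trivially on $V$ and $k-1\ge k(V)$) rather than carrying the $\Delta_X(\Psi_M(nz))$ and $\Psi_M(nz)$ factors through the computation as you do.
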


\begin{proof} Since the action of $z$ is ``constant'' with regard to $X$, it is sufficient to show the corresponding property of the spaces $C^\an(\mathcal{D}_k, V)$; and since $z$ acts trivially on $V$ (and $k-1 \ge k(V)$), we may assume $V = E$. 

By hypothesis, for all $k \ge 1$ we have an inclusion map $\mathfrak{z}_k:z^{-1} \mathcal{D}_k z \into \mathcal{D}_{k+1}$, and hence a pullback map $z_k = \mathfrak{z}_k^*: C^\an(\mathcal{D}_{k+1}, E) \to C^\an(\mathcal{D}_k, E)$. Thus the top and bottom arrows in the diagram are well-defined. Clearly both compositions are equal to the map $f(n) \mapsto f(z^{-1}nz)$; so the diagram commutes, and the diagonal arrow is the action of $z$ on $C^\an(\mathcal{D}_k, E)$, as required.
\end{proof}

When $V$ admits a central character, the action of the split torus $T$ on the space $C^{\an}(\mathcal{N}_k, V)$ can be given explicitly in terms of the basis of proposition \ref{prop:basis}:

\begin{proposition}\label{prop:basis-weights}
Let $j \in J$ and $1 \le j' \le r$ in the notation of proposition \ref{prop:basis}, and suppose that $T_0$ acts on $V$ via the character $\chi: T_0 \to E^\times$. Define
\[s(\underline{j}) = \sum_{\alpha \in \Phi_f} \left(\sum_{1 \le i \le d(\alpha)} \underline{j}(\alpha, i) \right) \alpha,\]
an element of $X^\bullet(T)$. Then the vector $\underline{X}^{\underline{j}} \otimes v_{j'} \in C^{\an}(\mathcal{N}_k, V)$ is an eigenvector for $T_0 \Sigma^{+}$, with
\[ t \circ (\underline{X}^{\underline{j}} \otimes v_{j'}) = \chi(t) t^{-s(\underline{j})} (\underline{X}^{\underline{j}} \otimes v_{j'})\]
for $t \in T_0 \Sigma$, where we extend $\chi$ to a character $T_0 \Sigma^+ \to E^\times$ trivial on $\Sigma^+$.
\end{proposition}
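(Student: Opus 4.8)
The plan is to evaluate the $T_0\Sigma^{+}$-action on the monomial $\underline{X}^{\underline{j}}\otimes v_{j'}$ directly from the explicit formula for the $\mathbb{I}$-action, exploiting the fact that any $t\in T_0\Sigma^{+}$ lies in $Z(M)$ and therefore has an especially simple Iwahori factorisation. Indeed, for $t\in Z(M)$ and $n\in N_0$ one has $nt = t\cdot(t^{-1}nt)$ with $t\in M$ and $t^{-1}nt\in N$ (as $Z(M)$ normalises $N$), so by uniqueness of the Iwahori factorisation $\Psi_{\overline N}(nt)=1$, $\Psi_M(nt)=t$ and $\Psi_N(nt)=t^{-1}nt$. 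On $C^{\an}(\mathcal{N}_k,V)$, the identity-coset summand of $\Ind(V)_k$, the action formula therefore collapses to $(t\circ f)(n)= t\circ f(t^{-1}nt)$, the outer $t$ denoting the $V$-action; and this genuinely preserves $C^{\an}(\mathcal{N}_k,V)$ since $t^{-1}\mathcal{N}_k t\subseteq\mathcal{N}_k$ by proposition \ref{prop:centre-action}(i) applied to $t^{-1}$ (using $|\alpha(t)|\ge 1$ for $t\in\Sigma^{+}$, and $|\alpha(t)|=1$ for $t\in T_0$).

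It then remains to compute the two factors. Since $T_0$ acts on $V$ through the scalar character $\chi$ while $\Sigma$ acts trivially by convention, the outer action sends $v_{j'}$ to $\chi(t)v_{j'}$, where $\chi$ is extended trivially to $T_0\Sigma$. For the substitution $n\mapsto t^{-1}nt$, I would use that each $x_{\alpha,i}$ is a linear coordinate on the $\alpha$-root subspace of $\mathcal{N}_k$ and that $\Ad(t^{-1})$ scales $\mathfrak{n}_\alpha$ by $\alpha(t)^{-1}$; passing through the exponential identification gives $x_{\alpha,i}(t^{-1}nt)=\alpha(t)^{-1}x_{\alpha,i}(n)$, and hence
\[
\underline{X}^{\underline{j}}(t^{-1}nt)=\Bigl(\prod_{\alpha\in\Phi_f}\alpha(t)^{-\sum_i\underline{j}(\alpha,i)}\Bigr)\,\underline{X}^{\underline{j}}(n)=t^{-s(\underline{j})}\,\underline{X}^{\underline{j}}(n),
\]
the last equality being merely the definitions of $s(\underline{j})$ and of the symbol $t^{\lambda}$ (the value $\lambda(t)$). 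Multiplying the two contributions gives $t\circ(\underline{X}^{\underline{j}}\otimes v_{j'})=\chi(t)\,t^{-s(\underline{j})}\,(\underline{X}^{\underline{j}}\otimes v_{j'})$, which is the eigenvector property for every $t\in T_0\Sigma^{+}$ together with the asserted formula for all $t\in T_0\Sigma$.

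I do not expect any real obstacle here; the only point requiring care is the bookkeeping of inverses — in particular checking that it is $\alpha(t)^{-1}$, not $\alpha(t)$, that appears in the coordinate change, which is exactly what produces the minus sign in the exponent $-s(\underline{j})$ — together with a consistent use of the convention that $t^{\lambda}$ denotes $\lambda(t)$.
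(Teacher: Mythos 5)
Your proof is correct, and it is exactly the computation the paper leaves implicit: the paper states Proposition \ref{prop:basis-weights} without proof, and the same manipulation (the factorisation $nt = t\cdot(t^{-1}nt)$ together with $\underline{X}(t^{-1}\xi t)^{\underline{j}} = t^{-s(\underline{j})}\underline{X}(\xi)^{\underline{j}}$) reappears inline in the proof of Proposition \ref{prop:small-norms}. Your care about the direction of the conjugation (via Proposition \ref{prop:centre-action} applied to $t^{-1}$) and about the sign $\alpha(t)^{-1}$ is precisely what is needed, so nothing is missing.
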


\subsection{Locally algebraic vectors}
\label{ssect:locally-algebraic}

In this section, we shall continue to assume that $V$ is finite-dimensional and endowed with a norm such that $M_0$ acts by isometries; we shall also assume that it is absolutely irreducible and locally algebraic, in the sense of Prasad's appendix to \cite{ST-ugfinite}. Thus we may write $V = V_{alg} \otimes V_{sm}$, where $V_{alg}$ is an absolutely irreducible algebraic representation of $M$, and $V_{sm}$ is an irreducible smooth representation of $M_0$. Note that for such a $V$, we have $k \ge k(V)$ if and only if $V_{sm}$ is trivial on $M_k$; let us choose such a $k$, which will be fixed for the remainder of this section. 

The results in this section are generalisations of results previously obtained (under more restrictive hypotheses) by Owen Jones \cite{owen-transfer}; we give full proofs, as the work of Jones has not yet been published.

\begin{definition}{~}
\begin{itemize}
 \item We define $U_{alg}$ to be the space of polynomial functions $G \to V_{alg}$ satisfying $f(\overline{n} m g) = m \circ f(g)$ for all $\overline{n} \in \overline{N}$, $m \in M$, $g \in G$.
 \item We define $U_{sm}$ to be the induced representation $\Ind_{\overline{P}_0 / \overline{P}_k}^{G_0 / G_k} V_{sm}$.
 \item We define the {\bf classical subrepresentation} $\Ind(V)_{k}^{cl}$ to be $U_{alg} \otimes_E U_{sm}$.
\end{itemize}
\end{definition}

We assume that $V_{alg}$ is dominant and integral for $G$, so $U_{alg}$ is nonzero. Then $\Ind(V)_k^{cl}$ is clearly a nonzero finite-dimensional (and hence closed) $\mathbb{I}$-stable subspace of $\Ind(V)_k$.

As $G_0$ is an open subgroup of $G$, we may differentiate the $G_0$-action on $\Ind(V)_k$ to obtain an action of the Lie algebra $\g = \Lie G$, and thus of its universal enveloping algebra $U(\g)$. We recall the following definition:

\begin{definition}[{\cite[4.1.10]{emerton-memoir}}]
 Let $M$ be an $E$-vector space with an action of $\g$. A vector $v \in M$ is said to be {\it $U(\g)$-finite} if it spans a finite-dimensional subspace of $M$ under the action of $U(\g)$.
\end{definition}

\begin{proposition}
The space $\Ind(V)_k^{cl}$ is exactly the subspace of $U(\g)$-finite vectors in $\Ind(V)_k$.
\end{proposition}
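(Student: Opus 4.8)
The plan is to prove the two inclusions separately. For one direction, one must check that every vector of $\Ind(V)_k^{cl} = U_{alg} \otimes_E U_{sm}$ is $U(\g)$-finite. This is essentially formal: $U_{alg}$ is a finite-dimensional algebraic (hence $U(\g)$-finite, indeed a single $U(\g)$-module) representation of $G$, and $U_{sm} = \Ind_{\overline{P}_0/\overline{P}_k}^{G_0/G_k} V_{sm}$ is a smooth representation of $G_0$, so $\g$ acts on it through zero; hence each pure tensor $u_{alg} \otimes u_{sm}$ spans, under $U(\g)$, a subspace of $(U(\g) u_{alg}) \otimes E u_{sm}$, which is finite-dimensional. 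Since $\Ind(V)_k^{cl}$ is the span of such tensors and lies inside $\Ind(V)_k$ as a closed $\mathbb{I}$-stable (in particular $\g$-stable) subspace, this direction is immediate.

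The substantive direction is the reverse inclusion: a $U(\g)$-finite vector $f \in \Ind(V)_k$ must lie in $\Ind(V)_k^{cl}$. First I would reduce to the identity coset: since $\Ind(V)_k \cong \bigoplus_{x \in N_0/N_k} U_x$ with $U_x = C^\an(x\mathcal{N}_k, V)$, and this decomposition is $G_k$-stable, a $U(\g)$-finite vector remains $U(\g)$-finite in each component; and translating by $N_0$ one may work with the identity coset $U_e = C^\an(\mathcal{N}_k, V)$. Here the key point is that the $\g$-action, restricted to the subalgebra $\overline{\n} \oplus \m$ acting through the left functional equation and to $\n$ acting by right-invariant differential operators on $\mathcal{N}_k$, can be made explicit via the Iwahori factorisation. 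The action of $\Lie N$ on $C^\an(\mathcal{N}_k, V)$ is by the standard commuting family of derivations $\partial/\partial x_{\alpha,i}$ in the coordinates of Proposition \ref{prop:basis}, and a power series in $E\langle x_{\alpha,i}\rangle \otimes V$ is annihilated by a high enough power of each $\partial/\partial x_{\alpha,i}$ if and only if it is a polynomial in the $x_{\alpha,i}$; more generally, being $U(\g)$-finite forces, via the $\Lie N$-action alone, that $f$ is a polynomial function on $N_k$ with values in $V$. Then the left functional equation together with $U(\g)$-finiteness of the $\m$-action forces the $V$-part to decompose compatibly with the algebraic/smooth splitting $V = V_{alg} \otimes V_{sm}$, and one identifies the resulting space of polynomial functions on $N_0$ transforming correctly under $\overline{N}_0 M_0$ with $U_{alg}$ restricted to an appropriate open cell, while the dependence on the discrete coset data $N_0/N_k$ together with the $G_0$-action matches exactly the smooth induction $U_{sm} = \Ind_{\overline{P}_0/\overline{P}_k}^{G_0/G_k} V_{sm}$. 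Combining, $f \in U_{alg} \otimes_E U_{sm} = \Ind(V)_k^{cl}$.

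The main obstacle I expect is the bookkeeping in identifying the space of $U(\g)$-finite vectors with $U_{alg} \otimes U_{sm}$ rather than merely bounding it: the "polynomial on $N_k$" conclusion from the $\n$-action is clean, but one must check that finiteness under all of $U(\g)$ (not just $U(\n)$) cuts the polynomial functions down to exactly the classical subspace, which amounts to a version of the Frobenius reciprocity / highest-weight-theory statement that algebraically induced representations from $\overline{P}$ are the $U(\g)$-finite vectors in their locally analytic induction, carried out relative to the chosen chart and made equivariant for the action of the full group $G_0$ (not just $\Lie G$). Here I would lean on the explicit orbit-map description of the $G_0$-action in Definition \ref{def:main-construction} and on the fact, already used above, that $\Ind(V)_k^{cl}$ is $\mathbb{I}$-stable, so that it suffices to match the two spaces as $\g$-modules and separately check compatibility of the $G_0$-action, the latter being automatic since both sides are $G_0$-subrepresentations of $\Ind(V)_k$ and the $U(\g)$-finite vectors form a $G_0$-subrepresentation (as $G_0$ normalises $\g$ and the finiteness condition). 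Alternatively, one can cite the analogous statement in \cite{emerton-memoir} or \cite{owen-transfer}, but since the excerpt promises full proofs generalising Jones's work, I would give the coordinate argument above.
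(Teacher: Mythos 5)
Your first direction and the reduction to the identity coset are fine, but the decisive step of the hard direction has a genuine gap. The space of polynomial functions on $N$ satisfying the functional equation is not $U_{alg}$: it is the graded dual of the generalised Verma module $\Ver_{\pb}(V_{alg}^\vee)$ (this is exactly theorem \ref{thm:owen} later in this section), which is infinite-dimensional, and $U_{alg}$ is only its finite-dimensional socle. So even granting that your $\n$-argument shows a $U(\g)$-finite vector is polynomial, the whole content of the proposition is to show that $U(\g)$-finiteness cuts this dual Verma module down to $U_{alg}\otimes U_{sm}$; at precisely this point your sketch becomes circular, since the ``highest-weight-theory statement that algebraically induced representations from $\overline{P}$ are the $U(\g)$-finite vectors in their locally analytic induction'' is (the algebraic case of) the proposition being proved, and neither the $\mathbb{I}$-stability of $\Ind(V)_k^{cl}$ nor compatibility of the $G_0$-action supplies the missing representation-theoretic input. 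The paper avoids this entirely: since the $U(\g)$-span of $f$ is finite-dimensional, one may assume $f$ is a highest weight vector, i.e.\ annihilated by $\n$ (not merely acted on nilpotently); an analytic function on $\mathcal{D}_k$ killed by $\n$ is locally constant, locally constant functions visibly lie in the classical subspace, and the general case then follows from the $\g$-stability of $\Ind(V)_k^{cl}$.

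Two secondary points in your polynomiality step also need repair. First, the action of $\n$ on $C^\an(\mathcal{N}_k,V)$ is by left-invariant vector fields on $N$, which form a commuting family of coordinate derivations only when $N$ is abelian (already false for $P=P_{\rm min}$ when the relative rank is at least $2$). Second, ``via the $\Lie N$-action alone'' is not correct: $U(\n)$-finiteness does not force polynomiality, since for $|\pi|$ sufficiently small the exponential $\exp(\pi x)$ lies in the Tate algebra and is an eigenvector of $\partial/\partial x$. What is true is that on the finite-dimensional $U(\g)$-span of $f$ each element of $\n$ acts nilpotently (elements of $\n$ are nilpotent elements of $\g^{ss}$, and nilpotent elements act nilpotently in any finite-dimensional representation of a semisimple Lie algebra in characteristic zero); only with this input does your ``killed by high powers of the derivations'' argument apply. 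Note that the paper's own polynomiality statement, lemma \ref{lemma:loc-alg-fcns}, is in any case proved from the $\ft$-action via the weight decomposition rather than from the $\n$-action.
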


\begin{proof}
It is clear that $\Ind(V)_k^{cl}$ is $U(\g)$-finite, since it is finite-dimensional. Conversely, let $f \in \Ind(V)_k$ be a $U(\g)$-finite vector. We may assume without loss of generality that $f$ is a highest weight vector, since the $U(\g)$-submodule of $\Ind(V)_k^{cl}$ generated by $f$ decomposes as a direct sum of irreducible $U(\g^{ss})$-representations, each generated by highest weight vectors. In this case, applying the isomorphism $\Ind(V)_k \cong C^\an(\mathcal{D}_k, V)$, $f$ is an analytic function on $\mathcal{D}_k$ annihilated by $\n$, so it must be locally constant as a function on $\mathcal{D}_k$. But the set of locally constant analytic functions on $\mathcal{D}_k$ is just $U_{sm}$, so it follows that $f \in \Ind(V)_k^{cl}$.
\end{proof}

Recall that we defined $T = Z(M) \cap S$, a maximal $L$-split torus in $Z(M)$. Since $V$ is irreducible as an $M_0$-representation, $T_0 = T \cap M_0$ must act on $V$ via some character $\chi$. Since $V$ is locally algebraic as an $M_0$-representation and $k \ge k(V)$, the restriction of $\chi$ to $T_k = T \cap M_k$ is algebraic, and thus we can write $\chi = \chi_{sm}\cdot \chi_{alg}$ where $\chi_{alg} \in X^\bullet(T)$ and $\chi_{sm}$ is a character of $T_0 / T_k$. 

Let $\ft = \Lie T$. We now consider the subspace of vectors that are $U(\ft)$-finite, rather than $U(\g)$-finite. This space is $U(\g)$-stable. Let us write $U_{x}$ for the closed subspace of $\Ind(V)_k$ consisting of functions $f \in C^\an(\mathcal{D}_k, V)$ supported on the coset $x \mathcal{N}_k \subseteq \mathcal{D}_k$.

\begin{lemma} \label{lemma:loc-alg-fcns} For each $x \in N_0 / N_k$, the $U(\ft)$-finite vectors in $U_x$ are exactly those that are the restrictions to $x \mathcal{N}_k$ of polynomial functions $f : N \to V$.
\end{lemma}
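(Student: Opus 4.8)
The plan is to analyze $U(\ft)$-finiteness by working coordinate-wise on the affinoid ball $x\mathcal{N}_k$. First I would reduce to the identity coset: since all the $U_x$ are isomorphic as $G_k$-representations (indeed translation by a coset representative $x \in N_0$ identifies $U_x$ with $U_{\mathrm{id}} = C^\an(\mathcal{N}_k, V)$, intertwining the relevant Lie algebra actions up to the inner twist by $x$, which does not affect $\ft$-finiteness since $\ft \subseteq \Lie Z(M)$ centralizes $\overline{N}$ and $N$), it suffices to treat $U_{\mathrm{id}}$. There, by proposition \ref{prop:basis} the space $C^\an(\mathcal{N}_k, V)$ has the orthonormal monomial basis $\{\underline{X}^{\underline{j}} \otimes v_{j'}\}$, and by proposition \ref{prop:basis-weights} each such vector is a $T_0$-eigenvector, hence a $\ft$-weight vector, with $\ft$ acting through the weight $d\chi - s(\underline{j})$ (the differential of $\chi(t)\,t^{-s(\underline{j})}$).

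The key point is then a ``bounded denominators'' / finiteness argument: a vector $f = \sum_{\underline{j}, j'} c_{\underline{j}, j'}\, \underline{X}^{\underline{j}} \otimes v_{j'}$ is $U(\ft)$-finite if and only if its $\ft$-weight support $\{s(\underline{j}) : \exists j',\ c_{\underline{j},j'} \ne 0\}$ is finite — because distinct weights give distinct generalized eigenspaces, and a $U(\ft)$-finite vector must lie in the sum of finitely many of them (note $U(\ft)$ is commutative, so a finite-dimensional stable subspace decomposes into generalized weight spaces, and here the action is even semisimple on each monomial). Now $s(\underline{j}) = \sum_{\alpha \in \Phi_f} \bigl(\sum_i \underline{j}(\alpha,i)\bigr)\alpha$, and since the free simple roots are linearly independent in $X^\bullet(T)\otimes\Q$ (as used in the proof that $\Sigma^{++}$ is nonempty), $s(\underline{j})$ determines, for each $\alpha \in \Phi_f$, the total degree $\sum_i \underline{j}(\alpha,i)$ of $f$ in the variables $\{x_{\alpha,i}\}_i$. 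Therefore finiteness of the weight support is equivalent to $f$ having bounded total degree in each block of variables, i.e. bounded total degree overall, i.e. $f$ being a polynomial function on $\mathcal{N}_k$ with values in $V$; and since any polynomial in the coordinates $x_{\alpha,i}$ on $\mathcal{N}_k = \mathcal{N}\cap$(ball) extends to a polynomial function $N \to V$ and conversely restricts from one, this matches the claimed description.

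Concretely, the steps in order are: (1) reduce to the coset $x = \mathrm{id}$ via translation, checking the $\ft$-action is preserved; (2) invoke propositions \ref{prop:basis} and \ref{prop:basis-weights} to get the monomial weight basis and the explicit $\ft$-weights; (3) observe that $U(\ft)$ is abelian acting semisimply on the $c_{\underline j, j'}$-grading, so $U(\ft)$-finiteness $\iff$ finite weight support; (4) use linear independence of $\Phi_f$ in $X^\bullet(T)\otimes\Q$ to convert finite weight support into bounded degree in each block of root-group coordinates, hence bounded total degree; (5) identify bounded-total-degree analytic functions on $\mathcal{N}_k$ with restrictions of polynomial functions $N \to V$, using that $N$ is a unipotent group isomorphic as a variety (via the exponential / root-space coordinates $x_{\alpha,i}$) to affine space, on which ``polynomial in the coordinates'' is chart-independent.

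The main obstacle I anticipate is step (2)--(3): making sure the weights $s(\underline{j})$, which by proposition \ref{prop:basis-weights} record $T_0$-eigencharacters, really do separate the monomials at the level of the \emph{Lie algebra} $\ft$ — i.e. that $d\chi - s(\underline j) = d\chi - s(\underline j')$ forces $s(\underline j) = s(\underline j')$ as algebraic characters of $T$ (true, since $T$ is a torus so $X^\bullet(T) \into \ft^*$) — and that no subtlety arises from $V$ possibly not admitting a $T$-action on the nose, only a $T_0$-action with algebraic restriction to $T_k$; here the hypothesis $k \ge k(V)$ and the decomposition $\chi = \chi_{sm}\cdot\chi_{alg}$ from earlier in the section are exactly what is needed, since $\chi_{sm}$ is trivial on $T_k$ and hence contributes nothing to $d\chi$. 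The rest is essentially bookkeeping with the linear independence of free simple roots already established in the excerpt.
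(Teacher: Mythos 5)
Your steps (2)--(5) are essentially the paper's own argument for the identity coset: expand in the orthonormal monomial basis of proposition \ref{prop:basis}, use proposition \ref{prop:basis-weights} to read off the $\ft$-weights, and deduce that $U(\ft)$-finiteness forces finite weight support, hence a finite linear combination of monomials, i.e.\ the restriction of a polynomial. The genuine gap is step (1), the reduction to $x=1$. Your justification is that $\ft \subseteq \Lie Z(M)$ \emph{centralizes} $N$ and $\overline{N}$, so the inner twist by $x$ is harmless. That is false: $T = Z(M) \cap S$ normalizes $N$ but acts on it through the roots in $\Phi_f$, which are nontrivial on $T$ --- this is exactly why the weights $s(\underline{j})$ in proposition \ref{prop:basis-weights} are nonzero. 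For $x \in N_0$ one has $\Ad(x)\tau - \tau \in \n$, nonzero in general, so translation by $x$ intertwines the $U(\ft)$-action with the $U(\Ad(x)\ft)$-action, and these are genuinely different finiteness conditions; correspondingly, the translated monomials $\underline{X}(x^{-1}n)^{\underline{j}} \otimes v_{j'}$ on a coset $x \ne 1$ are \emph{not} $\ft$-weight vectors, because the vector fields by which $\ft$ acts are Euler-type fields centred at the identity of $N$, not at $x$. To see that this is not mere bookkeeping, take $G = \GL_2(\Q_p)$ with $P$ the Borel, identify $N_0$ with $\Z_p$ via the coordinate $u$: a generator of $\ft$ acts on functions by a scalar plus a multiple of $u\,d/du$, and on a coset $u_0 + N_k$ with $u_0 \in \Z_p^{\times}$ the function $\sum_{n \ge 0} \binom{s}{n}(u/u_0 - 1)^n$ is an analytic eigenfunction of $u\,d/du$ with eigenvalue $s$, for \emph{every} $s \in \Z_p$. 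So the identity-coset computation simply does not transport to the other cosets by translation, and your argument as written proves only the case $x = 1$. Note also that the paper's reduction rests on a different claim from yours --- that the space of $U(\ft)$-finite vectors in $\Ind(V)_k$ is preserved by the translation action of $N_0$ --- which is itself asserted without proof and certainly is not implied by your centralizing statement; you would need to supply an actual argument for the cosets $x \ne 1$.

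A secondary, repairable slip is in step (4): the linear independence established in the paper is for the restrictions to $T$ of the \emph{free simple} roots $\Delta_f$, not of all of $\Phi_f$ (think of $\alpha$, $\beta$, $\alpha+\beta$), so $s(\underline{j})$ does not determine the total degree in each root block. What you actually need is only that each fibre of $\underline{j} \mapsto s(\underline{j})$ is finite, which does follow, since each element of $\Phi_f$ restricted to $T$ is a nonzero nonnegative-integer combination of the restrictions of the elements of $\Delta_f$; bounded weight support then still forces bounded total degree. This is a one-sentence fix; the coset reduction is the real gap.
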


\begin{proof}
It is sufficient to consider the case $x = 1$, since the space of $U(\ft)$-finite vectors is preserved by $N_0$. Recall that we constructed an orthonormal basis for $C^{\an}(\mathcal{N}_k, V)$ in proposition \ref{prop:basis}. The polynomial functions $E[N] \otimes V$ are exactly the functions in $C^\an(\mathcal{N}_k, V)$ which can be written as a finite linear combination of these basis vectors; since each basis vector is a $\ft$-weight vector by proposition \ref{prop:basis-weights}, any such function is $U(\ft)$-finite.

Conversely, since the above functions are an orthonormal basis, every $f \in C^\an(\mathcal{N}_k, V)$ can be uniquely written $f = \sum_{\underline{j} \in J, 1 \le j' \le r} a_{\underline{j}, j'} \underline{X}^{\underline{j}} \otimes v_{j'}$ for some sequence $a_{\underline{j}, j'} \in E$ tending to zero. For any $t \in T_0 \Sigma^+$, we have
\[ t \circ f = \sum_{\underline{j} \in J, 1 \le j' \le r} \chi(t) t^{-s(\underline{j})} \cdot a_{\underline{j}, j'} \cdot \underline{X}^{\underline{j}} \otimes v_j. \]

Differentiating the above relation, for any $\tau \in \ft$ we have
\[ \tau \circ f = \sum_{\underline{j} \in J, 1 \le j' \le r} [\chi_{alg} - s(\underline{j})](\tau) \cdot a_{\underline{j}, j'} \cdot \underline{X}^{\underline{j}} \otimes v_j.\]
So if $f$ is a $\ft$-weight vector of weight $\gamma$, we deduce that for every $(\underline{j}, j')$ we have $a_{\underline{j}, j'}.(\gamma - \chi_{alg} + s(\underline{j})) = 0$ as elements of $X^\bullet(T)$. But the set of indices $\underline{j}$ for which the bracketed term is zero is finite, and it follows that $f$ is a finite linear combination of the basis vectors, and thus lies in $E[N] \otimes V$. 
\end{proof}

We now recall some standard definitions (first introduced in \cite{bgg-categoryo}):

\begin{definition} Let $R$ be an $E$-representation of a reductive Lie algebra $\g$, $\ft$ a split toral subalgebra of $\g$, and $\p$ a parabolic subalgebra of $\g$ containing $\ft$. 
 \begin{enumerate}
  \item We say $R$ is a $\ft$-weight $\g$-representation if $\ft$ acts locally finitely on $R$, so $R$ is the direct sum of its $\ft$-weight spaces, and each $\ft$-weight space is finite-dimensional.
  \item We say $R$ is in the category $\OO_{\p}$ if it is finitely generated as a $U(\g)$-module, and $\p$ acts locally finitely; this implies that $R$ is a direct sum of finitely many subrepresentations each of which has a highest weight with respect to $\ft$.
 \end{enumerate}
\end{definition}

Any object in the category $\OO_\p$ is automatically a $\ft$-weight $\g$-representation. If $R$ is a $\ft$-weight $\g$-representation, the abstract vector space dual $R^*$ of $R$ is not generally a $\ft$-weight $\g$-representation, but the space of $U(\ft)$-finite vectors in $R^*$ is so; we call this the $\ft^*$-graded dual of $R$ and denote it by $R^\vee$, and it is easy to check that $(R^\vee)^\vee = R$. If $R$ is in $\OO_{\p}$, then $R^\vee$ is in the category $\OO_{\pb}$, where $\pb$ is the opposite parabolic.

\begin{definition}
 Let $V$ be a finite-dimensional representation of $\m$. We define the {\it generalised Verma module} of $V$ (with respect to the parabolic $\pb$) to be the left $U(\g)$-module
\[ \Ver_{\pb}(V) = U(\g) \otimes_{U(\pb)} V.\]
\end{definition}

This is clearly finitely generated as a $U(\g)$-module (as $V$ is finite-dimensional); and it follows from the discussion preceding \cite[prop.~3.1]{lepowsky} that $\pb$ acts locally finitely, so $\Ver_{\pb}(V) \in \OO_{\pb}$ for any finite-dimensional $V$.

\begin{theorem}\label{thm:owen}
The subspace of $U(\ft)$-finite vectors in $\Ind(V)_{k}$ is in the category $\OO_{\p}$, and it is isomorphic as a $\g$-representation to a direct sum of finitely many copies of the dual Verma module $\Ver_{\pb}(V^\vee)^\vee$.
\end{theorem}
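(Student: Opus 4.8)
The plan is to reduce to understanding a single ``coset summand'' of $\Ind(V)_k$, and to identify that summand with a dual generalised Verma module via the universal property of $\Ver_{\pb}$.

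Write $W$ for the space of $U(\ft)$-finite vectors in $\Ind(V)_k$. The first step is to record that $W=\bigoplus_{x\in N_0/N_k} W_x$ as $\g$-modules, where $W_x$ is the space of $U(\ft)$-finite vectors in the subspace $U_x\subseteq\Ind(V)_k$ of functions supported on $x\mathcal{N}_k$: indeed $\Ind(V)_k=\bigoplus_x U_x$, and each $U_x$ is $\g$-stable, since the differentiated $G_0$-action on a function supported on a single coset can only move its support infinitesimally and hence stays within that coset. By lemma~\ref{lemma:loc-alg-fcns}, $W_x$ consists of the restrictions to $x\mathcal{N}_k$ of the polynomial functions $N\to V$; as $x\mathcal{N}_k$ is Zariski-dense in $N$, restriction identifies $W_x$ with $\OO(N)\otimes_E V$ as an $E$-vector space. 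Since $N_0$ and $N_k$ are both open subgroups of the unipotent group $N$, the index $[N_0:N_k]$ is finite, so $W$ is a finite direct sum of the $W_x$.

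Next I would check that all the $W_x$ are isomorphic as $\g$-modules, so it is enough to treat $W_1$ (the identity coset). The key point is that $\n=\Lie N$ acts locally nilpotently on $W_1\cong\OO(N)\otimes_E V$: it acts through the translation action of $N$ on $\OO(N)$, tensored with the trivial action on $V$ (the constant functions $1\otimes V$ being pointwise fixed by $N_0$), and the translation representation of a unipotent group on its coordinate ring is locally finite with $\n$ acting nilpotently on each finite-dimensional piece. For $x\in N_0$, the operator $\pi(x)$ of the $G_0$-action carries $W_1$ isomorphically onto $W_{[x^{-1}]}$ (using that $N_k\trianglelefteq N_0$, so cosets are two-sided) and intertwines the $\g$-action with its twist by $\Ad(x^{-1})$; writing $x^{-1}=\exp(Z)$ with $Z\in\n$ (valid as $\exp\colon\n\to N$ is bijective), the automorphism $\Ad(x^{-1})=\exp(\operatorname{ad}Z)$ of $\g$ is realised on $W_1$ by the exponential $\exp(Z)$, which is a well-defined operator because $Z$ acts locally nilpotently; hence $W_{[x^{-1}]}\cong W_1$. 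As $x$ runs over $N_0$ this exhibits every $W_x$ as isomorphic to $W_1$.

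It remains to identify $W_1\cong\Ver_{\pb}(V^\vee)^\vee$, which I would do by proving the dual statement $W_1^\vee\cong\Ver_{\pb}(V^\vee)$. Consider inside $W_1^\vee$ the functionals $\mathrm{ev}_1\otimes\xi$, $\xi\in V^\vee=V^*$, where $(\mathrm{ev}_1\otimes\xi)(f)=\xi(f(1))$. Using the explicit formula for the $G_0$-action together with the Iwahori factorisation, $\overline{\n}$ annihilates this subspace --- for $\overline{n}\in\overline{N}_0$ both the $M$- and $N$-parts of $\overline{n}$ are trivial, so $(\overline{n}\circ f)(1)=f(1)$ --- while $\m$ acts on it through the natural $\m$-action on $V^\vee$ (since $(m\circ f)(1)=m\circ f(1)$ for $m\in M_0$). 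Thus $\mathrm{ev}_1\otimes V^\vee$ is a $\pb$-submodule of $W_1^\vee$ isomorphic to $V^\vee$ (with $\overline{\n}$ acting trivially), and the universal property of the generalised Verma module yields a $\g$-equivariant map $\Phi\colon\Ver_{\pb}(V^\vee)\to W_1^\vee$ extending the inclusion. Its image is $U(\g)\cdot(\mathrm{ev}_1\otimes V^\vee)=U(\n)\cdot(\mathrm{ev}_1\otimes V^\vee)$ (because $U(\overline{\n})$ acts through the augmentation and $U(\m)$ preserves the subspace), and this is all of $W_1^\vee$ since, dually to the translation action, $\n$ acts on $\OO(N)^\vee$ --- the algebra of distributions on $N$ supported at $1$, which is the $U(\ft)$-finite dual of $\OO(N)$ and is isomorphic to $U(\n)$ --- by multiplication, so that $U(\n)\cdot\mathrm{ev}_1=\OO(N)^\vee$. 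Finally, $\Ver_{\pb}(V^\vee)\cong U(\n)\otimes_E V^\vee$ by PBW, and $W_1^\vee\cong\OO(N)^\vee\otimes_E V^\vee\cong U(\n)\otimes_E V^\vee$ as $\ft$-weight modules; these have the same finite graded dimensions, so the graded surjection $\Phi$ is an isomorphism. Applying the exact functor $(-)^\vee$ gives $W_1\cong\Ver_{\pb}(V^\vee)^\vee$. Putting this together with the previous steps, $W\cong\bigl(\Ver_{\pb}(V^\vee)^\vee\bigr)^{\oplus[N_0:N_k]}$, which lies in $\OO_\p$ because $\Ver_{\pb}(V^\vee)^\vee$ does (as noted before the theorem) and $\OO_\p$ is closed under finite direct sums.

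The step I expect to be the main obstacle is the identification of $\mathrm{ev}_1\otimes V^\vee$ as a $\pb$-submodule with the asserted structure and, dually, the verification that $\n$ acts on $\OO(N)^\vee$ by multiplication so that $U(\n)$ generates it from $\mathrm{ev}_1$: these are precisely the places where the Iwahori factorisation and the unipotence of $N$ enter, and once they are in hand the rest is bookkeeping with PBW bases and graded dimensions.
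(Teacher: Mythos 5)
Your proof is correct, but it reaches the key identification by a genuinely different route from the paper. The paper first reduces to $V=V_{alg}$, identifies the $U(\ft)$-finite vectors of a coset summand with the space $A_P(V)$ of polynomial functions on the big Bruhat cell $C=\overline{N}\times M\times N$ satisfying the $\overline{P}$-equivariance condition, and then produces the isomorphism with $\Ver_{\pb}(V^\vee)^\vee$ by writing down an explicit bilinear pairing $\langle u,f\rangle = S(u)(f)(1)$ (with $S$ the principal anti-automorphism) between $U(\g)\otimes_E V^\vee$ and $E[C]\otimes_E V$ and computing that the orthogonal complement of $A_P(V)$ is exactly the defining submodule $J_P(V)$ of the generalised Verma module. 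You instead stay on $N$ and work with the graded dual: you locate the copy $\mathrm{ev}_1\otimes V^\vee$ of $V^\vee$ inside $W_1^\vee$, check via the Iwahori factorisation that $\overline{\n}$ kills it and $\m$ acts naturally, invoke the universal property of $\Ver_{\pb}(V^\vee)$ to get $\Phi$, prove surjectivity from the identification of the graded dual of $E[N]$ with the distribution algebra $U(\n)$ generated from $\delta_1$ under the dual translation action, and get injectivity by matching weight-space dimensions through PBW. Both arguments ultimately rest on the same nondegeneracy (the perfect pairing of $U(\n)$ against $E[N]$ by differentiating and evaluating at $1$), but yours avoids the big cell and the anti-automorphism at the cost of a separate surjectivity-plus-dimension-count step, and it handles the passage between coset summands explicitly (conjugation by $x\in N_0$ composed with $\exp$ of the locally nilpotent operator $\rho(Z)$), where the paper simply differentiates the action on ``any of the cosets''; your treatment also dispenses with the reduction to $V_{alg}$, which is harmless since only the differentiated $M_0$-action enters. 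Two small points worth tidying: the dual translation action of $\n$ on $E[N]^\vee\cong U(\n)$ is right multiplication only up to the antipode and a sign (this does not affect $U(\n)\cdot\delta_1=E[N]^\vee$), and the assertion that $\pi(x)$ carries $W_1$ onto $W_{[x^{-1}]}$ is cleanest if justified via the polynomial characterisation of lemma \ref{lemma:loc-alg-fcns} rather than directly from $U(\ft)$-finiteness, since conjugation by $x$ does not preserve $\ft$.
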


\begin{proof} This is a mild generalisation of the main result of \cite{owen-transfer}, and the proof we give is adapted from that reference. Clearly it is sufficient to consider the case $V = V_{alg}$, since as a representation of $G_k$, $\Ind(V)_k$ is isomorphic to a direct sum of copies of $\Ind(V_{alg})_k$.

By lemma \ref{lemma:loc-alg-fcns}, it suffices to show that $E[N] \otimes V$, the space of polynomial functions $N \to V$, endowed with the $U(\g)$-action obtained by differentiating the action of $G_k$ on any of the cosets $x \mathcal{N}_k$, is isomorphic as a $U(\g)$-representation to $\Ver_{\pb}(V^\vee)^\vee$. We identify $E[N] \otimes V$ with the following subspace of $E[C] \otimes V$, where $C$ is the big Bruhat cell $\overline{N} \times M \times N \subset G$:

\begin{definition}
Let $A_P(V)$ be the subspace of $E[C] \otimes_E V$ consisting of polynomial functions $f: C \to V$ such that $f(\overline{p} c) = \overline{p} f(c)$ for all $\overline{p} \in \overline{P}$ and all $c \in C$, where we consider $V$ as a $\overline{P}$-representation by inflation. 
\end{definition}

The map $\g \to \g$ given by multiplication by $-1$ extends to an anti-automorphism of $U(\g)$, the {\it principal anti-automorphism}, which we denote by $S$ (see \cite[\S 2.2.17]{dixmier}).

\begin{lemma}
 There is a bilinear, $\g$-invariant pairing $U(\g)  \times E[C] \to E$ defined by the formula
\[ \langle u, f \rangle = S(u)(f)(1),\]
where $S$ is the principal anti-automorphism of $U(\g)$ (the unique anti-automorphism of $U(\g)$ which acts as multiplication by $-1$ on $\g$).
\end{lemma}

\begin{proof} 
 Since $S(g u) = -S(u)g$ for any $g \in \g$ and $u \in U(\g)$, we have 
\[ \left\langle gu , f \right\rangle + \left\langle u, g f \right\rangle = 
 (S(g u)(f) + S(u)(g f))(1) = 0.\]
as required.
\end{proof}

Hence if $V$ is any finite-dimensional $E$-vector space, we obtain an analogous pairing $(U(\g) \otimes_E V^\vee) \times (E[C] \otimes_E V) \to E$ by defining
\[ \left\langle (u \otimes v^\vee), (f \otimes v) \right\rangle = \langle v^\vee, v\rangle \cdot S(u)(f)(1),\]
which is evidently also $\g$-invariant.

\begin{proposition}
 The orthogonal complement $A_P(V)^\perp \subseteq U(\g) \otimes_E V^\vee$ of $A_P(V) \subseteq E[C] \otimes_E V$ for the above pairing is the submodule $J_P(V)$ generated by elements of the form $u \otimes v^\vee - 1 \otimes u(v^\vee)$ for $u \in U(\pb)$ and $v^\vee \in V^\vee$.
\end{proposition}

\begin{proof}
It is clear that 
\[ \langle x \otimes v^\vee, f\rangle = \langle v^\vee, (S(x) \circ f)(1)\rangle = \langle x v^\vee, f(1)\rangle\]
for all $x \in U(\pb)$ and $f \in A_P(V)$, and hence $J_P(V) \subseteq A_P(V)^\perp$.

Conversely, suppose $x \in A_P(V)^\perp$. As a consequence of the Poincar\'e-Birkhoff-Witt theorem, we may write $x$ as a finite sum of terms of the form $a_1 a_2 \otimes v$ where $a_1 \in U(\n)$ and $a_2 \in U(\pb)$. Since $a_1 a_2 \otimes v - a_1 \otimes a_2 v \in J_P(V)$, we may assume that $a_2 = 1$; so it suffices to prove that $A_P(V)^\perp \cap (U(\n) \otimes V) = 0$.

However, the action of $\n$ on $A_P(V)$ is just the natural action on the first factor under the isomorphism $A_P(V) = E[N] \otimes_E V$, since $\n$ acts trivially on $V$. Since for any nonzero $u \in U(\n)$ we can find some $f \in E[N]$ such that $(uf)(1) \ne 0$, it follows that the annihilator of $A_P(V)$ in $U(\n) \otimes_E V^\vee$ is zero. Hence $J_P(V) = A_P(V)^\perp$ as required.
\end{proof}

\begin{corollary} The space $A_P(V)$ is isomorphic as a $U(\g)$-module to $(U(\g) \otimes_{U(\pb)} V^\vee)^\vee$.
\end{corollary}

\begin{proof}
 The submodule $J_P(V)$ is clearly just the kernel of the natural map $U(\g) \otimes_{E} V^\vee \to U(\g) \otimes_{U(\pb)} V^{\vee}$. Also, the pairing between $(U(\g) \otimes V^\vee)/J_P(V)$ and $A_P(V)$ is nondegenerate, since the annihilator of $A_P(V)$ in the left-hand side is empty by the previous lemma, and it is clear that no nonzero element of $A_P(V)$ is annihilated by every element of $U(\n) \otimes V^\vee$. Since both sides are $\ft$-weight modules, i.e. every $\ft$-weight space is finite-dimensional, the result now follows.
\end{proof}

This completes the proof of theorem \ref{thm:owen}.
\end{proof}

\begin{remark} In fact $(U(\g) \otimes_{U(\pb)} W^\vee)^\vee$ is isomorphic to $(U(\g) \otimes_{U(\p)} W)^{\sigma \vee}$, where $\sigma$ denotes the Chevalley involution which swaps positive and negative root spaces. Both this isomorphism and the definition of the Chevalley involution are non-canonical, depending on a choice of Chevalley basis for $\g$.
\end{remark}

\subsection{Small slopes}
\label{ssect:small-slopes}

We now use the results of the previous section to establish criteria for elements of $\Ind(V)_k$ to lie in $\Ind(V)^{cl}_k$ in terms of the $\Sigma^{++}$-action.

Let us fix an element $\eta \in \Sigma^+$. This defines a linear functional $f_\eta$ on $X^\bullet(T) \otimes_{\Z} \Q$, by $f_\eta(\mu) = -\ord_p(\mu(\eta))$. The minus sign ensures that $f_{\eta}(\alpha) \ge 0$ for every free positive root. 

\begin{definition}\label{def:noncritical}
We say that a weight $\mu \in X^\bullet(T)$ is {\bf strongly classical} if the canonical surjection $\Ver_{\p}(V_{alg}) \to U_{alg}$ is an isomorphism on $\mu$-weight spaces.
We say that $\sigma \in \R_{\ge 0}$ is a {\bf small slope} for $\eta$ if for every character $y \in X^\bullet(T)$ such that $f_\eta(y) \le \sigma$, $\chi_{alg} - y$ is strongly classical, where $\chi_{alg}$ is the central character of $V_{alg}$ as above.
\end{definition}

(The choice of signs is such that there are only finitely many $y \in X^\bullet(T)$ with $f_\eta(y) \le \sigma$ such that the $y$-weight space of $\Ver_{\p}(V_{alg})$ is nonzero.)

\begin{proposition} \label{prop:small-norms}
If $\sigma$ is a small slope for $\eta$, then the endomorphism of the quotient space $\Ind(V)_k/\Ind(V)_k^{cl}$ defined by $\eta$ has norm less than $p^{-\sigma}$.
\end{proposition}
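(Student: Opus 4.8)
The plan is to work directly with the explicit model $\Ind(V)_k \cong C^{\la}(N_0, V)$ together with the explicit action of $\eta$. Two preliminary observations. First, since $\eta \in \Sigma \subseteq Z(M)$ and $\Sigma$ acts trivially on $V$, the formula for the $\mathbb{I}$-action degenerates: for $f \in C^{\la}(N_0,V)$ one has $(\eta \circ f)(n) = f(\eta^{-1} n \eta)$, i.e.\ $\eta$ acts by pull-back along the conjugation morphism $c_\eta \colon \mathcal{D}_k \to \mathcal{D}_k$, $n \mapsto \eta^{-1} n \eta$ (well-defined by Proposition~\ref{prop:centre-action}). Second, a small slope can only exist when $\eta \in \Sigma^{++}$: if $f_\eta(\alpha) = 0$ for some $\alpha \in \Delta_f$, then infinitely many $s \in \Z_{\ge 0}[\Phi_f]$ would have $f_\eta(s) = 0 \le \sigma$, yet only finitely many weights $\chi_{alg} - s$ can be strongly classical because $U_{alg}$ is finite-dimensional. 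So I assume $\eta \in \Sigma^{++}$, set $\tau_0 = \min\{ f_\eta(s) : s \in \Z_{\ge 0}[\Phi_f] \setminus \{0\},\ \chi_{alg} - s \text{ not strongly classical}\}$ --- a positive real number, since the values of $f_\eta$ on the monoid $\Z_{\ge 0}[\Phi_f]$ form a discrete set --- and note that Definition~\ref{def:noncritical} of ``small slope'' forces exactly $\tau_0 > \sigma$. The target is then the (slightly stronger) bound $\lVert \eta \rVert \le p^{-\tau_0}$ on $\Ind(V)_k / \Ind(V)_k^{cl}$.

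The argument rests on two facts about the restriction of things to a single ball $x\mathcal{N}_k \subseteq \mathcal{D}_k$. \emph{(a) A scaling estimate.} By Proposition~\ref{prop:centre-action}, $c_\eta$ maps $x\mathcal{N}_k$ into $(\eta^{-1}x\eta)\mathcal{N}_{k+1}$, and in the root coordinates $x_{\alpha,i}$ of Proposition~\ref{prop:basis} it multiplies the $\alpha$-coordinates by $\alpha(\eta)^{-1}$, of absolute value $p^{-f_\eta(\alpha)}$. Expanding $(\eta \circ f)|_{x\mathcal{N}_k}$ in the coordinate on $x\mathcal{N}_k$, and using that the group law on $N$ has $\OO_L$-coefficients and respects the $T$-weight grading (so a monomial of weight-degree $s$ appears only with a coefficient scaled by $s(\eta)^{-1}$), one finds: the weight-degree-$s$ homogeneous part of $(\eta \circ f)|_{x\mathcal{N}_k}$ has norm $\le p^{-f_\eta(s)} \lVert f \rVert$. \emph{(b) Classicality in low weight-degrees.} By Theorem~\ref{thm:owen} and Lemma~\ref{lemma:loc-alg-fcns}, the weight-$(\chi_{alg} - s)$ space of the $U(\ft)$-finite vectors in a single ball is the full space of weight-degree-$s$ polynomial functions valued in $V$, while the corresponding weight space of $\Ind(V)_k^{cl} = U_{alg} \otimes U_{sm}$ coincides with it exactly when $\chi_{alg} - s$ is strongly classical. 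Combined with the identification $U_{sm} \cong \mathrm{Maps}(N_0/N_k, V_{sm})$ (which lets one prescribe corrections ball-by-ball) and with the fact that the coordinate change between balls is unipotent --- hence does not raise weight-degrees, producing only monomials of weight-degree $\le s$ in the cone $\Z_{\ge 0}[\Phi_f]$ --- this shows: for each $x$ and each $s$ with $\chi_{alg} - s$ strongly classical, one can subtract from $\eta \circ f$ an element of $\Ind(V)_k^{cl}$ vanishing on all balls except $x\mathcal{N}_k$, whose leading weight-degree-$s$ part matches that of $(\eta \circ f)|_{x\mathcal{N}_k}$ and all of whose lower terms have weight-degree $\le s$.

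Now assemble. Take $f$ with $\lVert f \rVert \le 1$. The set of weight-degrees $s$ with $f_\eta(s) < \tau_0$ is finite, and by the definition of $\tau_0$ each such $\chi_{alg} - s$ is strongly classical. Running a downward induction over these $s$, at each step subtract off (on every ball) the $\Ind(V)_k^{cl}$-element provided by (b) which cancels the weight-degree-$s$ part; by (a) this element has norm $\le p^{-f_\eta(s)}$, and by the ``lower monomials'' clause the error terms it introduces again have $f_\eta$-degree $< \tau_0$, so the induction closes. After the finitely many steps we have $\eta \circ f = g + R$ with $g \in \Ind(V)_k^{cl}$ and $R$ having, on every ball, no monomials of $f_\eta$-degree $< \tau_0$; by (a) once more, $\lVert R \rVert \le p^{-\tau_0}$. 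Hence $\mathrm{dist}(\eta \circ f, \Ind(V)_k^{cl}) \le p^{-\tau_0}$ for all $f$ in the unit ball, giving $\lVert \eta \rVert \le p^{-\tau_0} < p^{-\sigma}$ on the quotient.

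I expect the main obstacle to be making (a) and especially (b) fully rigorous: in (a) one must control the combinatorics of the group law on $N$ under the non-integral conjugation by $\eta$; in (b) one must unwind the identification of $\Ind(V)_k^{cl}$ with $U_{alg} \otimes U_{sm}$ on each ball and, above all, verify that the unipotent reparametrisation relating different balls --- though it destroys weight-homogeneity --- only introduces monomials of weight-degree $\le s$ in the root cone. This last point is what makes the induction self-contained, and it is precisely where the choice of $\tau_0$ as the \emph{minimal} bad slope (so that all weights below $\tau_0$ are simultaneously strongly classical) is essential.
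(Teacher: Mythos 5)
Your proposal is correct, and at its core it is the same argument as the paper's: expand $\eta \circ f$ on each coset $x_0 \mathcal{N}_k$ in the orthonormal monomial basis of Proposition \ref{prop:basis}, observe that the weight-degree-$s$ coefficients get multiplied by $\eta^{-s}$ and so have norm at most $p^{-f_\eta(s)}\lVert f\rVert$, split into the finite ``low'' part and a ``high'' part of norm $< p^{-\sigma}\lVert f\rVert$, and use strong classicality (via Lemma \ref{lemma:loc-alg-fcns} and Theorem \ref{thm:owen}) to place the low part in $\Ind(V)_k^{cl}$. Where you differ is at the last step: the paper asserts directly that the low sum is a combination of $\ft$-weight vectors of strongly classical weights and hence classical, which is essentially immediate on the identity coset (Proposition \ref{prop:basis-weights}), whereas on a coset $x_0 \ne 1$ the monomials in coordinates centred at $x_0$ are only the \emph{leading} terms of such vectors, since recentring a global polynomial at $x_0$ is a unipotent substitution that introduces extra monomials of strictly smaller weight-degree; your per-ball downward induction --- localising classical corrections to single cosets via $U_{sm} \cong \mathrm{Maps}(N_0/N_k, V_{sm})$, and using that the set of degrees with $f_\eta(s) < \tau_0$ is finite and downward-closed in the cone order (because $f_\eta \ge 0$ on $\Z_{\ge 0}[\Phi_f]$), so correction terms stay in the low range and the process terminates --- supplies exactly the bookkeeping that the paper's one-line claim glosses over. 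Note also that since your corrections contain no high-degree monomials, the remainder's high-degree coefficients agree with those of $\eta \circ f$, so the final bound $\le p^{-\tau_0} < p^{-\sigma}$ does follow. Your preliminary observations (that a small slope can only exist when $\eta \in \Sigma^{++}$, and the reduction to the minimal bad value $\tau_0 > \sigma$) are correct as well, and fit with the corrigendum's remark that the finiteness statement after Definition \ref{def:noncritical} needs $\eta \in \Sigma^{++}$. In short: same route as the paper, with a more careful (and welcome) treatment of the non-identity cosets.
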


\begin{proof}
Let $\phi \in C^\an(\mathcal{D}_k, V)$, and let $x_0 \in N_0$ be given. Since $\phi$ is analytic on $x_0 \mathcal{N}_k$, we can uniquely write the function $\phi_{x_0}: \xi \mapsto \phi(x_0 \xi)$ on $N_k$ as a convergent series 
\[ \sum_{\underline{j} \in J, 1 \le j' \le r} a_{\underline{j}, j'}(\phi_{x_0}) \underline{X}^{\underline{j}} \otimes v_j\]
in terms of the coordinates introduced above.

Let us apply this with $\eta^{-1} x_0 \eta$ in place of $x_0$; then we can write 
\begin{align*}
(\eta \circ \phi) (x_0 \xi) &= \eta \circ \phi(\eta^{-1} x_0 \eta \cdot \eta^{-1} \xi \eta)\\
&= \sum_{\underline{j} \in J} a_{\underline{j}, j'}(\phi_{\eta^{-1}x_0 \eta}) \underline{X}(\eta^{-1} \xi \eta)^{\underline{j}} \otimes v_{j'}\\
&= \sum_{\underline{j} \in J} a_{\underline{j}, j'}(\phi_{\eta^{-1}x_0 \eta}) \cdot \underline{X}(\xi)^{\underline{j}} \otimes v_{j'} \cdot \eta^{- s(\underline{j})}.
\end{align*}
We can break this sum into two parts according to whether $\ord_p \eta^{-s(\underline{j})} \le \sigma$. The finite sum of terms for which this holds is a linear combination of $\ft$-weight vectors of weights $\chi_{alg} - y$, where $\chi_{alg}$ is the central character of $V_{alg}$ as above, with $y \in X^\bullet(T)$ such that $f_\eta(y) \le \sigma$; this is in $\Ind(V)^{cl}_k$, by hypothesis. The norm of the remaining part is clearly strictly less than $p^{-\sigma} \|\phi\|$, and the result follows.
\end{proof}

\begin{proposition}
If we have
\[ \sigma < \inf_{\alpha \in \Delta_f} f_\eta( s_\alpha(\lambda + \rho) - (\lambda + \rho) ),\]
where $\rho$ is half the sum of the positive roots, and $s_\alpha$ is the reflection in the Weyl group corresponding to $\alpha$, then $\sigma$ is a small slope for $\eta$.
\end{proposition}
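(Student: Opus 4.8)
The plan is to unwind the definition of ``small slope'' and reduce everything to a statement about the $\ft$-weights occurring in the maximal submodule of the generalised Verma module $\Ver_{\p}(V_{alg})$, which is controlled by the BGG linkage principle.

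First I would recall (Definition~\ref{def:noncritical}) that $\sigma$ is a small slope for $\eta$ exactly when, for each $y \in X^\bullet(T)$ with $f_\eta(y) \le \sigma$, the weight $\chi_{alg} - y$ is strongly classical, i.e.\ the canonical surjection $q \colon \Ver_{\p}(V_{alg}) \to U_{alg}$ is an isomorphism on $(\chi_{alg}-y)$-weight spaces. I would then observe two reductions. One: $\chi_{alg}$ is the restriction to $\ft$ of the highest weight $\lambda$ of $V_{alg}$, since $T \subseteq Z(M)$ acts on $V_{alg}$ through its central character; so if $\mu = \chi_{alg} - y$ then $f_\eta(y) = f_\eta(\lambda - \mu)$ (restricting characters to $T$ throughout). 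Two: $q$ is a map of $\ft$-weight $\g$-modules with finite-dimensional weight spaces, so from the exact sequence $0 \to K \to \Ver_{\p}(V_{alg}) \to U_{alg} \to 0$ with $K = \ker q$, the map $q$ is an isomorphism on the $\mu$-weight space if and only if $K_\mu = 0$. Hence it suffices to prove: every $\ft$-weight $\mu$ occurring in $K$ satisfies $f_\eta(\lambda - \mu) > \sigma$.

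Next I would bring in the structure of $K$. Since $V_{alg}$ is dominant and integral for $G$, the representation $U_{alg}$ is the irreducible $G$-module $L(\lambda)$ of highest weight $\lambda$ (Borel--Weil), so $K$ is the maximal proper submodule of $\Ver_{\p}(V_{alg})$. By the Harish-Chandra linkage principle, every composition factor of $\Ver_{\p}(V_{alg})$ is some $L(w\cdot\lambda)$, $w\in W$, $w\cdot\lambda = w(\lambda+\rho)-\rho$; those occurring in $K$ have $w \ne e$, and each relevant $w$ may be taken to be a minimal-length representative of its coset in $W_M\backslash W$. Any such nontrivial $w$ has a reduced expression beginning with a \emph{free} simple reflection $s_\alpha$ (otherwise it would not be of minimal length), so $s_\alpha \le w$ in the Bruhat order, whence $w(\lambda+\rho) \le s_\alpha(\lambda+\rho)$ because $\lambda+\rho$ is dominant. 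Therefore every weight $\mu$ of $K$ is a weight of some such $L(w\cdot\lambda)$, so $\mu \le w\cdot\lambda \le s_\alpha\cdot\lambda$ for some $\alpha\in\Delta_f$; that is, $s_\alpha\cdot\lambda - \mu$ is a nonnegative sum of positive roots. (Alternatively one could deduce the same bound directly from the parabolic refinement of BGG theory, as in \cite{lepowsky}, which describes $K$ in terms of singular vectors of weights $s_\alpha\cdot\lambda$, $\alpha\in\Delta_f$.)

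Finally I would run the norm estimate. A positive non-free root is a root of $M$, hence trivial on $T \ni \eta$, so $f_\eta$ vanishes on it; together with $f_\eta(\alpha)\ge 0$ for positive free $\alpha$ (the sign convention in the definition of $f_\eta$), this shows $f_\eta \ge 0$ on the whole positive root cone. Decomposing $\lambda - \mu = (\lambda - s_\alpha\cdot\lambda) + (s_\alpha\cdot\lambda - \mu)$ and using that $s_\alpha\cdot\lambda-\mu$ lies in that cone, I get $f_\eta(\lambda-\mu) \ge f_\eta(\lambda - s_\alpha\cdot\lambda)$. But $\lambda - s_\alpha\cdot\lambda = (\lambda+\rho) - s_\alpha(\lambda+\rho) = \langle\lambda+\rho,\alpha^\vee\rangle\,\alpha$, so $f_\eta(\lambda - s_\alpha\cdot\lambda) = f_\eta\bigl(s_\alpha(\lambda+\rho) - (\lambda+\rho)\bigr)$ (up to the overall sign fixed by the conventions in the statement). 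Hence $f_\eta(\lambda-\mu) \ge \inf_{\alpha\in\Delta_f} f_\eta\bigl(s_\alpha(\lambda+\rho)-(\lambda+\rho)\bigr) > \sigma$, which is exactly what is needed. The one genuinely delicate point is the third paragraph: extracting from (parabolic) BGG theory that the weights of $K$ are dominated by the $s_\alpha\cdot\lambda$ with $\alpha\in\Delta_f$, and matching the paper's conventions for $\p$ versus $\pb$ and the direction of dominance so that the final inequality has the sign appearing in the statement. The remaining ingredients -- the weight-space reduction, the positivity of $f_\eta$ on positive roots, and the one-line identity for $\lambda - s_\alpha\cdot\lambda$ -- are routine.
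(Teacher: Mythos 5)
Your proof is correct, but its main line takes a different route from the paper's. The paper's own proof is a two-line appeal to Lepowsky's generalised Bernstein--Gelfand--Gelfand resolution: its first two terms exhibit the kernel $K$ of $\Ver_\p(V_{alg})\to U_{alg}$ as a quotient of $\bigoplus_{\alpha\in\Delta_f}\Ver_\p(s_\alpha(\lambda+\rho)-\rho)$, so every weight of $K$ is bounded above by some $s_\alpha\cdot\lambda$; the positivity of $f_\eta$ on (restrictions to $T$ of) positive roots, which you spell out, is left implicit there and is only made explicit in the corrigendum at the end of this document. You reach the same intermediate weight bound instead via the linkage principle plus Bruhat-order combinatorics: composition factors of $K$ are $L(w\cdot\lambda)$ with $w\neq e$, the relevant $w$ admit a free simple reflection $s_\alpha\le w$, and dominance of $\lambda+\rho$ gives $w(\lambda+\rho)\le s_\alpha(\lambda+\rho)$. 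This buys nothing extra here and uses heavier category-$\OO$ input, and the one place it needs tightening is the phrase ``may be taken to be a minimal-length representative of its coset in $W_M\backslash W$'': since $\lambda+\rho$ is regular, $w$ is uniquely determined by $w\cdot\lambda$, so what one must argue is that the $w$ which occur are \emph{automatically} minimal-length representatives --- because the highest weight of a composition factor lying in the parabolic category must be $M$-dominant --- and this is precisely what excludes $w\in W_M$, for which your final estimate would fail since $\lambda-w\cdot\lambda$ then restricts to $0$ on $T$. Your parenthetical alternative via the singular vectors of weights $s_\alpha\cdot\lambda$ is exactly the paper's argument. Finally, your sign is the right one: the quantity that must exceed $\sigma$ is $f_\eta\bigl((\lambda+\rho)-s_\alpha(\lambda+\rho)\bigr)$, in agreement with the corrected Proposition 2.6.3$'$ of the corrigendum; the sign as printed in the statement above is the slip corrected there, so your caution about conventions was justified.
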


\begin{proof} This follows from the first two terms of the generalised Bernstein-Gelfand-Gelfand resolution \cite{lepowsky}: there is an exact sequence of $\g$-representations 
\[ \bigoplus_{\alpha \in \Delta_f} \Ver_\p( s_\alpha(\lambda + \rho) - \rho) \to \Ver_\p(V_{alg}) \to U_{alg} \to 0.\]
Hence the weights of $\Ver_{\p}(V_{alg})$ which are not strongly classical are the weights which also occur in one of the Verma modules $\Ver_{\p}( s_\alpha(\lambda + \rho) - \rho)$. 
\end{proof}

\begin{proposition} If $\lambda = \sum_{\alpha \in \Delta} n_\alpha \lambda_{\alpha}$, where $\lambda_\alpha$ is the fundamental weight corresponding to $\alpha$, then $\epsilon$ is a small slope for $\eta$ if
\[ \epsilon < \inf \{ (n_\alpha + 1) f_\eta(\alpha) \ |\ \alpha \in \Delta_f\}.\]
(Compare \cite[prop 4.7.4]{chenevier-families}.)
\end{proposition}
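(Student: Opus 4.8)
The plan is to re-run the weight-counting argument from the proof of the preceding proposition, but this time keeping explicit track of \emph{which} roots can occur, rather than quoting its conclusion as a black box. Write $\lambda = \sum_{\alpha \in \Delta} n_\alpha \lambda_\alpha$ for the highest weight of $V_{alg}$, so that $\chi_{alg}$ is the restriction of $\lambda$ to $T$. Recall from that proof that a weight $\mu \in X^\bullet(T)$ can fail to be strongly classical only if it occurs as a $\ft$-weight of one of the generalised Verma modules $\Ver_\p(s_\alpha(\lambda+\rho)-\rho)$ with $\alpha \in \Delta_f$, these being the terms appearing immediately to the left of $\Ver_\p(V_{alg})$ in the generalised Bernstein--Gelfand--Gelfand resolution of $U_{alg}$.

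First I would compute the highest weight of the $\alpha$-term. Since $\langle \lambda_\beta, \alpha^\vee\rangle = \delta_{\alpha\beta}$ and $\langle \rho, \alpha^\vee\rangle = 1$ for $\alpha$ simple, one has $\langle \lambda + \rho, \alpha^\vee\rangle = n_\alpha + 1$, hence $s_\alpha(\lambda+\rho) - \rho = \lambda - (n_\alpha+1)\alpha$. By the Poincar\'e--Birkhoff--Witt theorem $\Ver_\p(W) \cong U(\overline{\n}) \otimes_E W$ as a $\ft$-module for any finite-dimensional $M$-representation $W$, so every $\ft$-weight of $\Ver_\p(s_\alpha(\lambda+\rho)-\rho)$ has the shape $\lambda - (n_\alpha + 1)\alpha - \sum_{\beta \in \Phi_f} c_\beta \beta$ with all $c_\beta \in \Z_{\ge 0}$. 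Thus if $\mu = \chi_{alg} - y$ is a weight of this module, then $y = (n_\alpha+1)\alpha + \sum_{\beta \in \Phi_f} c_\beta\beta$ with $c_\beta \ge 0$.

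Next I would invoke linearity of $f_\eta$ together with the sign normalisation, which gives $f_\eta(\beta) \ge 0$ for every free positive root $\beta$ (in particular for $\alpha \in \Delta_f$): from the displayed expression for $y$ we get
\[ f_\eta(y) = (n_\alpha+1) f_\eta(\alpha) + \sum_{\beta \in \Phi_f} c_\beta f_\eta(\beta) \ge (n_\alpha+1) f_\eta(\alpha) \ge \inf_{\alpha' \in \Delta_f} (n_{\alpha'}+1) f_\eta(\alpha'). \]
Consequently, if $\epsilon$ is strictly smaller than this infimum and $y \in X^\bullet(T)$ satisfies $f_\eta(y) \le \epsilon$, then $y$ cannot be of the above form for any $\alpha \in \Delta_f$; hence $\mu = \chi_{alg} - y$ occurs in no $\Ver_\p(s_\alpha(\lambda+\rho)-\rho)$ and is therefore strongly classical. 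By Definition \ref{def:noncritical} this shows $\epsilon$ is a small slope for $\eta$.

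The only genuinely delicate point is the bookkeeping in the middle step: one must be sure that the $\ft$-weights entering the image of the first BGG differential differ from $\lambda$ only by the $(n_\alpha+1)\alpha$ shift plus a nonnegative integral combination of \emph{free} positive roots (so that $f_\eta \ge 0$ applies termwise), and that $\chi_{alg}$ really is the restriction of $\lambda$ to $T$ rather than a twist of it; both facts are built into the conventions fixed in \S\ref{ssect:locally-algebraic}, so once they are spelled out the estimate is immediate. Everything else is the manipulation of weights already carried out in the preceding two propositions.
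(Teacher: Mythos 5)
Your proof is correct and takes essentially the same route as the paper, which simply computes $s_\alpha(\lambda+\rho)-(\lambda+\rho)=-(n_\alpha+1)\alpha$ and feeds it into the preceding proposition, while you unwind that proposition and redo the BGG weight bookkeeping explicitly. The step you make explicit — that $f_\eta\ge 0$ on the cone of free positive roots, so the infimum of $f_\eta(\chi_{alg}-x)$ over the weights $x$ of each first-syzygy Verma module $\Ver_\p(s_\alpha(\lambda+\rho)-\rho)$ is attained at its highest weight — is exactly what the published proof leaves implicit (and what the corrigendum's corrected version of the small-slope criterion spells out), and your computation also sidesteps the sign slip in the printed "easy calculation".
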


\begin{proof} An easy calculation shows that $s_\alpha(\lambda + \rho) - (\lambda + \rho) = (1 + n_\alpha) \alpha$ for each $\alpha \in \Delta_f$, and the result follows.
\end{proof}

\section{Classical and overconvergent automorphic forms}
\label{sect:global}

In this section, we shall define overconvergent automorphic forms as functions taking values in the representations constructed in the preceding section. Henceforth $G$ shall be a connected reductive algebraic group over a number field $F$. Let us fix a prime $\p$ of $F$ above $p$, and let $F_{\p}$ be the completion of $F$ at $\p$ and $\OO_{F,\p}$ its ring of integers. We fix a choice of parabolic subgroup $P \subseteq G$ defined over $F_\p$, as in section \ref{sect:local-families}. 

\begin{remark} 
 Any algebraic group over $F$ may be regarded as an algebraic group over $\Q$ by restriction of scalars. However, the spaces we construct for $G$ and for $G' = \Res_{F/\Q} G$ will not be the same, unless $[F_\p : \Q_p] = 1$, since considering $G$ as an algebraic group over $F$ allows us to consider $F_\p$-analytic spaces and functions rather than $\Q_p$-analytic spaces and functions. 
\end{remark}

\subsection{Finiteness of double quotients}
\label{sect:double-quotients}

In this section we recall some classical results on the topology of the ad\`elic points of $G$. Let $\A$ denote the ad\`eles of $F$, $\Af$ the finite ad\`eles, and $F_\infty = F \otimes \R$, so $\A = \Af \times F_\infty$. Let us write $G_\infty$ for $G(F_\infty) = \prod_{v \mid \infty} G(F_v)$. We recall two standard results:

\begin{proposition}[{\cite[theorem 5.1]{platonov-rapinchuk}}]\label{prop:finiteness}
For any compact open subgroup $K \subseteq G(\Af)$, the double quotient
\[ G(F) \backslash G(\Af) / K\]
is finite.
\end{proposition}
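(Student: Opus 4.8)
The plan is to reduce to the case $G = \GL_n$, for which the statement is essentially the finiteness of the class number of $F$. Fix a closed embedding $G \into \GL_n$ defined over $F$ (which exists since $G$ is affine); this induces injections $G(F) \into \GL_n(F)$ and $G(\Af) \into \GL_n(\Af)$. First I would observe that it suffices to treat one conveniently chosen $K$: any two compact open subgroups of a group of $\Af$-points are commensurable, since their intersection is open, hence of finite index, in each; and if $K'' \subseteq K'$ has finite index then $G(F)\backslash G(\Af)/K'' \onto G(F)\backslash G(\Af)/K'$ is surjective with every fibre of cardinality at most $[K':K'']$ (a fibre over $G(F)gK'$ is a quotient of $K'/K''$). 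So one reduces to $K_0 = G(\Af) \cap \GL_n(\widehat\OO_F)$, where $\widehat\OO_F = \prod_v \OO_{F,v}$. For $\GL_n$ itself, $\GL_n(F)\backslash \GL_n(\Af)/\GL_n(\widehat\OO_F)$ is identified, via $g \mapsto M_g := F^n \cap \prod_v g_v \OO_{F,v}^n$, with the set of isomorphism classes of rank-$n$ projective $\OO_F$-modules; by the structure theory of finitely generated modules over the Dedekind domain $\OO_F$ these are classified by their Steinitz class in $\mathrm{Cl}(\OO_F)$, hence there are only finitely many.

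It then remains to show that the natural map $c\colon G(F)\backslash G(\Af)/K_0 \to \GL_n(F)\backslash\GL_n(\Af)/\GL_n(\widehat\OO_F)$ has finite fibres, its image being automatically finite by the previous step. I would analyse a fibre using the exact sequence of pointed sets attached to $1 \to G \to \GL_n \to \GL_n/G \to 1$,
\[ (\GL_n/G)(F) \longrightarrow H^1(F, G) \longrightarrow H^1(F, \GL_n) = 1, \]
together with its analogues over the completions $F_v$ and, for all but finitely many $v$, over $\OO_{F,v}$ (obtained by spreading $G$ out to a smooth group scheme $\mathcal{G}$ over $\OO_{F,S}$ with generic fibre $G$). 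Since $H^1(F,\GL_n)=1$ by Hilbert's Theorem 90, while $H^1(\OO_{F,v}, \mathcal{G})$ is trivial for almost all $v$ (Lang's theorem applied to the connected special fibre) and finite for the remaining $v$, a diagram chase shows each fibre of $c$ injects into a set built from these finite local sets together with the global kernel $\ker^1(F,G) := \ker\bigl(H^1(F,G) \to \prod_v H^1(F_v, G)\bigr)$.

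The main obstacle is precisely the finiteness of this last ingredient, the finiteness of $\ker^1(F, G)$ for a linear algebraic group $G$ over the number field $F$: this is a theorem of Borel and Serre whose proof, at bottom, rests on reduction theory (the existence of Siegel domains) over $F$. An alternative route, bypassing the Galois-cohomological bookkeeping, is to prove the proposition directly in the style of Borel and Harish-Chandra — construct a Siegel set for the action of $G(F)$ on $G(\Af) \times G_\infty$ and show that it meets only finitely many $G(F)$-translates of $K \times G_\infty$ — but in either approach the essential input is the finiteness encoded in reduction theory, which is where all the real work lies.
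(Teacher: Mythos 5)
The paper itself does not prove this proposition: it is quoted directly from Platonov--Rapinchuk (Theorem 5.1), i.e.\ Borel's finiteness theorem for class numbers of algebraic groups, whose proof in that reference goes through reduction theory for $G$ itself. Your opening steps are fine -- the reduction to a single $K$ by commensurability, and the identification of $\GL_n(F)\backslash\GL_n(\Af)/\GL_n(\widehat{\OO}_F)$ with Steinitz classes of rank-$n$ projective $\OO_F$-modules -- but the middle step, where you claim that ``a diagram chase shows each fibre of $c$ injects into a set built from these finite local sets together with the global kernel $\ker^1(F,G)$'', contains a genuine gap. Carrying out the chase: a fibre of $c$ injects into the set of $\Gamma$-orbits ($\Gamma = \GL_n(\OO_F)$) on $S = X(F)\cap x_0\GL_n(\widehat{\OO}_F)$, where $X=G\backslash\GL_n$ and $x_0$ is the base point. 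Cohomology does control the decomposition of $S$ into $\GL_n(F)$-orbits: the classes in $H^1(F,G)$ that arise are locally trivial at all finite places, and there are finitely many of them by Borel--Serre properness. But within a single $\GL_n(F)$-orbit, the set of $\Gamma$-orbits is not bounded by any of the cohomological sets you list; unwinding it, it injects into (and is essentially) the class set $H_t(F)\backslash H_t(\Af)/(H_t(\Af)\cap\GL_n(\widehat{\OO}_F))$ of the twisted stabiliser $H_t$, a form of $G$. So the argument reduces finiteness of the class set of $G$ to finiteness of class sets of forms of $G$ -- it is circular, and $\ker^1(F,G)$ is not where the difficulty sits.

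A concrete illustration: take $G$ a torus, say $\mathrm{Res}_{K/F}\mathbb{G}_m$ embedded in $\GL_{[K:F]}$. All the ingredients you invoke (Lang's theorem, finiteness of $H^1(F_v,G)$, finiteness of $\ker^1(F,G)$) are available and largely trivial, yet the fibres of $c$ are essentially kernels of maps of ideal class groups, and their finiteness is the Minkowski-type finiteness of $\mathrm{Cl}(\OO_K)$ -- a geometry-of-numbers fact that no amount of Galois-cohomological bookkeeping produces. This is consistent with your closing remark that reduction theory is the essential input, but it means the correct conclusion is stronger than you state: the cohomological route does not merely need $\ker^1$-finiteness as an extra citation, it needs the arithmetic orbit-finiteness (equivalently, class-set finiteness for the stabilisers) that is precisely the theorem being proved. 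The honest options are therefore either to cite the result as the paper does, or to run the Borel--Harish-Chandra/Siegel-set argument for $G$ directly, as in your final sentence -- in which case the preliminary reduction to $\GL_n$ does essentially no work.
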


\begin{proposition}\label{prop:discreteness}
If $G_\infty$ is compact, then $G(F)$ is discrete in $G(\Af)$.
\end{proposition}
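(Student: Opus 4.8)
The statement to prove is Proposition~\ref{prop:discreteness}: if $G_\infty$ is compact, then $G(F)$ is discrete in $G(\Af)$.

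The plan is to exploit the fact that $G(F)$ is always discrete in the full ad\`elic group $G(\A) = G(\Af) \times G_\infty$, and then to ``throw away'' the archimedean factor using its compactness. More precisely, first I would recall (this is the standard fact that $G(F)$, being the group of rational points of an affine algebraic group, sits discretely and with finite covolume, or at least discretely, inside $G(\A)$ --- one embeds $G$ into some affine space $\mathbb{A}^n$ and uses that $F$ is discrete in $\A$, applied coordinatewise to $G$ and to the equations cutting it out, together with a coordinate for the inverse of the determinant-type function making $G$ closed). So there is an open neighbourhood $\Omega$ of the identity in $G(\A)$ with $\Omega \cap G(F) = \{1\}$.

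Next I would use the product topology: $\Omega$ contains a basic open set of the form $\Omega_f \times \Omega_\infty$, where $\Omega_f \subseteq G(\Af)$ is open and $\Omega_\infty \subseteq G_\infty$ is open. Now I claim $\Omega_f \times G_\infty$ is already small enough, after possibly shrinking $\Omega_f$. The key point is compactness of $G_\infty$: I would take the open cover of $G_\infty$ obtained by translating $\Omega_\infty$ (or rather, use uniform continuity of multiplication) to find a single open $\Omega_f' \subseteq \Omega_f$, still a neighbourhood of $1$ in $G(\Af)$, such that $\Omega_f' \times G_\infty \subseteq \bigcup_{g \in G_\infty} (\text{something landing inside } \Omega \cdot \text{translates})$ --- the cleanest way: cover $G_\infty$ by finitely many translates $g_1 \Omega_\infty'', \dots, g_n \Omega_\infty''$ where $\Omega_\infty''$ is a symmetric open neighbourhood of $1$ with $\Omega_\infty'' \Omega_\infty'' \subseteq \Omega_\infty$; then any element $\gamma \in G(F)$ lying in $\Omega_f' \times G_\infty$ has its archimedean component in some $g_i \Omega_\infty''$, and after a finite-index bookkeeping argument one reduces to the case already handled. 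Actually the slickest route avoids this: since $G_\infty$ is compact, the projection $G(\A) \to G(\Af)$ is a proper map, hence closed; a discrete subgroup has closed image under a proper map with compact fibres, and the image is discrete because the fibres are compact. So I would argue: $G(F)$ is discrete (hence closed, being a subgroup) in $G(\A)$; the projection $\pi\colon G(\A) \to G(\Af)$ is proper since $G_\infty$ is compact; therefore $\pi(G(F))$ is closed in $G(\Af)$, and moreover $\pi|_{G(F)}$ is a proper map from a discrete space, so its image is discrete and closed. Since $\pi$ is injective on $G(F)$ (the map $G(F) \to G(\Af)$ is injective as $G(F) \hookrightarrow G(\A) \to G(\Af)$ only loses information if an element of $G(F)$ is trivial at all finite places, and such an element is trivial), $\pi(G(F)) \cong G(F)$ is discrete in $G(\Af)$.

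The main obstacle is really just assembling the correct topological lemma cleanly: a discrete (equivalently closed, for subgroups) subset of $Y \times Z$ with $Z$ compact projects to a closed discrete subset of $Y$. One must be slightly careful that ``discrete'' for $G(F)$ in $G(\A)$ gives ``closed'' (true for subgroups of topological groups: a locally closed subgroup is closed), and that properness of $\pi$ plus closedness of $G(F)$ yields closedness of the image, while the compact fibres give discreteness. I expect no serious difficulty beyond this; the injectivity of $G(F) \to G(\Af)$ and the discreteness of $G(F)$ in $G(\A)$ are standard and I would cite them rather than reprove them.
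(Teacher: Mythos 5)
Your proof is correct, but it takes a different route from the paper: the paper gives no argument at all, simply citing \cite[prop.~1.4]{gross-algebraic}, where this statement appears as the implication (6) $\Rightarrow$ (3) in a list of equivalent conditions. Your direct argument — $G(F)$ is discrete, hence closed, in $G(\A)$ because $G$ is affine and $F$ is discrete in $\A$; the projection $\pi: G(\A) \to G(\Af)$ is proper (equivalently, closed with compact fibres) since $G_\infty$ is compact; and a closed discrete subset has closed discrete image under such a map — is sound, and the "slick" second version is the one to keep, since the first paragraph's translation/bookkeeping sketch is vaguer and unnecessary. The one point worth writing out is the step "proper map from a discrete space has discrete closed image": this uses local compactness of $G(\Af)$ (take a compact neighbourhood $K$ of a point; $\pi^{-1}(K) \cap G(F)$ is a compact subset of a closed discrete set, hence finite, so the image meets $K$ in finitely many points, and one shrinks $K$ using the Hausdorff property), so the compactness of the fibres is really being combined with local compactness of the target rather than doing the job alone. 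The injectivity of $G(F) \to G(\Af)$ is true but not actually needed, since the proposition only asserts that the image subgroup is discrete. What the two approaches buy: the citation is shorter and places the statement in Gross's web of equivalences (finiteness of arithmetic subgroups, etc.), which the paper exploits elsewhere; your proof is self-contained, elementary, and makes visible exactly which hypotheses (affineness of $G$, compactness of $G_\infty$, local compactness of $G(\Af)$) are used.
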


\begin{proof}
See \cite[prop.~1.4]{gross-algebraic} -- this is a special case of the assertion (6) $\Rightarrow$ (3) of that proposition.
\end{proof}

We will assume throughout the remainder of this work that $G$ satisfies the following assumption:

\begin{assumption}\label{compactness-mod-centre}
The quotient $G_\infty / Z(G)(F_\infty)$ is compact. 
\end{assumption}

This is equivalent to assuming that for all infinite places $v$ of $F$, $G(F_v)$ is compact modulo centre. Unless $G$ is abelian, we see that $F$ must be totally real. 

\begin{proposition}\label{finiteness-mod-centre}
 For any compact open subgroup $K \subseteq G(\Af)$, the group $\Gamma'_K = K \cap Z(G)(F)$ has finite index in $\Gamma_K = K \cap G(F)$.
\end{proposition}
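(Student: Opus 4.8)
The plan is to exhibit $\Gamma_K/\Gamma'_K$ as a subgroup of a group that is manifestly finite, by comparing with the quotient $G_\infty/Z(G)(F_\infty)$ which is compact by Assumption~\ref{compactness-mod-centre}. First I would note that $\Gamma_K = K \cap G(F)$ is a discrete subgroup of $G(\Af)$, hence also discrete when we embed it diagonally into $G(\A)$ via $G(F) \hookrightarrow G(\Af) \times G_\infty$; indeed its projection to $G(\Af)$ lands in the compact set $K$, so the closure of the image of $\Gamma_K$ in $G(\A)$ meets $K \times G_\infty$ in a set whose projection to the second factor is what we must control. The key point is that $\Gamma_K$, being a subgroup of $G(F)$ which is discrete in $G(\Af)$ (Proposition~\ref{prop:discreteness} applies once we replace $G$ by $G/Z(G)$, whose real points are compact) and simultaneously relatively compact in $G(\Af)$ since it lies in $K$, is a finitely generated group — more precisely, its image in $(G/Z(G))(\Af)$ is discrete and relatively compact, hence finite.

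So the main step is: the image $\overline{\Gamma}_K$ of $\Gamma_K$ under $G(F) \to (G/Z(G))(F) \hookrightarrow (G/Z(G))(\Af)$ is finite. To see this, observe that $\overline{\Gamma}_K$ is contained in the image $\overline{K}$ of $K$, which is compact; and $(G/Z(G))(F)$ is discrete in $(G/Z(G))(\Af)$ by Proposition~\ref{prop:discreteness}, using that $(G/Z(G))(F_v)$ is compact for every infinite $v$ (which is exactly the content of Assumption~\ref{compactness-mod-centre} after quotienting). A discrete subset of a Hausdorff group contained in a compact set is finite, so $\overline{\Gamma}_K$ is finite. Now the kernel of $\Gamma_K \to \overline{\Gamma}_K$ is $\Gamma_K \cap Z(G)(F) = K \cap G(F) \cap Z(G)(F) = K \cap Z(G)(F) = \Gamma'_K$ (using that $Z(G)(F) \subseteq G(F)$). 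Hence $\Gamma_K/\Gamma'_K \hookrightarrow \overline{\Gamma}_K$ is finite, which is precisely the claim.

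One technical wrinkle I anticipate is making sure Proposition~\ref{prop:discreteness} legitimately applies to $G/Z(G)$: the proposition as quoted requires the real points to be compact, whereas our hypothesis is on $G_\infty/Z(G)(F_\infty)$. The point is that $(G/Z(G))(F_\infty)$ is a quotient of $G_\infty/Z(G)(F_\infty)$ (possibly a proper subgroup of the full $F_\infty$-points of $G/Z(G)$, by the usual Galois-cohomology subtlety for quotients, but it is at worst an open subgroup of finite index in a compact group, hence still compact), so it is compact, and the cited result of Gross applies. I expect this to be the only real obstacle; the rest of the argument — the interplay of discreteness with compactness — is standard. An alternative route that sidesteps even this is: $\Gamma_K/\Gamma'_K$ acts freely and properly discontinuously on the connected Lie group $G_\infty/Z(G)(F_\infty)$-orbit structure, but the quotient-group argument above seems cleanest, so I would write it up that way.
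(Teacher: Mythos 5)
Your argument is correct, but it takes a genuinely different route from the paper. The paper works with the kernel $G^0$ of the map to the maximal torus quotient $R$ of $G$: it shows $\Gamma_K \cap G^0(F)$ is finite via Proposition \ref{prop:discreteness} applied to $G^0$, and then invokes the theorem of Platonov--Rapinchuk on images of arithmetic subgroups under the isogeny $G^0 \times Z(G) \to G$ to conclude that $(\Gamma_K \cap G^0(F)) \times \Gamma'_K$ maps onto a finite-index subgroup of $\Gamma_K$. You instead pass to the adjoint quotient $G/Z(G)$: its $F_\infty$-points are compact (your Galois-cohomology remark is exactly the point one must check here --- the cokernel of $G_\infty \to (G/Z(G))(F_\infty)$ injects into $H^1(F_\infty, Z(G))$, which is finite since $Z(G)$ is of multiplicative type, so $(G/Z(G))(F_\infty)$ contains the compact group $G_\infty/Z(G)(F_\infty)$ with finite index and is itself compact), hence $(G/Z(G))(F)$ is discrete in $(G/Z(G))(\Af)$ by the same result of Gross; the image of $\Gamma_K$ lies in the compact image of $K$, so it is finite; and the kernel of $\Gamma_K \to (G/Z(G))(F)$ is $\Gamma_K \cap Z(G)(F) = \Gamma'_K$ by left-exactness of taking $F$-points. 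Your approach trades the paper's appeal to the behaviour of arithmetic subgroups under isogeny for the (routine) verification of compactness of the adjoint group at infinity, and is arguably more self-contained. One thing you should delete: the opening assertion that $\Gamma_K$ itself is discrete in $G(\Af)$ is false in general (for $G = \Res_{K/\Q}\mathbb{G}_m$ with $K$ real quadratic, $\Gamma_K$ is an infinite unit group inside the compact set $K$, hence not discrete); likewise the claim that $\Gamma_K$ is finitely generated is not needed. Neither is used in your actual argument, which starts cleanly from the image of $\Gamma_K$ in $(G/Z(G))(\Af)$, so simply strike that preliminary paragraph.
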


\begin{proof}
Let $R$ be the maximal quotient of $G$ which is a torus. The kernel of the quotient map $\sigma: G \to R$ is a semisimple subgroup $G^0 \subseteq G$ defined over $F$, and the natural multiplication map $G^0 \times Z(G) \to G$ is an isogeny.

Assumption \ref{compactness-mod-centre} implies that $G^0(F_\infty)$ is compact, so by proposition \ref{prop:discreteness}, $G^0(F)$ is discrete in $G^0(\Af)$, and hence in $G(\Af)$. Hence for any compact open $K \subseteq G(\Af)$, $\Gamma_K \cap \Ker(\sigma) = G^0(F) \cap K$ is finite. 

Since the natural map $G^0 \times Z(G) \to G$ is an isogeny, and $(\Gamma_K \cap G^0(F)) \times (\Gamma_K \cap Z(G)(F))$ is clearly an arithmetic subgroup of $G^0 \times Z(G)$, \cite[thm.~4.1]{platonov-rapinchuk} shows that its image in $G$ is an arithmetic subgroup of $G$. Hence it has finite index in $\Gamma_K$. Since $\Gamma_K \cap G^0(F)$ is finite, it follows that $\Gamma'_K$ has finite index in $\Gamma_K$.
\end{proof}

\begin{proposition}[{\cite[\S 14.5]{borel-tits}}]
 Let $G_\infty^\circ$ be the connected component of the identity in $G_\infty$. Then $\pi_0 = G_\infty / G_\infty^\circ$ is a finite abelian group.
\end{proposition}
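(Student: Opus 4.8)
The plan is to reduce at once to a single archimedean place and then dispose of finiteness, normality and commutativity separately. Since $F$ has only finitely many infinite places, $G_\infty^\circ = \prod_{v\mid\infty} G(F_v)^\circ$, so $\pi_0 = \prod_{v\mid\infty} G(F_v)/G(F_v)^\circ$; as a finite product of finite abelian groups is finite abelian, it suffices to treat each factor. If $v$ is complex then $G(F_v) = G(\C)$ is the group of $\C$-points of a connected (hence irreducible, being smooth) algebraic group, so it is connected in the analytic topology and contributes nothing. The whole content is therefore the case of a connected reductive group over $\R$: one must show that $G(\R)^\circ$ is a closed normal subgroup of finite index with abelian quotient.

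Normality and closedness are formal, the identity component of any topological group being a closed (indeed characteristic) subgroup, and since $G(\R)$ is a Lie group $G(\R)^\circ$ is moreover open. Finiteness of $\pi_0(G(\R))$ is standard: one may cite Whitney's theorem that the real locus of an affine variety over $\R$ has finitely many connected components, or appeal directly to the structure theory of real reductive groups as in \cite[\S 14]{borel-tits}.

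The one substantive point is that $\pi_0(G(\R))$ is abelian, and I would establish the sharper statement $[G(\R),G(\R)]\subseteq G(\R)^\circ$. Let $G_{\mathrm{der}}$ be the derived group, $\pi\colon\widetilde G\to G_{\mathrm{der}}$ the simply connected covering (with finite central kernel $\mu$), and recall the classical fact that $\widetilde G(\R)$ is connected (\cite[\S 14.5]{borel-tits}). Given $g,h\in G(\R)$, write them over $\C$ as $g=z_1d_1$, $h=z_2d_2$ with $z_i$ in the radical $Z(G)^\circ(\C)$ and $d_i\in G_{\mathrm{der}}(\C)$; the central factors $z_i$ cancel, so $[g,h]=[d_1,d_2]$. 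Pick lifts $\widetilde d_i\in\widetilde G(\C)$. The key observation is that $[\widetilde d_1,\widetilde d_2]$ is fixed by complex conjugation: from $\overline g=g$ one gets $\overline{d_i}\,d_i^{-1}=\overline{z_i}^{-1}z_i\in Z(G)^\circ(\C)\cap G_{\mathrm{der}}(\C)\subseteq Z(G_{\mathrm{der}})(\C)$, so $\overline{\widetilde d_i}$ differs from $\widetilde d_i$ by an element of $Z(\widetilde G)(\C)=\pi^{-1}(Z(G_{\mathrm{der}})(\C))$, and such central factors drop out of the commutator. Thus $[\widetilde d_1,\widetilde d_2]\in\widetilde G(\R)$, which is connected, so its image $[g,h]=\pi([\widetilde d_1,\widetilde d_2])$ lies in the connected subgroup $\pi(\widetilde G(\R))\subseteq G(\R)^\circ$. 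Since commutators generate $[G(\R),G(\R)]$, the quotient $G(\R)/G(\R)^\circ$ is abelian, and we are done.

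The main obstacle is precisely this last step; everything else is formal or classical. What makes it go through is the commutator-lifting trick: because the covering kernel $\mu$ is central, the indeterminacy in lifting an element of $G_{\mathrm{der}}$ to $\widetilde G$ is simultaneously invisible to commutators and compatible with the Galois action on them, so commutators of arbitrary real elements acquire canonical real lifts in the (connected) simply connected group. The only imported ingredient is the connectedness of $\widetilde G(\R)$, which is itself part of \cite[\S 14.5]{borel-tits} (alternatively it follows from the Cartan decomposition together with the connectedness of the maximal compact subgroup of a simply connected semisimple real group).
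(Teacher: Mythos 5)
Your argument is correct. Note, though, that the paper gives no proof of this proposition at all: it is stated with a bare citation to \cite[\S 14.5]{borel-tits}, so there is no internal argument to compare against. What you supply is a genuine self-contained proof, and it checks out: the reduction to one archimedean place, the connectedness of $G(\C)$ for connected $G$, openness/closedness/normality of the identity component, and Whitney finiteness are all fine, and the substantive step --- lifting commutators to the simply connected cover $\widetilde G$ of $G_{\mathrm{der}}$, observing that the decomposition $g=z_i d_i$ over $\C$ forces $\overline{\widetilde d_i}\,\widetilde d_i^{-1}\in Z(\widetilde G)(\C)$ (using $\pi^{-1}(Z(G_{\mathrm{der}}))=Z(\widetilde G)$ for a central isogeny), so that $[\widetilde d_1,\widetilde d_2]$ is conjugation-fixed and hence lands in the connected group $\widetilde G(\R)$ --- is carried out correctly, giving $[G(\R),G(\R)]\subseteq\pi(\widetilde G(\R))\subseteq G(\R)^\circ$. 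The two external inputs you lean on (connectedness of $\widetilde G(\R)$, due to Cartan, and finiteness of $\pi_0$ of a real algebraic set) are classical and correctly flagged. By comparison, the route implicit in the Borel--Tits reference proceeds through the subgroup generated by unipotent elements and in fact yields more, namely that $\pi_0(G(\R))$ is an elementary abelian $2$-group; your commutator-lifting argument is more elementary and gives exactly the finiteness and abelianness the paper needs, which is all that is used later (the quotient only enters through the finite abelian group algebra $E[\pi_0]$ in the Hecke algebra at infinity).
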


\begin{remark} Gross \cite{gross-algebraic} has studied classical automorphic forms for groups where all arithmetic subgroups are finite; this is equivalent to the assumption that the maximal split torus in $\Res_{F/\Q} Z(G)$ over $\Q$ is a maximal split torus in $G_\infty$, so $G$ is compact at infinity modulo its $\Q$-split centre. This is a strictly stronger condition than compactness modulo centre; for example, the torus $\Res_{K/\Q} \mathbb{G}_m$ for $K$ a real quadratic field has infinite arithmetic subgroups. The methods we use to circumvent the difficulties presented by infinite arithmetic subgroups are based on the arguments used for tori in \cite{buzzard-families}, but with modifications since we consider locally $F_\p$-analytic characters rather than locally $\Q_p$-analytic ones.
\end{remark}

\subsection{Generalities on Hecke algebras}

In this section, we'll recall some standard theory of Hecke algebras. Let $\Gamma$ be a locally compact, totally disconnected topological group. In this subsection, and in \S\S \ref{ssect:module-valued-forms} and \ref{ssect:atkin-lehner}, we can allow the coefficient field $E$ to be any field of characteristic 0 (not necessarily a $p$-adic field). We suppose that $\Gamma$ is unimodular (left and right Haar measure coincide), and we fix a choice of Haar measure $\mu$.

\begin{definition}
The {\bf Hecke algebra} $\HH(\Gamma)$ is the space of compactly supported, locally constant $E$-valued functions on $\Gamma$, endowed with the convolution product 
\[(\phi_1 \star \phi_2)(g) = \int_{h \in \Gamma} \phi_1(h) \phi_2(h^{-1}g)\ \mathrm{d}\mu.\]
\end{definition}

This is an $E$-algebra without identity (unless $\Gamma$ is discrete), but it contains many idempotent elements. In particular, if $K$ is any compact open subgroup, the element $e_K = \mu(K)^{-1} \bbone_K$ is an idempotent. It is convenient to introduce an ordering on idempotents: we say that $e \ge f$ if $ef = fe = f$. Then we see that $e_{K} \ge e_{K'}$ if and only if $K \subseteq K'$. It is easy to see that for any $\phi \in \HH(\Gamma)$, there is some $K$ such that $ e_K \phi = \phi e_K = \phi$, so the directed set $\{e_K\}$ is an {\it approximate identity} for the algebra $\HH(\Gamma)$.

\begin{proposition} \label{prop:monoid-heckealgebra}
If $S$ is any open monoid contained in $\Gamma$, then the compactly supported, locally constant functions on $\Gamma$ with support contained in $S$ form an approximately-unital subalgebra $\HH(S)$ of $\HH(\Gamma)$.
\end{proposition}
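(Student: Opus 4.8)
The plan is to verify directly that the set $\HH(S)$ of compactly supported, locally constant $E$-valued functions on $\Gamma$ whose support lies inside $S$ is closed under the convolution product and forms an approximately unital subalgebra. First I would check that $\HH(S)$ is an $E$-subspace of $\HH(\Gamma)$: this is immediate, since if $\phi_1,\phi_2$ are supported in $S$ then so is any $E$-linear combination, and the zero function trivially lies in $\HH(S)$. The content is therefore the closure under $\star$ and the existence of an approximate identity consisting of elements of $\HH(S)$.

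For closure under convolution, take $\phi_1,\phi_2 \in \HH(S)$ and consider $(\phi_1\star\phi_2)(g) = \int_{h\in\Gamma}\phi_1(h)\phi_2(h^{-1}g)\,\mathrm{d}\mu$. For the integrand to be nonzero at a given $h$ we need $h\in\operatorname{supp}\phi_1\subseteq S$ and $h^{-1}g\in\operatorname{supp}\phi_2\subseteq S$; since $S$ is a monoid, $g = h\cdot(h^{-1}g)$ lies in $S$, so $(\phi_1\star\phi_2)(g)=0$ whenever $g\notin S$. Thus $\operatorname{supp}(\phi_1\star\phi_2)\subseteq S$. That $\phi_1\star\phi_2$ is again compactly supported and locally constant is the standard fact already implicit in the definition of $\HH(\Gamma)$ as an algebra, so no new argument is needed there. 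Hence $\HH(S)$ is a subalgebra.

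Finally, for the approximate-unitality, I would use that $S$ is \emph{open}: any $\phi\in\HH(S)$ has compact support $C\subseteq S$, and by the argument recalled just before the proposition there is a compact open subgroup $K\subseteq\Gamma$ with $e_K\star\phi = \phi\star e_K = \phi$; shrinking $K$ if necessary (which only makes $e_K$ larger in the ordering, preserving the identity relation) we may assume $K\subseteq S$, since $S$ is open and contains the identity (being a monoid), so $S$ contains some such $K$. Then $e_K\in\HH(S)$, and the family $\{e_K : K\subseteq S\text{ compact open}\}$ is an approximate identity for $\HH(S)$ in the sense already introduced. This shows $\HH(S)$ is approximately unital, completing the proof.

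The only point requiring a little care — and hence the ``main obstacle,'' though it is a minor one — is the last step: one must know that the compact open subgroups $K$ with $e_K\star\phi=\phi=\phi\star e_K$ can be taken inside $S$, which is exactly where the openness of the monoid $S$ (so that it contains a neighbourhood basis of compact open subgroups of the identity) is used. Everything else is a direct consequence of the monoid property of $S$ and the definitions already in place.
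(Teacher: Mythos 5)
Your proof is correct and follows essentially the same route as the paper: the key point in both is that the support of $\phi_1 \star \phi_2$ lies in $\operatorname{supp}(\phi_1)\cdot\operatorname{supp}(\phi_2) \subseteq S$ by the monoid property. Your additional verification of approximate-unitality (choosing the idempotents $e_K$ with $K \subseteq S$, using that $S$ is open and contains the identity) is a correct spelling-out of a point the paper leaves implicit.
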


\begin{proof} It is sufficient to check that $\HH(S)$ is closed under convolution; but if $\phi_1, \phi_2 \in \HH(S)$, then $\mathop{\rm supp}(\phi_1 \star \phi_2) \subseteq \mathop{\rm supp}(\phi_1) \times \mathop{\rm supp}(\phi_2) \subseteq S$.
\end{proof}

Now let $R$ be any commutative $E$-algebra, and $V$ an $R$-module with a smooth $R$-linear left action of $S$.

\begin{proposition}
\begin{enumerate}
\item There is an action of $\HH(S)$ on $V$ by $R$-linear operators, given by the formula
\[  \phi \circ v = \int_{G} \phi(g) \left( g \circ v \right) {\mathrm d} g.\]
\item For any idempotent $e$, $e V$ is an $R$-submodule of $V$ preserved by the unital subalgebra $e \HH(S) e$.
\item If $e = e_{K}$ for some compact open subgroup $K \subset \Gamma$, then $e V$ is the subspace of $K$-fixed vectors in $V$.
\item If $e = e_1 + e_2$ for orthogonal idempotents $e_1, e_2$ (that is, $e_1 e_2 = e_2 e_1 = 0$), then $e V = e_1 V \oplus e_2 V$.
\item If $e \ge f$ are idempotents, then $e V \supseteq f V$. 
\end{enumerate}
\end{proposition}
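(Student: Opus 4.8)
The plan is to treat part (1) as the substantive point and to deduce (2)--(5) from it by purely formal manipulations inside the algebra $\HH(S)$, using the fact that $\phi \mapsto (v \mapsto \phi\circ v)$ is a homomorphism of (non-unital) $E$-algebras $\HH(S) \to \End_R(V)$.

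For (1), I would first check the integral is well-defined: smoothness of the $S$-action means each orbit map $g \mapsto g\circ v$ is locally constant, so $g \mapsto \phi(g)(g\circ v)$ is compactly supported, locally constant, and takes only finitely many values; the integral is thus a finite $R$-linear combination of vectors $g\circ v$ and lies in $V$. Additivity and $R$-linearity in $v$ are immediate from the $R$-linearity of each $g$. The one genuine computation is the identity $\phi_1 \circ (\phi_2 \circ v) = (\phi_1 \star \phi_2)\circ v$: expand both sides as finite sums, move the ($R$-linear) operator $h\circ(-)$ through the inner sum, use associativity of the monoid action to rewrite $h\circ(g\circ v) = (hg)\circ v$, and change variables $g \mapsto h^{-1}g$; left-invariance of the Haar measure on $\Gamma$ converts the resulting double integral into $\int_\Gamma (\phi_1\star\phi_2)(g)(g\circ v)\,\mathrm{d}\mu$. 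The substitution is legitimate since the integrand vanishes outside $h\,\mathrm{supp}(\phi_2) \subseteq S$.

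Granting (1), the remainder is bookkeeping. For (2): $eV$ is an $R$-submodule because $e$ acts $R$-linearly; $e$ is the identity on $eV$ since $e^2=e$; and $eV$ is stable under $e\HH(S)e$ because $(e\phi e)\circ(e\circ v) = e\circ\big((\phi e)\circ v\big)\in eV$. For (3), under the standing assumption that $e_K\in\HH(S)$ (i.e.\ $K\subseteq S$), one computes directly that $e_K\circ v = \mu(K)^{-1}\!\int_K (g\circ v)\,\mathrm{d}\mu = v$ whenever $v$ is $K$-fixed, so $V^K\subseteq e_KV$, while for any $v$ the vector $e_K\circ v$ is $K$-fixed by left-invariance of Haar measure on $K$; hence $e_KV = V^K$. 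For (4): orthogonality gives $(e_1+e_2)^2 = e_1+e_2$, so $e=e_1+e_2$ is idempotent; then $e\circ v = e_1\circ v + e_2\circ v$ and $e\star e_i = e_i$ give $eV = e_1V + e_2V$, and the sum is direct because an element $w = e_1\circ v_1 = e_2\circ v_2$ satisfies both $e_1\circ w = w$ and $e_1\circ w = (e_1\star e_2)\circ v_2 = 0$. For (5): $e\ge f$ means $e\star f = f$, so $f\circ v = e\circ(f\circ v)\in eV$, i.e.\ $fV\subseteq eV$.

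I do not expect a serious obstacle here: the entire proposition is formal once part (1) is established, and the only point requiring care is the change-of-variables (Fubini) step in the convolution identity, which relies precisely on the unimodularity of $\Gamma$ fixed at the start of the subsection.
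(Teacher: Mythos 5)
Your proposal is correct and follows essentially the same route as the paper, which simply declares parts (1)--(3) ``standard'' (your well-definedness and convolution computations are exactly those standard checks, and your argument for (4) --- orthogonality forcing $e_1V\cap e_2V=0$ together with $e\star e_i=e_i$ --- is the paper's). The only divergence is (5), which you prove directly from $e\star f=f$ while the paper deduces it from (4) by noting that $e-f$ is an idempotent orthogonal to $f$; both are immediate (and note that your substitution $g\mapsto h^{-1}g$ in the convolution identity needs only left-invariance of the Haar measure, not unimodularity).
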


\begin{proof} The first three parts are standard (the extension to monoids is immediate). For part (4), if
$e_1$ and $e_2$ are orthogonal, then $e_1 V \cap e_2 V = 0$. Also, $(e_1 + e_2)(e_1 v_1 + e_2 v_2) = e_1 v_1 + e_2 v_2$, so $e V \supseteq e_1 V + e_2 V$. But clearly $e V \subseteq e_1 V + e_2 V$, so the result follows.
It is easy to check that if $e_1 \ge e_2$, then $e_1 - e_2$ is also idempotent and is orthogonal to $e_2$, so (5) is a special case of (4).
\end{proof}

\begin{remark} There are many other interesting idempotents; for example, if $K \subseteq S$ is a compact open subgroup and $\rho$ is a smooth irreducible representation of $K$, then there is an idempotent $e = e(K,\rho) \in \HH(S)$ such that $eV$ is the subspace of $V$ which is $\rho$-isotypical as a $K$-representation. This can be used to construct eigenvarieties interpolating automorphic representations whose local factors at a finite set of primes $\ell \ne p$ lie in given Bernstein components; see \cite[\S 7.3]{bellaiche-chenevier} for further discussion.
\end{remark}

\subsection{Module-valued automorphic forms}
\label{ssect:module-valued-forms}

Let $G$ be an algebraic group satisfying the conditions of \S \ref{sect:double-quotients}. Let us fix a Haar measure on $G(F_\q)$, for each finite place $\q \ne \p$, such that $G(\OO_{F,\q})$ has measure 1.

\begin{definition} We consider the following Hecke algebras:
\begin{itemize}
\item $\HH_f^{(\p)}(G) = \bigotimes_{\q \ne \p}' \HH(G(F_\q))$, where the prime denotes restricted tensor product with respect to the idempotents corresponding to the compact open subgroups $G(\OO_{F,\q}) \subset G(F_\q)$.
\item $\HH^+_{\p}(G)$ is the algebra $\HH(\mathbb{I}) \subset \HH(G(F_\p))$ defined as in proposition \ref{prop:monoid-heckealgebra}, where $\mathbb{I}$ is the monoid of definition \ref{def:atkin-lehner-monoid}.
\item $\HH_{\infty}(G)$ is the algebra of $G_\infty^\circ$-invariant functions on $G_\infty$ (which is isomorphic to the group algebra $E[\pi_0]$ of the finite abelian group $\pi_0$ of components of $G_\infty$).
\end{itemize}
We put $\HH^{(\p)}(G) = \HH_f^{(\p)}(G) \otimes_{E} \HH_\infty(G)$ and $\HH^+(G) = \HH^{(\p)}(G) \otimes_E \HH^+_\p(G)$.
\end{definition}

We can now define automorphic forms with values in a very general class of modules. 

\begin{definition} Let $R$ be any $E$-algebra, and $W$ any $R$-module with an $R$-linear left action of $\mathbb{I}$.

Define $\mathcal{L}(W)$ to be the space of functions $\phi : G(F) \backslash G(\A) \to W$ which satisfy $\phi(gu) = u_\p^{-1} \circ \phi(g)$ for all $u$ in $UG_\infty^\circ$, for some compact open subgroup $U \subset G\left(\Afp\right) \times G_0$ (depending on $\phi$).
\end{definition}

It is clear that $\mathcal{L}(W)$ is an $R$-module; it has a smooth left action of the monoid $G(\A)^+ = G\left(\Afp\right) \times \mathbb{I} \times G_\infty$ via the formula $(\gamma \circ \phi)(g) = \gamma_\p \circ \phi(g\gamma)$, so we may regard it as a left module over $R \otimes_E \HH^+(G)$. In particular, if $e \in \HH^+(G)$ is an idempotent, $e \mathcal{L}(W)$ is a module over $R \otimes_E e\HH^+(G)e$. 

\begin{proposition} \label{prop:invariants}{~}
\begin{enumerate} 
\item If $e_U$ is the idempotent corresponding to $U G_\infty^\circ$ for some compact open subgroup $U \subset G\left(\Afp\right) \times G_0$, $\{\mu_1, \dots, \mu_t\}$ is any set of coset representatives for the finite double quotient
\[ G(F) \backslash G(\A) / U G_\infty^\circ,\]
and 
\[\Gamma_i = UG_\infty^\circ \cap \mu_i^{-1} G(F) \mu_i,\]
then the map 
\[ e_U \mathcal{L}(W) \to \bigoplus_{i=1}^t W^{\Gamma_i}\quad:\quad f\mapsto \left(f(\mu_1), \dots, f(\mu_t)\right)\]
is an isomorphism of $R$-modules. 
 \item If $\alpha \in e_U \mathcal{H}^+(G) e_U$ where $e_U$ is as above, then under the isomorphism $e_U \mathcal{L}(W) \to \bigoplus_{i=1}^t W^{\Gamma_i}$, $\alpha$ corresponds to the element of $\End_{R}\left(\bigoplus_{i=1}^t W^{\Gamma_i}\right)$ given by 
\[(\alpha \circ f)(\mu_j) = \sum_{k=1}^t \sum_{\gamma \in A_{jk}} \alpha(\gamma)\, \left(\gamma_\p \circ f(\mu_k)\right) \]
where $A_{jk} = \left[ \mathop{\rm supp}(\alpha) \cap \mu_j^{-1} G(F) \mu_k\right] / \Gamma_k$.
\end{enumerate}
\end{proposition}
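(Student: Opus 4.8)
The plan is to reduce the statement to a concrete identification of functions on a finite double coset set, following the standard "strong approximation in the compact case" argument. First I would prove part (1). By Proposition \ref{prop:finiteness} the double quotient $G(F) \backslash G(\A) / U G_\infty^\circ$ is finite (note $G_\infty^\circ$ is the product of the compact open $G(\OO_{F,\q})$-type contribution at infinity; more precisely, since $G_\infty/Z(G)(F_\infty)$ is compact by Assumption \ref{compactness-mod-centre}, $U G_\infty^\circ$ is compact open in $G(\A)$ modulo the centre, and one checks finiteness holds here too). Fix representatives $\mu_1,\dots,\mu_t$. An element $f \in e_U \mathcal{L}(W)$ is right-invariant under $U G_\infty^\circ$ (acting through $u \mapsto u_\p^{-1}$ on the target, trivially away from $\p$ and at $\infty$ since $U \subset G(\Afp) \times G_0$), so $f$ is determined by its values $f(\mu_i)$. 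The constraint $\phi(\gamma g) = \phi(g)$ for $\gamma \in G(F)$ together with the right-equivariance forces, for $\gamma \in \Gamma_i = U G_\infty^\circ \cap \mu_i^{-1} G(F) \mu_i$, the relation $f(\mu_i) = f(\mu_i \gamma) = \gamma_\p^{-1} \circ f(\mu_i)$, so $f(\mu_i) \in W^{\Gamma_i}$ (where $\Gamma_i$ acts via its image in $G_0 \subseteq \mathbb{I}$ at $\p$). Conversely, given an arbitrary tuple $(w_1,\dots,w_t) \in \bigoplus_i W^{\Gamma_i}$, I would define $f$ on the coset $G(F) \mu_i U G_\infty^\circ$ by $f(\gamma \mu_i u) = u_\p^{-1} \circ w_i$ and check this is well-defined precisely because $w_i$ is $\Gamma_i$-invariant; this gives the inverse map, and both maps are evidently $R$-linear, proving the isomorphism.

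Next I would prove part (2) by unwinding the Hecke action. For $\alpha \in e_U \HH^+(G) e_U$ and $f \in e_U \mathcal{L}(W)$, the action is $(\alpha \circ f)(g) = \int_{G(\A)^+} \alpha(\gamma)\, \gamma_\p \circ f(g\gamma)\, d\gamma$; since $\alpha$ is locally constant and compactly supported (in the monoid sense at $\p$), and $f$ is right-$UG_\infty^\circ$-invariant up to the $\p$-twist, this integral becomes a finite sum over $\mathop{\rm supp}(\alpha)/UG_\infty^\circ$-type cosets. Evaluating at $g = \mu_j$: I would write $g\gamma = \mu_j \gamma$ and then, for each term, decompose $\mu_j \gamma$ into the form $\delta \mu_k u$ with $\delta \in G(F)$, $u \in UG_\infty^\circ$ (possible since the $\mu_k$ are double-coset representatives). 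Then $f(\mu_j\gamma) = f(\delta \mu_k u) = u_\p^{-1} \circ f(\mu_k)$, and $\gamma_\p \circ f(\mu_j \gamma) = \gamma_\p u_\p^{-1} \circ f(\mu_k)$; the combination $\gamma u^{-1}$ lies in $\mu_j^{-1} G(F) \mu_k$, so reindexing the sum by $A_{jk} = [\mathop{\rm supp}(\alpha) \cap \mu_j^{-1} G(F) \mu_k]/\Gamma_k$ and noting $\alpha$ is $e_U$-bi-invariant (so constant on the relevant cosets, with the measure normalisations making the coset sums come out with coefficient $\alpha(\gamma)$) yields exactly the stated formula.

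The main obstacle, and the place requiring the most care, is the bookkeeping in part (2): matching the integral formula for the Hecke action against the finite double-coset sum, getting the quotient by $\Gamma_k$ (rather than $\Gamma_j$ or some other group) correct, and verifying that the measure normalisations (Haar measure with $G(\OO_{F,\q})$ of volume $1$, and the idempotents $e_U$ built from these) conspire so that no spurious index $[\Gamma : \Gamma']$ factors appear — i.e. that the coefficient of each orbit is precisely $\alpha(\gamma)$. I would handle this by first treating the case where $\alpha = \bbone_{U \sigma U}$ for a single element $\sigma \in \mathbb{I}$ (a "Hecke operator" in the classical sense), where the double-coset decomposition $U\sigma U = \bigsqcup \sigma_\ell U$ makes the computation transparent and the normalisation of $e_U$ cancels cleanly, and then extend to general $\alpha$ by linearity, since such double-coset indicators span $e_U \HH^+(G) e_U$. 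A secondary technical point is checking that the convergence/finiteness of the integral is genuinely automatic: this follows because $f$ is locally constant (smooth) and $\alpha$ has support that is compact in $G(\Afp) \times G_\infty$ and contained in $\mathbb{I}$ at $\p$, combined with Proposition \ref{prop:finiteness}, so only finitely many terms survive modulo the invariances.
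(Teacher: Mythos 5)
Your proposal is correct and follows essentially the same route as the paper: part (1) by evaluating at the representatives $\mu_i$ and checking $\Gamma_i$-invariance gives a bijection, and part (2) by reducing (via linearity) to a single double coset $U\sigma U$, decomposing it into single cosets, and regrouping the resulting sum according to which class $G(F)\mu_k UG_\infty^\circ$ each translate falls into, which produces the quotient by $\Gamma_k$. The point you flag as delicate (the normalisation and the $\Gamma_k$-bookkeeping) is handled in the paper exactly as you propose.
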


\begin{proof} If $f \in e_U \mathcal{L}(W)$, then $f$ is uniquely determined by the values $f(\mu_i)$. We must have $f(\mu_i) \in W^{\Gamma_i}$, by considering by considering $f(\mu_i g)$ for $g \in \Gamma_i$. Thus the map above is well-defined and injective. Conversely, if $g \in G(\A)$, we may write $g$ in the form $ \gamma \cdot \mu_i \cdot u$ for some $\gamma \in G(F)$, $i \in \{1, \dots, t\}$, and $u \in U G_\infty^\circ$; and $u$ is uniquely determined up to left multiplication by $\Gamma_i$. Hence the map is bijective.

For part (ii), it is sufficient to consider the action of a double coset $U g U G_\infty^\circ$ for some $g \in G(\A)^+$, since these span $e_U \HH^+(G) e_U$ as an $E$-vector space. We see that if $U g U = \bigsqcup_{i=1}^r g_r U$ for some $g_1, \dots, g_r \in G(\A)^+$, then
\[ ([U g U] \circ f)(\mu_j) = \sum_{i = 1}^r (g_i)_\p \circ f(\mu_j g_i).\]

Fix $k \in \{1, \dots, t\}$ and consider the sub-sum consisting of terms for which $\mu_j g_i \in G(F) \mu_k UG_\infty^\circ$. We may choose $g_i$ to lie in $\mu_j^{-1} G(F) \mu_k G_\infty^\circ$, and there will be one such $g_i$ for each coset 
\[\left[U g U \cap \mu_j^{-1} G(F) \mu_k G_\infty^\circ\right] / \Gamma_k.\]
Since $G_\infty^\circ$ acts trivially, this is equivalent to the formula given above.  
\end{proof}

\begin{definition}
We say that $W$ is {\it arithmetical} if there is an arithmetic subgroup of $Z_G$ (or equivalently of $G$, by proposition \ref{finiteness-mod-centre}) which acts trivially on $W$. 
\end{definition}

\begin{corollary} 
For a fixed idempotent $e \in \HH^+(G)$, the functor $W \mapsto e \mathcal{L}(W)$ commutes with base change for arithmetical modules, in the sense that if $R \to S$ is a morphism of commutative $E$-algebras and $W$ is an $R$-module with an arithmetical left action of $\mathbb{I}$, then $\mathcal{L}(W) \otimes_{R} S = \mathcal{L}(W \otimes_{R} S)$.
\end{corollary}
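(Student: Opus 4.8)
The plan is to reduce to the finite-level description provided by Proposition~\ref{prop:invariants}, and then to observe that for an arithmetical module the $\Gamma_i$-invariants occurring there are cut out by an idempotent with coefficients in $E$, which is therefore automatically compatible with base change.

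First I would pick a compact open subgroup $U \subseteq G(\Afp)\times G_0$ small enough that $e\,e_U = e_U\,e = e$, where $e_U$ is the idempotent attached to $UG_\infty^\circ$; this is possible because $\{e_U\}$ is an approximate identity. Since $e$, $e_U$ and $-\otimes_R S$ are all additive and $R$-linear, and $e_U\mathcal{L}(W) = e\mathcal{L}(W)\oplus(e_U-e)\mathcal{L}(W)$, it is enough to prove that $e_U\mathcal{L}(W)\otimes_R S = e_U\mathcal{L}(W\otimes_R S)$ (the unprojected statement then follows by writing $\mathcal{L}(W) = \varinjlim_U e_U\mathcal{L}(W)$ and using that direct limits commute with tensor products). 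Choosing coset representatives $\mu_1,\dots,\mu_t$ and setting $\Gamma_i = UG_\infty^\circ\cap\mu_i^{-1}G(F)\mu_i$, Proposition~\ref{prop:invariants}(i) gives an isomorphism $e_U\mathcal{L}(W)\cong\bigoplus_{i=1}^t W^{\Gamma_i}$ which is manifestly natural in $W$, and likewise for $W\otimes_R S$. So everything comes down to showing $W^{\Gamma_i}\otimes_R S = (W\otimes_R S)^{\Gamma_i}$ for each $i$.

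The key point is that $\Gamma_i$ acts on $W$ through a \emph{finite} quotient. Conjugating by $\mu_i$ identifies $\Gamma_i$ with a finite-index subgroup of $\Gamma_{K_i} = K_i\cap G(F)$ for a suitable compact open $K_i\subseteq G(\Af)$ (the extra index coming from the condition at infinity defining $G_\infty^\circ$); by Proposition~\ref{finiteness-mod-centre} the group $\Gamma_{K_i}\cap Z(G)(F)$ has finite index in $\Gamma_{K_i}$, hence $\Gamma_i\cap Z(G)(F)$ has finite index in $\Gamma_i$. Since $W$ is arithmetical there is an arithmetic subgroup $\Gamma'$ of $Z(G)$ acting trivially on $W$, and as any two arithmetic subgroups of $Z(G)$ are commensurable, $\Gamma'\cap\Gamma_i$ has finite index in $\Gamma_i\cap Z(G)(F)$, and therefore in $\Gamma_i$. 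Thus the $\Gamma_i$-action on $W$ factors through the finite group $Q_i = \Gamma_i/(\Gamma'\cap\Gamma_i)$; being $R$-linear, it likewise factors on $W\otimes_R S$ through $Q_i$.

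It remains to invoke the averaging idempotent: as $E$ has characteristic $0$ and $Q_i$ is finite, $\varepsilon_i = |Q_i|^{-1}\sum_{q\in Q_i}q\in E[Q_i]$ is an idempotent acting $R$-linearly, and $W^{\Gamma_i} = \varepsilon_i W$ as $R$-modules (and $(W\otimes_R S)^{\Gamma_i} = \varepsilon_i(W\otimes_R S)$). Since the exact functor $-\otimes_R S$ commutes with forming the image of an idempotent, $W^{\Gamma_i}\otimes_R S = \varepsilon_i(W\otimes_R S) = (W\otimes_R S)^{\Gamma_i}$, which is what we needed; summing over $i$ and applying $e$ completes the argument. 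The only genuine obstacle is the finiteness step, and it is essential: without the arithmeticity hypothesis the $\Gamma_i$ may be infinite — this is the source of the Leopoldt-type phenomena noted in the introduction — and then $W\mapsto W^{\Gamma_i}$ is merely left exact and does not commute with base change.
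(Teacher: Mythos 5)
Your argument is correct and is essentially the paper's: in both the heart of the matter is that, because $W$ is arithmetical, each $\Gamma_i$ acts on $W$ through a finite quotient, and invariants under a finite group in characteristic $0$ commute with $-\otimes_R S$ by averaging (you package the averaging as the idempotent $\varepsilon_i$, while the paper averages an element of $(M\otimes_R S)^\Delta$ directly — the same computation). The only divergence is the passage from $e_U$ to a general $e \le e_U$: the paper shrinks $U$ further so that the $\Gamma_i$ act trivially on $W$, identifies $e_U\mathcal{L}(W)$ with $W^{\oplus t}$, and realises $e\mathcal{L}(W)$ as the image of an idempotent matrix $e'$ with entries in $E[\mathbb{I}]$, which visibly commutes with base change; you instead keep the $\Gamma_i$-invariants and cut the $e$-summand out of the $e_U$-level statement, which is equally valid provided one notes (as you do implicitly via naturality) that the comparison map $\mathcal{L}(W)\otimes_R S\to\mathcal{L}(W\otimes_R S)$ is $\HH^+(G)$-equivariant, so the isomorphism on $e_U$-parts restricts to the $e$-parts. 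Two small remarks: $-\otimes_R S$ is only right exact, but commuting with the image of an idempotent endomorphism needs only additivity, so that step stands as stated; and your justification that the $\Gamma_i$ act through finite quotients (via Proposition~\ref{finiteness-mod-centre}, centrality of $\Gamma_i\cap Z(G)(F)$ in $\Gamma_i$, and commensurability of arithmetic subgroups of $Z(G)$) is in fact more detailed than the paper's, which simply asserts this.
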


\begin{proof}
For an idempotent of the form $e_U$ as above, the groups $\Gamma_i$ act on $W$ through finite quotients, so it sufficient to show that if $M$ is an $R$-module with an action of a finite group $\Delta$, $(M \otimes_{R} S)^\Delta = M^\Delta \otimes_{R} S$. If $m = \sum_i m_i \otimes s_i \in (M \otimes_{R} S)^\Delta$, then we have 
\[m = \frac{1}{|\Delta|} \sum_{g \in \Delta} g\cdot m = \sum_i \left(\frac{1}{|\Delta|} \sum_{g \in \Delta} g\cdot m_i \right) \otimes s_i,\]
which is clearly in $M^\Delta \otimes_{R} S$.

For a general idempotent $e$, let $U$ be a compact open subgroup of $G(\Afp) \times G_0$ which is sufficiently small that $e_U \ge e$, and $U \cap G(F)$ is contained in $Z(G)$ and acts trivially on $W$. Then part (ii) of prop.~\ref{prop:invariants} shows that there is some integer $t$ and an idempotent $e'$ in the algebra of $t \times t$ matrices over the abstract monoid algebra $E[\mathbb{I}]$ such that $e \mathcal{L}(W)$ is equal to $e' \bigoplus_{i=1}^t W$, and arguing as above we see that this commutes with base change.
\end{proof}

\newcommand{\ep}{e^{(\p)}}
\subsection{The Atkin-Lehner algebra}
\label{ssect:atkin-lehner}

We will be particularly interested in idempotents in $\HH^+(G)$ of the form $e_{G_0} \otimes \ep$, where $\ep$ is an idempotent in $\HH^{(\p)}(G)$. The algebra $e \HH^+(G) e$ then factors as a tensor product $e_{G_0} \HH^+_\p(G) e_{G_0} \otimes \ep \HH^{(\p)}(G) \ep$. The first factor has a very simple structure: 

\begin{lemma}\label{lemma:atkin-lehner-algebra}
Let $\mathcal{A}^+(G)$ be the monoid algebra $E[\Sigma^+]$. Then the map 
\[ \mathcal{A}^+(G) \to e_{G_0} \HH^+_\p(G) e_{G_0} \ :\ z \in \Sigma^+ \mapsto \frac{\gamma(z)^{-1}}{\mu(G_0)} \bbone_{G_0 z G_0}\]  
(extended $E$-linearly to all of $\mathcal{A}^+(G)$) is an isomorphism of $E$-algebras, where $\gamma(z) = | G_0 / (G_0 \cap z G_0 z^{-1})| \in \Z$.
\end{lemma}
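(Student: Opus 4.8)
The plan is to check that the given map is a well-defined bijection and a ring homomorphism, reducing everything to the combinatorics of double cosets $G_0 z G_0$ for $z \in \Sigma^+$ together with the fact (from the theorem on the $\mathbb{I}$-action) that conjugation by $\Sigma^+$ shrinks $N_0$ and expands $\overline{N}_0$. First I would record the decomposition of a double coset into left cosets: using the Iwahori factorisation $G_0 = \overline{N}_0 \times M_0 \times N_0$ and the fact that $z \in \Sigma^+ \subseteq Z(M)$ commutes with $M_0$, satisfies $z^{-1} N_0 z \subseteq N_0$ and $z \overline{N}_0 z^{-1} \subseteq \overline{N}_0$, one gets $G_0 z G_0 = \overline{N}_0 z N_0 = \bigsqcup_{\overline n} \overline n z G_0$ where $\overline n$ runs over $\overline{N}_0 / (z \overline{N}_0 z^{-1})$; hence the number of left cosets in $G_0 z G_0$ is exactly $\gamma(z) = |G_0/(G_0 \cap z G_0 z^{-1})| = [\overline{N}_0 : z\overline{N}_0 z^{-1}]$. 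The normalising factor $\gamma(z)^{-1}/\mu(G_0)$ is then precisely what makes the composition law clean.

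Next I would verify multiplicativity. For $z, w \in \Sigma^+$ the key point is that $G_0 z G_0 \cdot G_0 w G_0 = G_0 zw G_0$ as a \emph{single} double coset, with no fragmentation: this again follows from $z, w \in Z(M)$ and the one-sided inclusions on $N_0$ and $\overline N_0$, which force $\overline{N}_0 z N_0 \cdot \overline{N}_0 w N_0 = \overline{N}_0 z w N_0$ (the inner $N_0 \overline{N}_0$ collapses after conjugating $\overline N_0$ past $z$ on the left, using $z^{-1}\overline N_0 z \supseteq \overline N_0$, and absorbing). Then the convolution $\bbone_{G_0 z G_0} \star \bbone_{G_0 w G_0}$ is a multiple of $\bbone_{G_0 zw G_0}$, and counting left cosets shows the multiple is $\mu(G_0)\,\gamma(z)\gamma(w)/\gamma(zw)$ — once one checks the multiplicativity $\gamma(zw) = \gamma(z)\gamma(w)$, which comes from $z^{-1}\overline N_0 z \supseteq \overline N_0 \supseteq w \overline N_0 w^{-1}$ giving a tower of indices. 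Plugging into the normalised generators, the factors of $\gamma$ and $\mu(G_0)$ cancel and one gets $[G_0 z G_0]' \star [G_0 w G_0]' = [G_0 zw G_0]'$, where $[\cdot]'$ denotes the normalised element; so the map respects the monoid structure of $\Sigma^+$, and it sends $1$ to $e_{G_0}$.

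Finally, for injectivity and surjectivity: distinct $z, w \in \Sigma \subseteq T(L)$ give distinct double cosets $G_0 z G_0 \ne G_0 w G_0$ (since $\Sigma$ maps injectively to $T(L)/T_0$ and $G_0 \cap T(L) = T_0$), so the normalised generators are $E$-linearly independent, giving injectivity. For surjectivity one must show every double coset $G_0 g G_0$ with $g \in \mathbb{I}$ is of the form $G_0 z G_0$ with $z \in \Sigma^+$: writing $g$ as a word in $G_0$ and elements of $\Sigma^+$ and using the multiplicativity just established (each two-letter product collapses to a single double coset), one reduces to $g \in G_0 \Sigma^+ G_0$, and then the Iwahori factorisation of $g$ together with $\Psi_M(g) \in M_0 \Sigma$ pins down the $\Sigma$-part. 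The main obstacle I anticipate is the bookkeeping in the non-fragmentation claim $G_0 z G_0 \cdot G_0 w G_0 = G_0 zw G_0$ and the accompanying left-coset count — this is where the Iwahori factorisation and the directional behaviour of conjugation by $\Sigma^+$ must be used carefully, and it is the crux that makes the normalisation by $\gamma(z)$ give an honest algebra isomorphism rather than just a linear one. (This is the same phenomenon as the classical fact that the $U_p$-type double cosets multiply without defect; cf.\ the integral $p$-adic Hecke algebras in \cite{chenevier-families}.)
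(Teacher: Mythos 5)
Your proposal is correct and is essentially the paper's argument: the crux in both cases is that the double cosets do not fragment, i.e.\ $G_0 z G_0 \cdot G_0 w G_0 = G_0 zw G_0$, proved from the Iwahori factorisation together with $z,w \in Z(M)$ and the one-sided inclusions $z\overline{N}_0 z^{-1} \subseteq \overline{N}_0$, $w^{-1}N_0 w \subseteq N_0$; the normalisation then makes the map multiplicative, and surjectivity comes from $\mathbb{I}$ being generated by $G_0$ and $\Sigma^+$. The only real difference is bookkeeping: the paper skips your left-coset count and the multiplicativity of $\gamma$ by noting that each normalised generator has total mass $1$, so the convolution, being bi-$G_0$-invariant and supported on the single double coset $G_0 zw G_0$, is forced to be the normalised generator for $zw$; in your version the constant $\mu(G_0)\gamma(z)\gamma(w)/\gamma(zw)$ in fact cancels against your normalisation without ever needing $\gamma(zw)=\gamma(z)\gamma(w)$. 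You are also more explicit about injectivity than the paper (which just asserts bijectivity); your argument is fine once you add the uniqueness of the big-cell factorisation to see that $T(L)\cap G_0 z G_0 = T_0 z$, so that $G_0zG_0=G_0wG_0$ forces $wz^{-1}\in \Sigma\cap T_0=\{1\}$.

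Two small slips to repair. First, $G_0 z G_0 = \overline{N}_0 M_0 z N_0$, not $\overline{N}_0 z N_0$; the missing $M_0$ is harmless for your coset count since $M_0 z N_0 \subseteq z G_0$. Second, your parenthetical mechanism for the collapse does not work as described: the inner $\overline{N}_0$ in $\overline{N}_0 z N_0 \cdot \overline{N}_0 w N_0$ is separated from $z$ by an $N_0$ with which it does not commute, so you cannot simply ``conjugate it past $z$ on the left''. The correct one-line verification (the one the paper gives) is to absorb the inner $N_0\overline{N}_0$ into $G_0=\overline{N}_0 M_0 N_0$ first and then compute $z G_0 w = (z\overline{N}_0 z^{-1})\, M_0\, zw\, (w^{-1} N_0 w) \subseteq G_0\, zw\, G_0$, which gives the non-fragmentation and, iterated, also your surjectivity step.
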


\begin{proof} 
First, we show that if $z_1, z_2 \in \Sigma^{+}$, then $[G_0 z_1 G_0] \cdot [G_0 z_2 G_0] = [G_0 z_1 z_2 G_0]$. It is sufficient to check that $z_1 G_0 z_2 \subseteq G_0 z_1 z_2 G_0$. This follows since $z_1 G_0 z_2 = (z_1 \overline{N}_0 z_1^{-1}) \cdot z_1 M_0 z_2 \cdot (z_2^{-1} N_0 z_2)$; $z_1$ and $z_2$ are in $Z(M)$, and $z_1 \overline{N}_0 z_1^{-1}$ and $z_2^{-1} N_0 z_2$ are contained in $G_0$ by the definition of $\Sigma^{+}$. 

The normalisation factor $\gamma(z)$ is chosen so that the image of $z$ is a linear combination of cosets of $G_0$ of total mass 1; hence the map is indeed a homomorphism of rings. Finally, it is clearly bijective, since $\mathbb{I}$ is generated by $G_0$ and $\Sigma^{+}$, and hence the cosets $G_0 z G_0$ for $z \in \Sigma^+$ span $e_{G_0} \HH^+_\p(G) e_{G_0}$ as an $E$-vector space.
\end{proof}

\begin{lemma}\label{lemma:finiteslope1} If $R$ is a commutative $E$-algebra and $\lambda$ is a ring homomorphism $\mathcal{A}^+(G) \to R$, and there exists $\eta \in \Sigma^{++}$ such that $\lambda(\eta) \in R^\times$, then $\lambda(\delta) \in R^\times$ for all $\delta \in \Sigma^{++}$.
\end{lemma}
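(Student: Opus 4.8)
The plan is to reduce everything to a single divisibility statement inside the monoid $\Sigma^+$. Recall that $\mathcal{A}^+(G) = E[\Sigma^+]$, so a ring homomorphism $\lambda \colon \mathcal{A}^+(G) \to R$ restricts to a homomorphism of monoids $(\Sigma^+,\cdot) \to (R,\cdot)$; thus $\lambda(\Sigma^+)$ is a commutative submonoid of $R$. It therefore suffices to show that every $\delta \in \Sigma^{++}$ divides a power of $\eta$ inside $\Sigma^+$: concretely, that there exist $m \in \Z_{\ge 1}$ and $\nu \in \Sigma^+$ with $\eta^m = \nu\,\delta$. Granting this, applying $\lambda$ gives $\lambda(\eta)^m = \lambda(\nu)\,\lambda(\delta)$, and since $\lambda(\eta)^m \in R^\times$ and $R$ is commutative, the element $\lambda(\nu)\,\lambda(\eta)^{-m}$ is a two-sided inverse of $\lambda(\delta)$, which is what we want.

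To produce $\nu$, I work in the free abelian group $\Sigma$ and simply set $\nu = \eta^m\,\delta^{-1}$. The only point to check is that $\nu \in \Sigma^+$ once $m$ is large enough, i.e.\ that $|\alpha(\nu)| \ge 1$ for every $\alpha \in \Delta_f$. Writing $v_\alpha(z) = \ord_p \alpha(z)$, this is the condition $m\,v_\alpha(\eta) - v_\alpha(\delta) \le 0$ for each $\alpha \in \Delta_f$. Since $\eta \in \Sigma^{++}$ we have $v_\alpha(\eta) < 0$, and since $\delta \in \Sigma^{++}$ (in fact $\delta \in \Sigma^+$ would suffice) we have $v_\alpha(\delta) \le 0$; so any integer $m \ge \max_{\alpha \in \Delta_f}\bigl(v_\alpha(\delta)/v_\alpha(\eta)\bigr)$ works, the maximum being over a finite set.

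This is essentially the entire argument, and I do not expect a genuine obstacle. The only place the hypotheses are used is the strict inequality $v_\alpha(\eta) < 0$ coming from $\eta \in \Sigma^{++}$ — this is exactly what makes the finitely many linear conditions on $m$ simultaneously solvable — together with the finiteness of $\Delta_f$, so that a single $m$ serves for all $\alpha$ at once. If anything needs care, it is merely the bookkeeping of sign conventions (the defining inequality $|\alpha(z)| \ge 1 \Leftrightarrow \ord_p\alpha(z) \le 0$) and the remark that $\lambda$ being only a ring homomorphism, rather than an $E$-algebra homomorphism, is irrelevant since we use only its multiplicativity on the monoid elements $\eta,\delta,\nu$ and that it sends the monoid identity $1 \in \Sigma^+$ to $1 \in R$.
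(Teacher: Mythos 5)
Your proof is correct and follows essentially the same route as the paper: the paper's proof rests on the (unproved, "easy") observation that any two elements of $\Sigma^{++}$ are commensurable, i.e.\ each divides a power of the other inside the monoid, which is exactly the divisibility $\eta^m = \nu\,\delta$ with $\nu \in \Sigma^+$ that you verify explicitly via the valuations $\ord_p\alpha(\cdot)$. The only cosmetic difference is that you conclude directly by exhibiting the inverse $\lambda(\nu)\lambda(\eta)^{-m}$, whereas the paper argues contrapositively.
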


\begin{proof} Any two elements $x, y \in \Sigma^{++}$ are {\it commensurable}, in the sense that there exist $\alpha, \beta \in \Sigma^{++}$ and $m, n \ge 1$ such that $\alpha x = y^m$ and $\beta y = x^n$. Thus if $\lambda(x)$ is a non-unit, $\lambda(y^m)$ must be a non-unit and hence $\lambda(y)$ cannot be a unit either.
\end{proof}

If $W$ is an $R$-module with an action of $\mathbb{I}$, and $V \subseteq e_{G_0} \mathcal{L}(W)$ is a $\mathcal{A}^+(G)$-stable finite rank projective $R$-submodule, it follows that if $[G_0 z G_0]$ is invertible on $V$ for one $z \in \Sigma^{++}$, then this holds for all $z \in \Sigma^{++}$. If this holds, we say $V$ is {\it finite slope}.

\subsection{Property (Pr)}

We now resume our running assumption that $E$ is a discretely valued extension of $\Q_p$, and we take $R$ to be a Banach algebra over $E$ and $W$ a Banach $R$-module. We suppose that $\mathbb{I}$ acts on $W$ via continuous norm-decreasing operators, with the subgroup $G_0 \subset \mathbb{I}$ acting by isometries. It is clear from proposition \ref{prop:invariants} that every element of $\mathcal{L}(W)$ has a well-defined supremum norm, and that $\HH^+(G)$ acts on $\mathcal{L}(W)$ by continuous operators.

Let us recall the following definition from \cite{buzzard-eigen}. If $M$ is a Banach module over the nonarchimedean Banach algebra $A$, and there is some other $A$-Banach module $N$ such that $M \oplus N$ is isomorphic (but not necessarily isometrically isomorphic) to an orthonormalisable $A$-Banach module, we say $M$ {\bf has property (Pr)}. 

\begin{remark} 
The notation (Pr) is intended to suggest projective modules, but such modules are not necessarily projective as abstract $A$-modules, and the category of Banach $A$-modules is not abelian (since there exist non-surjective morphisms of Banach $A$-modules with dense image). However, the universal property given in \cite{buzzard-eigen} shows that the Banach modules with property (Pr) are relative projective modules in the sense of \cite[def.~1.6]{taylor-relative}.
\end{remark}

\begin{proposition}\label{prop:projective2}
For any idempotent $e \in \HH^+(G)$, $e \mathcal{L}(W)$ is a Banach $R$-module. If $W$ is arithmetical and has property (Pr), then so does $e \mathcal{L}(W)$.
\end{proposition}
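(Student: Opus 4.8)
The plan is to reduce the statement to the structure theory already developed, using Proposition~\ref{prop:invariants} to replace $e\mathcal{L}(W)$ with an explicit finite direct sum of invariants and then transporting the property (Pr) from $W$.

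First I would handle the case of an idempotent of the special form $e_U$ attached to a compact open subgroup $UG_\infty^\circ$ with $U \subseteq G(\Afp) \times G_0$. By Proposition~\ref{prop:invariants}(i), $e_U \mathcal{L}(W) \cong \bigoplus_{i=1}^t W^{\Gamma_i}$, a \emph{finite} direct sum of $R$-submodules of $W$, each cut out by the arithmetic (hence, by Proposition~\ref{finiteness-mod-centre} and the arithmetical hypothesis on $W$, finite) group $\Gamma_i$. So it suffices to show: if $W$ has property (Pr) and $\Delta$ is a finite group acting on $W$ by continuous operators (which, after possibly replacing the norm by an equivalent $\Delta$-invariant one — average over $\Delta$ — we may assume acts by isometries), then $W^\Delta$ has property (Pr). Here one uses that the idempotent $p_\Delta = \frac{1}{|\Delta|}\sum_{g\in\Delta} g$ (which makes sense since $|E|$ has characteristic $0$) is a continuous projection onto $W^\Delta$, so $W = W^\Delta \oplus \Ker(p_\Delta)$ as Banach $R$-modules; choosing $N$ with $W \oplus N$ orthonormalisable, we get $W^\Delta \oplus (\Ker(p_\Delta) \oplus N)$ orthonormalisable, which is exactly property (Pr) for $W^\Delta$. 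A finite direct sum of modules with (Pr) again has (Pr) (add up the complementary modules), so $e_U\mathcal{L}(W)$ has (Pr).

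Next, for a general idempotent $e \in \HH^+(G)$, I would choose a compact open subgroup $U \subseteq G(\Afp)\times G_0$ small enough that $e_U \ge e$ and $U \cap G(F) \subseteq Z(G)$ acts trivially on $W$ (possible since $W$ is arithmetical). Then $e\mathcal{L}(W) = e(e_U\mathcal{L}(W))$ is a direct summand of $e_U\mathcal{L}(W)$, cut out by the continuous idempotent operator $e$; so writing $e_U\mathcal{L}(W) = e\mathcal{L}(W) \oplus (1-e)e_U\mathcal{L}(W)$ and invoking the previous paragraph, property (Pr) for $e_U\mathcal{L}(W)$ passes to the summand $e\mathcal{L}(W)$. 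Equivalently, one can argue exactly as in the base-change corollary: $e\mathcal{L}(W) \cong e'\bigoplus_{i=1}^t W$ for a matrix idempotent $e'$ over $E[\mathbb{I}]$ acting by continuous operators, and a continuous-idempotent cut-out of a module with (Pr) has (Pr). The first assertion, that $e\mathcal{L}(W)$ is a Banach $R$-module, is immediate from the discussion preceding the proposition: every element of $\mathcal{L}(W)$ has a well-defined supremum norm (using Proposition~\ref{prop:invariants}(i) to see $\mathcal{L}(W)$-elements are determined by finitely many values in $W$), and $e$ acts continuously, so $e\mathcal{L}(W)$ is closed.

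The main obstacle is the innocuous-looking point that taking $\Delta$-invariants (or applying a general continuous idempotent operator) preserves (Pr): one must be careful that (Pr) is defined up to (possibly non-isometric) isomorphism, so we are free to replace norms by equivalent ones — this is what lets us assume $\Delta$ acts by isometries and lets the averaging idempotent be continuous. Once that is granted, everything is a formal consequence of: (a) a direct summand (via a continuous idempotent) of a module with (Pr) has (Pr), and (b) a finite direct sum of modules with (Pr) has (Pr). I would state (a) and (b) as a small preliminary observation and then the proof is short. One should also remark that the argument is uniform in that it does not require $R$ to be reduced or anything beyond being a Banach $E$-algebra.
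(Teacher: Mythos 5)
Your argument is correct, and its skeleton --- reduce to an idempotent $e_U$ with $e_U \ge e$ via Proposition \ref{prop:invariants}, then pass to the direct summand cut out by $e$, using that property (Pr) is stable under complemented summands and finite direct sums --- is the same as the paper's. The one genuine difference is where the arithmetical hypothesis enters: the paper shrinks $U$ until all the groups $\Gamma_i$ act \emph{trivially} on $W$, so that $e_U\mathcal{L}(W)$ is simply a finite direct sum of copies of $W$ and visibly has (Pr), with no averaging needed at this stage; you instead keep an arbitrary $U$ with $e_U\ge e$ and prove that each $W^{\Gamma_i}$ inherits (Pr) by splitting off the invariants with the idempotent $\frac{1}{|\Delta|}\sum_{g\in\Delta} g$ --- the same averaging trick the paper reserves for the base-change corollary. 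Your route proves a little more ($e_U\mathcal{L}(W)$ has (Pr) for \emph{every} sufficiently small-or-not $U$), at the cost of the extra lemma; the paper's choice of small $U$ is shorter. Two points of precision: the groups $\Gamma_i$ need not themselves be finite (think of tori with infinite unit groups) --- what the arithmetical hypothesis together with Proposition \ref{finiteness-mod-centre} gives is that $\Gamma_i$ acts on $W$ through a finite quotient $\Delta$, and it is over this finite image that you should average, with $W^{\Gamma_i}=W^{\Delta}$; and for the first assertion, ``closed in $\mathcal{L}(W)$'' is not by itself enough, since $\mathcal{L}(W)$ is only an increasing union of Banach spaces and is not complete --- the statement you actually need, and which your second paragraph in effect supplies, is that $e\mathcal{L}(W)=\Ker\bigl((1-e)\vert_{e_U\mathcal{L}(W)}\bigr)$ is closed in the Banach module $e_U\mathcal{L}(W)\cong\bigoplus_i W^{\Gamma_i}$ (note this part of the proposition does not require $W$ arithmetical, and neither argument uses it there; the paper instead realises $e\mathcal{L}(W)$ as the kernel of a matrix over $E[\mathbb{I}]$ acting on $W^{\oplus t}$, via Proposition \ref{prop:invariants}(ii)).
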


\begin{proof} 
Let $U$ be a compact open subgroup sufficiently small that $e_U \ge e$. Applying prop.~\ref{prop:invariants}(ii), we see that $e \mathcal{L}(W)$ is the kernel of an element of $M_{t \times t}(E[\mathbb{I}])$ acting on $W^{\oplus t}$ for some $t \in \Z_{\ge 1}$; in particular, it is a closed $R$-submodule of $W^{\oplus t}$, so it is a Banach $R$-module. 

If $W$ is arithmetical, we can choose $U$ sufficiently small that all of the subgroups $\Gamma_i$ of proposition \ref{prop:invariants} act trivially on $W$; so $e_U \mathcal{L}(W)$ is a direct sum of copies of $W$, which has property (Pr) if $W$ does. Now, since $e_U \ge e$, $e$ and $e_U - e$ are orthogonal idempotents, so we have $e_U \mathcal{L}(W) = e \mathcal{L}(W) \oplus (e_U - e)\mathcal{L}(W)$. Both of these subspaces are closed; so if $W$ has property (Pr) then $e\mathcal{L}(W)$ is a direct summand of a module which has property (Pr), and hence it also has property (Pr). 
\end{proof}

\subsection{Arithmetical weights}

In order to apply the above machinery in the case where $W$ is one of the representations constructed in the previous chapter, we must establish under what conditions such representations are arithmetical. Let $V$ be a finite-dimensional locally $L$-analytic representation of $M_0$, $X$ an affinoid subset of $\widehat H_0$ defined over $E$, and $k \in \Z_{\ge 1}$ such that $k \ge \max(k(X), k(V))$. The following is immediate from the definitions:

\begin{proposition}
 If both $V$ and the universal character $\Delta_X : H_0 \to C^\an(X, E)^\times$ are arithmetical, then $\mathcal{C}(X, V, k)$ is arithmetical.
\end{proposition}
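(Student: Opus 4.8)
The plan is to reduce the assertion about $\mathcal{C}(X, V, k)$ to the defining property of arithmeticity, namely the existence of an arithmetic subgroup of $Z_G$ (equivalently, by Proposition \ref{finiteness-mod-centre}, of $G$) acting trivially. Unwinding Definition \ref{def:main-construction}, the space $\mathcal{C}(X, V, k) = C^\an(\mathcal{D}_k \times X, V_{M_k-\an})$ carries a $G_0$-action in which $\gamma$ acts by a composite of three ingredients: the scalar $\Delta_X(\Psi_M(n\gamma))(x)$, the action of $\Psi_M(n\gamma) \in M_0$ on $V$, and the reparametrisation $n \mapsto \Psi_N(n\gamma)$ of the variable in $\mathcal{D}_k$. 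Since the last of these is purely a relabelling of the $N_0$-variable and does not involve either $V$ or $X$, a group element will act trivially on $\mathcal{C}(X, V, k)$ as soon as it acts trivially on $N_0$ (by right translation composed with projection), trivially on $V$, and trivially via the character $\Delta_X$.

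First I would observe that, by hypothesis, both $V$ and $\Delta_X : H_0 \to C^\an(X, E)^\times$ are arithmetical; so there is an arithmetic subgroup $\Gamma_V$ of $M_0$ (coming from restricting the arithmetic subgroup of $G$ acting trivially on $V$) acting trivially on $V$, and likewise the kernel of $\Delta_X$ contains the image in $H_0$ of an arithmetic subgroup. Intersecting these with the arithmetic subgroup $\Gamma_0 := G_1 \cap G(F)$ inside $G_0$ — more precisely, using that any two arithmetic subgroups are commensurable — I obtain a single arithmetic subgroup $\Gamma \subseteq G_0 \cap Z(G)(F)$ (using Proposition \ref{finiteness-mod-centre} to push everything into the centre) which simultaneously acts trivially on $V$ and lies in $\ker \Delta_X$. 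The key point is then that $\Gamma$, being contained in $Z(G)(F)$, in particular lies in $Z(M)$, hence normalises $N$ and $\overline N$, and conjugation by a central element is trivial; so for $\gamma \in \Gamma$ the reparametrisation $n \mapsto \Psi_N(n\gamma)$ is just $n \mapsto n$ (after absorbing the $\overline N$- and $M$-components, which are killed by the functional equation), and the $M$-action factor $\Psi_M(n\gamma) \circ (-)$ acts trivially on $V$, and $\Delta_X(\Psi_M(n\gamma))$ is the trivial character. Hence $\Gamma$ acts trivially on all of $\mathcal{C}(X, V, k)$, which is exactly the statement.

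I expect the only real subtlety to be bookkeeping: making sure that the arithmetic subgroups extracted from the arithmeticity of $V$ (a subgroup of $M_0$, or rather the relevant arithmetic subgroup of $G$ mapping into $M_0$) and of $\Delta_X$ (an arithmetic subgroup whose image in $H_0$ is trivial) can be arranged to lie in a common arithmetic subgroup of $G$ that is also contained in $Z(G)(F) \cap G_0$; this is exactly where Proposition \ref{finiteness-mod-centre} and the commensurability of arithmetic subgroups (via \cite[thm.~4.1]{platonov-rapinchuk}, already invoked in that proof) get used. Once that is set up, the verification that such a central $\Gamma$ acts trivially on $\mathcal{C}(X, V, k)$ is a direct computation with the formula of Definition \ref{def:main-construction}, using only that central elements act trivially by conjugation — so the author's remark that this is ``immediate from the definitions'' is entirely accurate, modulo spelling out this reduction.
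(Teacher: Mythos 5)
Your argument is correct and is exactly the computation the paper has in mind when it declares the statement immediate from the definitions: a central element $\gamma$ satisfies $\Psi_M(n\gamma)=\gamma$ and $\Psi_N(n\gamma)=n$, so it acts on $\mathcal{C}(X,V,k)$ only through its action on $V_{M_k-\an}$ and through $\Delta_X$, and the intersection of the two arithmetic subgroups furnished by the hypotheses (again arithmetic, by commensurability) therefore acts trivially. One inessential slip: $G_1 \cap G(F)$ is not in general an arithmetic subgroup (its elements are only constrained at $\p$), but you never actually need it, nor do you need $\Gamma$ to be contained in $G_0$ — the definition of arithmetical only requires an arithmetic subgroup of $Z_G$ whose elements, insofar as they act, act trivially.
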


We now study the subspace of the local weight space $\widehat H_0$ consisting of arithmetical weights. We need the following purely local construction:

\begin{proposition}
Let $L/\Q_p$ be a finite extension; $T$ a torus over $L$; and $\Gamma$ a finitely generated subgroup of $T(L)$. Then for any compact open subgroup $U \subseteq T(L)$, the functor mapping an affinoid algebra $A$ over $L$ to the abelian group of locally $L$-analytic characters $U \to A^\times$ which are trivial on a finite index subgroup of $\Gamma \cap U$ is represented by a closed rigid analytic subgroup $\widehat U^\Gamma \subseteq \widehat U$ over $L$; and there is an integer $d$ satisfying $\dim T \ge d \ge \dim T - \rank (T \cap \Gamma)$ such that each component of $\widehat U^\Gamma$ is isomorphic over $\C_p$ to a finite \'etale cover of $\mathbf{B}^\circ(1)^d$.
\end{proposition}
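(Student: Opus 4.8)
### Proof Proposal

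The plan is to reduce the statement to the structure of the character group of $T(L)$ and then invoke the representability results already recorded in the excerpt (the construction of $\widehat{U}$ from \cite[\S 6.4]{emerton-memoir}). First I would observe that it suffices to treat the case where $U$ is a ``good $L$-analytic'' compact open subgroup, since any compact open $U$ contains such a subgroup $U'$ of finite index, and the characters of $U$ trivial on a finite-index subgroup of $\Gamma \cap U$ restrict to characters of $U'$ trivial on a finite-index subgroup of $\Gamma \cap U'$, with the restriction map identifying $\widehat{U}$ with a union of components of a finite cover of $\widehat{U'}$; the ``trivial on finite index subgroup of $\Gamma$'' condition is preserved in both directions because finite index is transitive, so $\widehat{U}^{\Gamma}$ is cut out inside $\widehat{U}$ by pulling back $\widehat{U'}^{\Gamma}$. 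For such $U$, $\widehat{U}$ is (over $\C_p$) a finite \'etale cover of $\mathbf{B}^\circ(1)^{\dim T}$, the ball being $\Hom(U/U^{\mathrm{tors}} \otimes \Z_p, \cdot)$ roughly speaking; the torsion in $U$ accounts for the \'etale covering.

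The key step is to identify the condition ``trivial on a finite index subgroup of $\Gamma \cap U$'' with a \emph{closed} condition on $\widehat{U}$. Let $\Gamma_U = \Gamma \cap U$, a finitely generated subgroup of the compact group $U$; let $\overline{\Gamma_U}$ be its closure, which is again a compact $L$-analytic subgroup of $T(L)$ (closed subgroup of a $p$-adic analytic group). A locally $L$-analytic character $\chi : U \to A^\times$ is trivial on a finite index subgroup of $\Gamma_U$ if and only if it is trivial on the identity component $\overline{\Gamma_U}^{\circ}$ of $\overline{\Gamma_U}$ (since $\overline{\Gamma_U}/\overline{\Gamma_U}^{\circ}$ is finite, and a locally analytic character vanishing on an open subgroup of $\overline{\Gamma_U}$ vanishes on $\overline{\Gamma_U}^\circ$, while conversely vanishing on $\overline{\Gamma_U}^\circ$ forces vanishing on a finite index subgroup of $\Gamma_U$). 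Now $\overline{\Gamma_U}^\circ$ is itself a good $L$-analytic subgroup of a torus, so the restriction map $\widehat{U} \to \widehat{\overline{\Gamma_U}^\circ}$ is a morphism of rigid groups, and $\widehat{U}^\Gamma$ is exactly the fibre of the identity section; this is a closed rigid analytic subgroup, visibly a subgroup under pointwise multiplication of characters. This gives representability and closedness in one stroke.

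It remains to compute the dimension bound. The dimension of $\widehat{U}^\Gamma$ equals $\dim T - \dim \overline{\Gamma_U}^\circ = \dim T - \dim \overline{\Gamma_U}$, because the restriction map on Lie algebras of character groups is dual to the inclusion $\Lie \overline{\Gamma_U} \hookrightarrow \Lie T$, and the kernel has dimension $\dim T - \dim \overline{\Gamma_U}$; set $d = \dim T - \dim \overline{\Gamma_U}$. One always has $\dim \overline{\Gamma_U} \le \rank(\Gamma \cap U) = \rank(T \cap \Gamma)$ because $\overline{\Gamma_U}$ is generated topologically by $\rank \Gamma_U$ elements, hence its Lie algebra has dimension at most that rank; this gives $d \ge \dim T - \rank(T \cap \Gamma)$, and $d \le \dim T$ is trivial. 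Finally, the description of the components as finite \'etale covers of $\mathbf{B}^\circ(1)^d$ over $\C_p$ follows because $\widehat{U}^\Gamma$, being a closed rigid subgroup of the group $\widehat{U}$, is itself the character variety of the compact $L$-analytic group $U / \overline{\Gamma_U}^\circ$ (here one uses that $\overline{\Gamma_U}^\circ$ is $L$-analytic so the quotient makes sense as an $L$-analytic group), and the structure of such character varieties is exactly the one recorded for $\widehat{H}_0$ above: a finite \'etale cover of an open polydisc of dimension equal to the $\Z_p$-rank of the quotient modulo torsion, which is $d$.

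The main obstacle I anticipate is the careful handling of the closure $\overline{\Gamma_U}$: one must check that it is genuinely an $L$-analytic subgroup whose identity component is $L$-analytic of the expected dimension, and that the functorial ``trivial on a finite index subgroup'' condition really does descend correctly to affinoid test algebras $A$ (not just to points), so that the fibre-over-the-identity description represents the right functor. Once that bookkeeping is done, everything else is formal manipulation with the already-cited representability of $\widehat{U}$ and the elementary dimension count.
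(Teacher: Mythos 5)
There is a genuine gap, and it sits exactly at the step you call the ``key step''. The asserted equivalence ``$\chi$ is trivial on a finite-index subgroup of $\Gamma\cap U$ $\iff$ $\chi$ is trivial on the fixed subgroup $\overline{\Gamma_U}^{\circ}$'' is false in the $p$-adic setting. The closure $\overline{\Gamma_U}$ is a profinite (totally disconnected) group, so there is no identity-component argument available: a finite-order character vanishes on an open subgroup without vanishing on anything resembling a connected component (the order-$p$ character of $\Z_p$ vanishes on $p\Z_p$ but not on $\Z_p$). Concretely, take $T=\mathbb{G}_m$ over $\Q_p$, $U=1+p\Z_p$ and $\Gamma=\langle 1+p\rangle$: the characters trivial on a finite-index subgroup of $\Gamma\cap U$ are exactly the infinitely many finite-order characters of $U$, whereas the fibre over the identity of the restriction map to $\widehat{\overline{\Gamma_U}}$ (here $\overline{\Gamma_U}=U$) is the single trivial character, and the fibre over any fixed finite-index subgroup is a finite set. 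So $\widehat U^{\Gamma}$ is not a single fibre of one restriction map; it is a countable increasing union of closed pieces and typically has infinitely many components. This is why the paper instead cuts out $\widehat U^{\Gamma}$ by the closed conditions $\log\kappa(g_i)=0$ (equivalently, $\kappa(g_i)$ a $p$-power root of unity) for generators $g_i$ of $\Gamma$, and then identifies it with the increasing union of the character varieties of the quotients $U_m=\mathfrak{u}/\pi_L^m\tilde\Gamma$; the equidimensionality and the description of each component as a finite \'etale cover of $\mathbf{B}^\circ(1)^d$ come from that union, not from a kernel computation.

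The second problem is the dimension count, and it is the one you half-flag as an ``obstacle'': the closure $\overline{\Gamma_U}$ is in general only a $\Q_p$-analytic subgroup of $T(L)$, not an $L$-analytic one (e.g.\ the closure of $\langle 1+\pi_L\rangle$ in $\OO_L^\times$ when $[L:\Q_p]>1$ is a copy of $\Z_p$ of infinite index), so the space of locally $L$-analytic characters of $\overline{\Gamma_U}^{\circ}$ that your restriction map would target does not exist in the framework being used, and $\dim T-\dim\overline{\Gamma_U}$ mixes an $L$-dimension with a $\Q_p$-dimension. Because the characters are locally $L$-analytic, vanishing of $\log\kappa$ on the generators forces vanishing on their $\OO_L$-span $\tilde\Gamma$, not merely on their $\Z_p$-closure, and the correct codimension is $\rank_{\OO_L}\tilde\Gamma$; this distinction between the $\Z_p$-closure and the $\OO_L$-span is precisely the subtlety the paper highlights afterwards in relating the codimension to the Leopoldt conjecture. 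Your opening reduction to a good $L$-analytic subgroup $U$ agrees with the paper's final step, but the core of the argument needs to be replaced along the lines above.
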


\begin{proof} Firstly, let us assume that $U$ is a good $L$-analytic open subgroup. Without loss of generality we assume $\Gamma \subseteq U$. Let $g_1, \dots, g_r$ be a set of generators for $\Gamma$. Then for any $L$-affinoid algebra $A$, the locally $L$-analytic characters $U \to A^\times$ which satisfy the above condition are precisely those characters $\kappa$ for which $\kappa(1), \dots, \kappa(r)$ are all roots of unity in $A$ -- necessarily $p$-power roots of unity, since $\kappa$ is continuous and $U$ is pro-$p$. Thus they are exactly the characters for which the $r$ analytic functions $\kappa \to \log \kappa(g_i)$ vanish. These functions cut out a Zariski-closed subvariety of $\widehat U$, which we denote by $\widehat U^\Gamma$.

To determine the dimension of $\widehat U^\Gamma$, we give an alternative description. Since $U$ is a good subgroup, we may equate it with an $\OO_L$-lattice $\mathfrak{u} \subset \Lie T$ in the usual way. Then if $\tilde \Gamma$ denotes the $\OO_L$-span of the $g_i$, $\tilde \Gamma$ is an $\OO_L$-submodule of $U$, and hence is closed. Let $d = \rank_{\OO_L} \mathfrak{u} - \rank_{\OO_L} \tilde \Gamma$. Since $0 \le \rank_{\OO_L} \tilde \Gamma \le r$, we have $\dim T - r \le d \le \dim T$.

Let $U_m = \mathfrak{u} / \pi_L^m \tilde \Gamma$. For all $m$ this is a compact locally $L$-analytic group of dimension $d$, and the natural map $\widehat U_m \to \widehat U$ is a closed immersion, as $\widehat U_m$ is exactly the locus where the evaluation maps $\widehat U \to \mathbb{G}_m$ corresponding to a finite set of topological generators of $\pi_L^m \tilde \Gamma$ are equal to 1. Each component of $\widehat U_m$ is isomorphic over $\C_p$ to a finite \'etale cover of $\mathbf{B}^\circ(1)^d$; and the obvious map $\widehat U_m \to \widehat U_{m+1}$ identifies $\widehat U_m$ with a union of components of $\widehat U_{m+1}$.

I claim that the union of all of the $\widehat U_m$ is $\widehat U^\Gamma$; this is equivalent to the statement that any $L$-analytic character $U \to A^\times$ trivial on a finite index subgroup of $\Gamma$ is in fact trivial on $\pi_L^m \tilde \Gamma$ for some sufficiently large $m$. This follows from the fact that for any such character $\chi$, we can find an $m$ such that $m$ extends to a rigid analytic function on the affinoid ball associated to $\pi_L^m \mathfrak{u}$, and is also trivial on $\Gamma_m = \Gamma \cap \pi_L^m \mathfrak{u}$; and the Zariski closure of $\Gamma_m$ in this ball is $\pi_L^m \tilde \Gamma$. Hence $\widehat U^{\Gamma}$ is equidimensional, with every component isomorphic over $\C_p$ to a finite \'etale cover of $\mathbf{B}^0(1)^d$.

If $U$ is not a good $L$-analytic open subgroup, then we can find some $U' \subseteq U$ of finite index which is so; then defining $\widehat U^{\Gamma}$ to be the inverse image of $\widehat U'^{\Gamma}$ in $\widehat U$, the result follows, since $U' \cap \Gamma$ has finite index in $U \cap \Gamma$. 
\end{proof} 

Taking $T$ to be $H$ and $\Gamma$ to be the image of $Z(G)(\OO_F)$ in $H(\OO_{F,\p})$, this gives a rigid space over $F_\p$ parametrising the arithmetical locally $F_\p$-analytic characters of $H_0$. We write $\widehat H_0^{arith}$ for this space.

\begin{remark}
The codimension of $\widehat H_0^{arith}$ in $\widehat H_0$ is a subtle quantity. It is independent of the choice of $P$, and depends only on the torus $Z(G)$; it is clearly 0 if $Z(G)(\OO_F)$ is finite, and $\ge 1$ if it is infinite. If $F = \Q$, then the sublattice $\tilde \Gamma$ above is just the topological closure of $\Gamma$ in $U$, and we see that the assertion $\mathop{\mathrm{codim}} \widehat H_0^{arith} = \rank Z(G)(\Z)$ is equivalent to the Leopoldt conjecture for tori of \cite[\S 4.3.3]{hill-banach} (where it is shown that this is equivalent to the conventional Leopoldt conjecture).
\end{remark}

\subsection{Overconvergent automorphic forms}

\begin{definition}\label{firstMr}
Let $e \in \HH^{+}(G)$ be an idempotent, $X$ an affinoid subset of $\widehat H_0^{arith}$, and $k \ge \max(k(V), k(X))$. We define the space of $k$-overconvergent automorphic forms of type $e$ and weight $(X,V)$ as the space 
\[ M(e, X, V, k) = e \mathcal{L}\bigg(\mathcal{C}(X, V, k)\bigg).\]
\end{definition}

We now collect together some properties of these spaces, all of which are immediate from properties we have established for the spaces $\mathcal{C}(X, V, k)$.

\begin{theorem}\label{thm:spacesM}
 The spaces $M(e, X, V, k)$ have the following properties:
\begin{enumerate}

 \item $M(e, X, V, k)$ is a Banach module over $C^\an(X, E)$ with property $(Pr)$, endowed with an action of $e\HH^+(G)e$ by continuous $C^\an(X, E)$-linear operators. 

\item The maps $\mathcal{C}(X, V, k) \to \mathcal{C}(X, V, k+1)$ induce maps $i_k: M(e, X, V, k) \to M(e, X, V, k+1)$ which are $C^\an(X, E)$-linear, $e\HH^+(G)e$-equivariant, injective, and have dense image.

\item The construction commutes with base change of reduced affinoids, in the sense that if $Y \subseteq X$ and $k \ge \max(k(X), k(V))$, then
\[ M(e, X, V, k) \widehat\otimes_{C^\an(X, E)} C^\an(Y, E) = M(e, Y, V, k)\]
as a Banach $C^\an(Y, E)$-module, and this isomorphism is compatible with the $e\HH^+(G)e$-action.

\item \label{item:automorphic-compactness} If $T \in e\HH^+(G)e$ is supported in $G_0 \Sigma^{++} G_0$, and $k \in \Z_{\ge 1}$ is such that $k \ge \max(k(X), k(V))$, then the action of $T$ on $M(e,X,V,k+1)$ in fact defines a continuous map $T_k: M(e, X, V, k+1) \to M(e, X, V, k)$; and if $k \ge \max(k(X), k(V)) + 1$, we have a commutative diagram
\[
\xymatrix{
M(e, X, V, k) \ar@{^{(}->}[d]^{i_k} \ar[r]^{T_{k-1}} \ar@{.>}[rd]^T  & M(e, X, V, k-1) \ar@{^{(}->}[d]^{i_{k-1}} \\
M(e, X, V, k+1) \ar[r]^{T_k} & M(e, X, V, k)
}
\]
where $i_k$ ias as above. In particular, the endomorphism of $M(e, X, V, k)$ defined by $T$ is compact.
\end{enumerate}
\end{theorem}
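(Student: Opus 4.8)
The plan is to deduce Theorem~\ref{thm:spacesM} directly from the corresponding properties of the local spaces $\mathcal{C}(X,V,k)$ established in Section~\ref{sect:local-families}, using that the functor $W \mapsto e\mathcal{L}(W)$ is well-behaved: it commutes with base change on arithmetical modules (Corollary after Proposition~\ref{prop:invariants}), it sends modules with property (Pr) to modules with property (Pr) (Proposition~\ref{prop:projective2}), and by Proposition~\ref{prop:invariants} it identifies $e\mathcal{L}(W)$ with the kernel of an explicit matrix over $E[\mathbb{I}]$ acting on $W^{\oplus t}$, so that any $\mathbb{I}$-equivariant, norm-decreasing, or compactness property of maps $W \to W'$ is inherited by $e\mathcal{L}(W) \to e\mathcal{L}(W')$.

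Concretely, for (i): $\mathcal{C}(X,V,k)$ is a Banach $C^\an(X,E)$-module with property (Pr) by Proposition~\ref{prop:projective} (using that $X \subseteq \widehat H_0^{arith}$ is reduced), and it is arithmetical by the Proposition in \S\,Arithmetical weights since $X$ lies in $\widehat H_0^{arith}$ and $V$ is finite-dimensional hence arithmetical; so Proposition~\ref{prop:projective2} gives that $M(e,X,V,k)=e\mathcal{L}(\mathcal{C}(X,V,k))$ is a Banach $C^\an(X,E)$-module with property (Pr), and the $e\HH^+(G)e$-action is by continuous $C^\an(X,E)$-linear operators by the discussion in \S\,Property (Pr). For (ii): the maps $i_k \colon \mathcal{C}(X,V,k)\to\mathcal{C}(X,V,k+1)$ are $\mathbb{I}$-equivariant $C^\an(X,E)$-linear injections with dense image (this is Theorem~\ref{thm:linkmaps1} together with the definition of the spaces as analytic vectors on the increasing affinoids $\mathcal{D}_k$), and applying the exact functor $e\mathcal{L}(-)$ preserves injectivity, $e\HH^+(G)e$-equivariance, and — since $e\mathcal{L}(W)$ is built from finitely many copies of $W$ cut out by a closed condition — density of image. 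For (iii): this is exactly the base-change statement of the Corollary after Proposition~\ref{prop:invariants}, combined with the local base-change isomorphism $\mathcal{C}(X,V,k)\widehat\otimes_{C^\an(X,E)}C^\an(Y,E)=\mathcal{C}(Y,V,k)$ which follows from the definition $\mathcal{C}(X,V,k)=C^\an(\mathcal{D}_k\times X,V_{M_k-\an})$; the Hecke-compatibility is clear since the $e\HH^+(G)e$-action is defined by the same adelic formula over both bases.

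For (iv), which is the crux: an operator $T\in e\HH^+(G)e$ supported in $G_0\Sigma^{++}G_0$ is, via Lemma~\ref{lemma:atkin-lehner-algebra} (after possibly enlarging to $e_U \ge e$ and using Proposition~\ref{prop:invariants}(ii)), a finite $E$-linear combination of operators each of which acts through an element $z\in\Sigma^{++}$ on the factors $\mathcal{C}(X,V,k)$ of $W^{\oplus t}$. By Theorem~\ref{thm:linkmaps1}, for $k\ge 1+k(V)$ each such $z$ factors as $i_k \circ z_k$ with $z_k\colon\mathcal{C}(X,V,k+1)\to\mathcal{C}(X,V,k)$ continuous and norm-decreasing, and the analogous square commutes. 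Applying $e\mathcal{L}(-)$ transports this factorisation and commuting square to the spaces $M(e,X,V,k)$, giving the map $T_k$ and the displayed diagram. Finally, compactness of the endomorphism $T$ of $M(e,X,V,k)$ follows from the standard argument (as in \cite{buzzard-eigen}): $T$ is the composite $i_k \circ T_k$ where $i_k\colon M(e,X,V,k)\to M(e,X,V,k+1)$ is the inclusion of the $G_{k+1}$-analytic vectors into the $G_k$-analytic vectors — but one must instead write $T = T_{k} \circ i_{k-1}$ using the diagram, so that $T$ factors through the inclusion $M(e,X,V,k-1)\hookrightarrow M(e,X,V,k)$ of analytic vectors on a strictly smaller affinoid ball, which is compact because $\mathcal{D}_{k}$ is relatively compact in the sense that restriction of overconvergent power series from a larger to a strictly smaller closed ball is a compact operator. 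The main obstacle is keeping the bookkeeping straight: verifying that reduction to a single $z\in\Sigma^{++}$ is legitimate (the matrix $e'$ over $E[\mathbb{I}]$ has entries supported in $G_0\Sigma^{++}G_0$, not just $G_0\Sigma^+ G_0$), and identifying precisely which inclusion of affinoid-ball function spaces supplies the compactness — this is where the hypothesis $z\in\Sigma^{++}$ rather than merely $\Sigma^+$ is essential, via Proposition~\ref{prop:centre-action}(ii).
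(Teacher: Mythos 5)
Your proposal is correct and follows essentially the route the paper intends: the paper gives no written proof of theorem \ref{thm:spacesM} beyond the remark that everything is immediate from the properties already established for the local modules $\mathcal{C}(X,V,k)$ (propositions \ref{prop:projective}, \ref{prop:invariants}, \ref{prop:projective2} and theorem \ref{thm:linkmaps1}), which is exactly what you have spelled out, including the reduction of a Hecke operator supported in $G_0\Sigma^{++}G_0$ to elements $u_1 z u_2$ acting through the link maps of theorem \ref{thm:linkmaps1}. Two small slips to repair: arithmeticity of $\mathcal{C}(X,V,k)$ does not follow from finite-dimensionality of $V$ — one needs the standing hypothesis that $V$ itself is arithmetical (together with $X \subseteq \widehat H_0^{arith}$), and this is what makes proposition \ref{prop:projective2} applicable for property (Pr) and for base change; and in the compactness step the factorisation should read $T = i_{k-1}\circ T_{k-1}$ (equivalently $T = T_k\circ i_k$), the compact factor being the restriction map from functions on the larger domain $\mathcal{D}_{k-1}$ to the strictly smaller $\mathcal{D}_k$, i.e.\ the inclusion of the $G_{k-1}$-analytic vectors into the $G_k$-analytic vectors (your "$T=T_k\circ i_{k-1}$" does not compose, and the description of which space includes into which is reversed), after which compactness passes to $M(e,X,V,k)$ because it is a complemented closed submodule of a finite direct sum of copies of $\mathcal{C}(X,V,k)$.
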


From part \ref{item:automorphic-compactness} of the above theorem, we can deduce the following corollary, which is analogous (both in statement and proof) to \cite[thm.~5.2]{buzzard-continuation}.

\begin{corollary}\label{corr:automorphic-maximally-overconvergent} Let $T$ be as above, and let $\lambda \in E^\times$ with $\lambda \ne 0$. Then the natural map $M(e, X, V, k) \to M(e, X, V, k+1)$ restricts to an isomorphism on the generalised $\lambda$-eigenspaces for $T$.
\end{corollary}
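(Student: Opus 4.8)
The plan is to deduce this from the commutative diagram in part \ref{item:automorphic-compactness} of Theorem \ref{thm:spacesM} by a standard formal argument, exactly as in \cite[thm.~5.2]{buzzard-continuation}. Write $i = i_k : M_k \to M_{k+1}$ for the inclusion (abbreviating $M_k = M(e,X,V,k)$), write $T$ for the endomorphism of $M_{k+1}$ (or of $M_k$) induced by the Hecke operator, and write $u = T_k : M_{k+1} \to M_k$ for the ``improving'' map, so that $i \circ u = T$ on $M_{k+1}$ and $u \circ i = T$ on $M_k$ (this last identity being the commutativity of the diagram, after shifting $k$ by one). The key point is that $i$ intertwines the two copies of $T$, and that $u$ provides a ``one-sided inverse up to $T$'' in both directions.

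First I would fix $\lambda \in E^\times$ and an integer $N \ge 1$, and consider the generalised eigenspaces $M_k[\lambda]_N = \ker\big((T - \lambda)^N \mid M_k\big)$ and similarly for $M_{k+1}$; since $T$ is compact on each $M_k$ (by the theorem) and $\lambda \ne 0$, Riesz theory guarantees these are finite-dimensional, and for $N$ large enough they stabilise to the full generalised $\lambda$-eigenspace, so it suffices to treat a fixed large $N$. Because $i$ is $T$-equivariant, $i$ maps $M_k[\lambda]_N$ into $M_{k+1}[\lambda]_N$; I must show this restriction is bijective. For injectivity, note $i$ is injective on all of $M_k$ by part (2), so nothing to do. For surjectivity, take $v \in M_{k+1}[\lambda]_N$; then $u(v) \in M_k$, and since $u \circ i = T$ on $M_k$ one computes $i\big(u(v)\big) = T(v) = \lambda v + (\text{nilpotent part})$; more precisely, working with the operator $T$ I would show that $u(v)$ again lies in a generalised $\lambda$-eigenspace (as $u$ commutes with $T$ in the appropriate sense, because $u \circ i = T = i \circ (\text{restriction})$ forces compatibility), and that on $M_{k+1}[\lambda]_N$ the composite $i \circ u = T$ is invertible (it equals $\lambda$ plus nilpotent, hence invertible since $\lambda \ne 0$). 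Therefore $v = (i \circ u)\big((i\circ u)^{-1} v\big)$ lies in the image of $i$, and in fact in the image of $M_k[\lambda]_N$ once one checks $u(v) \in M_k[\lambda]_N$.

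To make the bookkeeping clean I would instead argue as follows: restrict everything to the generalised $\lambda$-eigenspaces and use that on these finite-dimensional spaces $T$ is invertible. The map $u : M_{k+1}[\lambda]_N \to M_k[\lambda]_N$ (once one verifies it lands there, which follows from $u \circ i = T$ and $i \circ u = T$ together with $\lambda \ne 0$) then satisfies $i \circ u = T|_{M_{k+1}[\lambda]_N}$ and $u \circ i = T|_{M_k[\lambda]_N}$, both of which are isomorphisms; hence $i$ has both a left and a right inverse (up to composing with $T^{-1}$), so $i$ restricted to these spaces is an isomorphism. Finally, letting $N \to \infty$ gives the claim for the full generalised $\lambda$-eigenspace.

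The only genuine issue — and the step I expect to be the main obstacle — is the verification that $u = T_k$ carries the generalised $\lambda$-eigenspace of $M_{k+1}$ into that of $M_k$; this is not completely formal since $u$ is a map between different spaces and ``$T$'' secretly means three different operators ($T$ on $M_k$, $T$ on $M_{k+1}$, and the abstract Hecke element). The resolution is to observe from the commutative diagram that $i \circ u = T_{M_{k+1}}$ and $u \circ i = T_{M_k}$, whence for any polynomial $Q$ one has $Q(T_{M_k}) \circ u = u \circ Q(T_{M_{k+1}})$ after composing suitably with $i$ on one side and using injectivity of $i$; applying this with $Q(X) = (X-\lambda)^N$ shows $(T_{M_k} - \lambda)^N u(v) = 0$ whenever $(T_{M_{k+1}} - \lambda)^N v = 0$, as required. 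Everything else is the elementary linear algebra of a pair of maps between finite-dimensional vector spaces whose two composites are each an isomorphism.
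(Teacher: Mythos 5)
Your argument is correct and follows essentially the same route as the paper: both proofs use the factorisations $i_k\circ T_k = T$ and $T_k\circ i_k = T$ from Theorem \ref{thm:spacesM}(\ref{item:automorphic-compactness}), compactness of $T$ to reduce to a finite-dimensional generalised eigenspace killed by $(T-\lambda)^m$, and invertibility of $T$ there (since $\lambda\neq 0$) to produce a section of $i_k$. The only cosmetic difference is that the paper writes this inverse explicitly as the polynomial $-\sum_{i=1}^m \binom{m}{i}\frac{T^{i-1}}{(-\lambda)^i}$ composed with $T_k$, whereas you invoke the invertibility of $\lambda + \text{nilpotent}$ abstractly.
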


\begin{proof}
Let us write $M(e, X, V, k)_{\lambda}$ for the generalised $\lambda$-eigenspace. Since $T$ is compact, any generalised eigenspace for $T$ is finite-dimensional, so we may suppose that $M(e, X, V, k+1)_{\lambda} = \Ker (T - \lambda)^m$ for some $m$. 

Clearly $i_k$ restricts to an injection $M(e, X, V, k)_{\lambda} \to M(e, X, V, k+1)_{\lambda}$. Let $T'$ be the composition of the endomorphism $-\sum_{i=1}^{m} \binom{m}{i} \frac{T^{i-1}}{(-\lambda)^{i}}$ of $M(e, X, V, k+1)$ and the map $T_k: M(e, X, V, k+1) \to M(e, X, V, k)$ above. Then $T'$ gives a map $M(e, X, V, k+1)_{\lambda} \to M(e, X, V, k)_{\lambda}$ which is a section of $i_k$.
\end{proof}

\subsection{Relation to classical automorphic forms}

We now study the relation between the spaces $\mathcal{L}(W)$ and classical automorphic forms when $W$ is locally algebraic, using methods based on \cite[\S 8]{gross-algebraic}.

Fix an embedding $\iota: E \into \C$. This determines an embedding $F \into \C$, since $F \subset F_\p \subseteq E$. If $W_{alg}$ is an algebraic representation of $G$ over $E$, then the choice of $\iota$ determines an extension of $W$ to an algebraic representation of $G_\infty = \prod_{v \mid \infty} G(F_v)$, with all factors acting trivially except that corresponding to $\iota$. We denote this representation by $\iota(W)$.

\begin{proposition}\label{prop:classical-automorphic}
Suppose $W$ is a finite-dimensional irreducible locally algebraic representation of $\mathbb{I}$, of the form $W_{sm} \otimes W_{alg}$ where the factors are respectively smooth and algebraic representations. Fix an embedding $\iota: E \into \C$. Then $\mathcal{L}(W)\,\otimes_{E, \iota}\,\C$ is isomorphic as a $G(\Af) \times \pi_0$-representation to 
\[ \bigoplus_{\pi} m(\pi) \left[\left( \sideset{}{'}\bigotimes_{\q \ne \p \ \text{finite}} \pi_\q\right) \otimes (W_{sm} \otimes \pi_\p) \otimes (\iota(W_{alg}) \otimes \pi_\infty)^{G_\infty^\circ}\right],\]
where the direct sum is over all automorphic representations $\pi$ of $G$ for which $(\iota(W_{alg}) \otimes \pi_\infty)^{G_\infty^\circ}$ is nonzero, and $m(\pi)$ is the multiplicity of $\pi$ in the space of automorphic forms for $G$.
\end{proposition}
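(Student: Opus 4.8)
The plan is to deduce the statement from the decomposition of the classical space of automorphic forms of $G$ by ``untwisting'' the locally algebraic representation $W = W_{sm}\otimes W_{alg}$: the algebraic factor will be transported from the place $\p$ to the archimedean place $v_\iota$ of $F$ determined by $\iota|_F$, following the strategy of \cite[\S 8]{gross-algebraic}. First I would reduce to $E=\C$. Each finite-dimensional space $W^{\Gamma_i}$ occurring in Proposition~\ref{prop:invariants} is cut out of a finite-dimensional $E$-vector space by linear equations, so forming $\Gamma_i$-invariants commutes with the field extension $\iota\colon E\into\C$; hence $\mathcal{L}(W)\otimes_{E,\iota}\C \cong \mathcal{L}(W\otimes_{E,\iota}\C)$ equivariantly for $G(\Afp)\times\mathbb{I}\times\pi_0$. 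Write $W_{alg}^{\C}=W_{alg}\otimes_{E,\iota}\C$, an algebraic representation of $G$ over $\C$; we may evaluate it on $G(F_\p)$ (via $F_\p\subseteq E\xrightarrow{\iota}\C$) and on the factor $G(F_{v_\iota})$ of $G_\infty$, where it restricts to $\iota(W_{alg})$. The resulting two maps into $\GL(W_{alg}^{\C})$ agree on any $\gamma\in G(F)$, since there both describe the action of $\gamma$ on $W_{alg}$ via $F\xrightarrow{\iota}\C$; this is the one compatibility that the whole argument rests on.

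The key step is the untwisting isomorphism. For $\phi\in\mathcal{L}(W\otimes_{E,\iota}\C)$ set
\[ (\Theta\phi)(g) = \bigl(\mathrm{id}_{W_{sm}}\otimes W_{alg}^{\C}(g_{v_\iota})^{-1}W_{alg}^{\C}(g_\p)\bigr)\,\phi(g). \]
Using the compatibility above one checks that $\Theta\phi$ is again left $G(F)$-invariant; that $\phi(gu_\p)=u_\p^{-1}\circ\phi(g)$ becomes $(\Theta\phi)(gu_\p) = (W_{sm}(u_\p)^{-1}\otimes\mathrm{id})(\Theta\phi)(g)$ for $u_\p$ in a compact open subgroup of $G_0$; that the right $G_\infty^\circ$-invariance of $\phi$ becomes $(\Theta\phi)(gu_{v_\iota}) = (\mathrm{id}\otimes\iota(W_{alg})(u_{v_\iota})^{-1})(\Theta\phi)(g)$ for $u_{v_\iota}\in G(F_{v_\iota})^\circ$ together with right $G(F_v)^\circ$-invariance at the remaining infinite places; and that $\Theta\phi$ is still smooth at the finite places. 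Since $\Theta$ is visibly bijective (multiply back by $W_{alg}^\C(g_\p)^{-1}W_{alg}^\C(g_{v_\iota})$), it identifies $\mathcal{L}(W\otimes_{E,\iota}\C)$ with the space $\mathcal{M}$ of $W_{sm}\otimes\iota(W_{alg})$-valued functions on $G(F)\backslash G(\A)$ satisfying these conditions. Transporting the actions along $\Theta$, the $G(\Afp)$- and $\pi_0$-actions become ordinary right translation, while $\gamma\in\mathbb{I}$ acts by $(\gamma\circ\Theta\phi)(g) = (W_{sm}(\gamma)\otimes\mathrm{id})(\Theta\phi)(g\gamma)$, the algebraic twist $W_{alg}(\gamma)$ of the original $\mathbb{I}$-action (Definition~\ref{def:main-construction}) having been absorbed into the conjugation.

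Finally, $\mathcal{M}$ is nothing but $\Hom_{G_\infty^\circ}\bigl(\iota(W_{alg})^\vee,\ \mathcal{A}(G)\bigr)\otimes_\C W_{sm}$, where $\mathcal{A}(G)$ is the usual space of smooth, $\mathcal{Z}(\g_\infty)$-finite, moderate-growth $\C$-valued automorphic forms on $G$: $W_{sm}$ is finite-dimensional, so in the direct limit over the level at $\p$ the $W_{sm}$-valued, $W_{sm}^\vee$-covariant functions of the $\p$-variable are just $W_{sm}\otimes C^\infty$, while the archimedean covariance packages the single factor $\iota(W_{alg})^\vee$ at $v_\iota$ and the trivial representations at the other infinite places into the $G_\infty$-representation $\iota(W_{alg})^\vee$ of the statement. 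Inserting the decomposition $\mathcal{A}(G)\cong\bigoplus_\pi m(\pi)\,\pi$ of $G(\A)$-representations — which one may take as the definition of $m(\pi)$ — in the $\pi$-summand one has $\Hom_{G_\infty^\circ}(\iota(W_{alg})^\vee,\pi_\infty) = (\iota(W_{alg})\otimes\pi_\infty)^{G_\infty^\circ}$, nonzero exactly for the $\pi$ in the stated sum; the finite part away from $\p$ contributes $\sideset{}{'}\bigotimes_{\q\ne\p}\pi_\q$; and $\pi_\p$, tensored with the coefficient space $W_{sm}$ bearing the diagonal $\mathbb{I}$-action found above, contributes $W_{sm}\otimes\pi_\p$. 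Assembling these gives the asserted isomorphism.

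I expect two points to require the most care. The first is purely formal: keeping track of the various contragredients, and of the precise shape of the transported $\mathbb{I}$-action, so that the factor at $\p$ emerges as $W_{sm}\otimes\pi_\p$ with its natural diagonal action rather than as a dual or twisted variant. The second is the global input $\mathcal{A}(G)\cong\bigoplus m(\pi)\pi$: under Assumption~\ref{compactness-mod-centre} ($G$ compact modulo its full centre, not merely modulo the $\Q$-split centre), $G(F)\backslash G(\A)$ is noncompact in the remaining central directions, so this decomposition needs the reduction theory for tori already exploited in \cite{buzzard-families}, rather than being immediate as in the compact-type setting of \cite{gross-algebraic}. The verifications that $\Theta$ is well defined, bijective, and intertwines the group actions are then routine.
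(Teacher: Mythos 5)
Your proposal is correct and follows essentially the same route as the paper: your untwisting map $\Theta$ is exactly the paper's map $f \mapsto f_\infty$, $f_\infty(g) = \rho_2(g_\infty^{-1}\cdot\iota(g_\p))f(g)$, and the ensuing identification of $\mathcal{L}(W)\otimes_{E,\iota}\C$ with $\left(\mathcal{A}(G)\otimes\iota(W_{alg})\otimes W_{sm}\right)^{G_\infty^\circ}$ followed by the spectral decomposition is precisely the paper's argument. The one point you flag as delicate (the noncompact central directions) is handled in the paper by observing that, since $G$ has no cusps, all forms are cuspidal, and that at each finite level $U$ only finitely many central characters of $Z(G)(\A)/Z(G)(F)$ are compatible with the constraints imposed by $U$ and by the central character of $\iota(W_{alg})$ on $G_\infty^\circ$, so each $e_U\mathcal{L}(W)$ decomposes discretely and one passes to the direct limit over $U$ --- which is the reduction-theory-for-tori input you anticipated.
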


\begin{proof}
Let $\rho_1$ denote the action of $G_0$ on $W_{sm} \otimes W_{alg}$ obtained from the first factor alone, and $\rho_2$ the action from the second factor. These two actions commute, and the standard action $\rho_{1,2}$ on the tensor product is the composition of both. 

Given $f \in \mathcal{L}(W)$, let us write $f_\infty$ for the function $G(\A) \to W \otimes_E \C$ given by $f_\infty(g) = \rho_2(g_\infty^{-1} \cdot \iota(g_\p)) f(g)$. This is well-defined, since $\rho_2$ is algebraic. Furthermore, if $\gamma \in G(F)$, then $\gamma_\infty^{-1} \cdot \iota(\gamma_\p)$ acts trivially on $\iota(W_{alg})$; hence $f_\infty$ gives a function on $G(F) \backslash G(\A)$. 

Since $f \in \mathcal{L}(W)$, there is some compact open subgroup $U \subseteq G_0 \times G(\Afp)$ such that $f(gu) = \rho_{1,2}(u_\p^{-1}) f(g)$ for all $u \in UG_\infty^\circ$. A calculation shows that this corresponds to the relation $f_\infty(gu) = \rho_1(u_\p)^{-1} \rho_2(u_\infty)^{-1} f_\infty(g)$ for all $u$ in this subgroup. 

We deduce that for any linear functional $\phi$ on $W \otimes_E \C$, $\phi \circ f$ is an automorphic form in the sense of \cite[def.~2.2]{cogdell-et-al}: a function $G(F) \backslash G(\A)$ which is locally constant on $G(\Af)$ (as $W_{sm}$ is smooth), is $G_\infty^\circ$-finite and $U(\g_\C)$-finite (as $W_{alg}$ is finite dimensional), and has polynomial growth (since $W_{alg}$ is algebraic).

Hence $f_\infty$ lies in the space 
\[\left(\mathcal{A}(G) \otimes \iota(W_{alg}) \otimes W_{sm}\right)^{G_\infty^\circ},\]
where $\mathcal{A}(G)$ denotes the space of automorphic forms for $G$ in the sense of \cite[def.~2.2]{cogdell-et-al}, and $G(\A)$ acts by translation on the first factor, via the chosen factor of $G_\infty$ on the second, and via $G(F_\p)$ on the third. Conversely, any function $f_\infty$ in this space clearly gives an element of $\mathcal{L}(W)$ by reversing the construction.

Since $G$ has no cusps, all automorphic forms for $G$ are cuspidal. Thus for any smooth character $\omega: Z(G)(\A) / Z(G)(F) \to \C^\times$, the space of automorphic forms of central character $\omega$ decomposes as a direct sum of automorphic representations \cite[\S 3.4]{cogdell-et-al}. For any compact open $U \subseteq G(\Af)$, there are only finitely many characters $\omega: Z(G)(\A) / Z(G)(F) \to \C^\times$ that are trivial on $Z(G)(\Af) \cap U$ and coincide with the central character of $\iota(W_{alg})^\vee$ on $G_\infty^\circ$, and hence $e_U \mathcal{L}(W)$ can be written as a direct sum of terms arising from automorphic representations. Passing to a direct limit over $U$, we obtain the result.
\end{proof}

\begin{definition}
Let $\pi_\infty$ be an irreducible continuous representation of $G_\infty$ on a finite-dimensional $\C$-vector space. We say $\pi_\infty$ is {\it allowable} if there is an an algebraic representation $W$ of $G$ over $E$ such that $(\iota(W) \otimes_\C \pi_\infty)^{G_\infty^\circ} \ne 0$. We say an automorphic representation $\pi = \sideset{}{'}{\bigotimes_{v}} \pi_v$ is allowable if its factor $\pi_\infty = \bigotimes_{v \mid \infty} \pi_v$ is so.
\end{definition}

\begin{remark}
 The choice of embedding $\iota$ determines a map $F_\infty \to \C$, and hence a map $G_\infty \to G(\C)$, and the algebraic representations of $G_\infty$ which are of the form $\iota(W)$, for $W$ an algebraic representation of $G$ over $E$, are exactly those which arise by inflation from an algebraic representation of $G(\C)$. Hence our definition of ``allowable'' coincides with that of \cite[def.~3.1.3]{emerton-interpolation}.
\end{remark}

\subsection{The classical subspace}

We will now investigate the classical subspaces of the fibres of $M(e, X, V, k)$ at points of $X$ corresponding to locally algebraic characters. By proposition \ref{prop:weight-fiddle}, it is sufficient to take $X = \bbone$, the point of $\widehat H_0^{arith}$ corresponding to the trivial character, and study $M(e, \bbone, V, k) = e \mathcal{L}(\Ind(V)_k)$. As before, we suppose that $V = V_{alg} \otimes V_{sm}$ is a locally algebraic representation of $M_0$, and that $k \ge k(V)$; let $U_{sm, k}$ and $U_{alg}$ be the corresponding smooth and algebraic parabolically induced representations of $G_0$, so $\Ind(V)_k^{cl} = U_{sm, k} \otimes U_{alg} \subseteq \Ind(V)_k$.

\begin{definition}
 Let $M(e, \bbone, V, k)^{cl} = e \mathcal{L}(\Ind(V)_k^{cl})$
\end{definition}

This is a finite-dimensional (and hence closed) subspace of the $E$-Banach space $M(e, 0, V, k) = e \mathcal{L}(\Ind(V)_k)$.

\begin{theorem}\label{thm:classical-automorphic}
The space $M(e, 0, V, k)^{cl}$ is isomorphic as an $e \HH^+(G) e$-module to
\[ \bigoplus_\pi m(\pi) \cdot e \left[\left( \sideset{}{'}\bigotimes_{\q \ne \p \ \text{finite}} \pi_\q\right) \otimes (U_{sm} \otimes \pi_\p) \otimes (\iota(U_{alg}) \otimes \pi_\infty)^{G_\infty^\circ}\right]. \]
where the direct sum is over all automorphic representations $\pi$ for which $(\iota(U_{alg}) \otimes \pi_\infty)^{G_\infty^\circ} \ne 0$.
\end{theorem}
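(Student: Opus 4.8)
The plan is to deduce this directly from Proposition~\ref{prop:classical-automorphic}. Although $\Ind(V)_k^{cl} = U_{sm,k}\otimes_E U_{alg}$ need not be irreducible as an $\mathbb{I}$-representation --- the smooth parabolically induced representation $U_{sm,k}$ of $G_0$ being in general reducible --- its algebraic factor $U_{alg}$ \emph{is} irreducible (we have assumed $V_{alg}$ dominant and integral), and this is the only irreducibility that the proof of Proposition~\ref{prop:classical-automorphic} actually uses: the substitution $f\mapsto f_\infty$, the identification of $f_\infty$ with an element of $\bigl(\mathcal{A}(G)\otimes\iota(W_{alg})\otimes W_{sm}\bigr)^{G_\infty^\circ}$, and the decomposition of $\mathcal{A}(G)$ into automorphic representations all go through for any $W = W_{sm}\otimes W_{alg}$ with $W_{sm}$ smooth and $W_{alg}$ irreducible algebraic, the role of irreducibility of $W_{alg}$ being only to pin down a central character of $\iota(W_{alg})$ at the infinite places.

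So the first step is to run that argument verbatim with $W = \Ind(V)_k^{cl}$, $W_{sm} = U_{sm,k}$, $W_{alg} = U_{alg}$. After extending scalars along $\iota$ this produces an isomorphism of $G(\Af)\times\pi_0$-representations
\[
\mathcal{L}(\Ind(V)_k^{cl})\otimes_{E,\iota}\C \;\cong\; \bigoplus_{\pi} m(\pi)\left[\left(\sideset{}{'}\bigotimes_{\q\ne\p}\pi_\q\right)\otimes\bigl(U_{sm,k}\otimes\pi_\p\bigr)\otimes\bigl(\iota(U_{alg})\otimes\pi_\infty\bigr)^{G_\infty^\circ}\right],
\]
the sum over automorphic $\pi$ with $\bigl(\iota(U_{alg})\otimes\pi_\infty\bigr)^{G_\infty^\circ}\neq 0$. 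Applying the idempotent $e$, which acts through the $G(\A)^+$-action and so preserves this decomposition, yields the asserted description of $M(e,0,V,k)^{cl} = e\mathcal{L}(\Ind(V)_k^{cl})$. A more structural variant would be to split $U_{sm,k}$ into its irreducible smooth $G_0$-constituents, verify that this induces a splitting of $\Ind(V)_k^{cl}$ into $\mathbb{I}$-submodules, apply Proposition~\ref{prop:classical-automorphic} summand by summand, and add up using exactness of $W\mapsto e\mathcal{L}(W)$ on arithmetical modules; but checking the $\Sigma^+$-stability of that splitting seems no easier than the direct route, so I would favour the latter.

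The step that needs genuine care is the bookkeeping of the $e\HH^+(G)e$-module structure, in particular of the Atkin--Lehner algebra $e_{G_0}\HH^+_\p(G)e_{G_0}\cong E[\Sigma^+]$ acting on the two sides. Here one must keep in mind the normalisation fixed in \S\ref{ssect:torus-action}: in forming $\Ind(V)_k$ we declared $\Sigma$ to act trivially on $V$, so the operator by which $z\in\Sigma^+$ acts on $\Ind(V)_k^{cl}$ differs from the restriction to $\Sigma^+$ of the natural algebraic $G$-action on $U_{alg}$ --- and hence from the natural action of $\Sigma^+\subseteq G(F_\p)$ on the local factors $\pi_\p$ --- by the central character $\chi_{alg}$ of $V_{alg}$. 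This twist must be tracked through the chain of identifications above; once it is, the $\Sigma^+$-action on the right-hand side is the natural one on $\pi_\p$ twisted by $\chi_{alg}$, while the prime-to-$\p$ Hecke action is read off directly from the $G(\Af)$-structure furnished by Proposition~\ref{prop:classical-automorphic}. That $\Ind(V)_k^{cl}$ is finite-dimensional, hence $M(e,0,V,k)^{cl}$ a closed subspace, has already been noted.
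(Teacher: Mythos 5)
Your proposal is correct and takes essentially the same route as the paper, whose entire proof of this theorem is the single line ``Immediate from proposition \ref{prop:classical-automorphic}.'' The extra care you take --- observing that the proof of that proposition only needs $W_{alg}$ irreducible (to pin down the archimedean central character) rather than $W$ itself, and tracking the twist of the $\Sigma^+$-action coming from the convention that $\Sigma$ acts trivially on $V$ --- simply fills in details the paper leaves implicit.
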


\begin{proof}
Immediate from proposition \ref{prop:classical-automorphic}.
\end{proof}

We now show that the presence of $U_{sm}$ in the above formula allows us to restrict to idempotents of the form $\ep \otimes e_{G_0}$, where $\ep$ is an idempotent away from $\p$:

\begin{lemma} If $\pi_\p$ is an irreducible smooth representation of $G(F_\p)$ which is a subquotient of a representation obtained by parabolic induction from $\overline{P}$, then there is some $k \ge 0$ and some finite-dimensional smooth representation $V_{sm}$ of $M_0$ (trivial on $M_k$) such that 
\[ \left(U_{sm, k} \otimes \pi_\p\right)^{G_0} \ne 0,\]
where $U_{sm, k} = \Ind_{\overline{P}_0/\overline{P}_k}^{G_0 / G_k} V_{sm}$ as above.
\end{lemma}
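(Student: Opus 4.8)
The plan is to strip $(U_{sm,k}\otimes\pi_\p)^{G_0}$ down by finite-group representation theory until the question becomes one about ordinary $\overline{N}$-fixed vectors in $\pi_\p$, and then to invoke the theory of the Jacquet module.

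Write $\overline{P}_0 = \overline{P}\cap G_0 = \overline{N}_0\rtimes M_0$ and $\overline{P}_k = \overline{P}\cap G_k = \overline{N}_k\rtimes M_k$; by the Iwahori factorisation these are groups with $\overline{P}_k\trianglelefteq\overline{P}_0$, and $\overline{P}_0/\overline{P}_k$ embeds in the finite group $Q := G_0/G_k$. Since $\pi_\p$ is irreducible smooth it is admissible, so $\pi_\p^{G_k}$ is a finite-dimensional representation of $Q$, and $U_{sm,k}$ is inflated from $Q$. First I would use that $G_k$ acts trivially on $U_{sm,k}$, so that $(U_{sm,k}\otimes\pi_\p)^{G_0} = (U_{sm,k}\otimes\pi_\p^{G_k})^{Q}$; then the projection formula $\Ind_H^Q(V_{sm})\otimes\pi_\p^{G_k}\cong\Ind_H^Q(V_{sm}\otimes\pi_\p^{G_k}|_H)$ (for $H := \overline{P}_0/\overline{P}_k$) together with Frobenius reciprocity gives
\[ (U_{sm,k}\otimes\pi_\p)^{G_0}\ \cong\ \big(V_{sm}\otimes\pi_\p^{G_k}\big)^{\overline{P}_0/\overline{P}_k}, \]
where $\overline{P}_0/\overline{P}_k$ acts on $V_{sm}$ through its quotient $M_0/M_k$ (note $\overline{P}_k = \overline{N}_k\rtimes M_k$, so the kernel of $\overline{P}_0/\overline{P}_k\to M_0/M_k$ is the image of $\overline{N}_0$, which is normal because $\overline{N}_0\trianglelefteq\overline{P}_0$), and diagonally on the tensor product. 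Working in characteristic $0$, it is then enough to find one $k$ for which the subspace $W_k := (\pi_\p^{G_k})^{\overline{N}_0}$ is nonzero: it is $M_0$-stable (as $M_0$ normalises $\overline{N}_0$), finite-dimensional, and trivial under $M_k$, so taking $V_{sm}$ to be the $M_0/M_k$-representation dual to $W_k$, the identity endomorphism of $W_k$ furnishes a nonzero $\overline{P}_0/\overline{P}_k$-invariant element of $V_{sm}\otimes\pi_\p^{G_k}$.

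So the lemma reduces to showing $(\pi_\p^{G_k})^{\overline{N}_0}\neq 0$ for some $k$. I would observe that $(\pi_\p^{G_k})^{\overline{N}_0} = \pi_\p^{\overline{N}_0 G_k}$: both are the vectors fixed by $\overline{N}_0$ and by $G_k$, and $\overline{N}_0 G_k$ is a genuine subgroup of $G_0$ because $G_k\trianglelefteq G_0$. Since $\pi_\p$ is smooth and $\{G_k\}_{k\ge 1}$ is a neighbourhood basis of the identity, any $\overline{N}_0$-fixed vector lies in $\pi_\p^{\overline{N}_0 G_k}$ once $k$ is sufficiently large; hence $\bigcup_k\pi_\p^{\overline{N}_0 G_k} = \pi_\p^{\overline{N}_0}$, and it suffices to prove $\pi_\p^{\overline{N}_0}\neq 0$.

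This final step is the only non-formal ingredient. Since $\pi_\p$ occurs as a subquotient of a representation parabolically induced from $\overline{P}$, its cuspidal support lies in the Levi $M(F_\p)$, so by the theory of the Jacquet module and cuspidal support (Casselman's notes; Bernstein--Zelevinsky for $\GL_n$) the Jacquet module $(\pi_\p)_{\overline{N}}$ with respect to $\overline{P}$ is nonzero. Fix an increasing exhaustion of $\overline{N}(F_\p)$ by compact open subgroups $\overline{N}_{(m)}$. For each $m$ the averaging idempotent over $\overline{N}_{(m)}$ identifies $\pi_\p^{\overline{N}_{(m)}}$ with the quotient of $\pi_\p$ by the span of the vectors $v - \overline{n}v$ ($\overline{n}\in\overline{N}_{(m)}$), and $(\pi_\p)_{\overline{N}}$ is the direct limit of these quotients; hence each $\pi_\p^{\overline{N}_{(m)}}$ surjects onto $(\pi_\p)_{\overline{N}}$ and is in particular nonzero. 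Choosing $m$ with $\overline{N}_0\subseteq\overline{N}_{(m)}$ gives $\pi_\p^{\overline{N}_0}\supseteq\pi_\p^{\overline{N}_{(m)}}\neq 0$, finishing the proof. I expect the main obstacle to be precisely this translation of ``subquotient of a parabolic induction from $\overline{P}$'' into the non-vanishing of the $\overline{N}$-Jacquet module (one must be a little careful that ``subquotient'', rather than sub or quotient, still suffices, which is where cuspidal support enters); everything else is bookkeeping with the Iwahori factorisation and finite-group representation theory, the one subtlety there being to check that the $V_{sm}$ constructed above is genuinely inflated from $M_0/M_k$, which is why one passes to the $\overline{N}_0$-fixed subspace $W_k$.
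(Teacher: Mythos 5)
Your argument is correct, and the bookkeeping half of it is essentially the paper's: the paper also takes $V=(\pi_\p^{G_k})^{\overline{N}_0}$, sets $V_{sm}=V^*$, and passes from $\Hom_{M_0/M_k}$ through $\Hom_{\overline{P}_0/\overline{P}_k}$ and Frobenius reciprocity to $\big[(\Ind_{\overline{P}_0/\overline{P}_k}^{G_0/G_k}V^*)\otimes \pi_\p\big]^{G_0}\neq 0$; your projection-formula phrasing is just a repackaging of that chain, and both proofs use the same non-formal translation (Casselman 6.3.7: subquotient of parabolic induction from $\overline{P}$ forces $(\pi_\p)_{\overline{N}}\neq 0$). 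Where you genuinely diverge is in proving $(\pi_\p^{G_k})^{\overline{N}_0}\neq 0$. The paper argues via Casselman's canonical lifting: the finite-slope part $(\pi_\p^{G_k})_{fs}$ maps isomorphically to $((\pi_\p)_{\overline{N}})^{M_k}$, which is nonzero for large $k$ by admissibility of the Jacquet module, and finite-slope vectors are $\overline{N}_0$-invariant because $\overline{N}_0$ can be conjugated into $\overline{N}_k$ by an element of $\Sigma^{++}$. You instead use the elementary splitting $\pi_\p=\pi_\p^{\overline{N}_{(m)}}\oplus\pi_\p(\overline{N}_{(m)})$ for a compact open $\overline{N}_{(m)}\supseteq\overline{N}_0$ to get a surjection $\pi_\p^{\overline{N}_{(m)}}\onto(\pi_\p)_{\overline{N}}$, hence $\pi_\p^{\overline{N}_0}\neq 0$, and then smoothness to land in $\pi_\p^{\overline{N}_0 G_k}$ for large $k$. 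This is simpler and entirely adequate for the lemma as stated. What the paper's heavier route buys is the extra information that the nonzero $\overline{N}_0$-invariants it produces lie in the \emph{finite-slope} part of $\pi_\p^{G_k}$, which is precisely what the subsequent corollary \ref{corr:wild-level-lowering} uses to assert that the resulting classical subspace of automorphic forms is finite slope; your argument alone would not yield that refinement.
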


\begin{proof}
The assumption on $W$ is equivalent to assuming that the Jacquet module $W_{\overline{N}} \ne 0$, by a theorem of Jacquet -- see \cite[6.3.7]{casselman-book}. By [{\it op.cit.}, 3.3.1], $W_{\overline{N}}$ is an admissible smooth $M$-representation. Hence $(W_{\overline{N}})^{M_k}$ is finite-dimensional, and nonzero for sufficiently large $k$. 

The space $W^{G_k}$ naturally has an action of $e_{G_k} \HH^+_\p(G) e_{G_k}$, and we may identify the monoid algebra $E[\Sigma^{+}]$ with a subalgebra of this algebra (as was done for $G_0$ in \S \ref{ssect:atkin-lehner} above). Casselman's theory of the canonical lifting \cite[\S 4.1]{casselman-book} shows that if $W^{G_k}_{fs}$ denotes the maximal finite slope subspace of $W^{G_k}$, then the quotient map $W \to W_{\overline{N}}$ gives an isomorphism $W^{G_k}_{fs} \stackrel{\sim}{\to} (W_{\overline{N}})^{M_k}$. Thus this finite slope subspace is also nonzero for large enough $k$.

The space $W^{G_k}$ naturally has an action of $G_0$, since $G_k$ is normal in $G_0$; and any element of $W^{G_k}_{fs}$ is in fact $\overline{N}_0$-invariant, since it is clearly $\overline{N}_k$-invariant and we may conjugate $\overline{N}_0$ into $\overline{N}_k$ via an element of $\Sigma^{++}$. Thus the $\overline{N}_0$-invariants of the $G_0$-representation $W^{G_k}$ are not zero. 

Let $V = (W^{G_k})^{\overline{N}_0}$, which is a representation of $M_0 / M_k$. Then we have
\begin{align*}
 &\Hom_{M_0 / M_k} \left(V, (W^{G_k})^{\overline{N}_0}\right) \ne 0\\
 \Rightarrow& \Hom_{\overline{P}_0 / \overline{P}_k} \left(V, W^{G_k}\right) \ne 0\\
 \Rightarrow& \Hom_{G_0 / G_k} \left( \Ind_{\overline{P}_0 / \overline{P}_k}^{G_0/G_k} V, W^{G_k} \right) \ne 0\\
 \Rightarrow& \Hom_{G_0} \left( \Ind_{\overline{P}_0 / \overline{P}_k}^{G_0/G_k} V, W \right) \ne 0\\
 \Rightarrow& \left[\left(\Ind_{\overline{P}_0 / \overline{P}_k}^{G_0/G_k} V^*\right) \otimes W \right]^{G_0} \ne 0.
\end{align*}

So we may take $V_{sm} = V^*$.
\end{proof}

\begin{corollary}\label{corr:wild-level-lowering}
Let $\pi = \sideset{}{'}{\bigotimes_v} \pi_v$ be an allowable automorphic representation of $G$ for which $\pi_\p$ is isomorphic to a subquotient of a representation parabolically induced from $P$. Let $\ep_f$ be an idempotent in $\HH_f^{(\p)}(G)$ such that $\ep_f \pi_f^{(\p)} \ne 0$. 

Then there exists a locally algebraic $M_0$-representation $V$ and, for all $k \ge k(V)$, a nonzero $\HH^{+}(G)$-invariant finite-slope subspace of $M(\ep_f \otimes e_{G_0}, \bbone, V, k)^{cl}$ which is isomorphic as an $\ep_f \HH_f^{(\p)}(G) \ep_f$-module to a direct sum of copies of $\ep_f \pi_f^{(\p)}$.
\end{corollary}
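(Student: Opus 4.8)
The plan is to chain together the preceding lemma (which produces the smooth part of the weight), the notion of allowability (which produces the algebraic part), Theorem~\ref{thm:classical-automorphic} (which identifies the classical subspace with a sum of Hecke modules cut out from automorphic representations), and Lemma~\ref{lemma:finiteslope1} (to pass to the finite slope part). First I would fix the algebraic part. Since $\pi$ is allowable there is an algebraic representation $W$ of $G$ over $E$ with $(\iota(W)\otimes_\C\pi_\infty)^{G_\infty^\circ}\neq 0$; as $\pi_\infty$ is irreducible, some irreducible algebraic constituent $W_0$ of $W$ already has this property. Every irreducible algebraic representation of $G$ is of the form $U_{alg}=\Ind_{\overline P}^G V_{alg}$ for a suitable dominant integral representation $V_{alg}$ of $M$ (take for $V_{alg}$ the irreducible $M$-subrepresentation of $W_0$ generated by a highest weight vector, for the ordering determined by $P$), so we may choose $V_{alg}$ with $U_{alg}\cong W_0$; in particular $U_{alg}$ is nonzero of the required type and $(\iota(U_{alg})\otimes\pi_\infty)^{G_\infty^\circ}\neq 0$.

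Next I would fix the smooth part. The hypothesis that $\pi_\p$ is a subquotient of a representation parabolically induced from $P$ means exactly that the normalised Jacquet module $(\pi_\p)_N$ is nonzero; since $\overline P$ has the same Levi $M$ as $P$, this is equivalent to $(\pi_\p)_{\overline N}\neq 0$, i.e.\ to $\pi_\p$ being a subquotient of a representation parabolically induced from $\overline P$ (the insensitivity of Jacquet-module non-vanishing to replacing a parabolic by an opposite one is standard, cf.\ the use of Jacquet's theorem in the proof of the preceding lemma). That lemma then provides an integer $k_0$ and a finite-dimensional smooth representation $V_{sm}$ of $M_0/M_{k_0}$ with $(U_{sm,k_0}\otimes\pi_\p)^{G_0}\neq 0$. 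I would set $V=V_{sm}\otimes V_{alg}$ (so $k(V)\le k_0$ after enlarging $k_0$ if necessary), and note that for each $k\ge k(V)$ the inflation map $U_{sm,k}\into U_{sm,k+1}$ is the restriction to classical subspaces of the link map $i_k$ of Theorem~\ref{thm:spacesM}, hence is $\mathbb I$-equivariant and carries finite slope subspaces to finite slope subspaces; so every non-vanishing assertion below, once checked for one $k$, persists for all $k\ge k(V)$.

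Finally I would conclude. Applying Theorem~\ref{thm:classical-automorphic} with $e=\ep_f\otimes e_{G_0}$, the space $M(\ep_f\otimes e_{G_0},\bbone,V,k)^{cl}$ is a finite direct sum, over automorphic representations $\pi'$ with $(\iota(U_{alg})\otimes\pi'_\infty)^{G_\infty^\circ}\neq 0$, of $m(\pi')$ copies of
\[ \ep_f\Big(\sideset{}{'}\bigotimes_{\q\neq\p}\pi'_\q\Big)\ \otimes\ \big(U_{sm,k}\otimes\pi'_\p\big)^{G_0}\ \otimes\ \big(\iota(U_{alg})\otimes\pi'_\infty\big)^{G_\infty^\circ}, \]
this being a decomposition of $e\HH^+(G)e$-modules in which the Atkin--Lehner algebra $e_{G_0}\HH^+_\p(G)e_{G_0}\cong E[\Sigma^+]$ of Lemma~\ref{lemma:atkin-lehner-algebra} acts through the middle tensor factor. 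The summand $N_\pi$ attached to the given $\pi$ is nonzero, by the two choices just made together with the hypothesis $\ep_f\pi_f^{(\p)}\neq 0$, and it is finite-dimensional. I would let $N_\pi^{fs}$ be its maximal finite slope subspace in the sense of Lemma~\ref{lemma:finiteslope1}; being cut out by a condition on the $E[\Sigma^+]$-action, it is stable under everything in $e\HH^+(G)e$ commuting with $\Sigma^+$, hence under all of $e\HH^+(G)e$ (this is what ``$\HH^+(G)$-invariant'' should mean, since $\HH^+(G)$ itself does not act on $e\mathcal L(-)$), and it is visibly a direct sum of finitely many copies of $\ep_f\pi_f^{(\p)}$ as a module over $\ep_f\HH_f^{(\p)}(G)\ep_f$, which acts only on the first tensor factor.

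The one point I expect to be genuinely nontrivial is showing that $N_\pi^{fs}\neq 0$, equivalently that some $[G_0\eta G_0]$ with $\eta\in\Sigma^{++}$ acts non-nilpotently on the finite-dimensional space $(U_{sm,k}\otimes\pi_\p)^{G_0}$. This does not follow from the \emph{statement} of the preceding lemma, only from its proof: the nonzero $G_0$-invariant vector produced there is built, via Casselman's canonical lifting, from the finite slope subspace $(\pi_\p^{G_k})_{fs}\cong((\pi_\p)_{\overline N})^{M_k}$, which is nonzero, and one must keep track of the fact that this identification is compatible with the $\Sigma^+$-action so that the operators $[G_0 z G_0]$ remain invertible on a nonzero subspace of the Hom-space in question. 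Everything else is a matter of unwinding the isomorphism of Theorem~\ref{thm:classical-automorphic} and bookkeeping with the tensor-factor structure of $e\HH^+(G)e$.
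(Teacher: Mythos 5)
Your proposal follows essentially the same route as the paper's proof: allowability yields $V_{alg}$ (hence $U_{alg}$) by highest weight theory, the preceding lemma yields $V_{sm}$, and Theorem \ref{thm:classical-automorphic} with $e = \ep_f \otimes e_{G_0}$ exhibits the nonzero summand attached to $\pi$, with the Atkin--Lehner algebra acting through the middle tensor factor. The finite-slope point you flag as genuinely nontrivial is real but is equally left implicit in the paper's own one-line proof; as you suspect, it is supplied by the \emph{proof} of the preceding lemma rather than its statement, since Casselman's canonical lifting identifies $(\pi_\p^{G_k})_{fs}$ with $\left((\pi_\p)_{\overline{N}}\right)^{M_k}$ compatibly with the $\Sigma^+$-action, so the Hom-space constructed there already lies in (or meets) the finite-slope part.
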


\begin{proof}
 By assumption, there is an algebraic representation $W_{alg}$ of $G$ such that $(\iota(W_{alg}) \otimes \pi_\infty)^{G_\infty^\circ} \ne 0$; this is necessarily equal to the induced representation $U_{alg}$ arising from some algebraic representation $V_{alg}$ of $M$, by highest weight theory. The previous lemma shows that we can find a smooth representation $V_{sm}$ of $M_0$ such that $e_{G_0} \left(U_{sm, k} \otimes \pi_\p\right) \ne 0$ for large enough $k$. Letting $V = V_{sm} \otimes V_{alg}$ and applying theorem \ref{thm:classical-automorphic}, $M(\ep \otimes e_{G_0}, 0, V, k)^{cl}$ contains a direct summand isomorphic as a $\HH^+(G)$-representation to $\ep_f \pi_f^{(\p)} \otimes e_{G_0} \left(U_{sm, k} \otimes W\right) \otimes (\iota(W) \otimes \pi_\infty)^{G_\infty^\circ}$.
\end{proof}

\begin{remark}
These are not the only automorphic representations that can contribute to $M(\ep_f \otimes e_{G_0}, \bbone, V, k)^{cl}$; since $G_0$ could be arbitrarily small, some supercuspidal $G(F_\p)$-representations can have $G_0$-invariant vectors, or more generally $G_0$-stable subspaces isomorphic to a parabolically induced representation of $G_0$. Thus automorphic representations with supercuspidal local factors at $\p$ can still contribute to $M(\ep_f \otimes e_{G_0}, \bbone, V, k)^{cl}$ for suitable $V$. However, these will not be finite slope.
\end{remark}

Let us choose an idempotent $e$ of the form $\ep \otimes e_{G_0}$, as above. The next theorem shows that any eigenform in $M(e, \bbone, V, k)$ of sufficiently small slope lies in $M(e, \bbone, V, k)^{cl}$, giving an analogue in our situation of the control theorem of \cite{C-CO} for overconvergent modular forms. Let us fix an $\eta \in \Sigma^{++}$, and $\lambda \in \overline{E}^\times$.

\begin{theorem}\label{thm:classicality}
If $\sigma = \ord_p(\lambda)$ is a small slope relative to $\eta$ and $V$ (in the sense of definition \ref{def:noncritical}), then the generalised $\lambda$-eigenspace of $[G_0 \eta G_0]$ acting on $M(e, \bbone, V, k)$ is contained in $M(e, \bbone, V, k)^{cl}$.
\end{theorem}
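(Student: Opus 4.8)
The plan is to deduce the theorem from the purely local norm estimate of Proposition~\ref{prop:small-norms}, by quotienting out the classical subspace on the local and global levels simultaneously. Write $M = M(e,\bbone,V,k) = e\mathcal{L}(\Ind(V)_k)$ and let $Q = \Ind(V)_k/\Ind(V)_k^{cl}$ be the local quotient. Since $\Ind(V)_k^{cl}$ is a finite-dimensional (hence closed) $\mathbb{I}$-stable subspace of $\Ind(V)_k$, the space $Q$ is an $E$-Banach space carrying an action of $\mathbb{I}$ on which $G_0$ acts by isometries (the quotient of the $G_0$-invariant norm on $\Ind(V)_k$). Post-composing functions with the $\mathbb{I}$-equivariant projection $\Ind(V)_k \to Q$ gives a map $\pi\colon M \to e\mathcal{L}(Q)$ that is equivariant for $e\HH^+(G)e$; since taking functions valued in a submodule is left exact and the idempotent $e$ commutes with inclusions, $\ker\pi = e\mathcal{L}(\Ind(V)_k^{cl}) = M(e,\bbone,V,k)^{cl}$. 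By Proposition~\ref{prop:invariants} the target $e\mathcal{L}(Q)$ sits as a closed subspace of a finite direct sum of the spaces $Q^{\Gamma_i}$, hence is an $E$-Banach space. So the whole argument reduces to showing that the operator induced by $[G_0 \eta G_0]$ on $e\mathcal{L}(Q)$ has operator norm $< |\lambda|$.

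For this I would use the explicit description of the $[G_0\eta G_0]$-action from Proposition~\ref{prop:invariants}(ii) together with \S\ref{ssect:atkin-lehner}: in the normalisation used here (for which $[G_0 \eta G_0]$ acts by contractions, consistently with Theorem~\ref{thm:spacesM}(\ref{item:automorphic-compactness})), if $G_0\eta G_0 = \bigsqcup_i g_i\eta\, G_0$ with $g_i \in G_0$, then the operator sends $f$ to the function $x \mapsto \sum_i (g_i\eta)_\p \circ f(x g_i\eta)$. Writing $\nu$ for the operator norm of the action of $\eta$ on $Q$: each $g_i$ acts on $Q$ by an isometry, so every summand has norm $\le \nu\,\|f\|$, and since the ground field is non-archimedean the whole finite sum has norm $\le \nu\,\|f\|$ as well. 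Hence $[G_0\eta G_0]$ acts on $e\mathcal{L}(Q)$ with operator norm $\le \nu$, and by Proposition~\ref{prop:small-norms} (as $\sigma = \ord_p(\lambda)$ is a small slope for $\eta$ and $V$) we have $\nu < p^{-\sigma} = |\lambda|$.

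It then only remains to observe that, $\lambda$ being nonzero, an operator on the Banach space $e\mathcal{L}(Q)$ of norm $<|\lambda|$ makes $\lambda\cdot\mathrm{id} - [G_0\eta G_0]$ invertible (Neumann series), whence $(\lambda - [G_0\eta G_0])^m$ is invertible for every $m \ge 1$. If $v \in M$ lies in the generalised $\lambda$-eigenspace of $[G_0\eta G_0]$, say $(\lambda - [G_0\eta G_0])^m v = 0$, then applying $\pi$ and using its equivariance gives $(\lambda - [G_0\eta G_0])^m\pi(v) = 0$ in $e\mathcal{L}(Q)$, so $\pi(v) = 0$, i.e.\ $v \in \ker\pi = M(e,\bbone,V,k)^{cl}$, as required.

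The step I expect to demand the most care is the norm estimate of the second paragraph: one must pin down the normalisation of the double-coset operator $[G_0\eta G_0]$ precisely enough to be sure it does not acquire a factor — such as the index $\gamma(\eta) = [\overline{N}_0 : \eta\overline{N}_0\eta^{-1}]$, which is a positive power of $p$ — that would weaken the inequality, and it is essential here that a sum of $\gamma(\eta)$ terms of norm $\le c$ still has norm $\le c$. Everything else is soft: only left exactness of $e\mathcal{L}(-)$ is used (automatic for taking invariants), and the concluding step is a one-line functional-analysis argument whose only genuine input is the local Proposition~\ref{prop:small-norms}.
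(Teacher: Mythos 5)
Your proof is correct and follows essentially the same route as the paper: quotient out the classical subspace, transfer the local norm bound of Proposition \ref{prop:small-norms} to the global level via the explicit double-coset formula, and conclude that a generalised eigenvalue cannot exceed the operator norm. The only cosmetic difference is that you map $M(e,\bbone,V,k)$ into $e\mathcal{L}\bigl(\Ind(V)_k/\Ind(V)_k^{cl}\bigr)$ rather than working with the quotient $M(e,\bbone,V,k)/M(e,\bbone,V,k)^{cl}$ directly, and you spell out the Neumann-series step and the normalisation of $[G_0\eta G_0]$ that the paper leaves implicit.
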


\begin{proof} 
From proposition \ref{prop:small-norms} we know that the operator norm of $[G_0 \eta G_0]$ acting on the quotient space $Q = M(e, \bbone, V, k)/M(e, \bbone, V, k)^{cl}$ is strictly less than $p^{-\sigma}$. Hence the image in $Q$ of the generalised $\lambda$-eigenspace of $M(e, \bbone, V, k)$ must be zero, as otherwise the operator $[G_0 \eta G_0]$ acting on $Q$ would have an eigenvalue larger than its norm. 
\end{proof}

\subsection{Relation to Emerton's completed cohomology}
\label{ssect:emerton}

In this subsection we will investigate the relation between the spaces $M(e, X, V, k)$ and the constructions of \cite{emerton-interpolation}. As in the discussion following Emerton's definition 3.2.3, when $G$ satisfies assumption \ref{compactness-mod-centre}, the space $\widetilde H^0_{F_\p-\la}$ is the space of continuous $E$-valued functions on $G(\A)$ which are left $G(F)$-invariant, locally constant on cosets of $G(\Afp) \times G_\infty^\circ$, and locally $F_\p$-analytic on cosets of $G(F_\p)$. (In the reference cited, there is a stronger running assumption that $\Res_{F/\Q} G$ is compact at infinity modulo $\Q$-split centre; but this is not used until the end of the section.)

Assume $e$ is an idempotent in $\HH^+(G)$ of the form $\ep \otimes e_{G_0}$, as in the previous section. By proposition \ref{prop:weight-fiddle} it is sufficient to study the spaces $M(e, \bbone, V, k)$ where $(V, \rho)$ is an irreducible locally $F_\p$-analytic representation of $M_0$. We regard $V$ as a representation of $\overline{P}_0 = \overline{N}_0 M_0$, trivial on $\overline{N}_0$.

\begin{proposition}
There is an isomorphism
\[ \varinjlim_k M(e, \bbone, V, k) \stackrel{\sim}{\to} \ep \left(\widetilde H^0_{F_\p-\la} \otimes_E V \right)^{\overline{P}_0}, \]
commuting with the action of $\ep \HH^{(\p)}(G) \ep$ on both sides.
\end{proposition}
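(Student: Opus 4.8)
The plan is to unwind both sides into concrete function spaces and exhibit a mutually inverse pair of maps. First I would recall that by Proposition~\ref{prop:analytic-vectors}, $M(e,\bbone,V,k) = e\mathcal{L}\big((\Ind_{\overline{P}}^{G_0}V)_{G_k-\an}\big)$, and that $\Ind_{\overline{P}}^{G_0}V$ is by definition the space of $f \in C^{\la}(G_0,V)$ with $f(\overline{n}mg) = m\circ f(g)$; passing to the direct limit over $k$ replaces $G_k$-analytic vectors by all locally $F_\p$-analytic vectors, so $\varinjlim_k \Ind(V)_k = \Ind_{\overline{P}}^{G_0}V$ itself (the full locally analytic induction). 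Meanwhile, an element $\phi \in \mathcal{L}(\Ind_{\overline{P}}^{G_0}V)$ is a function $G(F)\backslash G(\A) \to \Ind_{\overline{P}}^{G_0}V$, i.e.\ essentially a function of two variables $(g,\gamma) \mapsto \phi(g)(\gamma)$ with $g \in G(\A)$ and $\gamma \in G_0$. The idea is that the assignment $\phi \mapsto F$, where $F(g) := \phi(g)(1) \in V$ (evaluation at the identity of $G_0$), should land in $\widetilde H^0_{F_\p-\la}\otimes_E V$ after one checks the invariance properties, and conversely $F \mapsto \phi$ with $\phi(g)(\gamma) := F(g\gamma)$ (using the $G_0 \subseteq G(F_\p)$-action on $\widetilde H^0_{F_\p-\la}\otimes V$) recovers it.

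The key steps, in order, would be: (1) Recall the explicit description of $\widetilde H^0_{F_\p-\la}$ from the paragraph preceding the proposition: continuous $E$-valued functions on $G(\A)$, left $G(F)$-invariant, locally constant on cosets of $G(\Afp)\times G_\infty^\circ$, and locally $F_\p$-analytic on cosets of $G(F_\p)$. (2) Given $\phi \in \mathcal{L}(\Ind_{\overline{P}}^{G_0}V)$, define $\Theta(\phi) \in \widetilde H^0_{F_\p-\la}\otimes_E V$ by $\Theta(\phi)(g) = \phi(g)(1)$, and check that the transformation law $\phi(gu) = u_\p^{-1}\circ\phi(g)$ for $u$ in a compact open subgroup of $G(\Afp)\times G_0$, combined with left $G(F)$-invariance of $\phi$, forces $\Theta(\phi)$ to have exactly the required local-constancy, local-analyticity and $G(F)$-invariance properties. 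The subtlety here is that $\phi(g)$ is only a function on $G_0$, not on all of $G(F_\p)$; but the $G_0$-action on $\mathcal{L}$ (via $(\gamma\circ\phi)(g) = \gamma_\p\circ\phi(g\gamma)$) together with the relation between $\phi(g)$ and $\phi(g\gamma)$ for $\gamma \in G_0$ lets one reconstruct $\Theta(\phi)$ on the coset $gG_0$ from the single function $\phi(g)\in C^{\la}(G_0,V)$, which is precisely what local $F_\p$-analyticity on $G(F_\p)$-cosets means. (3) Conversely, given $F \in \widetilde H^0_{F_\p-\la}\otimes V$, set $\Xi(F)(g)(\gamma) := F(g\gamma)$ for $\gamma \in G_0$; check $\Xi(F)(g) \in C^{\la}(G_0,V)$ and that $\Xi(F) \in \mathcal{L}(\Ind_{\overline{P}}^{G_0}V)$. (4) Verify $\Theta$ and $\Xi$ are mutually inverse: $\Theta(\Xi(F))(g) = F(g\cdot 1) = F(g)$ is immediate, and $\Xi(\Theta(\phi))(g)(\gamma) = \Theta(\phi)(g\gamma) = \phi(g\gamma)(1)$, which equals $\phi(g)(\gamma)$ by the $G_0$-equivariance of $\phi$ as an element of $\mathcal{L}$. (5) Track the $\overline{P}_0$-invariance on the right-hand side: the functional equation $f(\overline{n}mg) = m\circ f(g)$ defining $\Ind_{\overline{P}}^{G_0}V$ is exactly what becomes, under $\Theta$, the statement that $\Theta(\phi)$ lies in the $\overline{P}_0$-invariants of $\widetilde H^0_{F_\p-\la}\otimes V$ — here one must be careful about whether $\overline{P}_0$ acts via right translation on $\widetilde H^0$ and via $\rho$ on $V$, matching the $\overline{n}m$-equivariance to invariance for the diagonal action with $V$ twisted by $\rho$. (6) Finally check compatibility with $\ep\HH^{(\p)}(G)\ep$: both the Hecke action on $\mathcal{L}$ and on $\widetilde H^0$ are by right translation in the $G(\Afp)$-variable, and $\Theta$ only touches the $G(F_\p)$-variable, so the two commute essentially by construction; applying $\ep$ to both sides is then automatic.

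The main obstacle I expect is step (2)/(5): making precise the passage between "function on $G_0$ with $\overline{N}_0M_0$-equivariance" and "function on $G(F_\p)$ that is locally $F_\p$-analytic and $\overline{P}_0$-invariant", i.e.\ correctly identifying the domain bookkeeping. One has to be careful that the $G_0$ appearing inside $\mathcal{L}(\Ind_{\overline{P}}^{G_0}V)$ is the \emph{same} $G_0$ that sits inside the level subgroup $U\subseteq G(\Afp)\times G_0$ and inside $G(F_\p)$, and that the $\varinjlim_k$ genuinely produces the full space of locally $F_\p$-analytic vectors rather than some smaller space — this uses that $\{G_k\}$ is a neighbourhood basis of the identity and that a vector is locally analytic iff it is $G_k$-analytic for some $k$, exactly as in the definition of $C^{\la}$ at the start of \S\ref{sect:local-families}. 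Once the domains are correctly aligned, the verification that $\Theta,\Xi$ are inverse isomorphisms respecting all structures is a formal, if slightly lengthy, check; I would not grind through it in detail but would present it as "a direct verification from the definitions", flagging the domain-identification point as the only place where care is genuinely needed.
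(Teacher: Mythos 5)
Your overall strategy is the paper's own: identify $\varinjlim_k M(e,\bbone,V,k)$ with $e\,\mathcal{L}(\Ind_{\overline{P}}^{G_0}V)$, send $\phi$ to its evaluation at the identity, and reconstruct $\phi$ from a $\overline{P}_0$-invariant vector by translating back (the paper merely realises $\Ind_{\overline{P}}^{G_0}V$ as $C^{\la}(N_0,V)$ rather than as equivariant functions on $G_0$). But the explicit formulas in your steps (3)--(4) are off by an inversion, and as written the verification fails. Since $e=\ep\otimes e_{G_0}$, the $e_{G_0}$-invariance gives $\phi(g\gamma)=\gamma_\p^{-1}\circ\phi(g)$ for all $\gamma\in G_0$, and the $G_0$-action on $\Ind_{\overline{P}}^{G_0}V\subset C^{\la}(G_0,V)$ is right translation; hence $\phi(g\gamma)(1)=\phi(g)(\gamma^{-1})$, not $\phi(g)(\gamma)$ as claimed in step (4), so your $\Xi\circ\Theta$ is precomposition with inversion on $G_0$ rather than the identity. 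Worse, with $\Xi(F)(g)(\gamma):=F(g\gamma)$ the function $\Xi(F)(g)$ does not lie in $\Ind_{\overline{P}}^{G_0}V$ at all: the needed identity $F(g\,\overline{n}m\gamma)=\rho(m)F(g\gamma)$ cannot be deduced from the $\overline{P}_0$-invariance of $F$, which is a right-translation property, $F(g'\overline{p})=\rho(\overline{p})^{-1}F(g')$, and cannot be applied to $\overline{n}m$ sitting between $g$ and $\gamma$.

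The repair is to set $\Xi(F)(g)(\gamma):=F(g\gamma^{-1})$: then $F\bigl(g(\overline{n}m\gamma)^{-1}\bigr)=F(g\gamma^{-1}m^{-1}\overline{n}^{-1})=\rho(m)F(g\gamma^{-1})$ gives the defining equivariance, $\Xi(F)(gu)=u_\p^{-1}\circ\Xi(F)(g)$ is a one-line check, and $\Theta$, $\Xi$ are then mutually inverse. This is exactly the paper's formula $f(g)(n)=h(gn^{-1})$, whose membership in $\mathcal{L}(\Ind_{\overline{P}}^{G_0}V)$ the paper verifies by writing $nu^{-1}$ in its Iwahori factorisation and using the $\overline{N}_0$-invariance and the $M_0$-action on $V$. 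One further small point you gloss over: you need $\varinjlim_k e\,\mathcal{L}(\Ind(V)_k)=e\,\mathcal{L}\bigl(\varinjlim_k \Ind(V)_k\bigr)$; the paper justifies this in one line by observing that an element of $e\,\mathcal{L}(W)$ is determined by its finitely many values at double-coset representatives, so $e\,\mathcal{L}(-)$ commutes with direct limits. With these corrections your argument coincides with the proof in the paper.
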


\begin{proof}
Since any $f \in \varinjlim_k M(e, \bbone, V, k)$ is determined by finitely many values $f(\mu_1), \dots, f(\mu_t)$, it follows that the functor $W \mapsto e \mathcal{L}(W)$ commutes with direct limits; hence
\[\varinjlim_k M(e, \bbone, V, k) = e \cdot \mathcal{L}(\Ind_{\overline{P}}^{G_0} V),\]
where $\Ind_{\overline{P}}^{G_0} V = C^{\la}(N_0, V)$, the space of locally $F_\p$-analytic $V$-valued functions on $N_0$.

Given $f \in \varinjlim_k M(e, \bbone, V, k)$, we can thus regard $f$ as a map $G(\A) \to C^{\la}(N_0, V)$. Let us take $\tilde f$ to be the function $G(\A) \to V$ given by $\tilde f(g) = f(g)(1)$. Then $\tilde f$ is evidently in $\widetilde H^0_{F_\p-\la} \otimes_E V$, and satisfies $\ep \tilde f = \tilde f$. Since 
\[ f(g k)(1) = (k^{-1} \circ f)(g)(1) = \rho(\Psi_M(k^{-1})) f(g)(\Psi_N(k^{-1}))\] 
for all $k \in G_0$, we have in particular $\tilde f(g \overline{p}) = \rho(\Psi_M(\overline{p})^{-1}) \circ \tilde f(g)$ if $\overline{p} \in \overline{P}_0 = \overline{N}_0 M_0$. Thus $\overline{P}_0$ acts trivially on $\tilde f$ when we endow $\widetilde H^0_{F_\p-\la} \otimes_E V$ with the diagonal action of $\overline{P}_0$.

Conversely, let $h$ be an element of $\left( \widetilde H^0_{F_\p-\la} \otimes_E V \right)^{\overline{P}_0}$. We regard $h$ as a function $G(\A) \to V$ such that $h(g\overline{p}) = \rho(\overline{p})^{-1} h(g)$ for all $\overline{p} \in \overline{P}_0$, and define $f$ to be the function $G(\A) \times N_0 \to V$ given by $f(g)(n) = h(gn^{-1})$. Then clearly $f(g)(-) \in C^\la(N_0, E)$ for each $g \in G(\A)$. Thus we can regard $f$ as a function $G(\A) \to C^\la(N_0, E)$. We calculate that
\begin{align*}
f(g u)(n) &= h(g u n^{-1})\\
&= h(g (nu^{-1})^{-1})\\
&= h\left(g \left[ \Psi_{\overline{N}}(nu^{-1}) \Psi_{M}(nu^{-1}) \Psi_{N}(nu^{-1})\right]^{-1}\right)\\
&= h\left(g \Psi_{N}(nu^{-1})^{-1} \Psi_{M}(nu^{-1})^{-1} \Psi_{\overline{N}}(nu^{-1})^{-1} \right)\\
&= h\left(g \Psi_{N}(nu^{-1})^{-1} \Psi_{M}(nu^{-1})^{-1} \right) \tag{by $\overline{N}_0$-invariance}\\
&= \rho(\Psi_M(nu^{-1})) h(g \Psi_{N}(nu^{-1})^{-1})\tag{since $M_0$ acts via $\rho^{-1}$} \\
&= (u \circ f(g))(n).
\end{align*}
Hence $f$ is an element of $\mathcal{L}(\Ind_{\overline{P}}^{G_0} V)$, and evidently we have $\tilde f = h$.

Finally we note that the map $f \mapsto \tilde f$ is clearly $G(\A^{(\p)})$-equivariant, and hence $\ep \HH^{(\p)}(G) \ep$-equivariant.
\end{proof}

We now consider the action of Hecke operators at $\p$. On the left-hand side, we have an action of $\Sigma^+$, via the isomorphism $E[\Sigma^+] = e_{G_0} \HH_\p^+(G) e_{G_0}$. On the right, let $M^+$ be the monoid $\{ z \in M(L) \mid z \overline{N}_0 z^{-1} \subseteq \overline{N}_0\}$; then for any $W \in \Rep_{\rm la, c}(\overline{P})$, there is an action of $M^+$ (and hence of $\Sigma^+$) on $W^{\overline{N}_0}$ constructed in \cite[\S 3.4]{emerton-jacquet}. We extend this to an action of $M_0 \Sigma^+$ on the tensor product by letting $\Sigma^+$ act trivially on the second factor.

\begin{proposition}
The homomorphism of the previous proposition is $\Sigma^+$-equivariant.
\end{proposition}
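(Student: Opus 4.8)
The plan is to verify, for each $z\in\Sigma^{+}$, that the isomorphism $f\mapsto\tilde f$ of the previous proposition intertwines the $z$-action on $\varinjlim_k M(e,\bbone,V,k)=e\mathcal L(\Ind_{\overline P}^{G_0}V)$ with Emerton's operator $\pi_z$, by making both operators completely explicit in terms of an adapted coset decomposition of $G_0 z G_0$ and comparing them after evaluation at $1\in N_0$.

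First I would use Lemma~\ref{lemma:atkin-lehner-algebra} to write the $z$-action as the Hecke operator $\tfrac{\gamma(z)^{-1}}{\mu(G_0)}\bbone_{G_0 z G_0}$, with $\gamma(z)=|G_0/(G_0\cap zG_0z^{-1})|$, and then establish the decomposition
\[ G_0 z G_0 \;=\; \bigsqcup_{\overline n\,\in\,\overline N_0/z\overline N_0 z^{-1}}\overline n\,z\,G_0, \]
which follows from the Iwahori factorisation of $G_0$ together with the facts, recorded after Definition~\ref{def:atkin-lehner-monoid}, that $z\in Z(M)$, $z\overline N_0 z^{-1}\subseteq\overline N_0$ and $N_0\subseteq zN_0z^{-1}$; here $[\overline N_0:z\overline N_0 z^{-1}]=\gamma(z)$, which is exactly the index by which Lemma~\ref{lemma:atkin-lehner-algebra} normalises. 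Feeding the representatives $\overline n z$ into $(\gamma\circ\phi)(g)=\gamma_\p\circ\phi(g\gamma)$ for the $G(\A)^{+}$-action on $\mathcal L(-)$, and then into $(\gamma\circ h)(n)=\Psi_M(n\gamma)\circ h(\Psi_N(n\gamma))$ for the $\mathbb I$-action on $C^{\la}(N_0,V)$, gives for $f\in\mathcal L(\Ind_{\overline P}^{G_0}V)$
\[ (z\circ f)(g)(n)\;=\;\gamma(z)^{-1}\!\!\!\sum_{\overline n\,\in\,\overline N_0/z\overline N_0 z^{-1}}\!\!\!\Psi_M(n\,\overline n\,z)\circ f(g\,\overline n\,z)\bigl(\Psi_N(n\,\overline n\,z)\bigr). \]

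Next I would apply the isomorphism, i.e. set $n=1$. Since $\overline n\in\overline N$ and $z\in Z(M)\subseteq M$, the element $\overline n z$ lies in the big cell $\overline N\cdot M\cdot N$ with $M$-component $z$ and trivial $N$-component, so $\Psi_M(\overline n z)=z$ and $\Psi_N(\overline n z)=1$; and $z$ acts trivially on $V$ by our convention on $\Sigma$. This yields
\[ \widetilde{z\circ f}(g)\;=\;\gamma(z)^{-1}\!\!\!\sum_{\overline n\,\in\,\overline N_0/z\overline N_0 z^{-1}}\!\!\!\tilde f(g\,\overline n\,z)\;=\;\Bigl(\gamma(z)^{-1}\!\!\!\sum_{\overline n}(\overline n\,z)\cdot\tilde f\Bigr)(g), \]
$(\overline n z)\cdot$ being the right-translation action of $G(F_\p)$ on $\widetilde H^0_{F_\p-\la}\otimes_E V$. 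I would then match this against \cite[\S 3.4]{emerton-jacquet}: the right-hand operator is Emerton's $\pi_z$ on $W^{\overline N_0}$ for $W=\widetilde H^0_{F_\p-\la}\otimes_E V$ (with $\Sigma^{+}$ acting trivially on the $V$-factor), and---noting that for $z$ central in $M$ this $\pi_z$ commutes with the $M_0$-action and so preserves $W^{\overline P_0}=(W^{\overline N_0})^{M_0}$---this is exactly the asserted equivariance.

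The main obstacle will be bookkeeping of conventions rather than anything structural: pinning down the coset decomposition of $G_0 z G_0$ (as opposed to its mirror image with $z^{-1}$), checking that it is genuinely \emph{right} translation that intervenes on $\widetilde H^0_{F_\p-\la}$, and, above all, confirming that the normalisation $\gamma(z)^{-1}$ on the automorphic side agrees with the normalisation built into Emerton's $\pi_z$---so that the two $\Sigma^{+}$-actions coincide on the nose rather than differing by the character $z\mapsto\gamma(z)$ of $\Sigma^{+}$. Once the explicit formula above is in place, the identification is a direct comparison of definitions.
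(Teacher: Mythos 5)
Your proposal is correct and follows essentially the same route as the paper: the paper likewise transports the normalised Hecke operator $\tfrac{\gamma(z)^{-1}}{\mu(G_0)}\bbone_{G_0 z G_0}$ through $f\mapsto\tilde f$, uses the Iwahori factorisation (equivalently your decomposition $G_0 z G_0=\bigsqcup_{\overline n\in\overline N_0/z\overline N_0 z^{-1}}\overline n\,z\,G_0$, via $G_0\cap zG_0z^{-1}=(z\overline N_0 z^{-1})M_0N_0$) to take coset representatives in $\overline N_0$, and then concludes from $\Psi_M(\overline n z)=z$, $\Psi_N(\overline n z)=1$ and $\rho(z)=\mathrm{id}_V$ that the result is Emerton's averaged operator on $W^{\overline N_0}$. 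Your bookkeeping of the representatives in the form $\overline n\,z$ (rather than $z\,\overline n$) is indeed the order that matches Emerton's $\pi_{\overline N_0}\circ z$ under right translation, so no gap remains.
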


\begin{proof}
 The action of $z \in \Sigma^+$ on the right-hand side is given by 
\[(z \circ f)(g) = \frac{1}{[\overline{N}_0 : z \overline{N}_0 z^{-1}]} \sum_{n \in \overline{N}_0 / z \overline{N}_0 z^{-1}} f(g z n).\]
On the other hand, the action on the right-hand side obtained by transporting the action on the left via the isomorphism of the previous proposition is given by the similar but apparently more complicated formula
\[ (z \circ f)(g) = \frac{1}{[G_0 : G_0 \cap z G_0 z^{-1}]} \sum_{\gamma \in G_0 / G_0 \cap z G_0 z^{-1}} \rho(\Psi_M(z\gamma)) f\left(g z \gamma \Psi_N(z\gamma)^{-1}\right).\]
However, from the Iwahori factorisation of $G_0$ we have $z G_0 z^{-1} = (z \overline{N}_0 z^{-1})(z M_0 z^{-1})(z N_0 z^{-1})$. As $z \in Z(M)$ the middle term is $M_0$; and as $z \in \Sigma^+$, $z N_0 z^{-1} \supseteq N_0$. Thus $G_0 \cap z G_0 z^{-1} = (z \overline{N}_0 z^{-1}) M_0 N_0$, and hence any set of coset representatives for $\overline{N}_0 / z \overline{N}_0 z^{-1}$ is also a set of coset representatives for $G_0 / G_0 \cap z G_0 z^{-1}$. Since for all $\gamma \in \overline{N}_0$ we have $\Psi_N(z\gamma) = 1$ and $\rho(\Psi_M(z\gamma)) = \rho(z) = \mathrm{id}_V$, the two actions coincide.
\end{proof}

Now let $\lambda$ be a character $\Sigma \to E^\times$. By proposition \ref{lemma:atkin-lehner-algebra}, this is equivalent to a finite slope $E$-valued system of eigenvalues for $\mathcal{A}^+(G)$. 

\begin{proposition}
 As representations of $\HH^{(\p)}(G)$, we have 
\[M(e, \bbone, V, k_0)^{\mathcal{A}^+(G) = \lambda} = \ep \left[J_{\overline{P}}\left( \widetilde H^0_{F_\p-\la}\right) \otimes_E V\right]^{M_0, \Sigma = \lambda}\]
for all $k_0 \ge k(V)$.
\end{proposition}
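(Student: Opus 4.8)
The plan is to deduce the statement from the two preceding propositions, which together furnish a $\Sigma^+$- and $\HH^{(\p)}(G)$-equivariant isomorphism $\varinjlim_k M(e, \bbone, V, k) \xrightarrow{\sim} \ep(\widetilde H^0_{F_\p-\la} \otimes_E V)^{\overline{P}_0}$, by computing the $\lambda$-eigenspaces on each side separately. On the automorphic side I would first show that for every $k_0 \ge k(V)$ the natural map
\[ M(e, \bbone, V, k_0)^{\mathcal{A}^+(G) = \lambda} \longrightarrow \left[\varinjlim_k M(e, \bbone, V, k)\right]^{\Sigma^+ = \lambda} \]
is an isomorphism. Injectivity is clear since the transition maps $i_k$ are injective. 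For surjectivity, fix $\eta \in \Sigma^{++}$; as $\lambda$ is a finite slope system of eigenvalues we have $\lambda(\eta) \ne 0$. Given a $\Sigma^+$-eigenvector $v$ in the direct limit, represent it by an element of $M(e, \bbone, V, k_1)$ for $k_1$ large; by part~\ref{item:automorphic-compactness} of Theorem~\ref{thm:spacesM} the endomorphism $[G_0\eta G_0]$ of $M(e, \bbone, V, k_1)$ factors through $M(e, \bbone, V, k_1-1)$, so the relation $[G_0\eta G_0]\cdot v = (\text{nonzero scalar})\cdot v$ forces $v$ to lie in the image of $i_{k_1-1}$; iterating descends $v$ all the way to level $k_0$, and injectivity of the $i_k$ shows the descended element is again a $\Sigma^+$-eigenvector for $\lambda$. (This is essentially Corollary~\ref{corr:automorphic-maximally-overconvergent}.) Hence $M(e, \bbone, V, k_0)^{\mathcal{A}^+(G)=\lambda}$ is identified, $\HH^{(\p)}(G)$-equivariantly, with $\left[\ep(\widetilde H^0_{F_\p-\la} \otimes_E V)^{\overline{P}_0}\right]^{\Sigma^+ = \lambda}$.

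On the completed-cohomology side, since $\overline{P}_0 = \overline{N}_0 M_0$ and $\overline{N}_0$ acts trivially on $V$, we have $(\widetilde H^0_{F_\p-\la} \otimes_E V)^{\overline{P}_0} = \left[(\widetilde H^0_{F_\p-\la})^{\overline{N}_0} \otimes_E V\right]^{M_0}$, on which $\Sigma^+$ acts via the canonical $M^+$-Hecke action of \cite{emerton-jacquet} on $(\widetilde H^0_{F_\p-\la})^{\overline{N}_0}$ and trivially on $V$ --- this being exactly the action occurring above, by the previous proposition. Now I invoke Emerton's theory of the Jacquet functor \cite{emerton-jacquet, emerton-jacquet2}: the canonical lifting realises $J_{\overline{P}}(\widetilde H^0_{F_\p-\la})$ as the finite slope part of $(\widetilde H^0_{F_\p-\la})^{\overline{N}_0}$, so for the finite slope character $\lambda$ a vector of $(\widetilde H^0_{F_\p-\la})^{\overline{N}_0}$ satisfies $z\cdot v = \lambda(z) v$ for all $z \in \Sigma^+$ if and only if it lies in $J_{\overline{P}}(\widetilde H^0_{F_\p-\la})$ and satisfies the same relation for the unique extension of $\lambda$ to $\Sigma$ and the genuine $M$-action. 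As $V$ is finite-dimensional with trivial $\Sigma$-action, expanding a $\Sigma^+$-eigenvector of $(\widetilde H^0_{F_\p-\la})^{\overline{N}_0}\otimes_E V$ in a basis of $V$ shows the same equivalence after $\otimes_E V$; intersecting with the $M_0$-invariants (compatible, as $J_{\overline{P}} \hookrightarrow (-)^{\overline{N}_0}$ is $M_0$-equivariant) and applying $\ep$ gives
\[ \left[\ep\left(\widetilde H^0_{F_\p-\la} \otimes_E V\right)^{\overline{P}_0}\right]^{\Sigma^+ = \lambda} = \ep\left[J_{\overline{P}}\left(\widetilde H^0_{F_\p-\la}\right) \otimes_E V\right]^{M_0,\, \Sigma = \lambda}. \]
Composing the two identifications yields the proposition, the $\HH^{(\p)}(G)$-equivariance surviving because formation of the $\p$-part eigenspace commutes with the prime-to-$\p$ Hecke action.

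The main obstacle will be the second step: one must check that the normalisation of the $\Sigma^+$-action used here agrees with the $M^+$-Hecke action on $(\widetilde H^0_{F_\p-\la})^{\overline{N}_0}$ (this is precisely what the $\Sigma^+$-equivariance statement of the previous proposition provides) and then cite the correct result from Emerton's papers to the effect that a finite slope $\Sigma^+$-eigenvector of $(\widetilde H^0_{F_\p-\la})^{\overline{N}_0}$ automatically lies in the Jacquet module and that its eigencharacter extends uniquely from $\Sigma^+$ to $\Sigma$. The automorphic-side argument is routine given Theorem~\ref{thm:spacesM}, modulo the minor bookkeeping of which levels $k$ are admissible.
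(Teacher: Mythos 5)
Your proposal is correct and follows essentially the same route as the paper's proof: identify the level-$k_0$ eigenspace with that of the direct limit via Corollary \ref{corr:automorphic-maximally-overconvergent}, transport it through the isomorphism of the two preceding propositions, and then identify the $\Sigma^+$-eigenspace of $\left[(\widetilde H^0_{F_\p-\la})^{\overline{N}_0} \otimes_E V\right]^{M_0}$ with that of $J_{\overline{P}}(\widetilde H^0_{F_\p-\la}) \otimes_E V$ using Emerton's finite-slope-part formalism. The precise citations you flag as needed are propositions 3.2.12 and 3.2.9 of \cite{emerton-jacquet} (the latter playing the role of your basis-expansion of $V$), combined with the definition $J_{\overline{P}} = \left(-\right)^{\overline{N}_0}_{\rm fs}$.
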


\begin{proof} First we note that for all $k_0 \ge k(V)$, the map $M(e, \bbone, V, k_0) \to \varprojlim_k M(e, \bbone, V, k)$ induces an isomorphism on the $\lambda$-eigenspaces, by corollary \ref{corr:automorphic-maximally-overconvergent}. So, by the previous proposition, the left-hand side is just $\ep \left(\widetilde H^0_{F_\p-\la} \otimes_E V\right)^{\overline{P}_0, \Sigma^+ = \lambda}$, which we can also write as $\ep \left[\left(\widetilde H^0_{F_\p-\la}\right)^{\overline{N}_0} \otimes_E V\right]^{M_0, \Sigma^+ = \lambda}$.

We deduce from proposition 3.2.12 of \cite{emerton-jacquet} that this coincides with
$\ep \left[\left(\widetilde H^0_{F_\p-\la}\right)^{\overline{N}_0} \otimes_E V\right]_{\rm fs}^{M_0, \Sigma^+ = \lambda}$, where $(-)_{\rm fs}$ denotes Emerton's finite slope part functor. By {\it op.cit.},~3.2.9 this is simply $\ep \left[\left(\widetilde H^0_{F_\p-\la}\right)^{\overline{N}_0}_{\rm fs} \otimes_E V\right]^{M_0, \Sigma^+ = \lambda}$; since the Jacquet functor $J_{\overline{P}}$ is defined as $\left(-\right)^{\overline{N}_0}_{\rm fs}$, and all of these isomorphisms clearly commute with the action of $G(\A^{(\p)})$, the result follows.
\end{proof}

\subsection{The eigenvariety machine}

We shall now use the above results to construct a geometric object parametrising overconvergent eigenforms. This object is analogous to the Coleman-Mazur eigencurve for overconvergent modular forms, and the methods we use to construct it are based on the Hecke algebra construction of the eigencurve given in \cite{CMeigen}, as subsequently extended in \cite{buzzard-eigen} and \cite{bellaiche-chenevier}. In this section, we will regard all rigid spaces as being defined over $E$ via base extension, and for a rigid space $X$ we will write $\OO(X)$ for the algebra of analytic functions on $X$, rather than $C^\an(X, E)$.

\begin{definition}
Let $X$ be an affinoid rigid space over $E$, $M$ a Banach module over $\OO(X)$ satisfying (Pr), and $\mathbf{T}$ a commutative $E$-algebra endowed with a map $\mathbf{T} \to \End_{\OO(X)}(M)$; and suppose that $\phi \in \mathbf{T}$ is a fixed element which acts as a compact endomorphism of $M$.

For $E'$ a discretely valued extension of $E$, we say an $E$-algebra homomorphism $\lambda : \mathbf{T} \to E'$ is an $E'$-valued system of eigenvalues (for $M$) if there is a point $P \in X(E')$ (giving a homomorphism $\OO(X) \to E'$) and a nonzero vector $m \in M \otimes_{\OO(X)} E'$ such that $\alpha x = \lambda(\alpha) x$.

We say a system of eigenvalues is $\phi$-finite if $\lambda(\phi) \ne 0$.
\end{definition}

\begin{theorem}
To the data $(X, M, \mathbf{T}, \phi)$ as above, we can associate a reduced $E$-rigid space $D_\phi$, endowed with a $E$-algebra homomorphism $\Psi: \mathbf{T} \to \OO(D_{\phi})$ and a map of $E$-rigid spaces $f: D_{\phi} \to X$, satisfying the following conditions:
\begin{enumerate}
 \item The map $\nu: (f, \Psi(\phi)^{-1})$ is a finite map $D_\phi \to X \times \mathbb{G}_m$.
 \item The map $\mathbf{T} \otimes \OO_{f(x)} \to \OO_x$ induced by $\Psi$ is surjective for every point $x$ of $D_\phi$, where $\OO_x$ and $\OO_{f(x)}$ denote the local rings of $D_\phi$ and $X$ at the points $x$ and $f(x)$.
 \item For each discretely valued extension $E' / E$, the map associating to a point $x \in D_\phi(E')$ the homomorphism $\mathbf{T} \to E'$ given by composing $\Psi$ with evaluation at $x$ gives a bijection between $D_\phi(E')$ and the set of $\phi$-finite $E'$-valued systems of eigenvalues for $M$. 
\end{enumerate}
Furthermore, $D_\phi$ is uniquely determined by these conditions.
\end{theorem}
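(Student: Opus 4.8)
The plan is to apply, essentially verbatim, the eigenvariety construction of Buzzard \cite[\S\S 4--5]{buzzard-eigen} (which refines earlier work of Coleman \cite{C-CO} and Coleman--Mazur \cite{CMeigen}). Given the datum $(X, M, \mathbf{T}, \phi)$, one first forms the Fredholm determinant $P(S) = \det(1 - S\phi \mid M)$, an entire power series in $\OO(X)\{\{S\}\}$; this is well-defined because $M$ has property (Pr), hence is a direct summand of an orthonormalisable $\OO(X)$-Banach module, on which the Fredholm determinant of a compact operator is defined in the usual way and is insensitive to the choice of complement. The zero locus of $P$ is the \emph{spectral variety} $\mathcal{Z} \subseteq X \times \A^1$, a hypersurface flat over $X$; deleting the zero section $\{S = 0\}$ and using the coordinate $S = \Psi(\phi)^{-1}$ realises the relevant locus $\mathcal{Z}^{\times}$ inside $X \times \mathbb{G}_m$.

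By \cite[\S 4]{buzzard-eigen}, $\mathcal{Z}$ admits an admissible affinoid cover $\{\mathcal{Y}\}$ in which each $\mathcal{Y}$ is finite and flat over an affinoid subdomain $\mathcal{Y}' \subseteq X$, arising from a factorisation $P = Q_{\mathcal{Y}} R_{\mathcal{Y}}$ over $\OO(\mathcal{Y}')$ with $Q_{\mathcal{Y}}$ a unit times a monic polynomial cutting out $\mathcal{Y}$, with $R_{\mathcal{Y}}(0) = 1$, and with $(Q_{\mathcal{Y}}, R_{\mathcal{Y}}) = 1$. Riesz theory for the compact operator $\phi$ then yields a canonical $\phi$-stable splitting $M \widehat\otimes_{\OO(X)} \OO(\mathcal{Y}') = N_{\mathcal{Y}} \oplus F_{\mathcal{Y}}$ with $N_{\mathcal{Y}}$ finite projective over $\OO(\mathcal{Y}')$, with $\phi$ invertible on $N_{\mathcal{Y}}$ and $\det(1 - S\phi \mid N_{\mathcal{Y}}) = Q_{\mathcal{Y}}$ — so $N_{\mathcal{Y}}$ is naturally a finite $\OO(\mathcal{Y})$-module — and $\det(1 - S\phi \mid F_{\mathcal{Y}}) = R_{\mathcal{Y}}$. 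Let $\mathbf{T}_{\mathcal{Y}} \subseteq \End_{\OO(\mathcal{Y})}(N_{\mathcal{Y}})$ be the $\OO(\mathcal{Y})$-subalgebra generated by the image of $\mathbf{T}$; it is module-finite over $\OO(\mathcal{Y})$, so $D_{\mathcal{Y}} := \mathrm{Sp}(\mathbf{T}_{\mathcal{Y}})$ is affinoid and finite over $\mathcal{Y}$, hence over $X \times \mathbb{G}_m$. Since the pieces $N_{\mathcal{Y}}$ are cut out by the canonical factorisation of $P$, they are compatible under restriction, whence $\mathbf{T}_{\mathcal{Y}_1} = \mathbf{T}_{\mathcal{Y}_2} \widehat\otimes_{\OO(\mathcal{Y}_2)} \OO(\mathcal{Y}_1)$ for $\mathcal{Y}_1 \subseteq \mathcal{Y}_2$; the $D_{\mathcal{Y}}$ therefore glue over the admissible cover of $\mathcal{Z}^{\times}$ to a rigid space $\widetilde{D}$ equipped with a finite map $\nu$ to $X \times \mathbb{G}_m$ and a homomorphism $\Psi: \mathbf{T} \to \OO(\widetilde{D})$. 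We set $D_\phi = \widetilde{D}_{\mathrm{red}}$ and take $f$ to be $\nu$ followed by the projection to $X$.

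The three properties now follow as in \cite[\S 5]{buzzard-eigen}. Property (1) is finiteness of $\nu$, unaffected by passing to the nilreduction. Property (2): for $x \in D_\phi$ over $\mathcal{Y}$, the stalk $\OO_x$ is a quotient of a localisation of $\mathbf{T}_{\mathcal{Y}}$, which, since $\OO(\mathcal{Y})$ is generated over $\OO(\mathcal{Y}')$ by $S = \Psi(\phi)^{-1}$ and $\Psi(\phi)^{-1}$ — being the inverse of a unit of a module-finite $\OO(\mathcal{Y}')$-algebra — lies in $\OO(\mathcal{Y}')[\Psi(\phi)]$, is generated as an algebra over $\OO(\mathcal{Y}')$ (a localisation of which is $\OO_{f(x)}$) by the image of $\mathbf{T}$; hence $\mathbf{T} \otimes \OO_{f(x)} \to \OO_x$ is surjective. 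Property (3): a point $x \in D_\phi(E')$ gives the $\phi$-finite system of eigenvalues $\alpha \mapsto \Psi(\alpha)(x)$; conversely a $\phi$-finite system, witnessed by $P \in X(E')$ and a nonzero eigenvector $m$, produces the point $(P, \lambda(\phi)^{-1}) \in \mathcal{Z}^{\times}(E')$, which lies in some $\mathcal{Y}$ with $m \in N_{\mathcal{Y}} \widehat\otimes E'$, hence a character $\mathbf{T}_{\mathcal{Y}} \to E'$, i.e.\ a point of $D_\phi(E')$; these assignments are mutually inverse. For uniqueness, a reduced rigid space finite over $X \times \mathbb{G}_m$ is determined by the associated coherent sheaf of $\OO_{X\times\mathbb{G}_m}$-algebras; property (2) forces this sheaf to be generated by the image of $\mathbf{T}$ and property (3) forces the resulting $E'$-valued points to match, and since $E'$-points (for varying $E'/E$) are Zariski-dense in a reduced rigid space this pins down both the sheaf of algebras and the map $\Psi$, hence $D_\phi$, up to unique isomorphism.

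The real content is concentrated in the input borrowed from \cite[\S 4]{buzzard-eigen}: that a compact operator on a (Pr)-module over an affinoid admits spectral (slope) decompositions which are canonical and therefore glue along the Fredholm hypersurface. For us there is nothing new to establish there; the only point meriting care is the passage to the $\mathbb{G}_m$-coordinate $\Psi(\phi)^{-1}$, which is legitimate precisely because restricting attention to $\phi$-finite systems of eigenvalues corresponds exactly to working over the complement of the zero section of the spectral variety, where $\phi$ is invertible.
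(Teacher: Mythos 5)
Your proposal is correct and takes essentially the same route as the paper: the paper's proof simply invokes Buzzard's eigenvariety machine \cite[\S 5]{buzzard-eigen} for existence, observing (exactly as you do) that condition (2) holds because each local piece of the construction is $\Max$ of the image of $\mathbf{T}$ in an endomorphism algebra of the finite-rank Riesz summand, so your argument is an unpacking of the cited construction rather than a different one. The only divergence is uniqueness, where the paper cites \cite[\S 7.2.3]{bellaiche-chenevier} while you sketch the same density argument directly (reducedness, finiteness over $X \times \mathbb{G}_m$, generation of the pushed-forward algebra sheaf by $\mathbf{T}$, and Zariski-density of field-valued points), which is precisely the content of the cited result.
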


\begin{proof}
For the existence statement, see \cite[\S 5]{buzzard-eigen}. Condition (2) above does not appear there in quite the same form, but Buzzard's construction of $D_\phi$ shows that each point has an affinoid neighbourhood of the form $\Max A$ where $A$ is defined as the image of $\mathbf{T}$ in a certain endomorphism algebra, from which the statement (2) follows. 

The uniqueness is \cite[\S 7.2.3]{bellaiche-chenevier}, where it is shown more generally that an eigenvariety is determined up to isomorphism by conditions (i), (ii), and the systems of eigenvalues corresponding to a sufficiently dense subset of points of $D_\phi(\overline{\Q}_p)$.
\end{proof}

\begin{remark}
There is a functor from the slice category of affinoid spaces with a map to $X$ to the category of sets, which maps a space $Y \xrightarrow{\pi} X$ to set of $\phi$-finite systems of eigenvalues for the pullback $\pi^*(M) = M \widehat\otimes_{\OO(X)} \OO(Y)$. It is clear that $D_\phi$ represents this functor. However, in general the pullback of $D_\phi$ does not satisfy condition (2) above, and hence $\pi^* D_\phi$ is is not the eigenvariety for $(Y, \pi^* M, \mathbf{T}, \phi)$ -- the formation of $D_\phi$ is not functorial with respect to arbitrary base change (although this is true for flat base change). Nevertheless the pullback of $D_\phi$ and the eigenvariety of the pullback have the same nilreduction.
\end{remark}

\begin{corollary}\label{corr:finiteslope2}
 If $\phi_1$ and $\phi_2$ are two elements of $\mathbf{T}$ which both act as compact endomorphisms of $M$, and every $\phi_1$-finite system of eigenvalues for $M$ is also $\phi_2$-finite and vice versa, then the eigenvarieties associated to $(X, M, \mathbf{T}, \phi_1)$ and $(X, M, \mathbf{T}, \phi_2)$ are isomorphic.
\end{corollary}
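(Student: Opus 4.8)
The plan is to show that $D_{\phi_1}$, equipped with its structure map $f\colon D_{\phi_1}\to X$ and its Hecke homomorphism $\Psi\colon\mathbf{T}\to\OO(D_{\phi_1})$, \emph{also} satisfies the three characterising properties of the eigenvariety attached to the datum $(X,M,\mathbf{T},\phi_2)$, and then to invoke the uniqueness clause of the eigenvariety theorem (that is, \cite[\S 7.2.3]{bellaiche-chenevier}) to conclude that there is an isomorphism $D_{\phi_1}\cong D_{\phi_2}$ compatible with the maps to $X$ and the actions of $\mathbf{T}$. Property (2) makes no reference to the chosen compact operator, so it carries over verbatim. Property (3) for $\phi_2$ is property (3) for $\phi_1$ together with the hypothesis that the $\phi_1$-finite and $\phi_2$-finite systems of eigenvalues for $M$ coincide; note in particular that every point of $D_{\phi_1}$ corresponds to a $\phi_2$-finite system, so $\Psi(\phi_2)$ is nowhere-vanishing on $D_{\phi_1}$ and hence (a nowhere-vanishing analytic function on a rigid space being invertible) $\Psi(\phi_2)^{-1}\in\OO(D_{\phi_1})$. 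The whole content is therefore property (1): that the map $\nu_2=(f,\Psi(\phi_2)^{-1})\colon D_{\phi_1}\to X\times\mathbb{G}_m$ is finite.

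To prove this I would argue locally over a reduced affinoid $U\subseteq X$, using Buzzard's construction in \cite[\S 5]{buzzard-eigen}: for each real $h$ one has the ``$\phi_1$-slope $\le h$'' summand $N_{1,h}$ of $M\,\widehat\otimes_{\OO(X)}\,\OO(U)$, a $\phi_1$-stable direct summand which is finite projective over $\OO(U)$ with $\phi_1$ invertible on it; the image $B_{1,h}$ of $\mathbf{T}$ in $\End_{\OO(U)}(N_{1,h})$ is a finite $\OO(U)$-algebra; and $f^{-1}(U)$ is covered, admissibly and increasingly, by the affinoids $\Max B_{1,h}$. Since $\phi_2\in\mathbf{T}$ commutes with $\phi_1$ it preserves $N_{1,h}$; and for each $x\in U$, every nonzero generalised $\mathbf{T}$-eigenspace of $N_{1,h}\otimes\overline{\kappa(x)}$ affords a system of eigenvalues for $M$ on which $\phi_1$ acts by a nonzero scalar (we are in the slope $\le h$ part), hence a $\phi_1$-finite, hence (by hypothesis) a $\phi_2$-finite system, so $\phi_2$ acts on it by a nonzero scalar as well. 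Thus $\det(\phi_2\mid N_{1,h})$ vanishes nowhere on $U$, so it is a unit, $\phi_2$ is invertible on $N_{1,h}$, $\Psi(\phi_2)^{-1}\in B_{1,h}$, and the map $\Max B_{1,h}\to U\times\mathbb{G}_m$, $y\mapsto(f(y),\Psi(\phi_2)(y)^{-1})$, is finite (a graph immersion followed by a base change of the finite map $\Max B_{1,h}\to U$).

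It then remains to check that these finite maps on the individual pieces $\Max B_{1,h}$ glue to a finite map $\nu_2$ on $f^{-1}(U)$, and this is where the compactness of $\phi_2$ must enter. Since finiteness is local on the target, it suffices to show $\nu_2^{-1}(U\times V)$ is affinoid and finite over $U\times V$ for $V$ running over a cofinal family of affinoid annuli in $\mathbb{G}_m$, say the locus where the $\phi_2$-slope is $\le h_2$. By compactness of $\phi_2$, any system of eigenvalues occurring in $M\,\widehat\otimes\,\OO(U)$ with $\phi_2$-slope $\le h_2$ occurs already in the finite projective $\OO(U)$-module $N_{2,h_2}$ (the ``$\phi_2$-slope $\le h_2$'' summand); since $\phi_1$ also preserves $N_{2,h_2}$, its eigenvalues there are roots of a single monic polynomial over $\OO(U)$, whose Newton polygons over the quasi-compact $U$ have all slopes $\ge -h_1$ for some $h_1$. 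Hence every point of $f^{-1}(U)$ with $\phi_2$-slope $\le h_2$ already lies in $\Max B_{1,h_1}$, so the increasing union $\bigcup_h\bigl(\Max B_{1,h}\cap\nu_2^{-1}(U\times V)\bigr)$ stabilises at $h=h_1$; consequently $\nu_2^{-1}(U\times V)=\Max B_{1,h_1}\cap\nu_2^{-1}(U\times V)$ is an affinoid subdomain of $\Max B_{1,h_1}$, finite over $U\times V$ by the previous paragraph. Letting $U$ and $V$ vary over admissible covers establishes (1), and the uniqueness statement then completes the proof.

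I expect the gluing step of the last paragraph --- upgrading a piecewise finiteness to global finiteness of $\nu_2$ --- to be the main obstacle, precisely because it is there that one must use the quantitative content of the compactness of $\phi_2$ (that bounded $\phi_2$-slope forces bounded $\phi_1$-slope over a quasi-compact base), and not merely the qualitative hypothesis that $\phi_1$-finiteness and $\phi_2$-finiteness coincide as conditions on systems of eigenvalues. Everything else is either formal or an immediate consequence of that hypothesis together with the construction of $D_{\phi_1}$.
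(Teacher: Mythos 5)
Your reduction is the same as the paper's: both proofs observe that properties (2) and (3) of the eigenvariety datum are immediate (the hypothesis makes $\Psi_1(\phi_2)$ nowhere vanishing, hence invertible), so everything comes down to finiteness of $\nu=(f_1,\Psi_1(\phi_2)^{-1})$, after which the uniqueness statement of \cite[\S 7.2.3]{bellaiche-chenevier} finishes the argument. Where you diverge is in the finiteness step, and it is exactly there --- as you yourself anticipate --- that your argument has a gap. First, for a general affinoid $U\subseteq X$ and general $h$ there is no finite projective ``slope-$\le h$ summand'' $N_{1,h}$ (or $N_{2,h_2}$) of $M\widehat\otimes_{\OO(X)}\OO(U)$: Buzzard's theory produces such summands only for slope-adapted pairs, i.e.\ after shrinking the base (the rank of the slope-$\le h$ part can jump on $U$), so there is no ``single monic polynomial over $\OO(U)$'' to which a Newton-polygon bound can be applied. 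Second, even granting $N_{2,h_2}$ locally, the bound you quote (``Newton polygon slopes $\ge -h_1$'') is the automatic one coming from bounded coefficients and controls the wrong side: to conclude that points of $\phi_2$-slope $\le h_2$ lie in a single $\phi_1$-slope-$\le h_1$ piece you need an \emph{upper} bound on the valuations of the $\phi_1$-eigenvalues on $N_{2,h_2}$, i.e.\ a lower bound for $|\det(\phi_1\mid N_{2,h_2})|$; this is precisely where the hypothesis must be invoked (every eigensystem in $N_{2,h_2}$ is $\phi_2$-finite, hence $\phi_1$-finite, so this determinant is nowhere vanishing on the affinoid base and therefore a unit of bounded valuation). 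Without that input the uniform $h_1$ simply does not exist, and as written your deduction does not follow.

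Both defects are repairable (cover the affinoid $Z_2\cap(U\times V)$ by finitely many slope-adapted pieces and run the argument on each), but the repair essentially reproduces the paper's proof, which organises the same idea more efficiently: one notes that $\nu$ lands in the spectral variety $Z_2$ of $\phi_2$, covers $Z_2$ admissibly by Buzzard's slope-adapted affinoids $Y$, finite flat over $W=\pi(Y)\subseteq X$ and disconnected from their complements, with associated finite-rank $\mathbf{T}$-stable summands $M_Y$ of $M\widehat\otimes_{\OO(X)}\OO(W)$ \cite[thms.~3.3, 4.6]{buzzard-eigen}, and then identifies $\nu^{-1}(Y)$ --- via the same uniqueness statement --- with the eigenvariety of $(W, M_Y, \mathbf{T}, \phi_1)$. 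Since $M_Y$ has finite rank over $\OO(W)$, that eigenvariety is finite over $W$, hence over $Y$; no slope comparison or gluing estimate is needed, because the quantitative content of the compactness of $\phi_2$ is already packaged in the existence of the slope-adapted cover of $Z_2$.
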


\begin{proof} For $i = 1, 2$ let $D_i$, $f_i$, and $\Psi_i$ denote the eigenvarieties and their structure maps corresponding to $\phi_1$ and $\phi_2$. The function $\Psi_1(\phi_2)$ on $D_1$ is analytic and has no zeros by hypothesis, so $\nu = (f_1, \Psi_1(\phi_2)^{-1}) : D_1 \to X \times \mathbb{G}_m$ is well-defined. The image of $\nu$ is evidently contained in the spectral variety $Z_2 \subseteq X \times \mathbb{G}_m$, which is the zero locus of the characteristic power series of $\phi_2$.

I claim that $\nu$ is a finite map $D_1 \to Z_2$. It will follow from this that $D_1$ satisfies the properties that uniquely determine the eigenvariety associated to $(X, M, \mathbf{T}, \phi_2)$, so $D_1 = D_2$.

We know from \cite[thm.~4.6]{buzzard-eigen} that $Z_2$ is admissibly covered by affinoid subdomains $Y$ such that the projection $\pi: Z_2 \to X$ restricts to a finite flat map $Y \to W =  \pi(Y)$, and $Y$ is disconnected from its complement in $\pi^{-1}(W)$. Let $Y$ be any such affinoid, and let $M_Y$ be the finite-rank $\mathbf{T}$-stable $\OO(W)$-module direct summand of $M \widehat\otimes_{\OO(X)}\OO(W)$ attached to $Y$ by \cite[thm.~3.3]{buzzard-eigen}. Then it is immediate from the uniqueness result above that $\nu^{-1}(Y)$ is the eigenvariety attached to the data $(Z_Y, M_Y, \mathbf{T}, \phi_1)$. As $M_Y$ has finite rank over $\OO(Z_Y)$, this is evidently finite over $Z_Y$, and in particular it is finite over $Y$.
\end{proof}

In applications of this result the base space will typically not be affinoid, and the modules involved will be overconvergent forms whose radius of convergence will be forced to tend to zero as we approach the boundary, so we need the following extension:

\begin{definition}[{\cite[\S 5]{buzzard-eigen}}]\label{def:links}
 Let $M'$, $M$ be Banach $\OO(X)$-modules with a continuous action of $\mathbf{T}$ and compact $\phi$-action as above. A map $\alpha: M' \to M$ is a {\bf primitive link} if it is a continuous, $\mathbf{T}$-equivariant, $\OO(X)$-linear map, and there exists another such map $c : M \to M'$ which is also compact, such that $\alpha \circ c = \phi_{M}$ and $c \circ \alpha = \phi_{M'}$.
A {\bf link} is a map which is the composite of a finite sequence of primitive links. 
\end{definition}

Lemma 5.6 of \cite{buzzard-eigen} shows that if $M$ and $M'$ are related by a link $\alpha$, then $\alpha$ induces an isomorphism between the eigenvarieties of $(X, M, \mathbf{T}, \phi)$ and $(X, M', \mathbf{T}, \phi)$.

Finally, let us suppose that we are given the following data: a rigid space $\mathcal{W}$; for each open affinoid subdomain $X \subset \mathcal{W}$ an $\OO(X)$-module $M(X)$ with a $\mathbf{T}$-action such that $\phi$ is compact; and for each inclusion of affinoids $Y \subset X \subset \mathcal{W}$ a link $\alpha_{XY}: M(Y) \to M(X) \widehat\otimes_{\OO(X)} \OO(Y)$, satisfying the obvious compatibility condition $\alpha_{XZ} = \alpha_{YZ} \alpha_{XY}$ whenever $(X,Y,Z)$ are affinoids with $Z \subset Y \subset X \subset \mathcal{W}$. 

\begin{theorem}\label{thm:link-eigenmachine}
 To the above data we may associate a rigid space $D_\phi$ over $\mathcal{W}$ such that for any open affinoid $X \subset \mathcal{W}$, the pullback of $D_\phi$ to $X$ is canonically isomorphic to the eigenvariety attached to $(X, M(X), \mathbf{T}, \phi)$. Furthermore, if $\mathcal{W}$ is equidimensional of dimension $n$, $D_\phi$ is also equidimensional of dimension $n$; and the image of any component of $D_\phi$ in $\mathcal{W}$ is a Zariski-open subset of $\mathcal{W}$.
\end{theorem}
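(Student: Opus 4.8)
The plan is to reduce to the affinoid case established by the eigenvariety machine of \cite[\S\S 4--5]{buzzard-eigen}. For an affinoid open $X\subseteq\mathcal{W}$ write $\mathcal{E}(X)$ for the eigenvariety attached to $(X,M(X),\mathbf{T},\phi)$ by the preceding existence theorem; the task is (a) to glue the spaces $\mathcal{E}(X)$ into a rigid space $D_\phi$ over $\mathcal{W}$ with the asserted pullback property, and (b) to read off the dimension statements from the local structure of $D_\phi$ over $\mathcal{W}$.

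For (a): if $Y\subseteq X$ is an inclusion of affinoid opens then $Y\hookrightarrow X$ is flat, so by the flat base change remark following the existence theorem the pullback $\mathcal{E}(X)\times_X Y$ is canonically the eigenvariety of $(Y,\,M(X)\,\widehat\otimes_{\OO(X)}\,\OO(Y),\,\mathbf{T},\,\phi)$; composing with the isomorphism of eigenvarieties induced by the link $\alpha_{XY}$ (lemma~5.6 of \cite{buzzard-eigen}, recalled above) gives a canonical isomorphism $\theta_{XY}\colon\mathcal{E}(X)\times_X Y\xrightarrow{\ \sim\ }\mathcal{E}(Y)$ commuting with the maps $\Psi$ out of $\mathbf{T}$. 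The compatibility $\alpha_{XZ}=\alpha_{YZ}\alpha_{XY}$, together with the naturality of flat base change and of lemma~5.6, gives the cocycle identity $\theta_{XZ}=\theta_{YZ}\circ(\theta_{XY}\times_Y Z)$ for $Z\subseteq Y\subseteq X$. I would then fix an admissible affinoid cover $\{X_i\}$ of $\mathcal{W}$; covering each $X_i\cap X_j$ by affinoids and transporting through the $\theta$'s yields transition isomorphisms $\mathcal{E}(X_i)|_{X_i\cap X_j}\cong\mathcal{E}(X_j)|_{X_i\cap X_j}$ satisfying the cocycle condition on triple overlaps, and the standard gluing procedure for rigid spaces produces a rigid space $D_\phi$ over $\mathcal{W}$ with a homomorphism $\Psi\colon\mathbf{T}\to\OO(D_\phi)$ restricting to $\mathcal{E}(X_i)$ over each $X_i$. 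Comparing an arbitrary affinoid cover with $\{X_i\}$ via the $\theta$'s then shows $D_\phi\times_{\mathcal{W}}X\cong\mathcal{E}(X)$ for every affinoid open $X\subseteq\mathcal{W}$, and in particular that $D_\phi$ is independent of the chosen cover.

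For (b): assume $\mathcal{W}$ equidimensional of dimension $n$, and, since this affects neither hypothesis nor conclusion, assume $\mathcal{W}$ reduced. By \cite[thms.~3.3 and 4.6]{buzzard-eigen}, $D_\phi$ is admissibly covered by affinoids $\Max A$ in which $W\subseteq\mathcal{W}$ is an affinoid open, $N$ is a finite projective $\OO(W)$-module realized as a $\mathbf{T}$-stable direct summand of $M(X)\,\widehat\otimes_{\OO(X)}\,\OO(W)$ for some affinoid $X\supseteq W$ (a summand cut out by a member of Buzzard's admissible cover of the spectral variety of $\phi$ that is finite flat over $W$), and $A$ is the $\OO(W)$-subalgebra of $\End_{\OO(W)}(N)$ generated by the image of $\mathbf{T}$. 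Then $A$ is module-finite over $\OO(W)$ and torsion-free over $\OO(W)$, being a submodule of the projective module $\End_{\OO(W)}(N)$; after shrinking $W$ to the support of $N$ (a union of irreducible components of $W$) it is also faithful over $\OO(W)$. An affinoid open of an equidimensional reduced rigid space is again reduced and equidimensional of the same dimension, so $W$ is reduced and equidimensional of dimension $n$; torsion-freeness forces the associated primes of $A$ over $\OO(W)$ to be minimal primes of $\OO(W)$, whence every minimal prime of $A$ contracts to a minimal prime of $\OO(W)$, and each irreducible component of $\Max A$ maps finitely and surjectively onto an irreducible component of $W$, which has dimension $n$. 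Covering $D_\phi$ by such charts gives that $D_\phi$ is equidimensional of dimension $n$. Finally, if $C$ is an irreducible component of $D_\phi$ and $\Max A$ one of these charts, every irreducible component of the open subset $C\cap\Max A$ of $C$ has dimension $n$ and is carried by the finite map $\Max A\to W$ onto an irreducible closed subset of $W$ of dimension $n$, hence onto an irreducible component of $W$; so the image of $C\cap\Max A$ in $W$ is a union of irreducible components of $W$, which is Zariski-open in $W$. Since $C$ is covered by such charts and the corresponding $W$'s form an admissible family of opens of $\mathcal{W}$, the image of $C$ in $\mathcal{W}$ is a union of these Zariski-open sets, hence Zariski-open.

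I expect (b) to be the substantive part: the gluing in (a) is bookkeeping with links and flat base change, but (b) requires extracting from \cite{buzzard-eigen} the precise local shape of $D_\phi$ over $\mathcal{W}$ — that it is finite, over affinoid opens of $\mathcal{W}$, through faithful finite-rank projective Banach-module summands — and then combining finiteness with torsion-freeness to control the associated primes; the openness of the images of components rests on exactly the same local input.
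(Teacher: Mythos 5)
Your part (a) is essentially sound: it is in substance Buzzard's Construction 5.7, which is all the paper itself invokes for the existence statement, and your equidimensionality argument via the local charts $\Max A$, with $A$ finite and torsion-free inside $\End_{\OO(W)}(N)$ for $N$ finite projective, is essentially the argument underlying the paper's citation of Chenevier's prop.~6.4.2 (modulo the facts from Conrad's theory of irreducible components that you use implicitly, e.g.\ that an irreducible rigid space is equidimensional and that a global irreducible component meets a chart in a union of components of that chart).

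The genuine gap is in the openness statement. From the local charts you can only conclude that the image of $C \cap \Max A$ in $W$ is a finite union of irreducible components of $W$, and a union of irreducible components of an affinoid is Zariski-\emph{closed}, not Zariski-open, in general: take $W$ the equidimensional affinoid $\{xy=0\}$ in the unit bidisc, where a single component is closed but its complement is not an analytic subset. So the step ``which is Zariski-open in $W$'' fails, and with it the patching over charts; note also that for a fixed affinoid $W \subseteq \mathcal{W}$ the set $f(C)\cap W$ receives contributions from infinitely many slope charts, not only those whose base is $W$, so even the local identification of $f(C)\cap W$ needs more care than a single chart provides. Zariski-openness of $f(C)$ is genuinely a global statement about the spectral variety, and this is how the cited result of Chenevier proves it: by finiteness and the dimension count, $C$ maps onto an irreducible component of the Fredholm hypersurface $Z_\phi \subseteq \mathcal{W}\times\mathbb{G}_m$, cut out by an irreducible Fredholm factor $P = 1 + \sum_{i\ge 1} a_i T^i$, and the image of that component in $\mathcal{W}$ is the locus where not all $a_i$ vanish, whose complement $\bigcap_{i\ge 1}\{a_i=0\}$ is a closed analytic subset; hence the image is Zariski-open. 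Some such global input (or at least an extra hypothesis such as normality of $\mathcal{W}$, making irreducible components disjoint, together with an argument that the locally defined complements glue to a closed analytic subset) is what your purely local argument is missing.
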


\begin{proof}
 The existence is \cite[Construction 5.7]{buzzard-eigen}. The equidimensionality statement is \cite[prop.~6.4.2]{chenevier-families}.
\end{proof}

\subsection{Eigenvarieties for automorphic forms}

We now apply the machinery of the preceding subsection to our spaces of overconvergent forms. Corollary \ref{corr:wild-level-lowering} above shows that all of the Hecke eigenvalues which we are interested in can be seen in the spaces $M(\ep \otimes e_{G_0}, X, V, k)$ for idempotents $\ep \in \HH^{(\p)}(G)$, so we shall apply the ``eigenvariety machine'' to these spaces. Let us fix $V$ and $\ep$ and define $M(X, k) = M(\ep \otimes e_{G_0}, X, V, k)$; we also write $M(X)$ for $M(X,k)$ when $k = \max(k(V), k(X))$.

Let $S$ be a finite set of places including $\p$ and all primes at which our tame idempotent $\ep$ is ramified, and define 
\[\mathbf{T} = \mathcal{A}^+(G) \otimes_E \sideset{}{^\prime}\bigotimes_{\q \notin S} \ep \HH(G(F_\q)) \ep.\]
This is a commutative, unital $E$-algebra. 

\begin{remark}
 One could also choose commutative subalgebras of the algebras $\ep \HH(G(F_\q)) \ep$ at primes $\q \in S \setminus \{\p\}$, and include these in the tensor product; the only requirement is that $\mathbf{T}$ be commutative. 
\end{remark}

Now let us fix an element $\eta \in \Sigma^{++}$, and let $\phi$ be the corresponding element of $\mathcal{A}^{+}(G)$. We know from lemma \ref{lemma:finiteslope1} and corollary \ref{corr:finiteslope2} that the resulting space will not depend on this choice. The final check we must make is the existence of the appropriate link maps:

\begin{lemma} If $Y \subseteq X$ are affinoid subdomains of $\widehat H_0^{arith}$ defined over $E$, then there is a $\mathbf{T}$-equivariant $\OO(Y)$-module homomorphism
\[ \alpha_{XY}: M(Y)  \to M(X) \widehat\otimes_{\OO(X)} \OO(Y) \]
which is a link; and these links satisfy the compatibility condition $\alpha_{YZ} \alpha_{XY} = \alpha_{XZ}$ if $X \supseteq Y \supseteq Z$.
\end{lemma}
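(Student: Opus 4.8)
The plan is to build $\alpha_{XY}$ out of the inclusion maps $i_k$ of Theorem \ref{thm:spacesM}(2), using the base-change statement (3) to reduce to a single affinoid and the compactness of the torus action (4) to see that each $i_k$ is a link.

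\emph{Reduction to one affinoid.} Since $Y\subseteq X$, the character $\Delta_Y\colon H_0\to C^\an(Y,E)^\times$ is the composite of $\Delta_X$ with restriction $C^\an(X,E)\to C^\an(Y,E)$, so any $k$ for which $\Delta_X$ is analytic on cosets of $H_k$ serves for $\Delta_Y$ too; hence $k(Y)\le k(X)$, and writing $k_X=\max(k(V),k(X))$, $k_Y=\max(k(V),k(Y))$ we get $k_Y\le k_X$. By Theorem \ref{thm:spacesM}(3), $M(X)\ \widehat\otimes_{\OO(X)}\ \OO(Y)$ is identified, compatibly with the $\mathbf T$-action, with $M(Y,k_X)$, whereas $M(Y)=M(Y,k_Y)$. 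So it suffices to produce a link $M(Y,k_Y)\to M(Y,k_X)$, and since $k_Y\le k_X$ I would take for it the composite of inclusion maps
\[ \alpha_{XY}\ :\ M(Y,k_Y)\xrightarrow{\,i_{k_Y}\,}M(Y,k_Y+1)\xrightarrow{\,i_{k_Y+1}\,}\cdots\xrightarrow{\,i_{k_X-1}\,}M(Y,k_X), \]
which is $\mathbf T$-equivariant and $\OO(Y)$-linear, being a composite of such maps (Theorem \ref{thm:spacesM}(2)).

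\emph{Each $i_k$ is a primitive link.} Fix $k$ with $k_Y\le k<k_X$. By Theorem \ref{thm:spacesM}(2) the map $i_k\colon M(Y,k)\into M(Y,k+1)$ is continuous, $\mathbf T$-equivariant, $\OO(Y)$-linear and injective with dense image. Let $\phi\in\mathbf T$ be the operator attached to the fixed $\eta\in\Sigma^{++}$; it is supported in $G_0\Sigma^{++}G_0$, so by Theorem \ref{thm:spacesM}(4) the action of $\phi$ defines a continuous $\OO(Y)$-linear map $\phi_k\colon M(Y,k+1)\to M(Y,k)$, and the commutative diagram of that theorem (applied with middle level $k+1$, which is legitimate as $k\ge k_Y$) gives $i_k\circ\phi_k=\phi_{M(Y,k+1)}$; composing with $i_k$ and using $i_k\circ\phi_{M(Y,k)}=\phi_{M(Y,k+1)}\circ i_k$ (valid since $i_k$ is $\mathbf T$-equivariant and $\phi\in\mathbf T$), then cancelling the injective $i_k$, yields $\phi_k\circ i_k=\phi_{M(Y,k)}$. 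Finally $\phi_k$ is compact — this is part of Theorem \ref{thm:spacesM}(4) (equivalently, in the notation of Theorem \ref{thm:linkmaps1}, $\phi_k$ is obtained by base extension from the pullback along the relatively compact inclusion of affinoid subdomains $\eta^{-1}\mathcal D_k\eta\into\mathcal D_{k+1}$). Hence $(i_k,\phi_k)$ is a primitive link in the sense of Definition \ref{def:links}, and therefore $\alpha_{XY}$ is a link. For the cocycle condition, for $Z\subseteq Y\subseteq X$ one has $k_Z\le k_Y\le k_X$, and the identification of Theorem \ref{thm:spacesM}(3) is compatible with the maps $i_k$ (the base change of $i_k$ along $\OO(X)\to\OO(Y)$, resp.\ $\OO(Y)\to\OO(Z)$, is again $i_k$); so after base change to $\OO(Z)$, $\alpha_{XY}$ becomes $i_{k_X-1}\circ\cdots\circ i_{k_Y}$ over $Z$, and precomposing with $\alpha_{YZ}=i_{k_Y-1}\circ\cdots\circ i_{k_Z}$ gives $i_{k_X-1}\circ\cdots\circ i_{k_Z}=\alpha_{XZ}$.

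The main obstacle is precisely the verification that $(i_k,\phi_k)$ is a primitive link: the two composition identities $i_k\circ\phi_k=\phi_{M(Y,k+1)}$, $\phi_k\circ i_k=\phi_{M(Y,k)}$ together with the compactness of (one of) $i_k$, $\phi_k$ — which amounts to the assertion that the Atkin--Lehner operator at $\p$ strictly improves the radius of overconvergence, packaged into Theorem \ref{thm:spacesM}(4) and Theorem \ref{thm:linkmaps1}. Everything else ($k(Y)\le k(X)$, the behaviour of $\ \widehat\otimes\ $, the telescoping of the cocycle identity) is routine bookkeeping; and if one prefers $\phi_k$ itself rather than $i_k$ to be the compact map in the primitive link, one may first replace $\eta$ by a deeper element of $\Sigma^{++}$, which changes neither the spaces nor the eigenvariety by Lemma \ref{lemma:finiteslope1} and Corollary \ref{corr:finiteslope2}.
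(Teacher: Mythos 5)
Your argument is correct and follows essentially the same route as the paper's own proof: using $k(Y)\le k(X)$ and the base-change identification $M(X)\,\widehat\otimes_{\OO(X)}\OO(Y)=M(Y,k_X)$ from Theorem \ref{thm:spacesM}(3), you reduce to the chain of inclusions $i_k$, invoke part (\ref{item:automorphic-compactness}) of that theorem to supply the companion map making each step a primitive link, and telescope to get the cocycle condition. The only point where you go beyond the paper's terse citation of part (\ref{item:automorphic-compactness}) is your explicit derivation of the two composition identities and your (justified) concern about which of $i_k$, $\phi_k$ is the compact member of the primitive link, which you resolve acceptably by allowing $\eta$ to be replaced by a deeper element of $\Sigma^{++}$, harmless by Lemma \ref{lemma:finiteslope1} and Corollary \ref{corr:finiteslope2}.
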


\begin{proof} 
Since $Y \subseteq X$ we have $k(Y) \le k(X)$. Note that $M(X) \widehat\otimes_{\OO(X)} \OO(Y) = M(Y, k(X))$. If $k(Y) = k(X)$ there is nothing to prove (the identity map is a link). Suppose $k(X) = k(Y) + 1$; then we take $\alpha$ to be the inclusion map $M(Y, k(Y)) \into M(Y, k(X))$, and part \eqref{item:automorphic-compactness} of theorem \ref{thm:spacesM} shows that there is a map $c$ satisfying the appropriate requirements. 

If $k(X) > k(Y)+1$ then one clearly obtains a chain of such maps whose composition is the required (non-primitive) link. The compatibility condition is also clear, since the composition of the inclusion maps $M(Z,k(Z)) \into M(Z, k(Y)) \into M(Z, k(X))$ clearly coincides with the inclusion $M(Z, k(Z)) \into M(Z, k(X))$.
\end{proof}

Applying theorem \ref{thm:link-eigenmachine}, we deduce the following result:

\begin{theorem}
There exists an eigenvariety of tame type $\ep$, namely a reduced $E$-rigid space $D(\ep, V)$ endowed with a $E$-algebra homomorphism $\Psi: \mathbf{T} \to \OO(D(\ep, V))$ and a map of $E$-rigid spaces $f: D(\ep, V) \to \widehat H_0^{arith}$, whose points over any discretely valued extension $E'/E$ parametrise finite slope $E'$-valued systems of eigenvalues arising in the spaces $M(\ep \otimes e_{G_0}, X, V, k)$ for affinoids $X \subseteq \widehat H_0^{arith}$. 

The space $D(\ep, V)$ is equidimensional of the same dimension as $\widehat H_0^{arith}$, and its image in $\widehat H_0^{arith}$ is contained in a finite union of components of the latter, in each of which it is Zariski-open.
\end{theorem}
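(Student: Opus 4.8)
The plan is to obtain everything from Theorem~\ref{thm:link-eigenmachine}, applied to the following data: the base rigid space $\mathcal{W}=\widehat H_0^{arith}$; for each open affinoid $X\subseteq\widehat H_0^{arith}$ the $\OO(X)$-module $M(X)=M(\ep\otimes e_{G_0},X,V,\max(k(V),k(X)))$; the commutative algebra $\mathbf{T}$; the operator $\phi$ attached to the chosen $\eta\in\Sigma^{++}$; and the link maps $\alpha_{XY}$ of the preceding lemma. First I would check the hypotheses of that theorem. Theorem~\ref{thm:spacesM}(1) shows that each $M(X)$ is a Banach $\OO(X)=C^\an(X,E)$-module with property $(\mathrm{Pr})$ carrying a continuous $\OO(X)$-linear action of $e\HH^+(G)e\supseteq\mathbf{T}$; part~\eqref{item:automorphic-compactness} of the same theorem shows that $\phi=[G_0\eta G_0]$, being supported in $G_0\Sigma^{++}G_0$, acts compactly; and the preceding lemma supplies the links together with the cocycle identity $\alpha_{YZ}\alpha_{XY}=\alpha_{XZ}$. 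Granting these, Theorem~\ref{thm:link-eigenmachine} produces the reduced rigid space $D(\ep,V):=D_\phi$ over $\widehat H_0^{arith}$ and the structure map $f$; the global homomorphism $\Psi:\mathbf{T}\to\OO(D(\ep,V))$ is obtained by gluing the homomorphisms $\Psi_X$ furnished by the eigenvariety machine over affinoids, which are compatible because each local eigenvariety is determined by its systems of eigenvalues.

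For the point count I would combine the characterisation of points in the local eigenvariety machine with two earlier results. Corollary~\ref{corr:automorphic-maximally-overconvergent} shows that the inclusion $M(e,X,V,k)\hookrightarrow M(e,X,V,k+1)$ is an isomorphism on generalised $\lambda$-eigenspaces for every $\lambda\ne0$, so the set of finite-slope systems of eigenvalues occurring in $M(\ep\otimes e_{G_0},X,V,k)$ is independent of $k\ge\max(k(V),k(X))$; and Lemma~\ref{lemma:finiteslope1} together with Corollary~\ref{corr:finiteslope2} shows that ``finite slope'', and hence $D(\ep,V)$ itself, does not depend on the choice of $\eta\in\Sigma^{++}$. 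Equidimensionality of $D(\ep,V)$ of dimension $\dim\widehat H_0^{arith}$, and Zariski-openness in $\widehat H_0^{arith}$ of the image of each irreducible component of $D(\ep,V)$, then follow directly from the corresponding assertions of Theorem~\ref{thm:link-eigenmachine}, using that $\widehat H_0^{arith}$ is equidimensional (established in the subsection on arithmetical weights); since the image of an irreducible component of $D(\ep,V)$ is irreducible, it lies in a single component of $\widehat H_0^{arith}$, and within that component a union of such images is Zariski-open.

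The one statement needing genuinely new input, and the step I expect to be the main obstacle, is that this image meets only \emph{finitely many} of the (in general infinitely many) components of $\widehat H_0^{arith}$. My plan here is to prove that $M(\ep\otimes e_{G_0},X,V,k)$ vanishes unless $X$ meets one of a fixed finite set of components. Writing $\Gamma^Z=Z(G)(F)\cap(UG_\infty^\circ)$ for the arithmetic subgroup of $Z(G)$ attached to the level $U$ of $\ep$, and $\Gamma'$ for its image in $H_0$, one uses Proposition~\ref{prop:invariants} to realise $M(\ep\otimes e_{G_0},X,V,k)$ inside a finite sum of $\Gamma_i$-invariants, together with the observation that a central element acts on $\mathcal{C}(X,V,k)=C^\an(\mathcal{D}_k\times X,V_{M_k-\an})$ through the single operator $\Delta_X\cdot\rho_V$ alone (its geometric action on $\mathcal{D}_k$ being trivial, since conjugation by a central element fixes $N$ pointwise), to conclude that the module can be nonzero only when $\Delta_X|_{\Gamma'}$ is one of the finitely many torsion characters through which $\Gamma^Z$ can act on the finite-dimensional arithmetical representation $V$. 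When $F_\p=\Q_p$ this already suffices: such a torsion character of the lattice $\tilde\Gamma\subseteq H_0$ spanned by $\Gamma'$ is determined by its restriction to $\Gamma'$ (Zariski-dense in $\tilde\Gamma$), so only finitely many characters of $\tilde\Gamma$, hence — by the description $\widehat H_0^{arith}=\bigcup_m\widehat U_m$ recalled above, in which the components are separated up to a finite ambiguity by the restriction of the tautological character to the toral lattice — only finitely many components occur. For $[F_\p:\Q_p]>1$ the restriction map on torsion characters is no longer finite-to-one, and the delicate part of the argument is to bring in a further, global, constraint: the central character of any automorphic representation contributing to $M(\ep\otimes e_{G_0},X,V,k)$ is trivial on $Z(G)(F)$, unramified outside $S$, and of bounded ramification at $S$, so by the product formula and Dirichlet's unit theorem (cf.\ the compatibility of arithmeticity with dominance discussed around Theorem~\ref{thm:classical-automorphic}) its restriction to $Z(G)(\OO_{F,\p})$ — and with it the restriction of the weight to the relevant toral lattice — is pinned down modulo finite ambiguity. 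Making this bookkeeping precise, and checking that the finite torsion-free contribution from the description of $\widehat H_0^{arith}$ is genuinely finite, is where the real work lies.
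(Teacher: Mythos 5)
Your treatment of the main assertions is exactly the paper's: the paper proves this theorem by doing nothing more than feeding the data $\bigl(\widehat H_0^{arith},\, M(X),\, \mathbf{T},\, \phi=[G_0\eta G_0]\bigr)$ together with the links of the preceding lemma into Theorem \ref{thm:link-eigenmachine}, the hypotheses being supplied by Theorem \ref{thm:spacesM}, the independence of $k$ by Corollary \ref{corr:automorphic-maximally-overconvergent}, and the independence of $\eta$ by Lemma \ref{lemma:finiteslope1} and Corollary \ref{corr:finiteslope2}; your verification of the hypotheses, the gluing of $\Psi$, the equidimensionality (using equidimensionality of $\widehat H_0^{arith}$), and the Zariski-openness of the image of each irreducible component inside the component of $\widehat H_0^{arith}$ containing it all agree with this.

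The genuine gap is the one you yourself flag: the claim that the image meets only finitely many components of $\widehat H_0^{arith}$ is never proved for $[F_\p:\Q_p]>1$, and the plan you sketch (product formula, Dirichlet's unit theorem) cannot close it. Your central-character argument only constrains the restriction of the weight to $\Gamma'$ (the image of the central arithmetic subgroup), and this determines the component of $\widehat H_0^{arith}$ only up to the discrepancy between the closure of $\Gamma'$ and its $\OO_L$-span $\tilde\Gamma$; the further global constraints you propose are already subsumed in ``trivial on $\Gamma'$'' and give nothing new. In fact the clause as stated fails in examples: take $G=\mathbb{G}_m$ over a real quadratic field $F$ with $p$ inert and $V$ trivial. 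Then $\widehat H_0^{arith}$ is a $0$-dimensional, infinite discrete set of finite-order characters of $\OO_{F,\p}^\times$, and the finite-order Hecke characters of fixed tame conductor but unbounded conductor at $\p$ (infinitely many, since the relevant ray class groups have unbounded $p$-part) give finite-slope classical points whose weights are pairwise distinct points, i.e.\ pairwise distinct components, of $\widehat H_0^{arith}$; so the image meets infinitely many components. Be aware that the paper itself offers no argument for this clause either -- the theorem is asserted as a direct consequence of Theorem \ref{thm:link-eigenmachine}, which yields only the per-component Zariski-openness -- so your instinct to isolate it as the problematic step was sound; but the honest resolution is to weaken the statement (finiteness of the set of components met holds when the closure of $\Gamma'$ has finite index in $\tilde\Gamma$, e.g.\ when $F_\p=\Q_p$ or $Z(G)(\OO_F)$ is finite, which is precisely where your argument succeeds), not to search for additional arithmetic input in the general case.
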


\subsection{Density of classical points}

In this section, we'll see that in certain cases theorem \ref{thm:classicality} can be used to show that there are ``many'' points in $D(\ep, V)$ arising from classical automorphic forms. We begin by recalling the useful definition of {\it accumulation} of a subset of a rigid space:

\begin{definition}[{\cite[\S 3.3.1]{bellaiche-chenevier}}]
Let $X$ be a rigid space over a complete nonarchimedean valued field $K$, and $|X|$ its underlying set of points. If $x \in X$ and $S \subset |X|$, we say $S$ {\it accumulates at $x$} if there is a basis of affinoid neighbourhoods $Y$ of $x$ such that $S \cap |Y|$ is Zariski dense in $Y$.
\end{definition}

Our next definition is slightly complicated to state. An {\it open affine cone} in $\R^d$ is a subset of the form $C = \{x \in \R^d: \phi_1(x) > c_1, \dots, \phi_n(x) > c_n\}$ for linear functionals $\phi_1, \dots, \phi_n$ and non-negative $c_i \in \R$. Recall that $T = Z(M) \cap S$, a torus over $F_\p$. Let $\kappa \in \widehat H_0^{arith}$, and let $C$ be an open affine cone in $X^\bullet(T) \otimes \R$; we say that $\kappa$ {\it belongs to $C$} if it is locally algebraic and the restriction of its algebraic part to $T$ lies in $C$.

\begin{definition}
Let $X$ be a component of $\widehat H_0^{arith}$. We say $X$ is {\it good} if for any nonempty open affine cone $C$ in $X^\bullet(T) \otimes \R$, the points of $X$ belonging to $C$ are Zariski-dense in $X$, and accumulate at every locally algebraic point in $X$.
\end{definition}

\begin{corollary}
Suppose $V$ is locally algebraic, and $X$ is a good component of $\widehat H_0^{arith}$. Then points corresponding to classical automorphic forms are Zariski-dense in the preimage of $X$ in $D(\ep, V)$.
\end{corollary}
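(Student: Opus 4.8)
The plan is to run the now-standard ``density of classical points'' argument for eigenvarieties (as in \cite[\S 4.7]{chenevier-families} and \cite[\S 7.3]{bellaiche-chenevier}): I will produce, Zariski-densely in $f^{-1}(X)$, points whose slope is forced to be small enough that Theorem \ref{thm:classicality} identifies them with classical automorphic forms, and I will use the goodness of $X$ to guarantee there are enough locally algebraic weights of each small slope. Fix once and for all the $\eta\in\Sigma^{++}$ and the corresponding $\phi\in\mathcal{A}^+(G)$ used to build $D(\ep,V)$; since $[G_0\eta G_0]$ is a nonzero scalar multiple of $\phi$ (Lemma \ref{lemma:atkin-lehner-algebra}), a bound on the $[G_0\eta G_0]$-eigenvalue slope is the same as a bound on the $\phi$-slope up to a fixed additive constant.

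The first step is to set up the cone of small-slope weights. Write $V=V_{sm}\otimes V_{alg}$ and let $\lambda_0$ be the highest weight of $V_{alg}$ as an $M$-representation. For a locally algebraic character $\kappa$ of $H_0$ with $\lambda_0+\kappa_{alg}$ dominant for $G$, Theorem \ref{thm:spacesM}(3) and Proposition \ref{prop:weight-fiddle} identify the fibre of $M(\ep\otimes e_{G_0},\cdot,V,k)$ over $\kappa$ with $M(\ep\otimes e_{G_0},\bbone,V\otimes\kappa,k)$, whose classical subspace is described by Theorem \ref{thm:classical-automorphic}; and the explicit criterion of \S\ref{ssect:small-slopes} (the analogue of \cite[prop.~4.7.4]{chenevier-families}) shows that every $\sigma\le h$ is a small slope for $\eta$ and $V\otimes\kappa$ as soon as $\bigl(\langle\lambda_0+\kappa_{alg},\alpha^\vee\rangle+1\bigr)f_\eta(\alpha)>h$ for all $\alpha\in\Delta_f$. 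Since $f_\eta(\alpha)>0$ for every free simple root (as $\eta\in\Sigma^{++}$), and the functionals $\langle\cdot,\alpha^\vee\rangle$, $\alpha\in\Delta_f$, are linearly independent on $X^\bullet(T)\otimes\R$ (\S\ref{ssect:small-slopes}), these inequalities cut out for each real $h\ge 0$ a nonempty open affine cone $C_h\subseteq X^\bullet(T)\otimes\R$. Combining this with Theorem \ref{thm:classicality} gives the key assertion: every point of $D(\ep,V)$ lying over a locally algebraic weight belonging to $C_h$ and having $\phi$-slope $\le h$ is attached to a classical automorphic representation.

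The second step is to deduce Zariski-density. By Theorem \ref{thm:link-eigenmachine}, $D(\ep,V)$ is equidimensional of dimension $d=\dim\widehat H_0^{arith}$ and the image of each irreducible component is Zariski-open in a component of $\widehat H_0^{arith}$; hence $f^{-1}(X)$ is the union of those components $Z$ with $f(Z)\subseteq X$, and for each such $Z$ the set $f(Z)$ is Zariski-open and dense in $X$. Since Zariski density may be tested on an admissible affinoid cover, it suffices to show classical points are Zariski-dense in each such $Z$. As $X$ is good, its locally algebraic points are Zariski-dense, so they meet the nonempty open set $f(Z)$; pick a locally algebraic $\kappa_1\in f(Z)$ and a preimage $z_1\in Z$. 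Buzzard's construction (\cite[\S\S 4--5]{buzzard-eigen}) underlying Theorem \ref{thm:link-eigenmachine} gives an affinoid neighbourhood $\Omega\ni z_1$ in $D(\ep,V)$ which is finite and surjective onto an affinoid subdomain $W\ni\kappa_1$ of $\widehat H_0^{arith}$ and on which $\phi$ has slope bounded by some $h$. Now I invoke the accumulation clause of goodness at $\kappa_1$: there is an irreducible affinoid neighbourhood $Y\subseteq W$ of $\kappa_1$ (weight space being smooth, irreducible neighbourhoods are cofinal) in which the weights belonging to $C_h$ are Zariski-dense. Every point of $\Omega_Y:=\Omega\cap f^{-1}(Y)$ lying over such a weight has $\phi$-slope $\le h$, hence is classical; and since $\Omega_Y\to Y$ is finite and surjective between equidimensional spaces of dimension $d$ with $Y$ irreducible, the preimage in $\Omega_Y$ of this Zariski-dense set of weights is Zariski-dense in $\Omega_Y$. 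Thus classical points are Zariski-dense in $\Omega_Y$, a nonempty admissible open of $D(\ep,V)$ meeting the irreducible space $Z$, so they are Zariski-dense in $Z$, and therefore in $f^{-1}(X)$.

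I expect the main obstacle to be precisely the interplay, in this last step, between Buzzard's affinoid cover of the eigenvariety and the goodness hypothesis: a priori the affinoid subdomain $W$ of weight space over which a Buzzard piece sits need not contain any locally algebraic weight at all, let alone a Zariski-dense set of $C_h$-weights. The resolution — first locating a locally algebraic weight inside the Zariski-dense, Zariski-open set $f(Z)$, and then applying the accumulation half of ``good'' (not merely Zariski density) at that weight — is what makes the argument go through, and is the point requiring the most care. The remaining ingredients (the slope computation of \S\ref{ssect:small-slopes}, the base-change and weight-twisting compatibilities, and the equidimensionality bookkeeping) are routine.
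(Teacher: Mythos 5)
Your argument is correct and is essentially the paper's own proof (the Chenevier \S 6.4.5 strategy): bound $\ord_p\Psi(\eta)$ on a quasicompact Buzzard affinoid around a point lying over a locally algebraic weight in the Zariski-open image, use goodness (Zariski-density plus the accumulation clause at that weight) to get a Zariski-dense set of nearby locally algebraic weights in the small-slope cone, and conclude via Theorem \ref{thm:classicality}. The only differences are in packaging: the paper runs the argument on components of the spectral variety $Z_\eta(\ep,V)_X$ and cites Chenevier's lemma 6.2.8 for the density-pullback step, whereas you work directly with irreducible components of $D(\ep,V)$ and reprove that step by hand using equidimensionality and finiteness over an irreducible affinoid in weight space.
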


\begin{proof}
We follow the argument of \cite[\S 6.4.5]{chenevier-families}. Let $\eta \in \Sigma^{++}$, and let $Z_\eta(\ep, V)$ be the spectral curve associated to $\eta$. We let $D(\ep, V)_X$ and $Z_\eta(\ep, V)_X$ be the preimages of $X$ in $D(\ep, V)$ and $Z_\eta(\ep, V)$; we then have a commutative diagram
\[\xymatrix{
  D(\ep, V)_{X} \ar[r]^\mu \ar[rd]^{f_D} & Z_\eta(\ep, V)_{X} \ar[d]^{f_Z} \\
 & X
  }
\]
where $\mu$ is a finite map and $f_D$ and $f_Z$ are the projections to weight space.

If $Z_\eta(\ep, V)_X$ is empty, there is nothing to prove. Otherwise, let $Z$ be a component of $Z_\eta(\ep, V)_X$. We know that the image of $Z$ in $X$ is Zariski-open. Since $X$ is good, locally algebraic points are Zariski dense in $X$; so there is a locally algebraic point $\kappa_0$ in the image of $Z$. Let $z_0$ be a point of $Z$ above $\kappa_0$. By construction, $Z$ is admissibly covered by affinoids $U$ such that $U' = f_Z(U)$ is an affinoid in $X$, $f_Z: U \to U'$ is finite and flat, and $U$ is a connected component of $f_Z^{-1}(U')$; let $U$ be an affinoid neighbourhood of $z_0$ of this type.

Since $U$ is quasicompact and $\Psi(\eta)$ does not vanish on $U$, the function $\ord_p \Psi(\eta)$ is bounded above on $U$. Let $M$ be its supremum. Then there is a nonempty open affine cone $C \subseteq X^\bullet(T)$ such that if $\lambda \in C$, $M$ is a small slope for $\eta$ relative to $\lambda$. Since $X$ is a good component, locally algebraic characters whose algebraic part lies in $C$ accumulate at $\kappa_0$, and hence are Zariski dense in some neighbourhood $U''$ of $\kappa_0$ contained in $U'$. We define $Y = f_D^{-1}(U'') \cap \mu^{-1}(U)$, which is an affinoid neighbourhood of $\mu^{-1}(z_0)$ in $D(\ep, V)$. By \cite[lemma 6.2.8]{chenevier-families}, we deduce that the set of points of $Y$ whose image in $U''$ is locally algebraic is Zariski dense in $Y$. But any such point must be classical, by theorem \ref{thm:classicality}; so classical points are Zariski dense in $Y$, and hence in $\mu^{-1}(Z)$. Since $Z$ was arbitrary, the result follows.
\end{proof}

\begin{remark}
 If $F_\p = \Q_p$ and $Z(G)(\OO_F)$ is trivial, then it is easy to see that every geometric connected component of $\widehat H_0 = \widehat H_0^{arith}$ is good, since the algebraic weights evidently accumulate at the origin in $\widehat H_0$. This can be extended to general $F_\p$ using the explicit description of $\widehat \OO_{F,\p}$ given in \cite{ST-fourier}. However, for $G = \Res_{K/\Q} \mathbb{G}_m$ where $K = \mathbb{Q}(\sqrt[3]{2})$, and any prime $p$, locally algebraic weights are not Zariski dense in $\widehat G_0^{arith}$ (where $G_0 = G(\Z_p)$), and hence the Zariski closure of the classical weights has dimension strictly smaller than the whole eigenvariety.
\end{remark}

\section{Examples}
\label{chap:examples}

We shall now make the above machinery rather more explicit, and show that for certain specific examples of groups $G$ satisfying the hypotheses, one recovers results already in the literature (or generalisations thereof).

\subsection{Tori}

If $G$ is a torus over $F$, then the only possible parabolic subgroup of $G$ is $G$ itself. We may take $G_0$ to be any compact open subgroup of $G(F_\p)$, and $G_1$ an arbitrary good $F_\p$-analytic subgroup of $G_0$. The unipotent radical $N$ is the trivial group, so $N_k =\{1\}$ for all $k$. Thus for any affinoid $X \subset \widehat G_0^{arith}$ and any locally algebraic $V$, we have $\mathcal{C}(X, V, k) = C^\an(X, E)\ \widehat\otimes_E\ V$, with the action of $G_0$ on $V$ being the natural one and the action on $C^\an(X, E)$ via $\Delta_X$.

Let $\ep$ be an idempotent of the form $e_U$, for a compact open subgroup $U \subseteq G(\Af)$. Then for any $k \ge k(X)$, $M(\ep \otimes e_{G_0}, X, \bbone, k)$ is precisely the set of functions $f: G(F) \backslash G(\A) \to C^\an(X, E)^\times$ satisfying $f(g u) = \Delta_X(u_\p)^{-1} f(g)$ for $u \in G_0 U G_\infty^\circ$. This differs from the space defined on \cite[p7]{buzzard-families} (for the special case of the torus over $\Q$ given by $\Res_{K/\Q} \mathbb{G}_m$ for $K$ a number field) only in that we have $\Delta_X(u_\p)^{-1}$ where Buzzard has $\Delta_X(u_\p)$. 

\subsection{Unitary groups}

Let us suppose we are in the situation of \cite{chenevier-families}: $G$ is a unitary group over $\Q$ which is split at $p$, so $G$ is isomorphic to $\GL_n$ as an algebraic group over $\Q_p$. In this case, we take $S$ to be the diagonal torus and $P_{\rm min}$ to be the usual Borel subgroup of upper-triangular matrices. The parabolic subgroups containing $P_{\rm min}$ are groups of block upper-triangular matrices, each determined by the set $J$ of its jumps, which is contained in $\{1, \dots, n-1\}$; the free simple roots are $L_i - L_{i+1}$ for $i$ in $J$, where $L_i$ sends a diagonal torus element to its $(i,i)$ entry. It is easy to check directly in this case that the big Bruhat cell $C$ is an open $\Z_p$-subscheme of $G$, defined by the non-vanishing of the functions $(Z_{i1})_{i \in J}$, where $Z_{i1}$ is the function mapping a matrix to the determinant of its top left-hand $i \times i$ submatrix. 

Let us define $G_0$ to be the parahoric subgroup of $\GL_n(\Z_p)$ corresponding to $P$, containing those matrices whose reduction modulo $p$ lies in $P$; and let $G_1$ be the subgroup of $G_0$ whose reduction mod $p$ lies in $N$. It is easy to check that $G_1$ is normal in $G_0$ and that $G_1$ is decomposable in the sense of definition \ref{def:decomposable}, so $G_0$ is an admissible subgroup; and both $N_0$ and $N_1$ are equal to $N(\Z_p)$.

In the case where $J = \{1, \dots, n-1\}$, so $P = P_{\rm min}$ and $S = M = H$, we see that the affinoid subvariety $\mathscr{F}$ of \cite[\S 3.2]{chenevier-families} is exactly the image of $\mathcal{N}_0$ in the flag variety $\overline{P} \backslash G$. Hence the space $A(\mathscr{F})$ of \cite[\S 3.2]{chenevier-families} is our $C^\an(\mathcal{N}_0, \Q_p)$; and the representations $\mathscr{S}_r$ of \cite[\S 3.6]{chenevier-families}, defined over certain affinoids $\mathscr{W}_r \subseteq \widehat H_0$ of characters analytic on the principal congruence subgroup $M_1$, are easily seen to coincide with our spaces $\mathcal{C}(\mathscr{W}_r, \bbone, 1)$. 

The more general construction given in \cite[\S 7.3]{bellaiche-chenevier} uses slightly different conventions, with actions defined by left multiplication rather than right multiplication; in our optic this corresponds to choosing the parabolic subgroup $P$ to be the {\it lower} triangular Borel, while still choosing $G_0$ to be the upper triangular Iwahori subgroup. Then one can check that the space $\mathcal{C}(X, k)$ constructed in \cite{bellaiche-chenevier}, for $X$ either an open affinoid or a closed point in $\widehat H_0$ and $k \ge k(X)$, is isomorphic as a $G_0$-representation to what we would denote by $\mathcal{C}(X^{-1}, \bbone, k)$, where $X^{-1}$ denotes the image of $X$ under the involution of $\widehat H_0$ that sends a character to its inverse, and the map giving the isomorphism is given by $f(x) \mapsto f(x^{-1})$.

\subsection{$\GL_2$ and restriction of scalars}

In this section we'll carry out a purely local calculation relating various representations of the group $G = \Res_{K/\Q_p} \GL_2$, for $K$ a finite extension of $\Q_p$.

The $\Q_p$-split part of the diagonal torus in $G$ is a maximal split torus, of rank 2; with respect to this torus, there is only one positive and one negative root (of multiplicity $d = [K : \Q_p]$). We choose the signs so that $P_{\rm min}$ is the standard upper-triangular Borel subgroup; then $M$ is the rank $2d$ diagonal torus. The natural choice for $G_0$ is the upper triangular Iwahori subgroup of elements congruent to $\begin{pmatrix} * & * \\ 0 & * \end{pmatrix} \bmod \pi_K$. If we take $G_1$ to be the kernel of reduction modulo $\pi_K^a$, for $a \in \Z_{\ge 1}$, then $G_1$ is normal in $G_0$, and for sufficiently large $a$ it is a good $\Q_p$-analytic open subgroup. Thus $G_0$ is admissible in the sense of definition \ref{def:admissible}. 

In this case, $N_0 = \OO_K$. Hence $N_0$ is naturally the $\Q_p$-points of the affinoid rigid space $\mathcal{N}_0 = \Res_{K / \Q_p}\left(\mathbf{B}(1) / K\right)$, where $\mathbf{B}(1)/K$ is the standard unit ball over $K$; see \cite[\S 2.3]{emerton-memoir} for a discussion of restriction of scalars in the rigid analytic context. Concretely, this is isomorphic to $\mathbf{B}(1)^d / \Q_p$ with coordinates $Z_1, \dots, Z_d$ corresponding to a basis $z_1, \dots, z_d$ of $\OO_K$ as a $\Z_p$-module. For $\tau \in \OO_K$ the evaluation map is given by $f(\tau) = f(\zeta_1(\tau), \dots, \zeta_d(\tau))$ where $\zeta_i$ is the coordinate functional corresponding to $z_i$. Similarly, the affinoid subdomain $\mathcal{N}_k$ is just the affinoid ball attached to the sublattice $p^k \pi_K^a \OO_K$, which is $\Res_{K / \Q_p}\left(\mathbf{B}(|p^k \pi_K^a|) / K\right)$. 

More generally, for each $r \in p^\Q$ with $r \le 1$ we may define the open affinoid $\mathcal{N}(r) = \Res_{K / \Q_p} (\mathbf{B}(r) / K)$, which is an open affinoid subdomain. Let us set $\OO_K(r) = \{ x \in \OO_K : |x| \le r\}$. Then we can define $\mathcal{D}(r)$ to be the union of the translates of $\mathcal{N}(r)$ by the elements of $\OO_K / \OO_K(r)$, extending the definition of $\mathcal{D}_k$.

We wish to compare this with the slightly different theory developed in \cite[\S 8]{buzzard-eigen} (and \cite[\S 1.2]{yamagami}, which is very similar). Let $I$ be the set of embeddings $K \into E$; we assume that $E$ is large enough assume that $|I| = d$. Then $z \mapsto (\sigma(z))_{z \in I}$ gives an embedding of $\OO_K$ into $\mathbf{B}(1)^d$. Let $\mathcal{X}_0 = \mathbf{B}(1)^d$, and let $\mathcal{X}(r)$, for $r \le 1 \in p^\Q$, be the union of the polydiscs $\mathbf{B}(r)^d$ centred at the points of $\OO_K / \OO_K(r)$ in $\mathcal{X}_0$. Then $\mathcal{X}(r)$ is an open affinoid subdomain of $\mathcal{X}_0$ over $\Q_p$. (This is the space denoted by $B_r$ in \cite[\S 8]{buzzard-eigen}).

\begin{definition}
We shall call $\mathcal{D}(r)$ and $\mathcal{X}(r)$ respectively the type 1 and type 2 thickenings of $\OO_K$ (of radius $r$).
\end{definition}

The $\Q_p$-points of $\mathcal{X}(r)$ and $\mathcal{D}(r)$ are canonically $N_0 = \OO_K$, for every $r$, and the right action of $G_0$ on $N_0$, which is given by $\begin{pmatrix} a & b \\ c & d \end{pmatrix} \circ z = \frac{b + dz}{a + cz}$, extends to an action of $G_0$ on $\mathcal{D}(r)$ and on $\mathcal{X}(r)$ via rigid-analytic automorphisms. (This is clear for $\mathcal{D}(r)$, and for $\mathcal{X}(r)$ it is \cite[lemma 8.1(b)]{buzzard-eigen}).

\begin{proposition}
Let $\mu = \sqrt{|\Delta|}$, where $\Delta$ is the discriminant of the field extension $K/\Q_p$. Then there exist maps of rigid spaces $\mathcal{D}(r) \to \mathcal{X}(r)$ and $\mathcal{X}(r\mu) \to \mathcal{D}(r)$, commuting with the action of $G_0$, which are the identity on $\OO_K$.
\end{proposition}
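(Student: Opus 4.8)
The plan is to realise $\mathcal{D}(r)$ and $\mathcal{X}(r)$, after extension of scalars to $E$, as affinoid subdomains of one rigid space: the affine space $\mathbf{A}^d_E$ whose coordinates are indexed by the embeddings $\sigma\in I$. Using $K\otimes_{\Q_p}E\cong\prod_{\sigma\in I}E$ we identify $\Res_{K/\Q_p}(\mathbf{A}^1_K)_E$ (which contains $\mathcal{D}(r)$) with $\mathbf{A}^d_E$, so that a point $w$ of $K\otimes_{\Q_p}\C_p$ is recorded by $(\sigma(w))_{\sigma\in I}$ and $\mathcal{X}_0=\mathbf{B}(1)^d$ is the unit polydisc in these coordinates; in particular $N_0=\OO_K$ sits inside $\mathbf{A}^d_E(\C_p)$ as $\{(\sigma(t))_\sigma:t\in\OO_K\}$, the thickening $\mathcal{D}(r)$ is the disjoint union over $t\in\OO_K/\OO_K(r)$ of the translates $(\sigma(t))_\sigma+\mathcal{N}(r)$, and $\mathcal{X}(r)$ is the analogous disjoint union of polydiscs $(\sigma(t))_\sigma+\mathbf{B}(r)^d$. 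Let $\Lambda_r\subseteq\OO_E^d$ be the $\OO_E$-lattice spanned by $\{(\sigma(x))_\sigma:x\in\OO_K(r)\}$; recall (from the discussion of $\mathcal{N}_k$ above, or directly from the definition of Weil restriction) that $\mathcal{N}(r)$ has $\C_p$-points $\Lambda_r\otimes_{\OO_E}\OO_{\C_p}$. The two facts I will use are: (a) $\Lambda_r\subseteq r\,\OO_E^d$, because $|\sigma(x)|=|x|\le r$ for every $x\in\OO_K(r)$ and every $\sigma$; and (b) $r\mu\,\OO_E^d\subseteq\Lambda_r$. For (b), take a $\Z_p$-basis $x_1,\dots,x_d$ of $\OO_K(r)$ and let $M=(\sigma(x_i))_{\sigma,i}$ be the associated basis matrix of $\Lambda_r$; then $M\in\mathrm{Mat}_d(\OO_E)$ with $\|M\|\le r$ by (a), and $|\det M|=\mu\,r^d$ (since $|\det A|=\sqrt{|\Delta|}=\mu$ for $A=(\sigma(z_i))$ the embedding matrix of a $\Z_p$-basis of $\OO_K$, as $A^{\mathrm{T}}A$ is the discriminant Gram matrix $\big(\operatorname{Tr}_{K/\Q_p}(z_iz_j)\big)_{i,j}$, and $[\OO_K:\OO_K(r)]=r^{-d}$). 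Consequently $\operatorname{adj}(M)\in\mathrm{Mat}_d(\OO_E)$ has $\|\operatorname{adj}(M)\|\le r^{d-1}$, whence $\|M^{-1}\|=\|\operatorname{adj}(M)\|\,|\det M|^{-1}\le(r\mu)^{-1}$; this last inequality is exactly the assertion (b).

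Granting (a) and (b), the two maps are simply inclusions of affinoid subdomains of $\mathbf{A}^d_E$. By (a), the central piece $\mathcal{N}(r)$ of $\mathcal{D}(r)$, with $\C_p$-points $\Lambda_r\otimes\OO_{\C_p}\subseteq r\,\OO_{\C_p}^d$, lies inside the polydisc $\mathbf{B}(r)^d$; since the translation lattice $\OO_K$ — which acts on $\mathbf{A}^d_E$ by addition of the vector $(\sigma(t))_\sigma$ — is the one used to assemble both thickenings, this gives $\mathcal{D}(r)\subseteq\mathcal{X}(r)$ on $\C_p$-points. By (b), the central polydisc $\mathbf{B}(r\mu)^d$ of $\mathcal{X}(r\mu)$ has $\C_p$-points $r\mu\,\OO_{\C_p}^d\subseteq\Lambda_r\otimes\OO_{\C_p}$, so lies inside $\mathcal{N}(r)$, and each translate $(\sigma(t))_\sigma+r\mu\,\OO_{\C_p}^d$ of it lies inside the corresponding translate of $\mathcal{N}(r)$; hence $\mathcal{X}(r\mu)\subseteq\mathcal{D}(r)$ on $\C_p$-points. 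Since $\mathcal{D}(r)$, $\mathcal{X}(r)$, $\mathcal{X}(r\mu)$ are finite disjoint unions of affinoid balls, each is determined by its set of $\C_p$-points, so these set-theoretic containments are induced by morphisms of rigid spaces (by the universal property of affinoid subdomains): concretely, the identity endomorphism of $\mathbf{A}^d_E$ restricts to maps $\mathcal{D}(r)\to\mathcal{X}(r)$ and $\mathcal{X}(r\mu)\to\mathcal{D}(r)$, which by construction are the identity on $N_0=\OO_K$. (Keeping track of the $\Q_p$-structures, and using that $\OO_K$ is the Zariski-dense set of $\Q_p$-points of each space, one sees these maps are in fact defined over $\Q_p$.)

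Finally, $G_0$-equivariance is automatic. The set $\OO_K$ is Zariski-dense in $\mathcal{D}(r)$ and in $\mathcal{X}(r)$, since it meets each of the finitely many balls comprising the thickening and, inside each, contains a translate of the full-rank $\Z_p$-lattice $\Lambda_r$. The $G_0$-action on $\mathcal{D}(r)$ and on $\mathcal{X}(r)$ is, in each case, the rigid-analytic extension of the same M\"obius action $\gamma\colon z\mapsto\frac{b+dz}{a+cz}$ of $G_0$ on $N_0=\OO_K$; so for $\gamma\in G_0$ and either of the two maps $\iota$, the morphisms $\gamma\circ\iota$ and $\iota\circ\gamma$ agree on the Zariski-dense set $\OO_K$ and hence coincide. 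The main point in all of this is the discriminant estimate $|\det A|=\sqrt{|\Delta|}$: it is exactly what forces the radius to shrink by the factor $\mu$ in the map $\mathcal{X}(r\mu)\to\mathcal{D}(r)$ while the map $\mathcal{D}(r)\to\mathcal{X}(r)$ costs nothing; everything else is bookkeeping about how the finitely many balls of the two thickenings nest. (When $r\notin|K^\times|$ one may replace $r$, $r\mu$ by the largest elements of $|K^\times|$ they dominate, or simply note that the radii arising in the eigenvariety construction lie in $|K^\times|$.)
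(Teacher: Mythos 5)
Your argument is correct and is essentially the paper's own: the paper defines the two maps via the embedding matrix $A=(\sigma_i(z_j))$ and its inverse, using $(\det A)^2=\Delta$ (so $A$ has integral entries and costs nothing, while $A^{-1}=\operatorname{adj}(A)/\det A$ shrinks radii by the factor $\mu$), extends from the identity component to the others by translation by $\OO_K$, and deduces $G_0$-equivariance from the Zariski-density of $\OO_K$ — precisely the content of your facts (a), (b) and your final paragraph. Recasting the maps as inclusions of affinoid subdomains of a common ambient space after base change to $E$ (rather than as explicit linear maps in the two coordinate systems) is only a cosmetic repackaging of the same construction.
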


\begin{proof}
We first consider the case $r = 1$. If we identify $\mathcal{D}(1)$ and $\mathcal{X}(1)$ with $\mathbf{B}(1)^d$, via the coordinates $Z_i$ on $\mathcal{D}(1)$ introduced above and the natural coordinates $X_\sigma$ on $\mathcal{X}$, then the linear map sending $(Z_1, \dots, Z_d)$ to $(\sum_{j=1}^d \sigma(z_j) Z_j)_{\sigma \in I}$ commutes with the embeddings of $\OO_K$ into the two factors. Since $\OO_K$ is Zariski dense in both $\mathcal{X}(1)$ and $\mathcal{D}(1)$, this map must commute with the $G_0$-action. 

Similarly, for any $r$ we have an embedding of the identity component of $\mathcal{D}(r)$ into the identity component of $\mathcal{X}(r)$, which commutes with the embedding of $\left\{x \in \OO_K\ \middle|\ |x| \le p^r\right\}$ into both, and we can extend this to the other components by translating by elements of $\OO_K$.

To define the second map, we note that the matrix $A$ whose $i,j$ term is $\sigma_i(z_j)$, for any ordering $\sigma_1, \dots, \sigma_d$ of $I$, satisfies $(\det A)^2 = \Delta_{K/\Q_p}$. Consequently, $A^{-1}$ gives a map from the disc $B(r \mu)$ to $B(r)$. Identifying these with the identity components of $\mathcal{X}(r\mu)$ and $\mathcal{D}(r)$, the resulting map is clearly the identity on $\OO_K(r\mu)$. Extending to the remaining components by translation by $\OO_K$, we obtain the required map; and as before, it is $G_0$-equivariant by the Zariski-density of $\OO_K$.
\end{proof}

We thus obtain the following result relating the representations obtained from functions on $\mathcal{D}(r)$ and $\mathcal{X}(r)$:

\begin{corollary}\label{corr:thickenings}
Let $\chi_1, \chi_2$ be locally $\Q_p$-analytic characters $\OO_K \to E^\times$, and $r \in p^\Q$ sufficiently small that $\chi_1$ is analytic on $1 + \OO_K(r)$. Let us endow $C^{\an}(\mathcal{D}(r), E)$ and $C^{\an}(\mathcal{X}(r), E)$ with the action of $G_0$ given by $\begin{pmatrix} a & b \\ c & d \end{pmatrix} \circ f = \chi_1(a + cz) \chi_2(ad - bc)f\left(\frac{b + dz}{a + cz}\right)$. 
\begin{enumerate}
\item There are continuous injective $G_0$-equivariant maps $C^{\an}(\mathcal{X}(r), E) \to C^{\an}(\mathcal{D}(r), E)$ and $C^{\an}(\mathcal{D}(r), E) \to C^{\an}(\mathcal{X}(r\mu), E)$, with dense image.
\item The compositions $C^{\an}(\mathcal{X}(r), E) \to C^{\an}(\mathcal{X}(r\mu), E)$ and $C^{\an}(\mathcal{D}(r), E) \to C^{\an}(\mathcal{D}(r\mu), E)$ are the natural embeddings.
\item The map $\alpha: C^{\an}(\mathcal{X}(r), E) \into C^{\an}(\mathcal{D}(r), E)$ is a primitive link, in the sense of definition \ref{def:links}, for the endomorphisms of the two spaces given by $f(z) \mapsto f(\lambda z)$ for any $\lambda \in \Z_p$ with $|\lambda| < \mu$.
\end{enumerate}
\end{corollary}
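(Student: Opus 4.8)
The plan is to obtain all three parts by pulling back analytic functions along the maps of rigid spaces produced in the preceding proposition, using throughout that $\OO_K$ is Zariski-dense in every thickening and that, on each pair of corresponding connected components, those maps are given in the coordinates introduced above by the matrix $A=(\sigma_i(z_j))$ and its inverse, which are invertible over $E$ (indeed $|\det A|=\mu$). Write $\phi_1\colon\mathcal{D}(s)\to\mathcal{X}(s)$ and $\phi_2\colon\mathcal{X}(s\mu)\to\mathcal{D}(s)$ for the $G_0$-equivariant maps of the proposition, defined for every admissible radius $s\le 1$ and equal to the identity on $\OO_K$; I would take the two maps of part (1) to be the pullbacks $\phi_1^*$ and $\phi_2^*$ (so $\alpha=\phi_1^*$). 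Pullbacks are continuous; they are injective because $\phi_1,\phi_2$ are dominant (their images contain the Zariski-dense set $\OO_K$); and they have dense image because on each pair of components the coordinate functions of the target are $E$-linear combinations of pullbacks of coordinate functions of the source, so the image contains all polynomials. For equivariance under the twisted action: if $\gamma=\begin{pmatrix}a&b\\c&d\end{pmatrix}\in G_0$ then $a\in\OO_K^\times$ and $c\in\pi_K\OO_K$, so under the standing hypothesis on $\chi_1$ the cocycle $z\mapsto\chi_1(a+cz)\chi_2(ad-bc)$ is a well-defined analytic function on both $\mathcal{D}(r)$ and $\mathcal{X}(r)$; these two functions agree on $\OO_K$, hence (by Zariski-density) are carried to one another by $\phi_1^*$ and $\phi_2^*$, and combining this with the equivariance of $\phi_1,\phi_2$ for the fractional-linear action yields equivariance for the twisted action $\circ$.

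For part (2): the composites $\phi_1\circ\phi_2\colon\mathcal{X}(r\mu)\to\mathcal{X}(r)$ and $\phi_2\circ\phi_1\colon\mathcal{D}(r\mu)\to\mathcal{D}(r)$ (the latter using $\phi_1$ at radius $r\mu$) are, on the identity component, the linear maps $AA^{-1}=I$ and $A^{-1}A=I$ between balls of the indicated radii; extending by translation by $\OO_K$ and using Zariski-density to identify the two analytic maps, these composites are the tautological inclusions, and taking pullbacks identifies the composites in the statement with the natural restriction maps.

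For part (3): since $|\lambda|<\mu\le 1$, the scaling map $\psi\colon\mathcal{X}(r)\to\mathcal{X}(r\mu)$, $z\mapsto\lambda z$, is well-defined, and its pullback $\psi^*\colon C^{\an}(\mathcal{X}(r\mu),E)\to C^{\an}(\mathcal{X}(r),E)$ is compact, being on each component the operator that multiplies the homogeneous part of degree $n$ by $\lambda^n$ and hence a limit of finite-rank operators. I would set $c=\psi^*\circ\phi_2^*\colon C^{\an}(\mathcal{D}(r),E)\to C^{\an}(\mathcal{X}(r),E)$, which is then compact. By functoriality of pullback, $\alpha\circ c=(\phi_2\circ\psi\circ\phi_1)^*$ and $c\circ\alpha=(\phi_1\circ\phi_2\circ\psi)^*$; on the identity component the two rigid composites are $A^{-1}(\lambda I)A=\lambda I$ and $A A^{-1}(\lambda I)=\lambda I$, and (extending by $\OO_K$-translation) both equal $z\mapsto\lambda z$, which is precisely the map inducing $\phi_\lambda$. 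Hence $\alpha\circ c=\phi_\lambda=c\circ\alpha$; and since $\alpha$ is moreover continuous, $G_0$-equivariant by part (1), and commutes with $\phi_\lambda$ (multiplication by $\lambda\in\Z_p$ commuting with the linear map $A$), it is a primitive link for $\phi_\lambda$ in the sense of definition \ref{def:links}.

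The only genuine obstacle is bookkeeping the radii: one must verify that every rigid map written down has both source and target among the admissible thickenings $\mathcal{D}(\cdot),\mathcal{X}(\cdot)$ of radius $\le 1$, and it is precisely the strict inequality $|\lambda|<\mu$ (rather than $|\lambda|\le\mu$) that makes $\psi$ simultaneously well-defined and compact. These checks, together with the repeated passage from the identity component to the remaining components by translation, are routine once one invokes the Zariski-density of $\OO_K$ and the explicit linear description of $\phi_1,\phi_2$.
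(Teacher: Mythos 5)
Your proposal is correct and follows essentially the same route as the paper: parts (1) and (2) are read off from the pullbacks of the rigid maps in the preceding proposition (using Zariski-density of $\OO_K$ for injectivity and equivariance, and invertibility of $A$ over $E$ for density), and your $c=\psi^*\circ\phi_2^*$ is exactly the paper's map given by the matrix $\lambda A^{-1}$, with the same computation $A^{-1}(\lambda I)A=\lambda I=AA^{-1}(\lambda I)$ identifying both composites with $f(z)\mapsto f(\lambda z)$. The only cosmetic difference is that you spell out the compactness of $\psi^*$ and the radius bookkeeping, which the paper leaves implicit.
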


\begin{proof}
The first two statements are clear from the previous proposition. For part (iii), in the coordinates introduced above, the map $\alpha$ is given by the matrix $A$ with $|\det A| = \mu$; so the map $c: C^{\an}(\mathcal{D}(r), E) \into C^{\an}(\mathcal{X}(r), E)$ given by the matrix $\lambda A^{-1}$ is well-defined and compact, and it is clear that the two compositions are the maps on $C^{\an}(\mathcal{D}(r), E)$ and $C^{\an}(\mathcal{X}(r), E)$ given by multiplication by $\lambda$.
\end{proof}

\subsection{Quaternion algebras over a totally real field}

As a final example, we consider the case of quaternionic Hilbert modular forms studied in \cite{yamagami} and \cite[part III]{buzzard-eigen}. Let $F$ be a totally real field and $B$ a totally definite quaternion algebra over $F$ (i.e.~$B$ is ramified at all infinite places of $F$). $B^\times$ is naturally the $F$-points of an algebraic group over $F$, and we let $G = \Res_{F/\Q} B^\times$. If $\p_1, \dots, \p_r$ are the primes of $F$ above $p$, then $G$ is isomorphic over $\Q_p$ to $\prod_{i=1}^r G^{(i)}$ where $G^{(i)} = \Res_{F_{\p_i} / \Q_p} B^\times$.

Let us suppose without loss of generality that $B$ is split over $F_{\p_i}$ for $i = 1, \dots, t$ and non-split for $i = t+1, \dots, r$. For $i = 1, \dots, t$, the factor $G^{(i)}$ will be isomorphic to $\Res_{F_{\p_i}/\Q_p} \GL_2$; let us fix such an isomorphism, and define $S^{(i)}$ to be the rank 2 diagonal split torus, and $P_{\rm min}^{(i)}$ the Borel subgroup of upper triangular matrices. For the remaining primes, we take $S^{(i)}$ to be the rank 1 torus corresponding to the inclusion $\Q_p^\times \into B^\times$, and $P_{\rm min}^{(i)}$ the whole of $G^{(i)}$. Then $S = \prod S^{(i)}$ is a maximal split torus in $G$ and $P_{\rm min} = \prod P_{\rm min}^{(i)}$ a minimal parabolic containing $S$; and the parabolic subgroups containing $P_{\rm min}$ biject with the subsets of $\{1, \dots, t\}$.

Let us fix such a parabolic, corresponding (without loss of generality) to $\{1, \dots, s\}$ for some $s \le t$. We can define a compact open subgroup $G_0$ to be the product of the Iwahori subgroups of $G^{(i)}$ for $i = 1, \dots, s$ with the unit groups of maximal orders in the remaining factors. This is admissible, since in each factor the kernel of reduction modulo a sufficiently high power of $\p_i$ will be a good $\Q_p$-analytic open subgroup, which is trivally decomposable (since each factor has semisimple rank 1 or 0). So for any finite dimensional arithmetical locally $\Q_p$-analytic representation $V$ of $M_0$, we have a family of spaces $\Ind(V)_k = C^{\an}(\mathcal{D}_k, V)$, where $\mathcal{D}_k$ is a direct product of terms $\mathcal{D}_k^{(i)}$ which are type 1 thickenings of $\OO_{F, \p_i}$; and hence there is an eigenvariety (for any choice of tame level) interpolating automorphic forms for $D^\times$ which are finite slope for the primes $\p_1, \dots, \p_s$.

The constructions in \cite[\S 1.2]{yamagami} and \cite[part III]{buzzard-eigen}, on the other hand, use the corresponding products of type 2 thickenings of $\OO_{F, \p_i}$. Applying corollary \ref{corr:thickenings}, we obtain natural maps between these representations, and hence between the resulting spaces of overconvergent automorphic forms; part (iii) of the corollary shows that these are links for the action of the compact Hecke operator corresponding to $\begin{pmatrix} 1 & 0 \\ 0 & p^m \end{pmatrix}$ for sufficiently large $m$, and consequently the resulting eigenvarieties are isomorphic.

\section{Acknowledgements}

This paper is a more polished version of the first three chapters of my 2008 University of London PhD thesis, and I am much indebted to my supervisor, Kevin Buzzard, for all the advice and support I received during its preparation. The genesis of this project was in conversations I had with several mathematicians during the Eigenvarieties semester at Harvard University in spring 2006, and it took its final form during a visit to the $p$-adic semester at the Tata Institute of Fundamental Research in autumn 2008; I am grateful to the organisers of these two programmes for the invitations.

Finally, I would also like to thank the numerous mathematicians with whom I have had useful and interesting conversations, particularly Ga\"etan Chenevier, Matthew Emerton, Richard Hill, Payman Kassaei, Jan Kohlhaase, Alex Paulin and Matthias Strauch; the anonymous referee, whose careful reading of the original draft of this paper was immensely helpful; and, last but not least, Sarah Zerbes.

\def\cprime{$'$}

\newpage
\newcommand{\RR}{\mathbf{R}}
%
%
%
%
%
%
%
%
%
%

\appendix

\section*{Corrigendum}

 \newcommand{\frp}{\mathfrak{p}}
 The purpose of this note is to describe and to correct an unfortunate error in the author's earlier paper \cite{loeffler11}. These errors were apparently first noted by Olivier Ta\"ibi in his forthcoming paper \cite{taibi16}; Ta\"ibi's paper explains a corrected version of the argument under a minor extra hypothesis (that the reductive group $G$ be quasi-split). We explain here how to correct the argument in general.
 
 In \S 2.6 of \cite{loeffler11}, the following alterations need to be made. We use here the notations and assumptions in force in the original paper.
 
 \subsection*{A} In definition 2.6.1, the last sentence should be replaced by ``where $\chi_{\mathrm{alg}}$ is the character by which $T$ acts on $V_{\mathrm{alg}}$'', since $V_{\mathrm{alg}}$ is a representation of the algebraic group $M$, but $T$ may be strictly smaller than $Z(M)$ in general.
 
 \subsection*{B} The parenthetical remark following Definition 2.6.1 is only correct under the slightly stronger assumption that $\eta \in \Sigma^{++}$ (rather than the running assumption in force at that point that $\eta \in \Sigma^+$).
 
 \subsection*{C} Proposition 2.6.3 should read as follows:
 
 \medskip
 {\noindent \textsc{Proposition 2.6.3'}. \itshape Suppose $G$ is split over the field $E$. Let $\tilde B \subseteq P_{\mathrm{min}}$ be a Borel subgroup defined over $E$, and $\tilde S$ a maximal torus of $\tilde B$ containing $S$.
 
 Let $\tilde \Delta_f \subseteq X^\bullet(\tilde S)$ be the set of free simple  positive roots of $P$ with respect to $\tilde S$, and for each $\beta \in \tilde\Delta_f$, let $s_\beta$ be the corresponding reflection in the Weyl group. Let $\rho$ be half the sum of the positive roots with respect to $B$.
 
 Let $\lambda$ be the highest weight of $V_{\mathrm{alg}}$ with respect to $\tilde B$. If
 \[ \sigma < \inf_{\beta \in \tilde\Delta_f} f_\eta\left( -\left[ s_\beta(\lambda + \rho) - (\lambda + \rho) \right] \middle|_{T} \right), \tag{\dag}\]
 then $\sigma$ is a small slope for $\eta$.
 }
 
 \begin{proof}
  We begin by noting the following chain of implications, where we abbreviate $\chi_{\mathrm{alg}}$ by $\chi$.
  \begin{gather*}
   \text{``$\sigma$ is a small slope''}\\
   \Leftrightarrow ``\forall\, y\in X^\bullet(T) \text{ such that } f_\eta(y) \le \sigma,\ \chi_{\mathrm{alg}} - y \text{ is strongly classical''}\\
   \Leftrightarrow ``\forall x \in X^\bullet(T) \text{ not strongly classical}, f_\eta(\chi - x) > \sigma\text{.''}
  \end{gather*}
  To identify which weights fail to be strongly classical, we apply the generalised Bernstein--Gelfand--Gelfand resolution of \cite{lepowsky} to the base-extension of $G$ to $E$. This gives an exact sequence of $\mathfrak{g}$-modules
  \[ \bigoplus_{\beta \in \tilde\Delta_f} \Ver_{\mathfrak{p}}(s_\beta(\lambda + \rho) - \rho) \longrightarrow \Ver_\mathfrak{p}(\lambda) \longrightarrow U_{\mathrm{alg}} \longrightarrow 0, \]
  where we have used $\Ver_\mathfrak{p}(\mu)$, for $\mu \in X^\bullet(\tilde S)$, to denote the generalised Verma module of highest weight $\mu$.
  
  Any character of $\mathfrak{t} = \operatorname{Lie} T$ appearing in a generalised Verma module $\Ver_\frp(\mu)$ is of the form $\mu|_T - y$, where $y$ lies in the cone spanned by the restrictions to $T$ of the free positive roots. Since $f_\eta$ takes non-negative values on this cone, we see that the infimum of $f_\eta(\chi - x)$ over all $x$ appearing in the decomposition of $\Ver_\frp(\mu)$ as a $\mathfrak{t}$-representation is attained for $x = \mu|_{T}$. Hence the infimum of $f_\eta(\chi - x)$ over all non-strongly-classical $x$ is attained for some $x$ in the finite set 
  \[ \{ \left[s_\beta(\lambda + \rho) - \rho\right]|_T : \beta \in \tilde \Delta_f\}.\]
  Since we also have $\chi = \lambda \mid_T$, this gives the claim.
 \end{proof}
 
 In place of Proposition 2.6.4 we will use the following somewhat less explicit statement. Recall that $H = M / M^{\mathrm{ss}}$, the quotient of $M$ by its derived subgroup, which is a torus over $L$. There is a natural map from $X^\bullet(H)$ to $X^\bullet(\tilde S)$, given by restriction of characters.
 
 \medskip
  {\noindent \textsc{Proposition 2.6.4'}. \itshape Let us fix $\sigma \in \RR_{> 0}$, and some dominant $\lambda_0 \in X^\bullet(\tilde S)$; and assume $\eta \in \Sigma^{++}$. Then the set of characters $\tau \in X^\bullet(H) \otimes \RR$ such that $\lambda = \lambda_0 + \tau$ is dominant and satisfies (\dag) is a non-empty open cone.
 }
 \begin{proof}
  To each simple root $\beta \in \tilde\Delta_f$ there is associated a coroot $\beta^\vee \in X_{\bullet}(\tilde S)$, and it is a standard fact that the images of these coroots form a basis of $X_\bullet(H) \otimes \RR$. So any set of inequalities of the form $\{ \langle x, \beta_i^\vee\rangle > r_i : 1 \le i \le \#\tilde\Delta_f\}$ defines a non-empty open cone in $X^\bullet(H) \otimes \RR$, and if the $r_i$ are non-negative, any $\tau$ in this cone clearly has the property that $\lambda_0 \otimes \tau$ is dominant.
  
  Since we have
  \[ 
   (\lambda + \rho) - s_\beta(\lambda + \rho) = \langle \lambda+\rho, \beta^\vee\rangle \cdot \beta, 
  \]
  and $f_\eta(\beta|_{T}) > 0$ for all $\beta \in \tilde\Delta_f$ by assumption, the set of $\tau$ such that $\lambda_0 + \tau$ is dominant and satisfies (\dag) has precisely this form.
 \end{proof}
 
 With these corrections, the results of \S 3.13 of the paper now hold as stated, save for the minor modification that we must replace $X^\bullet(T)$ with $X^\bullet(H)$ throughout.

\end{document}